\documentclass[11pt,a4paper]{book}

\usepackage{listings}
\usepackage{algorithm}
\usepackage{algorithmic}
\usepackage[margin=2.5cm]{geometry}
\usepackage{amsmath,amssymb,amsthm,mathtools}
\usepackage{bm}
\usepackage{xcolor}
\usepackage{graphicx}
\usepackage{caption}
\usepackage{subcaption}
\usepackage{hyperref}
\usepackage{epstopdf}
\usepackage{booktabs}
\usepackage{tikz}
\usepackage{pgfplots}
\pgfplotsset{compat=1.18}
\usetikzlibrary{shadows,spy,fixedpointarithmetic,patterns,intersections,arrows,shapes,decorations.text,decorations.pathmorphing,backgrounds,fit,positioning,shapes.symbols,chains,arrows.meta,calc}
\usetikzlibrary{decorations.pathreplacing}
\usepackage{enumitem}
\usepackage{microtype}
\usepackage[most]{tcolorbox}
\usepackage{mdframed}
\usepackage{ifthen}
\usepackage[T1]{fontenc}
\usepackage[scaled=0.92]{inconsolata} 

\renewcommand{\bibname}{Bibliography}

\definecolor{TUblue}{HTML}{004C97}
\definecolor{TUeBlue}{HTML}{004C97}
\definecolor{softblue}{HTML}{EAF2FB}
\definecolor{softgreen}{HTML}{EAF8F1}
\definecolor{softyellow}{HTML}{FFF9E6}
\definecolor{softred}{HTML}{FDECEC}
\definecolor{ink}{HTML}{1F2937}

\tcbset{
  frame code={},
  center upper,
  top=4pt,bottom=6pt,left=8pt,right=8pt,
  boxsep=0pt,arc=2mm,outer arc=2mm,
  colback=white,colframe=TUblue!20!black,
}

\newtcolorbox{keyidea}[1][]{
  enhanced,unbreakable,colback=softgreen,colframe=TUblue!40!black,
  title=\textbf{Key idea}, fonttitle=\bfseries, #1
}

\newtcolorbox{takeaway}[1][]{
  enhanced,unbreakable,colback=softblue,colframe=TUblue!40!black,
  title=\textbf{Takeaways}, fonttitle=\bfseries, #1
}

\newtcolorbox{warningbox}[1][]{
  enhanced,unbreakable,colback=softred,colframe=red!50!black,
  title=\textbf{Pitfall}, fonttitle=\bfseries, #1
}

\theoremstyle{plain}
\newtheorem{theorem}{Theorem}[section]
\newtheorem{proposition}[theorem]{Proposition}
\newtheorem{corollary}[theorem]{Corollary}
\theoremstyle{definition}
\newtheorem{definition}[theorem]{Definition}
\newtheorem{example}[theorem]{Example}
\theoremstyle{remark}
\newtheorem{remark}[theorem]{Remark}

\newmdenv[
  leftline=true,
  topline=false,
  bottomline=false,
  rightline=false,
  linecolor=gray,
  linewidth=3pt,
  backgroundcolor=gray!10,
  innertopmargin=5pt,
  innerbottommargin=5pt,
  innerleftmargin=10pt,
  innerrightmargin=10pt
]{anecdote}

\newmdenv[
  backgroundcolor=gray!10,
  linecolor=red!60,
  linewidth=1pt,
  topline=false,
  bottomline=false,
  rightline=false,
  leftline=true,
  leftmargin=0pt,
  rightmargin=0pt,
  innerleftmargin=12pt,
  innerrightmargin=12pt,
  innertopmargin=10pt,
  innerbottommargin=10pt,
  roundcorner=8pt,
  font=\itshape,
  skipabove=10pt,
  skipbelow=10pt,
  nobreak=true
]{remarkablefact}

\graphicspath{{figures/}}

\hypersetup{
  colorlinks=true,
  linkcolor=TUblue!70!black,
  citecolor=TUblue!55!black,
  urlcolor=TUblue!70!black,
  filecolor=TUblue!70!black,
  linktoc=all,
  pdftitle={Advanced Linear Algebra with Applications --- Part I},
  pdfauthor={Victorita Dolean and Jemima Tabeart}
}

\title{Advanced Linear Algebra with Applications --- Part I}
\author{Victorita Dolean \and Jemima M. Tabeart}
\date{}

\newcommand{\coverillustration}{%
\begin{tikzpicture}[scale=1,every node/.style={font=\small}]

  \begin{scope}[xshift=-6.6cm,yshift=0cm]
    \foreach \x in {0,...,3}{
      \foreach \y in {0,...,3}{
        \fill[TUblue!70!black] (0.5*\x,0.5*\y) circle (1.2pt);
      }
    }
    \foreach \x in {0,...,3}{ \draw[TUblue!35!white,thin] (0.5*\x,0) -- (0.5*\x,1.5); }
    \foreach \y in {0,...,3}{ \draw[TUblue!35!white,thin] (0,0.5*\y) -- (1.5,0.5*\y); }
    \draw[TUblue!80!black,thick] (0.5,0.5) -- (1.0,0.5) -- (0.5,0.5) -- (0.5,1.0) -- (0.5,0.5) -- (0.0,0.5);
    \node at (0.75,-0.45) {\footnotesize\itshape PDE discretisation};
  \end{scope}

  \begin{scope}[xshift=-2.1cm,yshift=0cm]
    \draw[TUblue!80!black,thick] (0,0) rectangle (1.5,1.5);
    \foreach \i in {0,...,5}{
      \foreach \j in {0,...,5}{
        \pgfmathtruncatemacro{\d}{abs(\i-\j)}
        \ifnum\d<2
          \fill[TUblue!70!black,opacity=0.85] (0.25*\i+0.05,1.5-0.25*\j-0.2) rectangle ++(0.18,0.18);
        \fi
      }
    }
    \node at (0.75,-0.45) {\footnotesize\itshape Sparse matrices};
  \end{scope}

  \begin{scope}[xshift=2.4cm,yshift=0cm]
    \coordinate (g1) at (0,1.3);
    \coordinate (g2) at (0.9,1.5);
    \coordinate (g3) at (1.5,0.8);
    \coordinate (g4) at (0.9,0.1);
    \coordinate (g5) at (0.1,0.4);
    \draw[TUblue!45!white,thick] (g1)--(g2)--(g3)--(g4)--(g5)--(g1);
    \draw[TUblue!45!white,thick] (g1)--(g4);
    \foreach \g in {g1,g2,g3,g4,g5}{ \fill[TUblue!80!black] (\g) circle (2.2pt); }
    \node at (0.75,-0.45) {\footnotesize\itshape Graph Laplacians};
  \end{scope}

  \begin{scope}[xshift=6.9cm,yshift=0cm]
    \draw[TUblue!80!black,thick] (0,0.75) -- (1.5,0.75);
    \draw[TUblue!70!black,thick,samples=60,domain=0:1.5,variable=\t]
      plot ({\t},{0.75+0.55*sin(2*180*\t/1.5)});
    \draw[TUblue!35!white,thick,samples=60,domain=0:1.5,variable=\t]
      plot ({\t},{0.75+0.3*sin(6*180*\t/1.5)});
    \node at (0.75,-0.45) {\footnotesize\itshape Spectral analysis in ML};
  \end{scope}

\end{tikzpicture}%
}

\begin{document}

\frontmatter

\begin{titlepage}
  \centering
  \vspace*{\fill}

  {\color{TUblue!85!black}\rule{\textwidth}{1.1pt}}\\[0.9em]
  {\Huge\bfseries\color{TUblue!85!black} Advanced Linear Algebra\\[4pt] with Applications}\\[0.6em]
  {\LARGE\itshape\color{TUblue!60!black} Part I}\\[0.9em]
  {\color{TUblue!85!black}\rule{\textwidth}{1.1pt}}\\[2.6em]

  \begin{center}
    \resizebox{0.92\textwidth}{!}{\coverillustration}
  \end{center}

  \vspace{2.8em}
  {\Large Victorita Dolean \quad\&\quad Jemima Tabeart}\\[0.5em]
  {\large\itshape Numerical linear algebra for PDEs, machine learning, and data assimilation}

  \vspace*{\fill}
  {\normalsize\color{ink} Lecture notes}\\[0.3em]
  {\small\color{ink!70} \today}

\end{titlepage}

\tableofcontents

\chapter*{Preface}
\addcontentsline{toc}{chapter}{Preface}
\markboth{Preface}{Preface}

These notes form the first part of a longer course on advanced numerical linear algebra, taught at
master's level. Their purpose is not only to present the classical algorithms, but to show why the
subject has become considerably more central than it was a generation ago.

Numerical linear algebra grew up alongside the numerical solution of partial differential equations,
and for a long time that was where its large sparse systems came from. This is no longer the whole
story. Ranking the nodes of a network, assimilating observations into a weather forecast, and fitting
a model to a large noisy data set all lead to problems of exactly the same kind: too large to
factorise, structured, and accessible only through matrix--vector products. What is striking is how
few ideas are needed for all of them. Sparsity, the spectrum, Krylov subspaces and preconditioning
recur throughout, and a method designed for a discretised Laplacian is usually the right method for a
graph Laplacian or a design matrix as well.

The chapters that follow are organised around that observation. Each develops a classical topic and
then puts it to work outside its original setting: sparse matrices from finite differences, graphs and
machine learning; stationary iterations and the smoothing property; conjugate gradient and Lanczos,
with spectral clustering and regularisation by early stopping; Arnoldi and GMRES, with PageRank and
large least squares; and finally preconditioning, domain decomposition and multigrid.

This first part is deliberately foundational. The material collected here (factorisations and
conditioning, iterative methods, Krylov subspaces, preconditioners) is what the second part of the
course, not reproduced in this volume, takes for granted throughout.

The level is that of a first-year master's student in mathematics, computer science or a
computational discipline. We assume a first course in linear algebra and some familiarity with
numerical analysis, and we prove what can be proved compactly, leaving the more technical results to
the literature. Every section closes with a summary of what should be retained, and every chapter
with exercises, several taken from past examinations.

Reading about an iterative method is, however, a poor substitute for watching one converge. The
illustrations in these notes are reproducible, and we encourage readers to run and modify the
accompanying Python code, available at \url{https://github.com/vicdolean/scicomp_examples}.
Changing a parameter, a matrix or a stopping criterion and watching what happens to the convergence
history is by far the fastest way to develop a feeling for when these methods work and when they do not.

\paragraph{A note on the use of AI tools.}
The structure of these notes, the choice and ordering of the topics, and the mathematical content are
the authors' own. Some of the material is standard and presented here in our own way; some of it is
drawn from research we are ourselves engaged in. In preparing the manuscript we used a generative AI
assistant (Claude Opus 5, Anthropic) to help identify the seminal references on a given topic, to
improve the language, and to improve the graphical content. All statements, proofs and references
have been checked by the authors, who take full responsibility for the contents.

\vspace{1em}
\noindent
Victorita Dolean \quad and \quad Jemima M. Tabeart\\
Eindhoven, \today

\mainmatter

\chapter{Solving Linear Systems: A Unified Foundation}

\section*{Overview}

Linear algebra forms the backbone of many modern scientific and engineering computations. From solving systems of equations in structural analysis, optimizing models in machine learning, and forecasting weather through data assimilation, linear algebraic methods are pervasive. However, in practical computations, the mathematical representations of real numbers must confront the limitations of computer arithmetic. Floating-point numbers can only approximate real numbers, leading to small but inevitable rounding errors, and even mathematically exact operations can be extremely sensitive to those tiny errors depending on the structure of $A$.

This chapter lays the numerical foundations used throughout the rest of the course: how to measure the size of vectors, matrices, and errors (\emph{norms}); how a problem's own sensitivity to perturbations is quantified (\emph{conditioning}); how floating-point arithmetic and algorithms can amplify or control error (\emph{stability}); and the two standard families of solvers for $Ax=b$ — direct factorizations (LU, Cholesky) and stationary iterative methods (Jacobi, Gauss--Seidel, SOR, Richardson).

\begin{remarkablefact}
\textbf{The Scale of Linear Algebra in Real Life} \\
Weather forecasts use systems with $10^9$ unknowns -- solving linear systems is essential every few hours, and must be done in minutes.\\
Google's PageRank was originally computed by solving a giant eigenvalue problem.\\
Deep Learning has a lot of linear algebra: backpropagation is mostly matrix multiplies and chain-rule gradients.
\end{remarkablefact}

\begin{tcolorbox}[colback=softblue,colframe=TUblue!40!black,title=\textbf{Learning objectives},fonttitle=\bfseries]
By the end of this chapter, you should be able to:
\begin{itemize}
    \item Define and compute common vector and matrix norms, and explain why all norms are equivalent in finite dimension.
    \item Relate eigenvalues, eigendecompositions, the Schur decomposition, and the SVD to the spectral radius and to matrix norms.
    \item Explain the sources of floating-point error (rounding, truncation, cancellation) and use the condition number $\kappa(A)$ to relate backward and forward error.
    \item Compute the $LU$ and Cholesky factorizations of a matrix, and state why partial pivoting is needed for backward stability.
    \item Derive the Jacobi, Gauss--Seidel, SOR, and Richardson iterations from a matrix splitting $A=M-N$, and state the convergence criterion $\rho(G)<1$.
\end{itemize}
\end{tcolorbox}

\section{Norms and Matrix Decompositions}

\begin{anecdote}
The first electronic computer, ENIAC (1940s), already faced severe rounding problems.
Engineers had to carefully redesign algorithms to avoid divergence even after a few thousand operations.
\end{anecdote}

\subsection{Vector and Matrix Norms}

\begin{tcolorbox}[colback=green!5!white,colframe=green!50!black]
\begin{definition}[Vector norms]
Given a vector $x \in \mathbb{R}^n$, the common norms are:
\begin{itemize}
    \item $\ell^1$ norm: $\|x\|_1 = \sum_{i=1}^n |x_i|$
    \item $\ell^2$ norm (Euclidean): $\|x\|_2 = \left( \sum_{i=1}^n |x_i|^2 \right)^{1/2}$
    \item $\ell^\infty$ norm: $\|x\|_\infty = \max_i |x_i|$
\end{itemize}
\end{definition}
\end{tcolorbox}

These norms satisfy the following properties:
\begin{itemize}
    \item Non-negativity: $\|x\| \geq 0$ and $\|x\| = 0$ if and only if $x = 0$
    \item Homogeneity: $\|\alpha x\| = |\alpha| \|x\|$ for any scalar $\alpha$
    \item Triangle inequality: $\|x + y\| \leq \|x\| + \|y\|$
\end{itemize}

Norms give us a way to measure the size (or length) of a vector. They are particularly useful in determining convergence of sequences, measuring distances between vectors, and quantifying approximation errors.

\begin{figure}[h]
\centering
\includegraphics[width=0.4\linewidth]{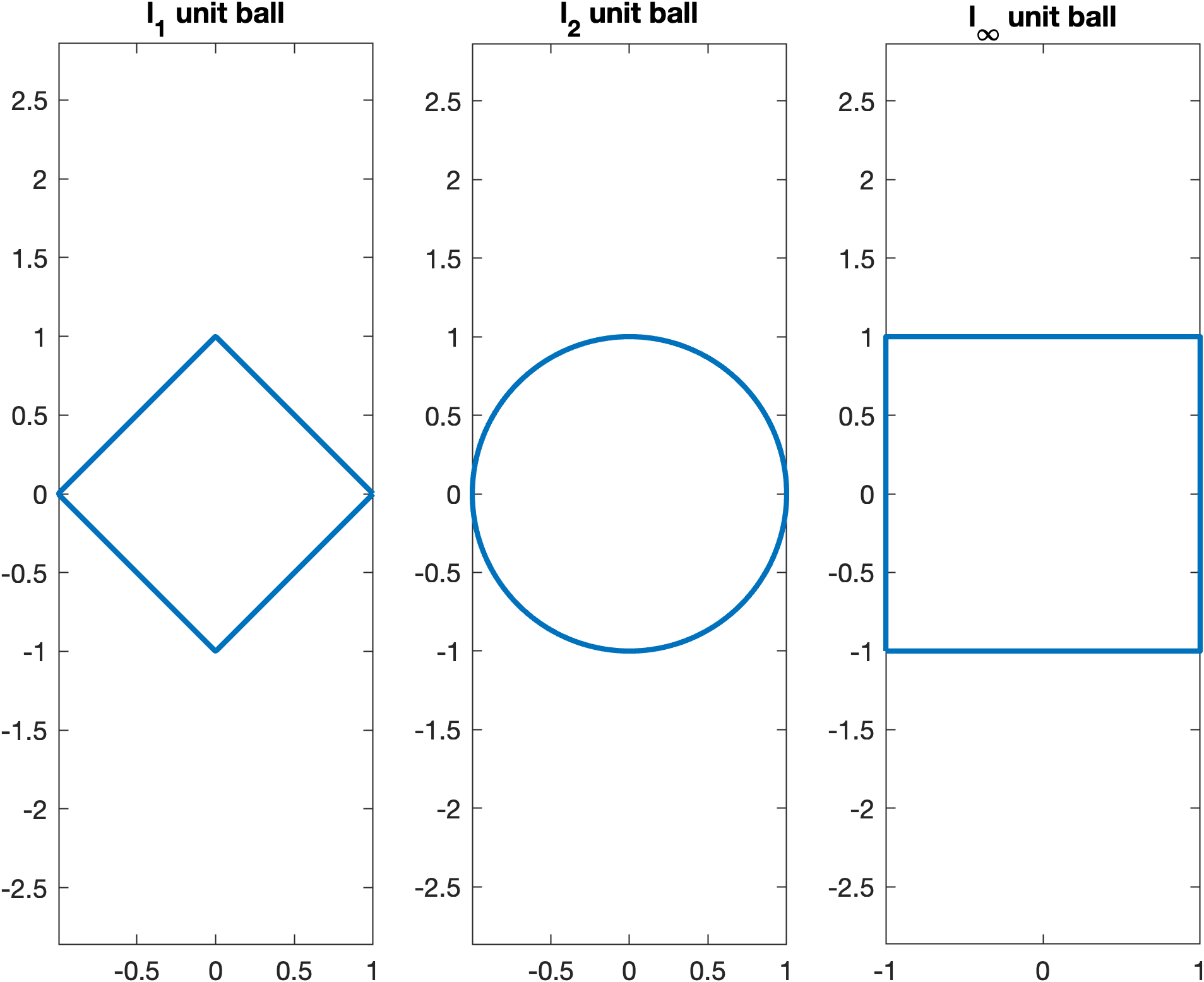}
\caption{Unit balls $\{x:\|x\|=1\}$ for the $\ell^1$, $\ell^2$, and $\ell^\infty$ norms in $\mathbb{R}^2$: the same vector has a different "size" depending on which norm is used.}
\end{figure}

\begin{tcolorbox}[colback=yellow!5!white,colframe=yellow!50!black]
\begin{proposition}[Norm equivalence]
All norms on a finite-dimensional vector space are equivalent. This means that for any two norms $\|\cdot\|_a$ and $\|\cdot\|_b$, there exist constants $c_1, c_2 > 0$ such that:
\[
    c_1\|x\|_a \leq \|x\|_b \leq c_2\|x\|_a \quad \text{for all } x \in \mathbb{R}^n.
\]
Thus, different norms can be used interchangeably when analyzing asymptotic behavior, even though their numerical values may differ. For instance, $\|x\|_\infty \le \|x\|_2 \le \sqrt{n}\|x\|_\infty$ and $\|x\|_2\le\|x\|_1\le\sqrt{n}\|x\|_2$, so here $c_1=1$ and $c_2=\sqrt n$ (see Problem~2).
\end{proposition}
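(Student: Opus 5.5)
The plan is the standard compactness argument. Since norm equivalence is itself an equivalence relation (reflexive, symmetric with constants $1/c_2,1/c_1$, and transitive by multiplying constants), it suffices to show that every norm $\|\cdot\|$ on $\mathbb{R}^n$ is equivalent to $\|\cdot\|_2$. A finite-dimensional real vector space $V$ is reduced to this case by fixing a basis, which gives a linear isomorphism $V\cong\mathbb{R}^n$; any norm on $V$ transports to a norm on $\mathbb{R}^n$.

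\textbf{Upper bound.} Write $x=\sum_i x_i e_i$ in the standard basis. The triangle inequality and homogeneity give $\|x\|\le\sum_i|x_i|\,\|e_i\|$. Cauchy--Schwarz then yields
\[
\|x\|\le C\|x\|_2,\qquad C=\Big(\sum_i\|e_i\|^2\Big)^{1/2}.
\]
Combined with the reverse triangle inequality $\big|\|x\|-\|y\|\big|\le\|x-y\|\le C\|x-y\|_2$, this shows that $x\mapsto\|x\|$ is (Lipschitz) continuous with respect to the Euclidean topology.

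\textbf{Lower bound.} This is the main step. The Euclidean unit sphere $S=\{x:\|x\|_2=1\}$ is closed and bounded in $\mathbb{R}^n$, hence compact by Heine--Borel. The continuous function $\|\cdot\|$ therefore attains a minimum $m$ on $S$ at some $x^*$. Since $x^*\neq0$, non-negativity of the norm forces $m=\|x^*\|>0$. For any $x\neq0$, applying this to $x/\|x\|_2\in S$ and using homogeneity gives $\|x\|\ge m\|x\|_2$; the case $x=0$ is trivial. Hence
\[
m\|x\|_2\le\|x\|\le C\|x\|_2 .
\]
The only genuinely nontrivial ingredient is compactness of $S$, i.e.\ Heine--Borel (or Bolzano--Weierstrass), and this is exactly where finite dimension is used.

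\textbf{Explicit constants.} For the examples in the statement, direct arguments suffice. For the $\infty$--$2$ pair, $\max_i|x_i|^2\le\sum_i|x_i|^2\le n\max_i|x_i|^2$. For the $2$--$1$ pair, expanding $\big(\sum_i|x_i|\big)^2$ shows it is at least $\sum_i|x_i|^2$. Cauchy--Schwarz against the all-ones vector gives $\sum_i|x_i|\le\sqrt n\,\|x\|_2$. Taking square roots gives the stated bounds.
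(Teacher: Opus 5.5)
Your proof is correct and complete. The notes do not prove the general claim at all: they assert that all norms are equivalent, illustrate this with the two concrete pairs, and defer the $\infty$--$2$ inequality to Problem~2 as an exercise.

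Your last paragraph is exactly the elementary argument that exercise asks for, and you extend it to the $2$--$1$ pair. Your compactness argument supplies the general statement that the paper takes for granted.

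The two routes give different things:
\begin{itemize}
\item The Heine--Borel argument is the only one that handles an arbitrary norm, and it shows where finite dimension is used. However, it is non-constructive: it gives no information about the size of $m$ and $C$.
\item The direct estimates give explicit constants, which are in fact sharp: the lower bounds are attained at $e_1$ and the upper bounds at the all-ones vector $\mathbf{1}$. They also show that the constants grow like $\sqrt{n}$.
\end{itemize}

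That dependence on $n$ matters in these notes, where $n$ grows under mesh refinement. Saying that norms can be used ``interchangeably'' is safe at fixed $n$. A bound $\|e^{(k)}\|_a\le K\rho^k$ does transfer to $\|e^{(k)}\|_b\le c_2K\rho^k$ with the same rate $\rho$, but only with a prefactor $c_2$ that may depend on $n$.

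A minor wording point: $m>0$ follows from definiteness ($\|x\|=0\Rightarrow x=0$) rather than from non-negativity alone. The paper folds definiteness into its ``non-negativity'' axiom, so your phrasing is consistent with the notes.
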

\end{tcolorbox}

\subsection{Eigenvalues and Matrix Decompositions}

Before turning to matrix norms, it is useful to recall the key spectral objects that will reappear throughout the course: eigenvalues, and the two canonical matrix factorizations built from them.

\begin{tcolorbox}[colback=green!5!white,colframe=green!50!black]
\begin{definition}[Eigenvalues, eigendecomposition, spectral radius]
A scalar $\lambda\in\mathbb{C}$ and nonzero vector $x$ satisfying $Ax=\lambda x$ are an \emph{eigenvalue} and \emph{eigenvector} of $A$. If $A\in\mathbb{R}^{n\times n}$ is diagonalizable, it admits an \emph{eigendecomposition}
\[
A = V\Lambda V^{-1}, \qquad \Lambda=\operatorname{diag}(\lambda_1,\dots,\lambda_n),
\]
where the columns of $V$ are eigenvectors. The \emph{spectral radius} is $\rho(A)=\max_i|\lambda_i|$.
\end{definition}
\end{tcolorbox}

\begin{tcolorbox}[colback=green!5!white,colframe=green!50!black]
\begin{definition}[Schur decomposition and SVD]
Every $A\in\mathbb{C}^{n\times n}$ admits a \emph{Schur decomposition} $A=QTQ^*$ with $Q$ unitary ($Q^*Q=QQ^*=I$) and $T$ upper triangular, with the eigenvalues of $A$ on the diagonal of $T$. Every $A\in\mathbb{R}^{m\times n}$ (or $\mathbb{C}^{m\times n}$) admits a \emph{singular value decomposition (SVD)} $A=U\Sigma V^*$ with $U,V$ unitary and $\Sigma=\operatorname{diag}(\sigma_1,\dots)$ where $\sigma_i\ge 0$.
\end{definition}
\end{tcolorbox}

\begin{remark}
Every symmetric matrix is diagonalizable with real eigenvalues; if it is also symmetric positive definite (SPD), i.e.\ $x^\top Ax\ge 0$ with equality only for $x=0$, its eigenvalues are strictly positive. Unlike the eigendecomposition, the Schur decomposition and the SVD exist for \emph{every} matrix, which is why they underpin general-purpose numerical algorithms.
\end{remark}

\begin{takeaway}
Eigenvalues and the spectral radius $\rho(A)$ control the long-run behaviour of iterative processes (Section~\ref{sec:stationary}); the Schur decomposition and the SVD are the two factorizations that always exist and that later chapters (and the induced 2-norm below) build on.
\end{takeaway}

\subsection{Matrix Norms}

Matrix norms are critical for analyzing the stability and conditioning of algorithms and problems involving linear systems, eigenvalue computations, and optimization. Matrix norms are often induced by vector norms.

\begin{tcolorbox}[colback=green!5!white,colframe=green!50!black]
\begin{definition}[Induced norm (operator norm)]
For a matrix $A \in \mathbb{R}^{m \times n}$ the {induced norm (operator norm)} is defined by
    \[
        \|A\| = \sup_{x \neq 0} \frac{\|Ax\|}{\|x\|} = \max_{\|x\|=1} \|Ax\|
    \]
\end{definition}
\end{tcolorbox}
The three induced norms used throughout these notes are the following.
\begin{itemize}
    \item \textbf{Spectral norm (2-norm):} $\|A\|_2 = \sigma_{\max}(A)$, where $\sigma_{\max}$ is the largest singular value.
        \item \textbf{1-norm} $\|A\|_1$: maximum absolute column sum
        \item \textbf{Infinity-norm} $\|A\|_\infty$: maximum absolute row sum
\end{itemize}
Not every matrix norm is induced by a vector norm. The most important exception is the
\emph{Frobenius norm},
    \[
        \|A\|_F = \left( \sum_{i=1}^m \sum_{j=1}^n |a_{ij}|^2 \right)^{1/2}
    \]
    
    It is equivalent to the $\ell^2$ norm of the matrix viewed as a vector of its entries.

\begin{tcolorbox}[colback=yellow!5!white,colframe=yellow!50!black]
\begin{proposition}[Note on induced norms]
For an induced matrix norm $\|A\|$, we have:
\begin{itemize}
    \item $\|AB\| \leq \|A\| \cdot \|B\|$
    \item $\|A\| \geq \|Ax\|/\|x\|$ for all $x \neq 0$
\end{itemize}
This submultiplicative property makes induced norms useful in bounding errors and proving convergence.
\end{proposition}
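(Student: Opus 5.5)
The plan is to prove the second property first, directly from the definition of the induced norm, and then derive submultiplicativity from it.

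For the second property, fix $x \neq 0$. By definition, $\|A\|$ is the supremum of the quotient $\|Ay\|/\|y\|$ over all $y \neq 0$. The vector $x$ is one such $y$, so $\|Ax\|/\|x\| \le \|A\|$. Multiplying by $\|x\|>0$ gives the form we will actually use, $\|Ax\| \le \|A\|\,\|x\|$. This inequality also holds trivially for $x=0$, since both sides vanish.

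For submultiplicativity, assume $A\in\mathbb{R}^{m\times n}$ and $B\in\mathbb{R}^{n\times p}$, with each space carrying its vector norm and $\|\cdot\|$ denoting the corresponding induced norms. Take any $x\neq 0$ in $\mathbb{R}^p$. Applying the inequality above twice, first to $A$ with the vector $Bx$ and then to $B$ with the vector $x$, gives
\[
\|ABx\| = \|A(Bx)\| \le \|A\|\,\|Bx\| \le \|A\|\,\|B\|\,\|x\|.
\]
Dividing by $\|x\|$ shows that $\|ABx\|/\|x\| \le \|A\|\,\|B\|$ for every $x\neq 0$. Taking the supremum over $x$ then gives $\|AB\|\le\|A\|\,\|B\|$.

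There is no real obstacle here. The only points needing care are the case $Bx=0$, which the form $\|Ay\|\le\|A\|\,\|y\|$ handles automatically since it holds for $y=0$, and the finiteness of $\|A\|$. If desired, finiteness follows because the unit sphere is compact in finite dimension, which is consistent with the Norm equivalence proposition, and $x\mapsto\|Ax\|$ is continuous, so the supremum is attained as a maximum, as the definition asserts.
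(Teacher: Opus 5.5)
Your proof is correct. You read off the pointwise bound $\|Ax\|\le\|A\|\,\|x\|$ from the definition of the supremum, apply it twice to $A(Bx)$, and then take the supremum over $x$; this is the standard argument. The notes state this proposition without proof, treating it as immediate from the definition, so there is no different route to compare against. Your remarks on the consistent norm on the intermediate space $\mathbb{R}^n$, the case $Bx=0$, and finiteness of the supremum cover the only points a complete proof needs.
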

\end{tcolorbox}

\begin{takeaway}
On finite-dimensional spaces all norms are equivalent: asymptotically they give the same notion of convergence, but numerically their values can differ significantly.
\end{takeaway}

\section{Floating-Point Arithmetic, Error, Conditioning, and Factorizations}

\subsection{Floating-Point Arithmetic}

\begin{anecdote}
    The Ariane 5 rocket self-destructed (1996) due to a floating-point overflow error when converting a 64-bit number to a 16-bit integer. This incident underscores the risks inherent in floating-point arithmetic and careless type conversions.
\end{anecdote}

\begin{tcolorbox}[colback=green!5!white,colframe=green!50!black]
\begin{definition}[Floating-point model]
A real number $x$ is represented as:
\begin{equation*}
    x \approx \text{fl}(x) = x(1 + \delta), \quad |\delta| \leq \varepsilon_{\text{mach}}
\end{equation*}
where $\varepsilon_{\text{mach}}$ is the machine epsilon.
\end{definition}
\end{tcolorbox}

Floating-point arithmetic is governed by the IEEE 754 standard, which defines how numbers are stored and manipulated. A typical floating-point number consists of:
\begin{itemize}
    \item A \textbf{sign bit} (1 bit)
    \item An \textbf{exponent} field (used to scale the number)
    \item A \textbf{significand} or mantissa (which holds the significant digits)
\end{itemize}

The first IEEE standard was introduced in 1985. For a long time single (8 exponent bits and 23-bits for the significand) and double precision (11 exponent bits and 52 significand bits) were the most commonly-used formats. However, in recent years there has been a growth in interest in reduced precisions, such as half precision (5 exponent bits and 10 significand bits), non-standard precisions such as bfloat16 (8 exponent bits and  7 significand bits) and proprietary formats like Nvidia's TensorFloat-32 (8 exponent bits and 10 significand bits). Much of this development has been driven by machine learning applications, and the desire to accelerate performance. In data assimilation, the computational cost is dominated by evaluations of the PDE model, so large gains can be made by applying the model in single precision \cite{hatfieldSinglePrecisionTangentLinearAdjoint2020}.

One option is to run an entire algorithm in a reduced precision, which will lead to increased computational efficiency, but may result in degraded performance. This has motivated `mixed-precision' approaches, particularly in the numerical linear algebra community, where different steps of an algorithm are run in different precisions. Such approaches aim to combine the computational benefits of reduced precision, while ensuring rapid convergence and computational stability (see e.g. \cite{highamMixedPrecisionAlgorithms2022,abdelfattahSurveyNumericalLinear2021} for comprehensive reviews).
 
\paragraph{Machine Epsilon}
The machine epsilon $\varepsilon_{\text{mach}}$ is the smallest number such that:
\[
    1 + \varepsilon_{\text{mach}} \neq 1
\]
It gives an upper bound on relative error due to rounding in floating-point arithmetic. For double precision, $\varepsilon_{\text{mach}} \approx 2^{-53} \approx 1.11 \times 10^{-16}$.

\paragraph{Sources of Numerical Errors}
\begin{itemize}
    \item \textbf{Rounding errors:} Due to truncating or rounding numbers to fit into finite precision.
    \item \textbf{Truncation errors:} Arise when series are approximated by finite sums.
    \item \textbf{Cancellation errors:} Occur when subtracting two nearly equal numbers, leading to loss of significant digits.
\end{itemize}

\paragraph{Example: Catastrophic Cancellation}
Suppose we compute $x = a - b$ with $a = 1.0000001$ and $b = 1.0000000$. In exact arithmetic:
\[x = 0.0000001\] but in floating-point arithmetic, $a$ and $b$ may be stored with limited precision, and the result may be inaccurate or even zero. This is an example of \emph{catastrophic cancellation}, where most significant digits cancel out.

\begin{takeaway}
\begin{itemize}
\item $\varepsilon_{\text{mach}}$ (machine epsilon) quantifies roundoff. For double precision: $\approx 10^{-16}$.  
\item Cancellation (subtracting nearly equal numbers) is the most dangerous source of error.  
\item Floating-point arithmetic is not associative: $(x+y)+z\ne x+(y+z)$.  
\item Any finite precision implementation can be affected by overflow/rounding errors, but lower precisions are more at risk.
\end{itemize}
\end{takeaway}

\subsection{Error, Conditioning and Stability of algorithms}
Numerical errors in computations arise from various sources and can significantly affect the reliability of results. To understand and mitigate these effects, we analyze the nature of errors and how sensitive a problem is to perturbations.
\begin{anecdote}
    During the Gulf War (1991), the Patriot missile system failed to intercept a missile due to a small rounding error in the system's clock, leading to a timing drift of 0.34 seconds. This small error had devastating consequences, highlighting the importance of careful error analysis.
\end{anecdote}

\begin{anecdote}
    The Norwegian oil platform Sleipner A collapsed (1991) due to a small error in the finite element analysis stiffness matrix, demonstrating how critical numerical stability and accuracy are in engineering simulations.
\end{anecdote}

\begin{tcolorbox}[colback=green!5!white,colframe=green!50!black]
\begin{definition}[Forward and backward error]
If $x$ is the exact solution and $\hat{x}$ the computed one:
\[
\text{forward error}=\|\hat{x}-x\|,\qquad
\text{backward error}=\min\{\|\delta A\|:\ (A+\delta A)\hat{x}=b\}.
\]
\end{definition}
\end{tcolorbox}
\begin{tcolorbox}[colback=green!5!white,colframe=green!50!black]
\begin{definition}[Condition number]
For a nonsingular $A$,
\[
\kappa(A)=\|A\|\cdot\|A^{-1}\|.
\]
\end{definition}
\end{tcolorbox}

\begin{tcolorbox}[colback=green!5!white,colframe=green!50!black]
\begin{definition}[Condition number in the $2$-norm]
For a symmetric positive definite $A$,
\[
\kappa_2(A)=\|A\|_2\cdot\|A^{-1}\|_2 = \frac{\lambda_{max}(A)}{\lambda_{min}(A)}.
\]
\end{definition}
\end{tcolorbox}

\begin{tcolorbox}[colback=yellow!5!white,colframe=yellow!50!black]
\begin{proposition}[Error amplification]
A perturbation $\delta b$ yields
\[
\frac{\|\delta x\|}{\|x\|}\le \kappa(A)\,\frac{\|\delta b\|}{\|b\|}.
\]
\end{proposition}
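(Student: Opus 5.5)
The plan is to compare the perturbed system with the unperturbed one, set up with the same nonsingular matrix $A$ as in the definition of $\kappa(A)$. Let $x$ solve $Ax=b$ with $b\neq 0$ (so $x\neq 0$), and let $x+\delta x$ solve $A(x+\delta x)=b+\delta b$. Subtracting the two equations and using linearity gives $A\,\delta x=\delta b$. Since $A$ is nonsingular, this means $\delta x=A^{-1}\delta b$.

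The argument then needs two one-sided bounds, both from the second item of the proposition on induced norms, namely $\|Mv\|\le\|M\|\,\|v\|$.

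\emph{Numerator.} Applying this to $M=A^{-1}$ and $v=\delta b$ gives
\[
\|\delta x\|\le\|A^{-1}\|\,\|\delta b\|.
\]

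\emph{Denominator.} We need a lower bound on $\|x\|$. Applying the same inequality to $M=A$ and $v=x$ gives $\|b\|=\|Ax\|\le\|A\|\,\|x\|$, hence
\[
\frac{1}{\|x\|}\le\frac{\|A\|}{\|b\|}.
\]

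Multiplying these two inequalities, which is legitimate because all quantities are nonnegative and $x,b\neq 0$, yields
\[
\frac{\|\delta x\|}{\|x\|}\le\|A\|\,\|A^{-1}\|\,\frac{\|\delta b\|}{\|b\|}=\kappa(A)\frac{\|\delta b\|}{\|b\|}.
\]

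There is no real obstacle here. The only step needing care is the direction of the bound on $1/\|x\|$: one must bound $\|b\|$ above in terms of $\|x\|$, not $\|x\|$ in terms of $\|b\|$. It must also be stated that the vector norm and the matrix norm are compatible, i.e.\ the matrix norm is induced by the vector norm, so that $\kappa(A)$ is taken in the induced norm. The statement implicitly assumes $b\neq 0$, and the proof should record that assumption.
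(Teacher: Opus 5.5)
The paper states this proposition without proof (it goes straight on to the near-singular $2\times 2$ example), so there is no argument of its own to compare with; your proof is the standard one and is correct and complete. In particular, you take the bound on $1/\|x\|$ in the right direction and make the implicit hypotheses ($A$ nonsingular, $b\neq 0$) explicit. One small point: all the argument actually uses is compatibility, $\|Mv\|\le\|M\|\,\|v\|$, which is weaker than the matrix norm being induced (e.g.\ $\|\cdot\|_F$ is compatible with $\|\cdot\|_2$ without being induced), so your ``i.e.'' slightly understates what the argument covers.
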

\end{tcolorbox}

\begin{example}[Error amplification]
Consider
\[
    A = \begin{bmatrix} 1 & 1 \\ 1 & 1.0001 \end{bmatrix}, \quad b = \begin{bmatrix} 2 \\ 2.0001 \end{bmatrix}.
\]
This system is very close to being singular, so solving $Ax = b$ produces a result that is highly
sensitive to small perturbations in $A$ or in $b$.
\end{example}

\begin{tcolorbox}[colback=gray!10!white,colframe=gray!60!black,title=\textbf{Stability at a glance}]
\begin{itemize}
    \item \textbf{Stable algorithm:} Controls the propagation of rounding errors.
    \item \textbf{Backward stable algorithm:} Solves a nearby (perturbed) problem exactly.
    \item \textbf{Combined view:} Stability of the algorithm + conditioning of the problem together determine the final accuracy.
\end{itemize}
\end{tcolorbox}

\begin{takeaway}
\begin{itemize}
\item Small $\kappa(A)$ (close to 1): well-conditioned, stable predictions.  
\item Large $\kappa(A)$: ill-conditioned, even perfect algorithms may give unreliable results.  
 \item An algorithm can be stable (small backward error) yet produce inaccurate results if the problem is ill-conditioned (large condition number).
    \item Conversely, for ill-conditioned problems no algorithm can suppress error amplification.
\end{itemize}

\end{takeaway}

\subsection{Gaussian Elimination and LU Factorization}

\begin{anecdote}
Gaussian elimination, though often attributed to Gauss (1820), was already known in ancient China
(Chapter~8 of \emph{The Nine Chapters on the Mathematical Art}, $\sim$200~AD).
Gauss' major contribution was to interpret elimination as a matrix factorization. It took him more than twenty years to arrive at that interpretation.
\end{anecdote}

We wish to solve $Ax=b$ with $A\in\mathbb{R}^{n\times n}$, and proceed in three steps.
\begin{enumerate}
  \item Apply elimination to zero out entries below the diagonal.
  \item This produces a factorization $A=LU$ with $L$ unit lower triangular, $U$ upper triangular.
  \item Solve $Ly=b$ by forward substitution, then $Ux=y$ by backward substitution.
\end{enumerate}

\begin{tcolorbox}[colback=green!5!white,colframe=green!50!black]
\begin{definition}[LU factorization]
A matrix $A\in\mathbb{R}^{n\times n}$ admits an LU factorization if it can be written
\[
A=LU,
\]
where $L$ is unit lower triangular and $U$ is upper triangular.
\end{definition}
\end{tcolorbox}

\begin{example}[Worked $3\times 3$ LU factorization, no pivoting]
For
\[
A=\begin{bmatrix} 2 & 1 & 1 \\ 4 & -6 & 0 \\ -2 & 7 & 2 \end{bmatrix},
\]
elimination proceeds as: $\ell_{21}=4/2=2$, row$_2\leftarrow$row$_2-2\,$, row$_1=[0,-8,-2]$; $\ell_{31}=-2/2=-1$, row$_3\leftarrow$row$_3+$, row$_1=[0,8,3]$; pivot $U_{22}=-8$, $\ell_{32}=8/(-8)=-1$, row$_3\leftarrow$row$_3-(-1)\,$row$_2=[0,0,1]$. This gives
\[
L=\begin{bmatrix}1&0&0\\ 2&1&0\\ -1&-1&1\end{bmatrix},\qquad
U=\begin{bmatrix}2&1&1\\ 0&-8&-2\\ 0&0&1\end{bmatrix},
\]
and one checks $A=LU$. To solve $Ax=b$, solve $Ly=b$ by forward substitution, then $Ux=y$ by backward substitution.
\end{example}

\begin{tcolorbox}[colback=yellow!5!white,colframe=yellow!50!black]
\begin{proposition}[Complexity]
The cost of Gaussian elimination (LU factorization) is $\mathcal{O}(n^3)$ flops,
while forward and backward substitution cost $\mathcal{O}(n^2)$ each.
\end{proposition}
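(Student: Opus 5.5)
We count flops in the elimination loop directly and bound the resulting sums. At step $k$ of Gaussian elimination ($k=1,\dots,n-1$), the pivot $u_{kk}$ is used to eliminate the entries below it in column $k$. This has two parts. First, we compute the $n-k$ multipliers $\ell_{ik}=a^{(k)}_{ik}/a^{(k)}_{kk}$ for $i=k+1,\dots,n$, which costs $n-k$ divisions. Second, we update the trailing $(n-k)\times(n-k)$ submatrix by
\[
a^{(k+1)}_{ij}=a^{(k)}_{ij}-\ell_{ik}a^{(k)}_{kj}, \qquad i,j=k+1,\dots,n,
\]
which costs one multiplication and one subtraction per entry, i.e.\ $2(n-k)^2$ flops. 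We assume no pivoting, exactly as in the worked $3\times3$ example. Row swaps for partial pivoting involve no arithmetic, and the comparisons they need are only $\mathcal{O}(n^2)$ in total, so they do not change the leading order.

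Summing over $k$ and substituting $m=n-k$ gives
\[
\sum_{m=1}^{n-1}\bigl(2m^2+m\bigr)=\frac{(n-1)n(2n-1)}{3}+\frac{(n-1)n}{2}=\frac{2}{3}n^3+\mathcal{O}(n^2),
\]
using the standard closed forms $\sum m^2 = \tfrac{(n-1)n(2n-1)}{6}$ and $\sum m=\tfrac{(n-1)n}{2}$. This is $\mathcal{O}(n^3)$. For a pure big-O statement it is enough to note that every term is at most $3n^2$ and there are fewer than $n$ steps, so the total is below $3n^3$.

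For forward substitution with the unit lower triangular $L$, row $i$ computes
\[
y_i=b_i-\sum_{j<i}\ell_{ij}y_j,
\]
which costs $2(i-1)$ flops. Summing over $i$ gives $n(n-1)=\mathcal{O}(n^2)$. Backward substitution with $U$ computes
\[
x_i=\Bigl(y_i-\sum_{j>i}u_{ij}x_j\Bigr)/u_{ii},
\]
which costs $2(n-i)+1$ flops. Summing over $i$ gives $n^2$ flops.

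The main point needing care is the bookkeeping of the trailing-update count, so that the $2(n-k)^2$ term is seen to dominate. Everything else is lower order, and the only genuine difficulty is evaluating the sum of squares.
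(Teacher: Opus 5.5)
Your operation count is correct. The per-step cost $2(n-k)^2+(n-k)$ checks out, as does the closed form $\tfrac{(n-1)n(2n-1)}{3}+\tfrac{(n-1)n}{2}=\tfrac23 n^3+\mathcal{O}(n^2)$. The substitution counts $n(n-1)$ and $n^2$ are also right, and the $\mathcal{O}(n^2)$ pivot-search remark correctly covers the pivoted variant.

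The paper states this proposition without any proof, treating it as a standard fact, so there is no argument of its own to compare with. Yours is the standard textbook justification.

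It also gives slightly more than the statement asks for:
\begin{itemize}
\item The sharp constant $\tfrac23 n^3$ is what underlies the paper's later claim that Cholesky, at about $n^3/3$ flops, costs roughly half of LU.
\item The same bookkeeping applies to a banded matrix of half-bandwidth $w$. Without pivoting there is no fill outside the band, so each step touches a trailing block of at most $w\times w$ entries. This immediately gives the $\mathcal{O}(nw^2)$ factorisation and $\mathcal{O}(nw)$ solve costs that the paper quotes without derivation.
\end{itemize}
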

\end{tcolorbox}

\begin{remark}
For large linear systems with $n\sim 10^6$, $\mathcal{O}(n^3)$ is infeasible.
This motivates the use of iterative solvers and preconditioners.
\end{remark}

\begin{tcolorbox}[colback=green!5!white,colframe=green!50!black]
\begin{definition}[Partial pivoting]
During elimination, rows are swapped to ensure that pivot elements are large in magnitude.
This produces a factorization
\[
PA=LU,\quad P \text{ a permutation matrix}.
\]
\end{definition}
\end{tcolorbox}

\begin{theorem}[Backward stability of GEPP]
Gaussian elimination with partial pivoting (GEPP) is backward stable: the computed $\hat{x}$ solves
\[
(A+\delta A)\hat{x}=b,\qquad \frac{\|\delta A\|}{\|A\|}=\mathcal{O}(\varepsilon_{\text{mach}}).
\]
\end{theorem}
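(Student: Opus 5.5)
The plan is to follow the classical Wilkinson argument in the floating-point model above, applied to each stage of the solve. We model every elementary operation as $\mathrm{fl}(a\circ b)=(a\circ b)(1+\delta)$ with $|\delta|\le\varepsilon_{\text{mach}}$. We then work in two stages: first the factorization, then the two triangular solves.

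\textbf{Step 1 (factorization).} Row interchanges are exact, so we may assume $A$ is already replaced by $PA$. Each entry of $U$ is computed by a recurrence
\[
u_{ij}=a_{ij}-\sum_{k<i}\ell_{ik}u_{kj}.
\]
Each entry of $L$ is computed by a similar sum followed by one division by a pivot. Accumulating the rounding factors $(1+\delta)$ in these inner products, with the standard bound $\prod(1+\delta_i)=1+\theta_n$, $|\theta_n|\le \gamma_n:=n\varepsilon_{\text{mach}}/(1-n\varepsilon_{\text{mach}})$, we will show that the computed factors satisfy
\[
\hat L\hat U=PA+E,\qquad |E|\le \gamma_n|\hat L||\hat U|
\]
componentwise.

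\textbf{Step 2 (triangular solves).} For forward substitution $\hat L\hat y=Pb$, the same inner-product analysis shows that $\hat y$ solves $(\hat L+\delta L)\hat y=Pb$ with $|\delta L|\le\gamma_n|\hat L|$. Likewise, back substitution gives $(\hat U+\delta U)\hat x=\hat y$ with $|\delta U|\le\gamma_n|\hat U|$. Combining the two,
\[
(PA+E+\delta L\,\hat U+\hat L\,\delta U+\delta L\,\delta U)\hat x=Pb.
\]
So $\hat x$ solves $(A+\delta A)\hat x=b$ with
\[
|\delta A|\le (3\gamma_n+\gamma_n^2)\,P^{\top}|\hat L||\hat U|.
\]
Taking $\infty$-norms, which are monotone in absolute values, gives
\[
\|\delta A\|_\infty\le c\, n\,\varepsilon_{\text{mach}}\,\|\,|\hat L|\,\|_\infty\,\|\,|\hat U|\,\|_\infty .
\]
By norm equivalence (Proposition above) the same statement then holds in any norm up to $n$-dependent constants.

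\textbf{Step 3 (the main obstacle: bounding $|\hat L||\hat U|$ relative to $A$).} This is where partial pivoting enters. Pivoting chooses the largest entry in the column, so every multiplier satisfies $|\ell_{ij}|\le 1$, which gives $\|\,|\hat L|\,\|_\infty\le n$. The difficulty is $\hat U$. We introduce the growth factor
\[
\rho_n=\frac{\max_{i,j,k}|a^{(k)}_{ij}|}{\max_{i,j}|a_{ij}|},
\]
so that $\|\hat U\|_\infty\le n\rho_n\|A\|_\infty$. We then show $\rho_n\le 2^{n-1}$: each elimination step at most doubles the largest entry, since $|a^{(k+1)}_{ij}|\le|a^{(k)}_{ij}|+|\ell_{ik}||a^{(k)}_{kj}|$ and $|\ell_{ik}|\le1$.

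This yields
\[
\frac{\|\delta A\|}{\|A\|}\le c(n)\,\rho_n\,\varepsilon_{\text{mach}}.
\]
The honest caveat, which I would state explicitly, is that $\mathcal{O}(\varepsilon_{\text{mach}})$ here means a constant depending on $n$ and on $\rho_n$. The worst case $2^{n-1}$ is attained (Wilkinson's example), so the theorem holds in this sense only, and in the practical sense that $\rho_n$ is observed to be small. Making this caveat precise, rather than the rounding-error bookkeeping, is the real obstacle.
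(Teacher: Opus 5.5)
The notes give no proof of this theorem. It is quoted as a classical result, in line with the preface's policy of leaving the more technical results to the literature, so there is no in-text argument to compare yours against. Your proposal is the standard Wilkinson analysis (as in Higham's \emph{Accuracy and Stability of Numerical Algorithms}, Ch.~9, or Trefethen--Bau, Lecture~22), and it is correct as an outline. The ingredients are the componentwise bound $\hat L\hat U=PA+E$ with $|E|\le\gamma_n|\hat L||\hat U|$, the two triangular-solve perturbations, and the bounds $|\hat\ell_{ik}|\le 1$ and $\rho_n\le 2^{n-1}$. Together they give $\|\delta A\|_\infty/\|A\|_\infty\le c\,n^3\rho_n\,\varepsilon_{\text{mach}}$.

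What your route makes visible, and the paper's one-line statement hides, is where each hypothesis is used. Rounding enters only through $|\delta A|\lesssim n\varepsilon_{\text{mach}}|\hat L||\hat U|$, which holds with or without pivoting. Pivoting enters only in bounding $\|\,|\hat L||\hat U|\,\|$ by a multiple of $\|A\|$. That is exactly what fails without pivoting, because the multipliers become unbounded, and it explains the paper's warning box. Your closing caveat is also the right reading of $\mathcal{O}(\varepsilon_{\text{mach}})$: the constant must be allowed to depend on $n$ through $c(n)\rho_n$.

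Two details to tighten when you write it out:
\begin{itemize}
\item Define $\rho_n$ via the \emph{computed} reduced entries $\hat a^{(k)}_{ij}$, since those are what populate $\hat U$.
\item The doubling argument applied to computed quantities carries an extra factor $(1+\varepsilon_{\text{mach}})^2$ per step, so $\rho_n\le 2^{n-1}\bigl(1+\mathcal{O}(n\varepsilon_{\text{mach}})\bigr)$.
\end{itemize}
Neither affects the conclusion.
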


\begin{warningbox}
Without pivoting, Gaussian elimination may fail catastrophically even for well-conditioned~$A$.
\end{warningbox}

\begin{anecdote}
\cite{bisainNewUpperBound2025} develops bounds on the growth of entries when using \textit{full} pivoting (not commonly used in practice). The appendix contains a nice overview of computational issues with partial pivoting, including in relation to new hardware such as GPUs.
\end{anecdote}

\begin{anecdote}
Gauss used elimination in 1809 to compute planetary orbits from astronomical data.
Today, $LU$ is the default “black box” in numerical linear algebra libraries (LAPACK, and hence NumPy/SciPy).
\end{anecdote}

\begin{takeaway}
\begin{itemize}
  \item LU factorization provides a direct, stable way to solve $Ax=b$.
  \item Pivoting is essential for stability.
  \item Complexity $\mathcal{O}(n^3)$ makes LU unsuitable for very large sparse linear systems.
  \item Iterative solvers (next section) are designed to overcome this scaling barrier.
\end{itemize}
\end{takeaway}

\subsection{Cholesky Factorization for SPD Matrices}

When $A$ is symmetric positive definite, LU factorization can be specialized to exploit the symmetry, halving both storage and computational cost.

\begin{tcolorbox}[colback=green!5!white,colframe=green!50!black]
\begin{definition}[Cholesky factorization]
If $A\succ 0$ is SPD, then $A=LL^\top$ for a unique lower triangular $L$ with positive diagonal entries, computed via
\[
\ell_{kk}=\sqrt{a_{kk}-\sum_{j=1}^{k-1}\ell_{kj}^2},\qquad
\ell_{ik}=\frac{a_{ik}-\sum_{j=1}^{k-1}\ell_{ij}\ell_{kj}}{\ell_{kk}},\quad i>k.
\]
\end{definition}
\end{tcolorbox}

\begin{keyidea}
Cholesky needs no pivoting and is backward stable for SPD matrices, uses half the storage of LU (only $L$ is stored), and costs $\approx n^3/3$ flops — about half the cost of a general LU factorization.
\end{keyidea}

\begin{takeaway}
\textbf{If $A$ is SPD, use Cholesky first}: it is the fastest, most memory-efficient, backward-stable direct solver available. If some $\ell_{kk}\le 0$ is encountered during the factorization, $A$ is not SPD (or is poorly scaled); switch to $LDL^\top$ with symmetric pivoting, or regularize via $A\leftarrow A+\lambda I$.
\end{takeaway}

\section{Stationary Iterative Methods}\label{sec:stationary}

Direct methods (Gaussian elimination / LU with pivoting) are robust and backward stable,
but they become impractical at scale.

\begin{itemize}
  \item \textbf{Cost.} Factorization costs $\mathcal{O}(n^3)$ flops; each triangular solve costs $\mathcal{O}(n^2)$.
        For sparse banded matrices with half-bandwidth $w$, the costs are $\mathcal{O}(nw^2)$ (factor) and $\mathcal{O}(nw)$ (solve).
  \item \textbf{Memory.} \emph{Fill-in} during elimination destroys sparsity; storage can grow from $\mathcal{O}(\mathrm{nnz}(A))$\footnote{$\mathrm{nnz}(A)$ denotes the number of non-zero elements of the matrix $A$}
        to orders of magnitude larger.
\end{itemize}

\begin{keyidea}
Iterative methods trade exactness for scalability: they approximate the solution via many
\emph{cheap} updates. One step costs $\mathcal{O}(\mathrm{nnz}(A))$, preserves sparsity, and is parallel-friendly.
\end{keyidea}

\begin{anecdote}
In the early 19th century, Gauss and Jacobi independently realized that, instead of eliminating variables,
one can successively refine a guess. Jacobi's 1845 paper on planetary orbits is often cited as the first
systematic study of iterative solvers.
\end{anecdote}

\begin{tcolorbox}[colback=green!5!white,colframe=green!50!black]
\begin{definition}[Splitting method]
Let $A=M-N$ with $M$ invertible. Define the stationary iteration
\[
x^{(k+1)} \;=\; M^{-1}N\,x^{(k)} + M^{-1}b,
\qquad
G:=M^{-1}N,\quad c:=M^{-1}b.
\]
If it converges, the limit $x^\star$ satisfies $(I-G)x^\star=c$ and hence $Ax^\star=b$.
\end{definition}
\end{tcolorbox}

\begin{tcolorbox}[colback=yellow!5!white,colframe=yellow!50!black]
\begin{theorem}[Convergence criterion]\label{th:fp}
The iteration converges for every initial guess $x^{(0)}$ if and only if $\rho(G)<1$.
Moreover, $x^{(k)}\to x^\star=(I-G)^{-1}c$ and the solution is unique.
\end{theorem}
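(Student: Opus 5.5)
The plan is to reduce everything to powers of $G$. Assume first that $\rho(G)<1$. Then $1$ is not an eigenvalue of $G$, so $I-G$ is invertible. Since $(I-G)x^\star=c$ is equivalent to $M^{-1}(M-N)x^\star=M^{-1}b$, that is $Ax^\star=b$, this fixed point is unique. Subtracting the fixed-point equation $x^\star=Gx^\star+c$ from the iteration gives the error recursion
\[
e^{(k+1)}=Ge^{(k)},\qquad e^{(k)}:=x^{(k)}-x^\star,
\]
and hence $e^{(k)}=G^k e^{(0)}$. The whole statement therefore reduces to the equivalence
\[
G^k\to 0 \iff \rho(G)<1,
\]
where for the forward direction it suffices that $G^k e\to0$ for every vector $e$. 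By the norm-equivalence proposition, any norm may be used.

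For the ``if'' direction I would use the Schur decomposition $G=QTQ^*$ from the earlier definition. Write $T=\Lambda+U$, with $\Lambda$ diagonal (containing the eigenvalues) and $U$ strictly upper triangular. Take $D_t=\operatorname{diag}(1,t,\dots,t^{n-1})$. Then $D_t^{-1}TD_t$ has the same diagonal as $T$, and its $(i,j)$ off-diagonal entry equals $t^{j-i}u_{ij}$, which tends to $0$ as $t\to0$. Fix $\epsilon>0$ with $\rho(G)+\epsilon<1$ and choose $t$ small enough that
\[
\|D_t^{-1}TD_t\|_\infty\le\rho(G)+\epsilon .
\]
This holds because the $\infty$-norm is the maximum absolute row sum. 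Define the vector norm $\|x\|_*:=\|D_t^{-1}Q^*x\|_\infty$. Its induced matrix norm satisfies $\|G\|_*=\|D_t^{-1}TD_t\|_\infty<1$. Submultiplicativity, from the proposition on induced norms, then gives $\|G^k\|_*\le\|G\|_*^k\to0$. So $e^{(k)}\to0$ in $\|\cdot\|_*$, and by norm equivalence in every norm. Hence $x^{(k)}\to x^\star=(I-G)^{-1}c$ for every $x^{(0)}$.

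For the ``only if'' direction, suppose $\rho(G)\ge1$ and let $Gv=\lambda v$ with $|\lambda|\ge1$ and $v\neq0$. If $\lambda$ is real, $v$ can be taken real, and the initial guess $x^{(0)}=x^\star+v$ gives $e^{(k)}=\lambda^k v$, which does not tend to $0$. If $\lambda$ is complex, write $v=a+ib$ with $a,b$ real. Then $G^k v=\lambda^k v$ does not tend to $0$, and by linearity $G^k v=G^ka+iG^kb$, so $G^ka$ or $G^kb$ fails to tend to $0$. Taking $x^{(0)}=x^\star+a$ or $x^{(0)}=x^\star+b$ accordingly gives non-convergence.

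There is one subtlety here. When $\rho(G)\ge1$, $I-G$ may be singular and no $x^\star$ may exist. However, $A$ is implicitly invertible in this setting, so $x^\star=A^{-1}b$ still satisfies $x^\star=Gx^\star+c$, and the argument above goes through using this $x^\star$.

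The main obstacle is the ``if'' direction when $G$ is not diagonalizable: the bound $\|G\|<1$ can fail in standard norms even though $\rho(G)<1$. The scaled Schur construction, which builds a norm with $\|G\|_*\le\rho(G)+\epsilon$, is the step that handles this.
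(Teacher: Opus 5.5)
Your proof is correct and follows the same route as the paper: the error recursion $e^{(k)}=G^k e^{(0)}$, an eigenvector with $|\lambda|\ge 1$ for necessity, and a norm with $\|G\|\le\rho(G)+\epsilon<1$ for sufficiency. The paper defers that norm to Problem~9 and tacitly uses a possibly complex eigenvector as the initial error, so your version is more complete: it supplies the scaled Schur construction of the norm, splits a complex eigenvector into real and imaginary parts, and states explicitly that $A$ must be invertible (so that $I-G=M^{-1}A$ is invertible and $x^\star$ exists).
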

\end{tcolorbox}

\begin{proof}[Sketch]
Let $e^{(k)}:=x^{(k)}-x^\star$. Subtract the fixed-point relation $(I-G)x^\star=c$ from the iteration to obtain
$e^{(k+1)}=G\,e^{(k)}$, hence $e^{(k)}=G^k e^{(0)}$.

\emph{($\Rightarrow$)} If $\rho(G)\ge 1$, there is an eigenpair $(\lambda,v)$ with $|\lambda|\ge 1$; choose $e^{(0)}=v$ to get
$e^{(k)}=\lambda^k v$, which does not tend to $0$. Thus convergence for \emph{all} $x^{(0)}$ implies $\rho(G)<1$.

\emph{($\Leftarrow$)} If $\rho(G)<1$, then for any $\varepsilon>0$ there exists a (subordinate) norm $\|\cdot\|_\varepsilon$ with
$\|G\|_\varepsilon \le \rho(G)+\varepsilon=:q<1$\footnote{see the theoretical problems at the end of the chapter}. Therefore
$\|e^{(k)}\|_\varepsilon \le \|G\|_\varepsilon^k \|e^{(0)}\|_\varepsilon \le q^k \|e^{(0)}\|_\varepsilon \to 0$.
Equivalently, the Neumann series $\sum_{j=0}^\infty G^j$ converges\footnote{we admit this result without proof} and $(I-G)^{-1}=\sum_{j=0}^\infty G^j$,
hence $x^\star=(I-G)^{-1}c$.
\end{proof}

\begin{corollary}[A practical sufficient condition]\label{cor:norm}
If there exists any subordinate matrix norm with $\|G\|<1$, then $x^{(k)}\to x^\star$ and
\[
\|e^{(k)}\| \;\le\; \|G\|^k \,\|e^{(0)}\|,
\qquad
\|x^\star - x^{(k)}\| \;\le\; \frac{\|G\|^k}{1-\|G\|}\,\|x^{(1)}-x^{(0)}\|.
\]
\end{corollary}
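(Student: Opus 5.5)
The plan is to run everything through the error recursion already used in the proof of Theorem~\ref{th:fp}, with one preliminary step that the hypothesis forces on us. Set $q:=\|G\|<1$ for the given subordinate norm.

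\emph{Existence of $x^\star$.} The corollary assumes nothing about $x^\star$, so we first establish it.
\begin{itemize}
\item Every eigenvalue satisfies $|\lambda|\le\|G\|$: from $Gv=\lambda v$ we get $|\lambda|\,\|v\|=\|Gv\|\le\|G\|\,\|v\|$. Hence $\rho(G)\le q<1$.
\item Therefore $I-G$ is nonsingular, because $1$ is not an eigenvalue of $G$.
\item So the fixed point $x^\star=(I-G)^{-1}c$ exists and is unique, and Theorem~\ref{th:fp} already gives convergence.
\end{itemize}
The point of the corollary, however, is the quantitative bounds, so I would prove those directly.

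\emph{First bound.} Subtracting $x^\star=Gx^\star+c$ from the iteration gives $e^{(k+1)}=Ge^{(k)}$, hence $e^{(k)}=G^ke^{(0)}$. The submultiplicativity of induced norms (the earlier proposition) gives $\|G^k\|\le\|G\|^k$. Therefore
\[
\|e^{(k)}\|\le\|G^k\|\,\|e^{(0)}\|\le q^k\|e^{(0)}\|\to0 .
\]
This proves convergence for every initial guess, independently of Theorem~\ref{th:fp}.

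\emph{A posteriori bound.} This is the step requiring a small idea.
\begin{itemize}
\item Consecutive differences contract: $x^{(j+1)}-x^{(j)}=G\bigl(x^{(j)}-x^{(j-1)}\bigr)$, so $\|x^{(j+1)}-x^{(j)}\|\le q^{j}\|x^{(1)}-x^{(0)}\|$.
\item Write the tail as a telescoping series, which converges because $x^{(m)}\to x^\star$:
\[
x^\star-x^{(k)}=\sum_{j=k}^{\infty}\bigl(x^{(j+1)}-x^{(j)}\bigr).
\]
\item By the triangle inequality and a geometric sum,
\[
\|x^\star-x^{(k)}\|\le\sum_{j\ge k}q^j\|x^{(1)}-x^{(0)}\|=\frac{q^k}{1-q}\|x^{(1)}-x^{(0)}\| .
\]
\end{itemize}

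As a check, the same result follows without series. For $k=0$, write $e^{(0)}=(x^{(0)}-x^{(1)})+(x^{(1)}-x^\star)$, so that
\[
\|e^{(0)}\|\le\|x^{(1)}-x^{(0)}\|+q\|e^{(0)}\| ,
\]
which gives $\|e^{(0)}\|\le\|x^{(1)}-x^{(0)}\|/(1-q)$. Combining this with the first bound yields the claim for general $k$.

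The main obstacle is only bookkeeping. One must justify that the telescoping tail converges, which the first bound provides. One must also remember that the norm on $G$ is subordinate to the vector norm used on the errors, which is what makes $\|Gv\|\le\|G\|\,\|v\|$ valid.
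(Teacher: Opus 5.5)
Your proof is correct. The first bound ($e^{(k)}=G^ke^{(0)}$ followed by $\|G^k\|\le\|G\|^k$) is exactly the argument of the $(\Leftarrow)$ direction of Theorem~\ref{th:fp}, with the given subordinate norm playing the role of $\|\cdot\|_\varepsilon$. The paper states the corollary without further proof and never derives the a posteriori estimate. Your telescoping argument supplies it correctly, and so does your shorter series-free route: bound $\|e^{(0)}\|\le\|x^{(1)}-x^{(0)}\|/(1-\|G\|)$ and combine it with the first bound.

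One small refinement concerns your preliminary step. The estimate $|\lambda|\,\|v\|\le\|G\|\,\|v\|$ requires a possibly complex eigenvector, which a norm on $\mathbb{R}^n$ does not directly measure. You never actually need $\rho(G)\le\|G\|$, only that $I-G$ is nonsingular, and that follows without eigenvalues. If $(I-G)v=0$, then $\|v\|=\|Gv\|\le\|G\|\,\|v\|$, hence $v=0$. This also shows that $A=M(I-G)$ is nonsingular, so $x^\star$ is indeed the solution of $Ax=b$.
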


\begin{takeaway}
To design an effective stationary method:
\begin{itemize}
  \item Choose $M$ \emph{easy to invert} (triangular / block-diagonal / diagonal) so each step is cheap.
  \item Ensure $\rho(M^{-1}N)<1$ (often via diagonal dominance or SPD structure).
  \item Prefer splittings that keep $\|G\|$ comfortably below $1$ to avoid roundoff-limited plateaus.
\end{itemize}
\end{takeaway}

\subsection{Examples of Stationary Methods and their convergence}

Write $A = D-L-U$ with $L$ strictly lower, $D$ diagonal, $U$ strictly upper triangular.

\begin{itemize}
  \item \textbf{Jacobi:} $M=D$, $G_J=D^{-1}(L+U)$. Each component is updated
  using only values from the previous iterate.
  \item \textbf{Gauss--Seidel:} $M=D-L$, $G_{GS}=(D-L)^{-1}U$. New values are
  used immediately within the same sweep, leading to faster convergence.
  \item \textbf{Successive Over-Relaxation (SOR):}
  \[
  M=\tfrac{1}{\omega}D+L,\qquad
  G_\omega=(\tfrac{1}{\omega}D-L)^{-1}\Big(\tfrac{1-\omega}{\omega}D-U\Big).
  \]
  Here $\omega>0$ is a relaxation parameter. For $\omega=1$ we recover Gauss--Seidel.
\item \textbf{Richardson's method:} $M = \frac{1}{\alpha}I$, $G_R = I- \alpha A$ with $\alpha$ a well-chosen parameter.

\end{itemize}
\begin{remark}
While Jacobi updates all components simultaneously from the old iterate,
Gauss--Seidel reuses fresh values within the same sweep. SOR goes one step further:
each update is extrapolated by $\omega$, effectively accelerating convergence
if $\omega$ is chosen well.
\end{remark}

\begin{anecdote}
Lewis Fry Richardson also proposed the first numerical weather forecasting approach, via the solution of differential equations. His paper was published  in 1922 and is the foundation of modern numerical weather prediction. He attempted to perform computations by hand while serving with a Quaker ambulance unit on the Western Front in 1917 (his results were not good, but the method was later shown to be sound   \href{https://www.americanscientist.org/article/the-weatherman}{https://www.americanscientist.org/article/the-weatherman}).
\end{anecdote}

\begin{anecdote}
The SOR method was proposed by David M.~Young in 1950 while working on elliptic
PDEs as a PhD student at Harvard. His method was so effective that it immediately became the standard
iterative scheme of its era, especially in engineering applications.
\end{anecdote}

\begin{tcolorbox}[colback=yellow!5!white,colframe=yellow!50!black]
\begin{theorem}[Convergence under diagonal dominance]\label{th:DD-detailed}
If $A$ is strictly diagonally dominant by rows, then both Jacobi and Gauss--Seidel converge.
\end{theorem}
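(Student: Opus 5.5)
We will use Corollary~\ref{cor:norm}: it suffices to show $\|G\|_\infty<1$ for the Jacobi matrix. For Gauss--Seidel we will show $\rho(G_{GS})<1$ and then apply Theorem~\ref{th:fp}. Write the strict row diagonal dominance as $|a_{ii}|>\sum_{j\ne i}|a_{ij}|$ for every $i$. This gives $a_{ii}\neq 0$, so $D$ is invertible. It also gives $D-L$ invertible, being triangular with nonzero diagonal, so both iterations are well defined.

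\emph{Jacobi.} The entries of $G_J=D^{-1}(L+U)$ are $-a_{ij}/a_{ii}$ for $j\ne i$, with zero diagonal. The $\infty$-norm is the maximum absolute row sum, so
\[
\|G_J\|_\infty=\max_i \frac{\sum_{j\ne i}|a_{ij}|}{|a_{ii}|}<1 .
\]
Corollary~\ref{cor:norm} then gives convergence, with the rate $\|G_J\|_\infty^k$.

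\emph{Gauss--Seidel.} Computing $\|(D-L)^{-1}U\|_\infty$ directly is messy, so we argue on eigenvalues. Let $G_{GS}x=\lambda x$ with $x\ne0$, so $Ux=\lambda(D-L)x$. Choose the index $i$ with $|x_i|=\|x\|_\infty$. Row $i$ of this identity gives
\[
\lambda a_{ii}x_i=-\lambda\sum_{j<i}a_{ij}x_j-\sum_{j>i}a_{ij}x_j ,
\]
using the sign convention $A=D-L-U$. Divide by $|x_i|$ and use $|x_j|\le|x_i|$. With $s_i^-=\sum_{j<i}|a_{ij}|$ and $s_i^+=\sum_{j>i}|a_{ij}|$, this yields
\[
|\lambda|\,|a_{ii}|\le|\lambda|\,s_i^-+s_i^+ ,\qquad\text{hence}\qquad |\lambda|\le\frac{s_i^+}{|a_{ii}|-s_i^-}.
\]
The denominator is positive, since dominance gives $|a_{ii}|>s_i^-+s_i^+$. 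Dominance also makes the right-hand side less than $1$. So $\rho(G_{GS})<1$, and Theorem~\ref{th:fp} gives convergence.

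The main obstacle is the sign bookkeeping in the Gauss--Seidel eigenvalue identity. The paper's SOR formula writes $M=\tfrac1\omega D+L$ while using $D-L$ elsewhere, so the signs need care. The absolute-value estimate does not depend on these signs, so this is only a presentational issue.

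As a refinement, the same argument bounds $\|G_{GS}\|_\infty$ by $\max_i s_i^+/(|a_{ii}|-s_i^-)\le\|G_J\|_\infty$. To see this, take $y=G_{GS}x$ and pick $i$ maximizing $|y_i|$. This gives the explicit error bound of Corollary~\ref{cor:norm} for Gauss--Seidel as well, and we would mention it.
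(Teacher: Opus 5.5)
Your proof is correct. For Jacobi it is the paper's argument, namely the row-sum formula for $\|G_J\|_\infty$.

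For Gauss--Seidel the routes differ. The paper only points to a weighted norm $\|x\|_{\infty,w}=\max_i|x_i|/w_i$ and asserts, without constructing $w$, that strict row dominance gives $\|G_{GS}\|_{\infty,w}<1$. Your main argument instead takes an eigenpair and its component of largest modulus, which gives $\rho(G_{GS})\le\max_i s_i^{+}/(|a_{ii}|-s_i^{-})<1$. That eigenvalue step is short and complete, and it runs verbatim for complex $\lambda$ and $x$. On its own, however, it yields only asymptotic convergence. It also relies on the direction $\rho(G)<1\Rightarrow$ convergence of Theorem~\ref{th:fp}, which in the paper rests on the norm-approximation result of Problem~9.

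Your closing refinement is the stronger statement, and it is exactly the paper's hint made explicit with $w=\mathbf 1$. The same maximal-component estimate applied to $y=G_{GS}x$ gives $\|G_{GS}\|_\infty\le\max_i s_i^{+}/(|a_{ii}|-s_i^{-})\le\|G_J\|_\infty<1$. The second inequality, which you stated without justification, holds row by row because it is equivalent to $0\le s_i^{-}\bigl(|a_{ii}|-s_i^{-}-s_i^{+}\bigr)$. This bound delivers the error estimates of Corollary~\ref{cor:norm} and the comparison with Jacobi directly, so it could serve as the main argument and make the eigenvalue step redundant.

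The freedom to choose $w\neq\mathbf 1$ in the paper's hint only becomes necessary under weaker hypotheses. An example is generalised diagonal dominance, $|a_{ii}|w_i>\sum_{j\ne i}|a_{ij}|w_j$ for some $w>0$, which is strict row dominance of $W^{-1}AW$ with $W=\mathrm{diag}(w)$.
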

\end{tcolorbox}

\begin{proof}[Sketch]
Write $A=L+D+U$ the for Jacobi method we have $G_J=D^{-1}(L+U)$. For the $\|\cdot\|_\infty$ norm,
\[
\|G_J\|_\infty
= \max_i \sum_{j\neq i}\Big|\frac{a_{ij}}{a_{ii}}\Big|
= \max_i \frac{\sum_{j\neq i}|a_{ij}|}{|a_{ii}|}.
\]
Strict row diagonal dominance (SRDD) means $|a_{ii}|>\sum_{j\neq i}|a_{ij}|$ for all $i$, hence
$\|G_J\|_\infty<1$. Therefore $\rho(G_J)\le \|G_J\|_\infty<1$, and Jacobi converges by the fixed-point criterion.\\
For Gauss-Seidel similar arguments can be employed. An idea would be to use a \emph{weighted} $\infty$-norm $\|x\|_{\infty,w}=\max_i |x_i|/w_i$. SRDD implies the existence of $w>0$ with $\|G_{GS}\|_{\infty,w}<1$. 
\end{proof}

\begin{tcolorbox}[colback=yellow!5!white,colframe=yellow!50!black]
\begin{theorem}[Gauss--Seidel for SPD matrices,  Theorem 11.2.3 \cite{GolubvanLoan}]
If $A$ is symmetric positive definite, then Gauss--Seidel converges.
\end{theorem}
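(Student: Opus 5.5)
By Theorem~\ref{th:fp}, it suffices to show that every eigenvalue $\lambda$ of $G_{GS}=(D-L)^{-1}U$ satisfies $|\lambda|<1$. Here $A=D-L-U$. Since $A$ is symmetric, $U=L^\top$. Since $A$ is SPD, $D$ has positive diagonal entries $a_{ii}=e_i^\top Ae_i>0$, so $D-L$ is invertible and the splitting is well defined. The whole argument is a direct Rayleigh-quotient computation on one (possibly complex) eigenvector.

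Let $G_{GS}x=\lambda x$ with $x\in\mathbb{C}^n$, $x\neq 0$. Then $L^\top x=\lambda(D-L)x$. Take the Hermitian inner product with $x$ and set
\[
\delta:=x^*Dx>0,\qquad \alpha:=x^*Lx\in\mathbb{C}.
\]
Since $L$ is real, $x^*L^\top x=\overline{x^*Lx}=\bar\alpha$, so the eigen-relation becomes
\[
\bar\alpha=\lambda(\delta-\alpha).
\]
Positive definiteness of $A$ extended to complex vectors gives $x^*Ax=\delta-\alpha-\bar\alpha>0$. This step uses the fact that a real symmetric $A$ satisfies $x^*Ax=u^\top Au+v^\top Av$ for $x=u+iv$.

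First rule out $\delta-\alpha=0$. In that case $\bar\alpha=0$, hence $\alpha=0$ and $\delta=0$, which contradicts $\delta>0$. So we may write $\lambda=\bar\alpha/(\delta-\alpha)$. Now compare the squared moduli of numerator and denominator:
\[
|\delta-\alpha|^2-|\alpha|^2=\delta^2-2\delta\,\mathrm{Re}\,\alpha=\delta\,(\delta-2\,\mathrm{Re}\,\alpha)=\delta\,x^*Ax>0.
\]
Hence $|\alpha|<|\delta-\alpha|$, which gives $|\lambda|<1$. Therefore $\rho(G_{GS})<1$, and Theorem~\ref{th:fp} yields convergence for every initial guess.

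The main obstacle is the possibility of complex eigenvalues. $G_{GS}$ is not symmetric, so real arguments do not directly apply. The delicate points are using $x^*$ rather than $x^\top$, justifying positivity of $x^*Ax$ for complex $x$, and excluding $\delta-\alpha=0$. Once these are handled, the rest is the one-line modulus comparison above.
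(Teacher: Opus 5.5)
Your proof is correct, and since the paper gives no argument of its own beyond citing Golub--Van Loan (Theorem 11.2.3), the right comparison is with that reference, whose proof is the same computation: one takes a complex eigenvector and uses the Rayleigh quotient together with $x^*Ax>0$ to show $|\lambda|<1$. The only difference is bookkeeping: Golub--Van Loan first conjugate by $D^{1/2}$, working with $D^{-1/2}LD^{-1/2}$ and a unit vector so that the diagonal term equals $1$, whereas you carry $\delta=x^*Dx$ through the calculation.
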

\end{tcolorbox}

\begin{remark}
Jacobi and GS are \emph{not} comparable in general: examples exist where one converges and the other does not.
\end{remark}

\begin{tcolorbox}[colback=yellow!5!white,colframe=yellow!50!black]
\begin{theorem}[Convergence SOR (Kahan)]
For any $A$ and $\omega\in\mathbb{R}$,
\[
\rho(G_\omega)\ \ge\ |\,\omega-1\,|.
\]
Hence a \emph{necessary} condition for convergence is $0<\omega<2$.
\end{theorem}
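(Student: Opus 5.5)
The plan is the classical determinant argument. Write $G_\omega = M_\omega^{-1}N_\omega$ with $M_\omega=\tfrac{1}{\omega}D-L$ and $N_\omega=\tfrac{1-\omega}{\omega}D-U$. Here $A=D-L-U$, and $L$, $U$ are strictly lower and strictly upper triangular. We assume throughout that $D$ is invertible and $\omega\neq 0$; this is exactly what is needed for SOR to be defined. The statement holds "for any $A$" only in this sense.

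\textbf{Step 1: compute $\det G_\omega$.} The matrix $M_\omega$ is lower triangular with diagonal $\tfrac{1}{\omega}D$, so
\[
\det M_\omega=\omega^{-n}\det D .
\]
The matrix $N_\omega$ is upper triangular with diagonal $\tfrac{1-\omega}{\omega}D$, so
\[
\det N_\omega=\Big(\tfrac{1-\omega}{\omega}\Big)^{n}\det D .
\]
Taking the quotient gives
\[
\det G_\omega=\frac{\det N_\omega}{\det M_\omega}=(1-\omega)^n .
\]
This is the key identity. The only point needing care is the sign convention. With the splitting as written in the SOR bullet, both triangular factors have diagonals proportional to $D$, so $\det D$ cancels.

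\textbf{Step 2: compare with the eigenvalues.} Let $\lambda_1,\dots,\lambda_n$ be the eigenvalues of $G_\omega$, counted with algebraic multiplicity over $\mathbb{C}$. The Schur decomposition gives $G_\omega = QTQ^*$ with the $\lambda_i$ on the diagonal of $T$, so
\[
\det G_\omega=\prod_i\lambda_i .
\]
Hence
\[
|1-\omega|^n=\prod_i|\lambda_i|\le \rho(G_\omega)^n ,
\]
and taking $n$-th roots yields $\rho(G_\omega)\ge|\omega-1|$.

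\textbf{Step 3: the necessary condition.} Suppose SOR converges for every initial guess. By Theorem~\ref{th:fp}, $\rho(G_\omega)<1$, so $|\omega-1|<1$, which is $0<\omega<2$.

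There is no real obstacle in this argument. The main point of care is Step~1: the triangular structure of both $M_\omega$ and $N_\omega$, and the identification of their diagonals. Everything else follows from the fact that the determinant equals the product of the eigenvalues.
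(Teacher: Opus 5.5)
Your determinant argument is correct. It is the classical Kahan proof, $\det G_\omega=(1-\omega)^n=\prod_i\lambda_i\Rightarrow\rho(G_\omega)\ge|\omega-1|$; the paper gives no argument of its own and only points to Axelsson's Theorem~6.32, of which this is the standard proof.

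One small remark: the sign in front of $U$ in $N_\omega$ is irrelevant to the determinant, though for $M_\omega-N_\omega=A$ it should be $+U$, as in the Chapter~3 form $G_\omega=(D-\omega L)^{-1}\bigl((1-\omega)D+\omega U\bigr)$. Working with that rescaled form also lets you drop the restriction $\omega\neq0$, since $G_0=I$ and the bound then holds with equality.
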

\end{tcolorbox}
\begin{proof}
    See \cite[Theorem 6.32]{axelssonIterativeSolutionMethods1994} 
\end{proof}

\begin{tcolorbox}[colback=yellow!5!white,colframe=yellow!50!black]
\begin{theorem}[Convergence SOR (Ostrowski \cite{ostrowski1954linear})]
Let $A$ a SPD matrix with the standard splitting $A=D+L+L^\top$ such that the diagonal $D$ is also positive. Then SOR converges for \textbf{all} $0<\omega<2$.
\end{theorem}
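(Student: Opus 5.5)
We prove the Ostrowski statement by the classical energy-norm argument: show that every eigenvalue of $G_\omega$ has modulus strictly less than $1$, and conclude with Theorem~\ref{th:fp}. Write $A=D+L+L^\top$ with $L$ strictly lower triangular. The SOR iteration matrix is then $G_\omega=M_\omega^{-1}N_\omega$ with
\[
M_\omega=\tfrac{1}{\omega}D+L,\qquad N_\omega=M_\omega-A=\tfrac{1-\omega}{\omega}D-L^\top .
\]
Since $D$ is positive, $M_\omega$ is invertible for $\omega>0$. The key algebraic fact is
\[
M_\omega+M_\omega^\top-A=\Big(\tfrac{2}{\omega}-1\Big)D,
\]
which is symmetric positive definite exactly when $0<\omega<2$. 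This is the only place where the range of $\omega$ enters.

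The main step works directly with a possibly complex eigenpair. Let $G_\omega x=\lambda x$ with $x\in\mathbb{C}^n\setminus\{0\}$. Set $y=(I-G_\omega)x=M_\omega^{-1}Ax$, using $I-G_\omega=M_\omega^{-1}A$. Then $y=(1-\lambda)x$ and $M_\omega y=Ax$.

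First, $\lambda\neq1$: otherwise $Ax=0$, contradicting the positive definiteness of $A$.

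Next comes the standard identity for $z=G_\omega x=x-y$:
\[
z^*Az = x^*Ax - y^*M_\omega^{*}y - y^*M_\omega y + y^*Ay = x^*Ax - y^*\big(M_\omega+M_\omega^\top-A\big)y .
\]
Here we used $x^*Ay=y^*M_\omega^*y$ and $y^*Ax=y^*M_\omega y$, both following from $Ax=M_\omega y$. Substituting $z=\lambda x$ and $y=(1-\lambda)x$ gives
\[
|\lambda|^2\,x^*Ax = x^*Ax-\Big(\tfrac{2}{\omega}-1\Big)|1-\lambda|^2\,x^*Dx .
\]
Because $\lambda\neq1$, $x^*Dx>0$ and $2/\omega-1>0$, the subtracted term is strictly positive. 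Since $x^*Ax>0$, this forces $|\lambda|<1$.

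It follows that $\rho(G_\omega)<1$ for all $0<\omega<2$, and Theorem~\ref{th:fp} yields convergence for every initial guess. An equivalent formulation is that $\|G_\omega\|_A<1$ in the energy norm $\|v\|_A=\sqrt{v^\top Av}$, which allows Corollary~\ref{cor:norm} to be quoted directly.

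The main obstacle is the bookkeeping in the energy identity for complex eigenvectors. One has to be careful that the cross terms combine into the Hermitian part $M_\omega+M_\omega^*$ rather than $2M_\omega$. One also has to exclude the case $\lambda=1$ separately, since otherwise the strict inequality could degenerate.
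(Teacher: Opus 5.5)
Your argument is correct and complete. The paper gives no proof of this theorem; it only cites Ostrowski, so there is no in-text route to compare with.

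What you have written is the standard energy argument behind the Householder--John theorem. For any splitting $A=M-N$ of an SPD matrix, with $y=M^{-1}Ax$, one has
$\|x-y\|_A^2=\|x\|_A^2-y^\top(M+M^\top-A)y$.
So you have in fact proved the more general statement that $\rho(M^{-1}N)<1$ whenever $M+M^\top-A$ is positive definite. SOR with $0<\omega<2$, and the Gauss--Seidel result for SPD matrices stated in Chapter~1 ($\omega=1$), are special cases. Your treatment of the complex eigenvector is exactly right: the conjugate transposes make the cross terms produce $M_\omega+M_\omega^*$, and you rule out $\lambda=1$ using the nonsingularity of $A$.

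Two small remarks. First, positivity of $D$ is not an extra hypothesis, since $a_{ii}=e_i^\top Ae_i>0$ for SPD $A$. Second, $\|G_\omega\|_A<1$ is not an \emph{equivalent} formulation of $\rho(G_\omega)<1$; it is a strictly stronger one. It does follow from your identity, by this route:
\begin{itemize}
\item apply the identity to an arbitrary real $x\neq0$;
\item then $y\neq0$, so $\|G_\omega x\|_A<\|x\|_A$;
\item compactness of the $A$-unit sphere turns this pointwise strict inequality into a uniform bound $\|G_\omega\|_A<1$.
\end{itemize}
That version avoids complex eigenvectors entirely. Through Corollary~\ref{cor:norm} it also yields an explicit a~priori error estimate in the energy norm, which the spectral-radius version alone does not provide.
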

\end{tcolorbox}

\begin{tcolorbox}[colback=yellow!5!white,colframe=yellow!50!black]
\begin{theorem}[Convergence rate SOR (Young \cite{youngConvergencePropertiesSymmetric1970})]
Assume that the Jacobi eigenvalues $\mu(G_J)$ are real and that $\rho(G_J)<1$. Then the \emph{optimal} relaxation parameter in SOR method is
\[
\boxed{\ \omega^\star=\frac{2}{\,1+\sqrt{\,1-\rho(G_J)^2\,}}\ }\qquad(0<\omega^\star<2),
\]
and the optimized SOR factor is
\[
\rho(G_{\omega^\star})=\left(\frac{\rho(G_J)}{1+\sqrt{\,1-\rho(G_J)^2\,}}\right)^{\!2}.
\]
\end{theorem}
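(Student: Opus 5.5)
The plan is Young's classical argument. Its engine is the functional relation between Jacobi and SOR eigenvalues. This relation needs a hypothesis left implicit in the statement: $A$ must be \emph{consistently ordered}, e.g.\ block tridiagonal, as for the model Poisson problem. Concretely, the eigenvalues of $\alpha D^{-1}L+\alpha^{-1}D^{-1}U$ must not depend on $\alpha\neq 0$. I will make this assumption explicit. With the sign convention $A=D-L-U$, we have $G_J=D^{-1}(L+U)$ and $G_\omega=(D-\omega L)^{-1}\big((1-\omega)D+\omega U\big)$.

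\textbf{Step 1 (Young's relation).} If $\lambda\neq0$ is an eigenvalue of $G_\omega$, then
\[
\det\big(\lambda D-\omega\lambda L-(1-\omega)D-\omega U\big)=0 .
\]
Dividing by $\omega\lambda^{1/2}$ and using consistent ordering with $\alpha=\lambda^{1/2}$ gives
\[
\det\Big(\tfrac{\lambda+\omega-1}{\omega\lambda^{1/2}}D-(L+U)\Big)=0 .
\]
Hence $\mu=(\lambda+\omega-1)/(\omega\lambda^{1/2})$ is a Jacobi eigenvalue, i.e.
\[
(\lambda+\omega-1)^2=\lambda\,\omega^2\mu^2 .
\]
Conversely, every Jacobi eigenvalue $\mu$ produces the two roots $\lambda$ of this quadratic. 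Consistent ordering also makes the Jacobi spectrum symmetric ($\mu$ and $-\mu$ both occur), so only $\mu^2$ matters.

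\textbf{Step 2 (analysis of the quadratic).} Fix a real $\mu$ with $0\le\mu\le\bar\mu:=\rho(G_J)<1$ and $\omega\in(0,2)$; Kahan's theorem already excludes all other $\omega$. Write the quadratic in $\lambda^{1/2}$ as
\[
\lambda^{1/2}=\tfrac12\Big(\omega\mu\pm\sqrt{\omega^2\mu^2-4(\omega-1)}\Big).
\]
\emph{Case $\omega^2\mu^2<4(\omega-1)$.} The roots are complex conjugate with product $(\omega-1)^2$, so $|\lambda|=\omega-1$.
\emph{Case $\omega^2\mu^2\ge 4(\omega-1)$.} The roots are real, and the larger one is
\[
\lambda_+(\omega,\mu)=\tfrac14\Big(\omega\mu+\sqrt{\omega^2\mu^2-4(\omega-1)}\Big)^2 .
\]
This is increasing in $\mu$ and, on this branch, decreasing in $\omega$; the latter is checked by differentiating, which is routine. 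Therefore
\[
\rho(G_\omega)=\max_\mu|\lambda|
=\begin{cases}\lambda_+(\omega,\bar\mu), & \omega\le\omega_b,\\ \omega-1, & \omega\ge\omega_b,\end{cases}
\]
where $\omega_b$ is the value at which the discriminant vanishes for $\mu=\bar\mu$. That is, $\omega^2\bar\mu^2=4(\omega-1)$, whose root in $(0,2)$ is $\omega_b=2/(1+\sqrt{1-\bar\mu^2})$.

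\textbf{Step 3 (optimum).} On $(0,\omega_b]$ the function $\rho(G_\omega)$ decreases, and on $[\omega_b,2)$ it equals $\omega-1$ and increases. So the minimiser is $\omega^\star=\omega_b$, and the optimal factor is $\rho(G_{\omega^\star})=\omega^\star-1$. Simplifying,
\[
\omega^\star-1=\frac{1-\sqrt{1-\bar\mu^2}}{1+\sqrt{1-\bar\mu^2}}=\frac{\bar\mu^2}{(1+\sqrt{1-\bar\mu^2})^2},
\]
which is the boxed formula. Since $\bar\mu<1$, we get $1<\omega^\star<2$.

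\textbf{Main obstacle.} I expect Step 1 to be the hardest part: the determinant manipulation and the precise role of consistent ordering, including the careful treatment of $\lambda=0$ and of the branch of $\lambda^{1/2}$. Here I would first verify the $\alpha$-invariance via the similarity $\operatorname{diag}(1,\alpha,\alpha^2,\dots)$ for block tridiagonal $A$. The monotonicity claims in Step 2 are elementary calculus. Realness of the $\mu$ is essential, because it is what forces $|\lambda|=\omega-1$ exactly in the complex case.
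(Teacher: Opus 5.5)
The notes give no proof of this theorem. It is quoted from Young in Chapter~1, and Chapter~3 explicitly accepts it without proof, so there is no argument in the paper to compare yours with. Your argument is the classical Young--Varga proof, and it is correct.

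You are also right that the printed statement is incomplete, and the missing hypothesis is essential rather than technical. Consistent ordering is what produces both the relation $(\lambda+\omega-1)^2=\lambda\omega^2\mu^2$ and the symmetry $\mu\leftrightarrow-\mu$ of the Jacobi spectrum. Without it, the conclusion fails even for SPD matrices, whose Jacobi eigenvalues are automatically real. For example, take $A=\tfrac75 I-\tfrac25\mathbf{1}\mathbf{1}^\top\in\mathbb{R}^{3\times3}$. Its Jacobi spectrum is $\{0.8,-0.4,-0.4\}$, so the formula gives $\omega^\star=1.25$ and predicts $\rho(G_{\omega^\star})=0.25$. However, the characteristic polynomial of $G_{1.25}$ is $(\lambda-\tfrac18)(\lambda^2-\tfrac18)$, so $\rho(G_{\omega^\star})=1/(2\sqrt2)\approx0.354$.

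Only small points need tightening.

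For $1<\omega\le\omega_b$, Jacobi eigenvalues with $|\mu|<2\sqrt{\omega-1}/\omega$ fall in your complex case and contribute $|\lambda|=\omega-1$. To conclude $\rho(G_\omega)=\lambda_+(\omega,\bar\mu)$ you must also note that $\lambda_+(\omega,\bar\mu)\ge\omega-1$. This holds because the two real $\lambda$-roots have product $(\omega-1)^2$ and $\lambda_+$ is the larger in modulus.

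In that complex case, it is the two values of $\lambda^{1/2}$ that are conjugate with product $\omega-1$; the two $\lambda$'s have product $(\omega-1)^2$. Your conclusion $|\lambda|=\omega-1$ is unaffected.

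Dividing by $\lambda^{1/2}$ loses nothing. Indeed, $\lambda=0$ is an eigenvalue of $G_\omega$ only if $\det((1-\omega)D+\omega U)=(1-\omega)^n\det D=0$, i.e.\ only for $\omega=1$, where it does not affect $\rho$.

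The ``routine'' monotonicity in $\omega$ reduces, after differentiating $\lambda_+^{1/2}$ and squaring, exactly to $\mu^2<1$. This is where $\rho(G_J)<1$ is used.

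Finally, $\omega^\star>1$ requires $\bar\mu>0$. If $\bar\mu=0$ you get $\omega^\star=1$, which is still within the range $0<\omega^\star<2$ claimed in the statement.
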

\end{tcolorbox}

\begin{tcolorbox}[colback=yellow!5!white,colframe=yellow!50!black]
\begin{theorem}[Convergence of Richardson's method for SPD matrices \cite{youngRichardsonsMethodSolving1953}]
Let $A$ a SPD matrix. Then:
\begin{itemize}
\item Convergence \textbf{iff}\ $0<\alpha<\dfrac{2}{\rho(A)}=\dfrac{2}{\lambda_{\max}(A)}$.
\item The optimal step is $\displaystyle \alpha^\star=\frac{2}{\lambda_{\max}(A)+\lambda_{\min}(A)}$.
\item The optimized factor is $\displaystyle \rho(I-\alpha^\star A)=\frac{\kappa(A)-1}{\kappa(A)+1}$ with $\kappa(A)=\lambda_{\max}/\lambda_{\min}$.
\end{itemize}
\end{theorem}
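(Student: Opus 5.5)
The argument reduces everything to the spectrum of the iteration matrix $G_R=I-\alpha A$, with $\alpha>0$ understood as in the statement. Since $A$ is SPD, it is orthogonally diagonalizable, $A=Q\Lambda Q^\top$, with eigenvalues $0<\lambda_{\min}=\lambda_1\le\dots\le\lambda_n=\lambda_{\max}$. Then $G_R=Q(I-\alpha\Lambda)Q^\top$ has eigenvalues $1-\alpha\lambda_i$, so
\[
\rho(G_R)=\max_i|1-\alpha\lambda_i|=\max\{|1-\alpha\lambda_{\min}|,\ |1-\alpha\lambda_{\max}|\},
\]
because $\lambda\mapsto|1-\alpha\lambda|$ is convex and its maximum over $[\lambda_{\min},\lambda_{\max}]$ is attained at an endpoint. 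By Theorem~\ref{th:fp}, convergence for every initial guess is equivalent to $\rho(G_R)<1$. Since $G_R$ is symmetric, $\|G_R\|_2=\rho(G_R)$, so the contraction factor in Corollary~\ref{cor:norm} is exactly this quantity in the 2-norm.

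\textbf{Convergence range.} The condition $\rho(G_R)<1$ means $-1<1-\alpha\lambda_i<1$ for all $i$. The right inequality gives $\alpha\lambda_i>0$, i.e.\ $\alpha>0$, since every $\lambda_i>0$. The left inequality gives $\alpha\lambda_i<2$ for all $i$, i.e.\ $\alpha<2/\lambda_{\max}=2/\rho(A)$. Conversely, if $0<\alpha<2/\lambda_{\max}$, then every $1-\alpha\lambda_i$ lies in $(-1,1)$. This proves the ``iff''.

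\textbf{Optimal step.} We minimise $f(\alpha)=\max\{|1-\alpha\lambda_{\min}|,|1-\alpha\lambda_{\max}|\}$ over $\alpha>0$.
\begin{itemize}
\item For $\alpha\le 1/\lambda_{\max}$ both arguments are nonnegative, so $f(\alpha)=1-\alpha\lambda_{\min}$, which is decreasing.
\item For $\alpha\ge 1/\lambda_{\min}$ both are nonpositive, so $f(\alpha)=\alpha\lambda_{\max}-1$, which is increasing.
\item In between, $f(\alpha)=\max\{1-\alpha\lambda_{\min},\ \alpha\lambda_{\max}-1\}$, the maximum of a decreasing and an increasing linear function.
\end{itemize}
Hence the minimum is attained where the two branches cross: $1-\alpha\lambda_{\min}=\alpha\lambda_{\max}-1$, which gives $\alpha^\star=2/(\lambda_{\max}+\lambda_{\min})$. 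Substituting yields
\[
\rho(I-\alpha^\star A)=1-\frac{2\lambda_{\min}}{\lambda_{\max}+\lambda_{\min}}=\frac{\lambda_{\max}-\lambda_{\min}}{\lambda_{\max}+\lambda_{\min}}=\frac{\kappa(A)-1}{\kappa(A)+1},
\]
after dividing numerator and denominator by $\lambda_{\min}$ and using $\kappa_2(A)=\lambda_{\max}/\lambda_{\min}$ for SPD matrices. We also check that $\alpha^\star$ lies in the convergence interval, since $\lambda_{\max}+\lambda_{\min}>\lambda_{\max}$.

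The only step needing care is the min–max argument: we must justify that the maximum over the spectrum reduces to the two extreme eigenvalues, which follows from convexity, and that the equioscillation point is the global minimiser over all $\alpha>0$. The monotonicity of the two branches on either side of $\alpha^\star$ settles both points. The degenerate case $\lambda_{\min}=\lambda_{\max}$ is consistent: then $\alpha^\star=1/\lambda$ and the factor is $0$.
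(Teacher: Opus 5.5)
Your proof is correct and takes the same approach as the paper. The theorem is only cited to Young in Chapter~1, but the Chapter~3 paragraph on choosing the step size argues the same way: diagonalise $I-\alpha A$, reduce $\rho(I-\alpha A)$ to the two extreme gains $|1-\alpha\lambda_{\min}|$ and $|1-\alpha\lambda_{\max}|$, and equalise them to obtain $\alpha^\star$ and the factor $(\kappa-1)/(\kappa+1)$. Your convexity and monotone-branch case analysis rigorously justifies the minimax step that the paper only asserts.
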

\end{tcolorbox}

\begin{example}[Diagonally dominant system]
The tridiagonal matrix
\[
A=\begin{bmatrix}
4 & -1 & 0 & \cdots \\
-1 & 4 & -1 & \ddots \\
0 & -1 & 4 & \ddots \\
\vdots & \ddots & \ddots & \ddots
\end{bmatrix}
\]
is strictly diagonally dominant. Jacobi and Gauss--Seidel are guaranteed to converge.
\end{example}

\begin{example}[Ill-behaved system]
The Hilbert matrix $H_{ij}=1/(i+j-1)$ is symmetric positive definite,
but extremely ill-conditioned. Gauss--Seidel converges in theory,
but in finite precision it stagnates quickly due to roundoff.
\end{example}

\begin{anecdote}
It is now widely recognised that Jacobi and Gauss--Seidel should not be judged solely by their speed
as solvers. Their true importance lies in their role as building blocks for the preconditioned methods
widely used in modern scientific computing.
\end{anecdote}

\begin{takeaway}
\begin{itemize}
\item \textbf{Existence of a good splitting} and $\rho(G)<1$ are the universal convergence keys.
\item \textbf{Jacobi/GS}: guaranteed under SRDD; GS also for all SPD $A$.
\item \textbf{SOR}: must take $0<\omega<2$; for SPD, convergence holds for every such $\omega$; if Jacobi’s $\rho(G_J)$ is known/estimated, Young’s formula gives a near-optimal $\omega$.
\item \textbf{Richardson}: simple baseline; optimal $\alpha^\star$ gives factor $(\kappa-1)/(\kappa+1)$.
  \item Though obsolete as standalone solvers for large systems, Jacobi, GS, and SOR
        remain indispensable as \emph{smoothers} in multigrid and as \emph{preconditioners}
        in Krylov methods.
\end{itemize}
\end{takeaway}

\newpage
\section{Exercises}

\paragraph{Problem 1 (Condition number comparison).}
Compute the condition number of
\begin{itemize}
\item the identity matrix $I$,
\item a diagonal matrix with entries ranging from $1$ to $10^5$,
\item a $5 \times 5$ Hilbert matrix.
\end{itemize}
Discuss what these condition numbers tell you.

\paragraph{Problem 2 (Vector norm equivalence).}
Prove that for any vector $x \in \mathbb{R}^n$,
\[
\|x\|_\infty \leq \|x\|_2 \leq \sqrt{n} \|x\|_\infty,
\]
in other words that these norms are equivalent.

\paragraph{Problem 3 (Properties of the Frobenius norm, exam 2024/2025).}
Let $A\in\mathbb{R}^{n\times n}$ and let $U,V\in\mathbb{R}^{n\times n}$ be unitary (orthogonal) matrices, i.e.,
$U^\top U=UU^\top=I$ and $V^\top V=VV^\top=I$. The Frobenius norm is
\[
\|A\|_F := \Bigg(\sum_{i=1}^n\sum_{j=1}^n a_{ij}^2\Bigg)^{1/2}.
\]
The trace of $A$ is $\mathrm{trace}(A)=\sum_{i=1}^n a_{ii}$.
\begin{enumerate}
\item[(a)] Show that the Frobenius norm is a matrix norm, but it is \emph{not} induced by any vector norm.
\item[(b)] Show that $\|A\|_F^2=\mathrm{trace}(A^\top A)$.
\item[(c)] Show that unitary transformations leave the Frobenius norm unchanged:
$\|UAV\|_F=\|A\|_F$.
\end{enumerate}

\paragraph{Problem 4 (Induced 2-norm, resit exam 2024/2025).}
\begin{enumerate}
    \item[(a)] Let $A$ be an $n \times n$ matrix. Define the induced 2-norm as
    \[ \|A\|_2 = \sup_{\|x\|_2=1} \|Ax\|_2. \]
    Prove that the induced 2-norm satisfies the properties of a matrix norm.
    \item[(b)] Show that the induced 2-norm of a diagonal matrix is equal to the maximum absolute value of its diagonal entries.
    \item[(c)] Let $A \in \mathbb{R}^{n \times n}$ and $U, V$ be unitary matrices of appropriate dimensions. Prove that $\|UAV\|_2 = \|A\|_2$.
\end{enumerate}

\paragraph{Problem 5 (Catastrophic cancellation).}
Let $a = 1.0000000001$, $b = 1.0000000$ and assume we work in double precision.
\begin{itemize}
\item Compute $a - b$ and compare to the expected result.
\item Explain what happens if these numbers are rounded to fewer significant digits (single precision).
\end{itemize}

\paragraph{Problem 6 (Floating-point summation order).}
Design an experiment to show that the order of operations in floating-point summation matters. Sum a list of numbers in increasing and decreasing order and compare the results.

\paragraph{Problem 7 (Effect of conditioning).}
Solve the systems $Ax = b$ for two matrices:
\[
    A_1 = \begin{bmatrix} 1 & 2 \\ 3 & 4 \end{bmatrix}, \quad
    A_2 = \begin{bmatrix} 1 & 1 \\ 1 & 1.0001 \end{bmatrix}, \quad
    b = \begin{bmatrix} 5 \\ 11 \end{bmatrix}
\]
Compare the computed solutions and relate the error to the condition number of $A$.

\paragraph{Problem 8 (Backward error estimation).}
For a given $A$ and $\hat{x}$ that solves $Ax \approx b$, compute $r = b - A\hat{x}$ and interpret $r$ as a backward error. What does this tell you about the quality of $\hat{x}$?

\paragraph{Problem 9 (Approximating the spectral radius by an induced norm).}
\emph{For any $A\in\mathbb{C}^{n\times n}$ and any $\varepsilon>0$, show there exists a (vector) norm
on $\mathbb{C}^n$ such that the induced matrix norm satisfies}
\[
\rho(A)\ \le\ \|A\|\ \le\ \rho(A)+\varepsilon.
\]

\paragraph{Problem 10 (Gelfand's formula for induced norms).}
\emph{For any induced matrix norm and any $G\in\mathbb{R}^{n\times n}$, prove}
\[
\lim_{k\to\infty} \|G^k\|^{1/k} = \rho(G).
\]

\paragraph{Problem 11 (Neumann series).}
\emph{Let $B\in\mathbb{C}^{n\times n}$ with $\rho(B)<1$. Prove that $I-B$ is invertible and}
\[
(I-B)^{-1}=\sum_{j=0}^{\infty} B^j.
\]

\paragraph{Problem 12 (Basic iterative methods, resit exam 2024/2025).}
Consider the linear system $Ax = b$, where
\[ A = \begin{bmatrix} 4 & -1 & 0 \\ -1 & 4 & -1 \\ 0 & -1 & 4 \end{bmatrix}, \quad b = \begin{bmatrix} 2 \\ 6 \\ 2 \end{bmatrix}. \]
\begin{enumerate}
    \item[(a)] Derive the iteration formula for the Jacobi method for solving this system.
    \item[(b)] Perform two iterations of the Jacobi method starting from $x^{(0)} = [0, 0, 0]^T$.
    \item[(c)] Prove that the iteration matrix $G$ for the Jacobi method satisfies $\rho(G) < 1$, ensuring convergence of the method.
\end{enumerate}

\paragraph{Problem 13 (Stationary methods applied to a SOAR correlation matrix).}
A second-order autoregressive (SOAR) correlation matrix, as used for spatial correlations in data assimilation, is defined in Chapter~2. Implement Jacobi, Gauss--Seidel, and SOR for a SOAR matrix with $n=100$, $L=0.4$, $a=1$ (see the course GitHub repository for a Python notebook). How do the numerical results connect to the theory?

\chapter{Discretization of 1D and 2D PDEs (and Beyond)}

\section*{Overview}

Chapter 1 built the numerical foundations (norms, conditioning, direct and stationary iterative solvers) for $Ax=b$. In this chapter we will see where the matrix $A$ actually comes from, and why it is almost always \emph{sparse}: the discretization of a PDE by finite differences, the Laplacian of a graph, and the normal equations of a machine learning model all produce matrices whose nonzero pattern is dictated by \emph{local structure} - a stencil, an edge list, or a feature's support. This chapter covers: (1) from Taylor expansions to finite-difference formulas and the 1D tridiagonal Poisson system; (2) the 2D Poisson matrix, graph Laplacians, and sparsity in machine learning; (3) a toolbox for working with sparse matrices in practice - storage formats, bandwidth/fill-in, reordering, and the spectral properties that govern solver convergence.

\begin{tcolorbox}[colback=softblue,colframe=TUeBlue!40!black,title=\textbf{Learning objectives},fonttitle=\bfseries]
By the end of this chapter, you should be able to:
\begin{itemize}
    \item Derive finite-difference formulas from Taylor expansions and state their consistency order (truncation error).
    \item Assemble the tridiagonal (1D) and block-tridiagonal/Kronecker (2D) linear systems that arise from discretizing the Poisson equation.
    \item Define the graph Laplacian $L=D-A$ and relate its sparsity pattern and spectrum to those of PDE and machine-learning matrices.
    \item Explain why sparsity appears in ML (normal equations, Hessians) and data assimilation (localized correlation matrices).
    \item Choose an appropriate sparse storage format (CSR/CSC, banded), and explain how bandwidth, profile, and reordering affect fill-in and solver cost.
    \item Relate the spectral properties (eigenvalues, condition number) of a sparse matrix to the convergence of iterative solvers and the need for preconditioning.
\end{itemize}
\end{tcolorbox}

\section{Finite difference approximation}
In Chapter 1 we discussed the importance of numerical solvers for differential equations. Some differential equations can be solved analytically (e.g. separable partial differential equations, or using the method of characteristics), giving solution in closed form. However, many PDEs of interest for applications cannot be solved analytically. In this case, we solve the problem approximately by discretising the continuous problem, which can then be solved on a computer. Three of the most common approaches include the finite element method, finite volume method, and finite difference method. In this course, we focus on finite difference approximations and consider how the properties of the resulting matrices play a role in the convergence of iterative methods.

\begin{anecdote}
Long before computers, finite differences were already a powerful idea. Euler used them to tabulate functions, and a century later Richardson boldly extended the same spirit to the whole atmosphere. In his 1922 Weather Prediction by Numerical Process, he replaced derivatives by local difference operators to turn the equations of fluid dynamics into linear systems, attempting a six-hour forecast by hand. The calculation took him weeks and produced unstable and incorrect results, but his dream came back to life again with the advent of parallel computers \cite{youngRichardsonsMethodSolving1953}.
\end{anecdote}

Let $u:\mathbb{R}\to\mathbb{R}$ be smooth and $h>0$. A Taylor expansion around $x$ gives the following result.
\[
u(x\pm h)=u(x)\pm h u'(x) + \tfrac{h^2}{2}u''(x)\pm \tfrac{h^3}{6}u^{(3)}(x)+ \tfrac{h^4}{24}u^{(4)}(\xi_\pm),
\]
for some $\xi_\pm$ between $x$ and $x\pm h$. From this we obtain the following.

\begin{tcolorbox}
[colback=green!5!white,colframe=green!50!black]
\begin{definition}[Finite difference operators]
\begin{align*}
\text{Forward:}\quad &\frac{u(x+h)-u(x)}{h} \;=\; u'(x) \;+\; \tfrac{h}{2}u''(x) + \mathcal{O}(h^2),\\[2pt]
\text{Backward:}\quad &\frac{u(x)-u(x-h)}{h} \;=\; u'(x) \;-\; \tfrac{h}{2}u''(x) + \mathcal{O}(h^2),\\[2pt]
\text{Central:}\quad &\frac{u(x+h)-u(x-h)}{2h} \;=\; u'(x) \;+\; \mathcal{O}(h^2),\\[4pt]
\text{Second derivative:}\quad &\frac{u(x+h)-2u(x)+u(x-h)}{h^2} \;=\; u''(x) \;+\; \mathcal{O}(h^2).
\end{align*}
\end{definition}
\end{tcolorbox}

 \begin{figure}
\begin{center}
\begin{tikzpicture}[scale=0.95]
  \draw[->] (-0.2,0) -- (6.2,0) node[below] {$x$};
  \draw[domain=0.2:6, smooth, samples=120, TUeBlue, thick]
    plot (\x,{0.7 + 0.5*sin(0.9*\x r) + 0.08*\x}) node[right] {$u(x)$};
  \def\xa{2} \def\xm{3} \def\xb{4}
  \draw (\xa,0) -- (\xa,0.12) node[below] {$x-h$};
  \draw (\xm,0) -- (\xm,0.12) node[below] {$x\vphantom{h}$};
  \draw (\xb,0) -- (\xb,0.12) node[below] {$x+h$};
  \fill (\xa,{0.7 + 0.5*sin(0.9*\xa r) + 0.08*\xa}) circle (1.3pt);
  \fill (\xm,{0.7 + 0.5*sin(0.9*\xm r) + 0.08*\xm}) circle (1.3pt);
  \fill (\xb,{0.7 + 0.5*sin(0.9*\xb r) + 0.08*\xb}) circle (1.3pt);
  \node[gray] at (\xm, -0.55) {\small local Taylor expansion};
\end{tikzpicture}
\caption{Zoom-in of $u(x)$ around a point with neighbors $x\pm h$}
\end{center}
\end{figure}
\begin{tcolorbox}
[colback=green!5!white,colframe=green!50!black]
\begin{definition}[Consistency and truncation error]
A finite difference operator $\mathcal{L}_h$ is \emph{consistent} with a differential operator $\mathcal{L}$ if
$(\mathcal{L}_h u)(x)=(\mathcal{L} u)(x)+\tau_h(x)$ with $\|\tau_h\|\to 0$ as $h\to 0$.
The term $\tau_h$ is the \emph{local truncation error}; its order ($\mathcal{O}(h)$, $\mathcal{O}(h^2)$,\dots) is the \emph{accuracy}.
\end{definition}
\end{tcolorbox}

\begin{tcolorbox}[colback=yellow!5!white,colframe=yellow!50!black]
\begin{proposition}[Consistency and order of the three-point stencil]
Let $u\in C^{4}$ in a neighborhood of $x\in\mathbb{R}$. Then
\[
D_{xx}^h u(x) :=\frac{u(x+h)-2u(x)+u(x-h)}{h^{2}}
= u''(x) + \frac{h^{2}}{12}\,u^{(4)}(\xi)
\quad\text{for some }\xi\in(x-h,x+h).
\]
In particular, the finite difference operator \(
D_{xx}^h u(x) \)
is consistent with $u''(x)$ and has truncation error $\mathcal{O}(h^{2})$ as $h\to 0$.
\end{proposition}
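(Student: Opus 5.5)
The plan is to expand $u(x+h)$ and $u(x-h)$ by Taylor's theorem to third order with a Lagrange remainder of fourth order, as in the expansion displayed just before the definition of the finite difference operators:
\[
u(x\pm h)=u(x)\pm h u'(x) + \tfrac{h^2}{2}u''(x)\pm \tfrac{h^3}{6}u^{(3)}(x)+ \tfrac{h^4}{24}u^{(4)}(\xi_\pm),
\]
where $\xi_+\in(x,x+h)$ and $\xi_-\in(x-h,x)$. This is legitimate because $u\in C^4$ on a neighbourhood of $x$, and we may take $h$ small enough that $[x-h,x+h]$ lies inside that neighbourhood.

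Next, I add the two expansions and subtract $2u(x)$. The odd-order terms $\pm h u'(x)$ and $\pm \tfrac{h^3}{6}u^{(3)}(x)$ cancel by symmetry, which leaves
\[
u(x+h)-2u(x)+u(x-h)=h^2u''(x)+\tfrac{h^4}{24}\bigl(u^{(4)}(\xi_+)+u^{(4)}(\xi_-)\bigr).
\]
Dividing by $h^2$ gives $D_{xx}^h u(x)=u''(x)+\tfrac{h^2}{12}\cdot\tfrac12\bigl(u^{(4)}(\xi_+)+u^{(4)}(\xi_-)\bigr)$.

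The only genuinely non-routine step is to replace the average of the two fourth derivatives by a single value $u^{(4)}(\xi)$. For this I use that $u^{(4)}$ is continuous on $[\xi_-,\xi_+]\subset(x-h,x+h)$. The average $\tfrac12(u^{(4)}(\xi_+)+u^{(4)}(\xi_-))$ lies between $\min\{u^{(4)}(\xi_-),u^{(4)}(\xi_+)\}$ and $\max\{u^{(4)}(\xi_-),u^{(4)}(\xi_+)\}$. By the intermediate value theorem there is therefore some $\xi$ between $\xi_-$ and $\xi_+$, hence in $(x-h,x+h)$, at which $u^{(4)}$ attains this average. This gives the stated identity.

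Finally, consistency and the order follow directly. Since $u^{(4)}$ is continuous, it is bounded on a fixed compact interval $[x-h_0,x+h_0]$, say by $M$. For $h\le h_0$ we then have $|\tau_h(x)|=\tfrac{h^2}{12}|u^{(4)}(\xi)|\le \tfrac{M}{12}h^2\to 0$. So $D_{xx}^h$ is consistent with $u''$ in the sense of the definition of consistency and truncation error, and its truncation error is $\mathcal{O}(h^2)$.

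I expect no real obstacle beyond correctly justifying the intermediate value step.
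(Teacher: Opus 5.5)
Your proposal is correct and follows essentially the same route as the paper: Taylor expansions with fourth-order Lagrange remainders, cancellation of the odd terms, the intermediate value theorem to merge $u^{(4)}(\xi_+)$ and $u^{(4)}(\xi_-)$ into a single $u^{(4)}(\xi)$, and a bound on $u^{(4)}$ near $x$. Your choice of a fixed compact interval $[x-h_0,x+h_0]$ for the final estimate is a slightly tidier way of writing the paper's bound $\max_{|y-x|\le h}|u^{(4)}(y)|$.
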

\end{tcolorbox}

\begin{proof}
By Taylor’s theorem with Lagrange remainder, for some $\xi_{+}\in(x,x+h)$ and
$\xi_{-}\in(x-h,x)$,
\[
u(x\pm h)
= u(x) \pm h u'(x) + \frac{h^{2}}{2}u''(x)
\pm \frac{h^{3}}{6}u^{(3)}(x)
+ \frac{h^{4}}{24}u^{(4)}(\xi_{\pm}).
\]
Adding the $+$ and $-$ expansions and subtracting $2u(x)$ yields
\[
u(x+h)-2u(x)+u(x-h)
= h^{2}u''(x) + \frac{h^{4}}{24}\big(u^{(4)}(\xi_{+})+u^{(4)}(\xi_{-})\big).
\]
Dividing by $h^{2}$ gives
\[
\frac{u(x+h)-2u(x)+u(x-h)}{h^{2}}
= u''(x) + \frac{h^{2}}{24}\big(u^{(4)}(\xi_{+})+u^{(4)}(\xi_{-})\big).
\]
By the intermediate value theorem, there exists $\xi\in(x-h,x+h)$ such that
$\tfrac12\big(u^{(4)}(\xi_{+})+u^{(4)}(\xi_{-})\big)=u^{(4)}(\xi)$, hence
\[
\frac{u(x+h)-2u(x)+u(x-h)}{h^{2}}
= u''(x) + \frac{h^{2}}{12}\,u^{(4)}(\xi).
\]
Therefore
\[
\Bigl|D_{xx}^h u(x)-u''(x)\Bigr|
\le \frac{h^{2}}{12}\max_{|y-x|\le h}|u^{(4)}(y)|
= \mathcal{O}(h^{2}),
\]
which proves consistency with second-order accuracy.
\end{proof}

\begin{remark}
Central differences are second order accurate for $u'$ and $u''$, while forward/backward are first order for $u'$.
Higher order formulas follow by eliminating higher derivatives from longer Taylor stencils.
\end{remark}

Consider now the boundary value problem (BVP)
\[
-\,u''(x)=f(x)\quad\text{for }x\in(0,1),\qquad u(0)=\alpha,\;u(1)=\beta.
\]
Let $x_i=ih$ with $h=1/(n+1)$ and unknowns $u_i\approx u(x_i)$ for $i=1,\dots,n$.
Approximating $u''(x_i)$ with the central difference gives
\[
-\,\frac{u_{i-1}-2u_i+u_{i+1}}{h^2} \;=\; f(x_i),\qquad i=1,\dots,n,
\]
where $u_0=\alpha$ and $u_{n+1}=\beta$ are known from the Dirichlet boundary conditions.

\begin{figure}
\begin{center}
\begin{subfigure}[t]{0.48\textwidth}
\begin{tikzpicture}[scale=0.95]
  \draw[->] (0,0) -- (6.2,0) node[below] {$x$};
  \foreach \x/\lbl in {0/$\alpha$,1/$u_1$,2/$u_2$,3/$\cdots$,4/$u_{n-1}$,5/$u_n$,6/$\beta$}{
    \draw (\x,0) -- (\x,0.12);
    \fill (\x,0.12) circle (1.1pt) node[above,yshift=1pt] {\scriptsize \lbl};
  }
\end{tikzpicture}
\end{subfigure}
\begin{subfigure}[t]{0.48\textwidth}
\begin{tikzpicture}[scale=1.0]
  \draw[->] (-0.5,0) -- (6.2,0) node[below] {$x$};
  \foreach \x/\lab in {1/{$i-1$},3/{$i$},5/{$i+1$}}{
    \draw (\x,0) -- (\x,0.12);
    \fill (\x,0.12) circle (1.3pt) node[above] {\scriptsize \lab};
  }
  \node[TUeBlue] at (1,0.7) {\Large $-1$};
  \node[TUeBlue] at (3,1.0) {\Large $+2$};
  \node[TUeBlue] at (5,0.7) {\Large $-1$};
  \draw[decorate, decoration={brace, amplitude=6pt}] (0.7,1.3) -- (5.3,1.3)
    node[midway, yshift=10pt] {\small $[-1,\;2,\;-1]/h^2$};
      \draw[decorate, decoration={brace, amplitude=3pt},TUeBlue] (1.0,0.1) -- (3.0,0.1)
    node[midway, yshift=10pt,TUeBlue] {\small $h$};
\end{tikzpicture}
\end{subfigure}
\caption{Left: Function values on discretised mesh. Right: Stencil for central differences in 1D}
\end{center}
\end{figure}

\begin{tcolorbox}[colback=yellow!5!white,colframe=yellow!50!black]
\begin{proposition}[Tridiagonal linear system]
Collecting the $n$ interior equations yields
\[
A\,\bm{u}=\bm{g},\qquad
A=\frac{1}{h^2}\,\mathrm{tridiag}(-1,\,2,\,-1)\in\mathbb{R}^{n\times n},
\]
with $\bm{g}=(f(x_i))_{i=1}^n$ modified at the endpoints:
$g_1\leftarrow g_1 + \alpha/h^2$, \ $g_n\leftarrow g_n + \beta/h^2$.
\end{proposition}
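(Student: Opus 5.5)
The proof is a direct bookkeeping argument: I would write out the discrete equations row by row and read off both the matrix and the right-hand side.

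Start from the scheme just derived,
\[
\frac{-u_{i-1}+2u_i-u_{i+1}}{h^2}=f(x_i),\qquad i=1,\dots,n,
\]
with $u_0=\alpha$ and $u_{n+1}=\beta$. Split the indices into the interior rows $2\le i\le n-1$ and the two boundary rows $i=1$ and $i=n$.

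For an interior row, all three unknowns $u_{i-1},u_i,u_{i+1}$ belong to the vector $\bm{u}=(u_1,\dots,u_n)^\top$. The $i$-th equation is therefore exactly the $i$-th component of $\frac{1}{h^2}\mathrm{tridiag}(-1,2,-1)\,\bm{u}$. This row has entry $2/h^2$ on the diagonal, $-1/h^2$ on the sub- and superdiagonal, and zeros elsewhere. Its right-hand side is $f(x_i)$, with no modification.

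For $i=1$, the equation reads $(-\alpha+2u_1-u_2)/h^2=f(x_1)$. Since $\alpha$ is known data, I move it to the right-hand side, giving $(2u_1-u_2)/h^2=f(x_1)+\alpha/h^2$. This is the first row of $A$, which has only two nonzero entries because the sub-diagonal entry would fall outside the matrix, together with the modified entry $g_1$.

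Symmetrically, for $i=n$ the term $-\beta/h^2$ moves to the right. This gives $(-u_{n-1}+2u_n)/h^2=f(x_n)+\beta/h^2$, the last row of $A$ and the modified $g_n$.

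Stacking the $n$ equations then gives $A\bm{u}=\bm{g}$ with $A$ and $\bm{g}$ exactly as stated. The case $n=1$ needs a brief check: both boundary terms enter the single equation, and $g_1=f(x_1)+(\alpha+\beta)/h^2$.

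There is no real obstacle here. The only point needing care is the boundary rows: the known values $u_0,u_{n+1}$ must not be counted as unknowns, and the sign must come out right when they are moved across, $-(-\alpha)/h^2$ becoming $+\alpha/h^2$. I would present this step explicitly and treat the interior rows in a single line.
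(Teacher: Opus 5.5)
Your proof is correct and follows the same route as the paper, which simply collects the $n$ difference equations, moves the known values $u_0=\alpha$ and $u_{n+1}=\beta$ to the right-hand side, and displays the resulting matrix for $n=5$. Your explicit handling of the two boundary rows and of the degenerate case $n=1$ adds care the paper leaves implicit.
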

\end{tcolorbox}

\paragraph{Explicit form of the matrix.}
For $n=5$, the system has the form
\[
\frac{1}{h^2}
\begin{bmatrix}
2 & -1 & 0 & 0 & 0 \\
-1 & 2 & -1 & 0 & 0 \\
0 & -1 & 2 & -1 & 0 \\
0 & 0 & -1 & 2 & -1 \\
0 & 0 & 0 & -1 & 2
\end{bmatrix}
\begin{bmatrix} u_1\\u_2\\u_3\\u_4\\u_5 \end{bmatrix}
=
\begin{bmatrix} g_1\\g_2\\g_3\\g_4\\g_5 \end{bmatrix}.
\]
The sparsity pattern corresponds exactly to the stencil \(\boxed{[-1,\; 2,\; -1]/h^2}.\)

\begin{takeaway}
\begin{itemize}
  \item Finite difference discretisation converts differential operators into sparse matrices whose
  nonzero pattern is determined by the \emph{stencil}.
  \item The stencil is therefore the object to look at: it fixes the bandwidth, and with it the cost
  of every direct method applied to the resulting system.
\end{itemize}
\end{takeaway}

\section{Sparse Matrices: PDEs, Graphs, and Machine Learning}

\begin{anecdote}
In 1928, Courant, Friedrichs, and Lewy introduced the finite difference method for PDEs in their seminal work. At the same time, scientists like Kirchhoff (1847) were studying electrical networks using adjacency matrices and Laplacians.  Nearly a century later, sparse matrices from both PDEs and graphs lie at the core of  modern scientific computing and machine learning.
\end{anecdote}

\subsection{The 2D Poisson equation: structure, stencils and blocks}\label{sec:2DPoisson}

Consider the Poisson problem on the unit square
\[
- \Delta u(x,y) = f(x,y), \qquad (x,y)\in (0,1)^2, 
\quad u|_{\partial\Omega}=0.
\]
Let $x_i=ih$, $y_j=jh$, with $h=\tfrac{1}{n+1}$ and interior indices $i,j=1,\dots,n$.
Denote the grid unknowns by $u_{i,j}\approx u(x_i,y_j)$.
We discretize the two second derivatives separately:
\[
u_{xx}(x_i,y_j)\ \approx\ \frac{u_{i-1,j}-2u_{i,j}+u_{i+1,j}}{h^2},
\qquad
u_{yy}(x_i,y_j)\ \approx\ \frac{u_{i,j-1}-2u_{i,j}+u_{i,j+1}}{h^2}.
\]
Thus
\[
-\Delta u(x_i,y_j)\ \approx \ \frac{4u_{i,j}-u_{i-1,j}-u_{i+1,j}-u_{i,j-1}-u_{i,j+1}}{h^2}.
\]

\begin{figure}
    \centering
\begin{tikzpicture}[scale=0.9]
  \draw[thick] (-8,-1) rectangle (-3,4);
  \foreach \x in {-7.5,-7.0,...,-3.5} \draw[gray!70] (\x,-1) -- (\x,4);
  \foreach \y in {-0.5,0.0,...,3.5} \draw[gray!70] (-8,\y) -- (-3,\y);
  \fill[TUeBlue] (-5.5,2.5) circle (1.4pt) node[above right] {\scriptsize $(i,j)$};

  \def\S{1.6}   
  \def\dot{2pt} 
  \def\dy{10pt} 
\draw[dashed,TUeBlue] (-5.5,2.5) -- (0*\S,0*\S);
\draw[dashed,TUeBlue] (-5.5,2.5) -- (0*\S,2*\S);
  \path[use as bounding box] (0.2*\S,-0.8*\S) rectangle (3.2*\S,3.8*\S);

  \foreach \x in {0,1}{
    \foreach \y in {0,1}{
      \fill[gray!15] (\x*\S,\y*\S) rectangle ++(\S,\S);
    }
  }

  \foreach \k in {0,1,2}{
    \draw[gray!55] (\k*\S,0) -- (\k*\S,2*\S);
    \draw[gray!55] (0,\k*\S) -- (2*\S,\k*\S);
  }

  \coordinate (W) at (0,\S);
  \coordinate (E) at (2*\S,\S);
  \coordinate (S) at (\S,0);
  \coordinate (N) at (\S,2*\S);
  \coordinate (C) at (\S,\S);

  \foreach \P in {(0,0),(2*\S,0),(0,2*\S),(2*\S,2*\S)} \fill[gray!50] \P circle (1.4pt);

  \fill (W) circle (\dot);
  \fill (E) circle (\dot);
  \fill (S) circle (\dot);
  \fill (N) circle (\dot);
  \fill[TUeBlue] (C) circle (\dot);

  \node[anchor=south] at ($(C)+(0,0)$) {\large $(i,j)$};
  \node[anchor=north, TUeBlue] at ($(C)+(0,-0.02)$) {\Large $\mathbf{+4}$};

  \node[anchor=south] at ($(W)+(0,0)$) {\large $(i\!-\!1,j)$};
  \node[anchor=north, TUeBlue] at ($(W)+(0,0)$) {\Large $-1$};

  \node[anchor=south] at ($(E)+(0,0)$) {\large $(i\!+\!1,j)$};
  \node[anchor=north, TUeBlue] at ($(E)+(0,0)$) {\Large $-1$};

  \node[anchor=south] at ($(S)+(0,0)$) {\large $(i,j\!-\!1)$};
  \node[anchor=north, TUeBlue] at ($(S)+(0,0)$) {\Large $-1$};

  \node[anchor=south] at ($(N)+(0,0)$) {\large $(i,j\!+\!1)$};
  \node[anchor=north, TUeBlue] at ($(N)+(0,0)$) {\Large $-1$};
\end{tikzpicture}
\caption{Left: Indexing of grid. Right: stencil to update $u(i,j)$}

\end{figure}

\begin{tcolorbox}
[colback=green!5!white,colframe=green!50!black]
\begin{definition}[Kronecker product]
For matrices $A \in \mathbb{R}^{m \times n}$ and $B \in \mathbb{R}^{p \times q}$, 
their \emph{Kronecker product} is the block matrix
\[
A \otimes B \;=\;
\begin{bmatrix}
a_{11}B & a_{12}B & \cdots & a_{1n}B \\
a_{21}B & a_{22}B & \cdots & a_{2n}B \\
\vdots  & \vdots  & \ddots & \vdots \\
a_{m1}B & a_{m2}B & \cdots & a_{mn}B
\end{bmatrix}
\in \mathbb{R}^{mp \times nq}.
\]
\end{definition}
\end{tcolorbox}

\paragraph{From grid to linear system.}
Collect the $n^2$ interior equations into $A_{2D}\, \bm{u} = \bm{g}$.
Using \emph{row-wise} (lexicographic) ordering $p=(j-1)n+i$, we obtain the \emph{block tridiagonal} matrix
\[
A_{2D}=\frac{1}{h^2}\Big(I \otimes T \;+\; T \otimes I_n\Big),\qquad
T=\mathrm{tridiag}(-1,\,2,\,-1)\in\mathbb{R}^{n\times n}.
\]

Written blockwise, the same matrix has the structure
\[
A_{2D}=\frac{1}{h^2}\begin{bmatrix}
B & -I & 0  & \cdots & 0\\
-I&  B & -I & \ddots & \vdots\\
0 & -I & B  & \ddots & 0\\
\vdots & \ddots & \ddots & \ddots & -I\\
0 & \cdots & 0 & -I & B
\end{bmatrix}.
\]
Each block $B$ is itself tridiagonal with stencil $[-1,4,-1]$, and each $-I$ couples neighboring \emph{rows} of the grid.

\begin{remark}
$A_{2D}$ is sparse (each row has at most 5 nonzeros), symmetric, and positive definite (SPD) under Dirichlet data.
\end{remark}

\paragraph{Boundary conditions.}
For homogeneous Dirichlet ($u=0$ on $\partial\Omega$) the boundary values are known and \emph{do not} appear as unknowns.
For inhomogeneous Dirichlet ($u=g$), add boundary contributions to the right hand side near the boundary;
sparsity is preserved. One sided differences handle Neumann or Robin conditions, modifying only the first/last block rows.

\subsection{Graph Laplacians}

Graphs give rise to another fundamental family of sparse matrices.
Let $G=(V,E)$ be an undirected graph with $|V|=n$ vertices and edges $E$. The \emph{adjacency matrix} $A$ is defined by
\[
A_{ij}=\begin{cases}
1 & \text{if }(i,j)\in E,\\
0 & \text{otherwise}.
\end{cases}
\]
The \emph{degree matrix} $D$ is diagonal with entries $D_{ii}=\deg(i)=\sum_j A_{ij}$.

\begin{tcolorbox}[colback=green!5!white,colframe=green!50!black,title=Graph Laplacian]
The \textbf{graph Laplacian} of $G$ is
\(
L = D - A.
\)\\
It is symmetric, positive semidefinite, and sparse: $\mathrm{nnz}(L)=2|E|+n$.
\end{tcolorbox}

\paragraph{Example.}
For the path graph on 5 vertices:

\begin{center}
\begin{tikzpicture}[scale=1, every node/.style={circle,draw,fill=blue!15,inner sep=1pt,minimum size=0.6cm}]
\node (1) at (0,0) {1};
\node (2) at (1.5,0) {2};
\node (3) at (3,0) {3};
\node (4) at (4.5,0) {4};
\node (5) at (6,0) {5};
\foreach \i/\j in {1/2,2/3,3/4,4/5}{
  \draw (\i) -- (\j);
}
\end{tikzpicture}
\end{center}

the adjacency matrix is
\[
A=\begin{bmatrix}
0&1&0&0&0\\
1&0&1&0&0\\
0&1&0&1&0\\
0&0&1&0&1\\
0&0&0&1&0
\end{bmatrix},
\qquad
D=\mathrm{diag}(1,2,2,2,1),
\]
so
\[
L=D-A=\begin{bmatrix}
1&-1&0&0&0\\
-1&2&-1&0&0\\
0&-1&2&-1&0\\
0&0&-1&2&-1\\
0&0&0&-1&1
\end{bmatrix}.
\]

\begin{remark}
This $L$ is a rank-$2$ perturbation of the tridiagonal matrix arising from the 1D finite difference
discretisation of $-u''(x)$.
\end{remark}

\paragraph{Topology changes the pattern.} The path graph is only one extreme; the \emph{cycle} $C_n$ (periodic path) and the \emph{star} $K_{1,n-1}$ (one hub, $n-1$ leaves) illustrate how differently $L$ can look for the same number of vertices.

\begin{center}
\begin{tikzpicture}[scale=0.85, every node/.style={circle,draw,fill=blue!15,inner sep=1pt,minimum size=0.5cm}]
\foreach \k/\lab in {0/1,1/2,2/3,3/4,4/5,5/6}{
  \node (c\k) at ({1.3*cos(60*\k)},{1.3*sin(60*\k)}) {\lab};
}
\foreach \k in {0,...,5}{
  \pgfmathtruncatemacro{\nxt}{mod(\k+1,6)}
  \draw (c\k) -- (c\nxt);
}
\node[draw=none,fill=none] at (0,-2.0) {Cycle graph $C_6$};
\end{tikzpicture}
\qquad
\begin{tikzpicture}[scale=0.85, every node/.style={circle,draw,fill=blue!15,inner sep=1pt,minimum size=0.5cm}]
\node (hub) at (0,0) {1};
\foreach \k/\ang in {2/90,3/18,4/-54,5/-126,6/162}{
  \node (v\k) at ({1.6*cos(\ang)},{1.6*sin(\ang)}) {\k};
  \draw (hub) -- (v\k);
}
\node[draw=none,fill=none] at (0,-2.2) {Star graph $K_{1,5}$};
\end{tikzpicture}
\end{center}

For the cycle $C_n$, $L$ is a circulant tridiagonal matrix with wrap-around corner entries; its eigenvalues are $\lambda_k=2-2\cos(2\pi k/n)$ (see Section~\ref{sec:sparsetoolbox}). For the star $K_{1,n-1}$, the hub has degree $n-1$ and every leaf has degree $1$, giving the sparse but highly non-tridiagonal spectrum $\sigma(L)=\{0,\,\underbrace{1,\dots,1}_{n-2},\,n\}$: a single dominant eigenvalue reflects the hub's central role, in sharp contrast to the path and cycle.

\begin{remarkablefact}
On a path graph, the Laplacian matrix ``almost'' coincides with the finite difference stencil of the PDE Laplacian.\footnote{More rigorously, this would correspond to the discrete version of the Laplacian with homogeneous Neumann boundary conditions}
\end{remarkablefact}

Graph Laplacians are put to work in several directions.
\begin{itemize}
  \item \textbf{Spectral clustering:} eigenvectors of $L$ reveal community structure in networks.
  \item \textbf{Graph neural networks:} message passing relies on $L$ or its normalized variants.
  \item \textbf{Electrical networks:} resistor networks are governed by Kirchhoff’s laws, which are encoded in $L$.
  \item \textbf{Data assimilation:} $L$ is used to construct diffusion operators, which are commonly used as spatial covariance matrices in meteorological and oceanographic data assimilation.

\end{itemize}

\begin{takeaway}
Graph Laplacians are sparse and symmetric matrices that extend the notion of the Laplace operator
from continuous domains to arbitrary discrete networks. Their role as ``discrete PDEs on graphs''
explains their central place in modern data science and network analysis.
\end{takeaway}

\subsection{Sparse Matrices in Machine Learning}

Machine learning problems generate some of the largest sparse matrices encountered today.  
The source of sparsity is \emph{locality}: each data point, feature, or parameter interacts only with a small subset of the whole system. In \emph{least squares regression}, for instance,
\[
\min_x \|Ax-b\|_2^2,
\]
the optimality condition yields the \emph{normal equations}
\[
A^\top A x = A^\top b,
\]
where $A\in\mathbb{R}^{m\times n}$ is the feature matrix and $b\in\mathbb{R}^m$ the labels.

\begin{tcolorbox}[colback=green!5!white,colframe=green!50!black,title=Sparsity of $A$ and $A^\top A$]
If each row of $A$ contains only $k\ll n$ nonzeros (each sample depends on $k$ features), 
then $\mathrm{nnz}(A)=O(mk)$ and $A$ is extremely sparse.  
The matrix $A^\top A$ is SPSD and encodes feature correlations.  
It is usually denser than $A$, but it still preserves significant sparsity compared to a dense $n\times n$ matrix.
\end{tcolorbox}

\begin{remark}
In text classification, $A$ is the \emph{document-term matrix}: rows correspond to documents, columns to words, and entries record word frequencies. Each document contains only a few words compared to the dictionary size $\Rightarrow$ $A$ is sparse. Then $A^\top A$ counts word co-occurrences across documents, producing a sparse graph of word relations.
\end{remark}

\begin{center}
\begin{tikzpicture}[scale=0.8]
  \draw (0,0) rectangle (2,3);
  \node at (1,-0.4) {$A$};
  \foreach \i/\j in {0.3/0.5,0.8/1.2,1.5/2.4,0.5/2.7,1.6/0.8}{
    \fill[blue] (\i,\j) rectangle +(0.2,0.2);
  }
  \draw (3,0) rectangle (5,2);
  \node at (4,-0.4) {$A^\top A$};
  \foreach \i/\j in {3.5/1.5,3.1/1.7,3.3/1.5,3.3/1.3,4.5/0.8,4.1/0.5,4.1/1.1,3.9/0.9,4.7/1.7,3.1/0.1}{
    \fill[red] (\i,\j) rectangle +(0.2,0.2);
  }
\end{tikzpicture}

\vspace{-0.2cm}
\emph{Sparse design matrix $A$ (left) vs.\ denser normal matrix $A^\top A$ (right).}
\end{center}
Second-order optimisation methods require the Hessian
\[
H_{ij}=\frac{\partial^2}{\partial x_i\partial x_j}\,F(x).
\]
When the loss $F$ couples only nearby features, $H$ inherits sparsity.  
This occurs frequently in \emph{PDE-constrained optimization}, and \emph{deep learning layers with local connectivity}.
\vspace{-0.3cm}
\begin{tcolorbox}[colback=yellow!5!white,colframe=yellow!50!black,title=Example: Logistic regression]
For logistic regression with data matrix $A$ (columns $a_k$) and labels $b_k\in \{-1,1\}$,
\[
F(x) = \sum_{k=1}^m \log\big(1+\exp(-b_k\,a_k^\top x)\big),
\]
the Hessian takes the form
\(H = A^\top W A\) where $W$ is a diagonal matrix of weights depending on $x$ ($w_k = \frac{\exp{b_k a_k^\top x}}{(1+\exp{b_k a_k^\top x})^2}$).  If $A$ is sparse, so is $H$, and computations can be done efficiently.
\end{tcolorbox}

\begin{takeaway}
In machine learning, sparsity comes from locality in data or models.  
Normal equations, Hessians, and Jacobians inherit this structure.  
Harnessing it is essential: without sparse techniques, modern large-scale ML training would be infeasible.
\end{takeaway}

\subsection{Sparsity in data assimilation}
In the final part of the course we will see that covariance matrices are important for weighting the contribution of different components to a data assimilation system. The sparsity structure of these covariance matrices has important computational and physical consequences. 

\begin{remarkablefact}
    In ensemble data assimilation methods, underlying `physical' covariances are obtained using sample estimates. This can lead to spurious long-range correlations which destroy sparsity. In a later part of the course we will consider \textit{localisation} methods, which are used to prescribe a given local sparsity structure on estimated spatial covariance matrices.
\end{remarkablefact}

\begin{remarkablefact}
    Sometimes we wish to estimate the value of the covariance matrix from measurement data. One method is to use the measurements to fit appropriate parameter values for a standard correlation function (e.g. Gaussian, auto-regressive functions) see e.g. \cite{martinDataAssimilationFOAM2007}. These distributions all have long tails and therefore yield dense matrices. In practice, correlation distributions are often truncated when they fall below  a threshold value (typically chosen as $0.2$) to ensure  sparsity/fixed bandwidth of the resulting matrices.
\end{remarkablefact}

\section{Sparse Matrix Toolbox}\label{sec:sparsetoolbox}

We have now seen where sparse matrices come from: discretised PDEs, graphs, and machine learning.
Knowing where they arise is not enough to compute with them. We also need to know how to store them,
how to operate on them efficiently, and how their spectral properties affect the algorithms of the
following chapters. That is the subject of this section.

\subsection{Sparsity Patterns and storage formats}

\begin{tcolorbox}[colback=green!5!white,colframe=green!50!black,title=Definition]
The \emph{sparsity pattern} of a matrix $A$ is the set of positions 
$(i,j)$ where $a_{ij}\neq 0$.
\end{tcolorbox}

\paragraph{Example.}
For the tridiagonal finite difference Poisson matrix:
\[
\mathrm{nnz}(A) \approx 3n,\quad 
\mathrm{density}(A)=\tfrac{\mathrm{nnz}(A)}{n^2}\approx 1/n.
\]

\paragraph{1D Poisson (tridiagonal).}
For $n$ interior points, $A=\tfrac{1}{h^2}\,\mathrm{tridiag}(-1,2,-1)$ has nonzeros only on the
main diagonal and the first sub-/super-diagonals.

\begin{center}
\begin{tikzpicture}[scale=0.45]
  \def\N{8} 
  \draw[step=1,gray!20,thin] (0,-\N-0.2) grid (\N+0.2,0);
  \foreach \i in {1,...,\N}{
    \fill[blue] (\i,-\i) circle (0.18);                 
    \ifnum\i<\N
      \fill[blue] (\i+1,-\i) circle (0.18);             
    \fi
    \ifnum\i>1
      \fill[blue] (\i-1,-\i) circle (0.18);             
    \fi
  }
\end{tikzpicture}
\end{center}
\vspace{-3mm}
\emph{Spy pattern for 1D Poisson, $n=9$. Bandwidth = 1, nnz/row $\approx 3$ (except at ends).}

\paragraph{2D Poisson with lexicographic ordering.}
Let the grid be $n\times n$ ($N=n^2$ unknowns) and use row-wise (lexicographic) ordering. Then $A$ has nonzeros at indices $j=i$ (main), $j=i\pm 1$ \emph{within the same row block}, and $j=i\pm n$ (coupling to the row above/below). That is exactly the 5-point stencil.
\begin{center}
\begin{tikzpicture}[scale=0.4]
  \def\n{4}
  \pgfmathtruncatemacro{\N}{\n*\n}
  \draw[step=1,gray!20,thin] (0,-\N-0.2) grid (\N+0.2,0);

  \foreach \i in {1,...,\N}{
    \fill[blue] (\i,-\i) circle (0.15);

    \pgfmathtruncatemacro{\jp}{\i+\n}
    \pgfmathtruncatemacro{\jm}{\i-\n}
    \ifnum\jp>\N\else \fill[blue] (\jp,-\i) circle (0.15);\fi
    \ifnum\jm>0     \fill[blue] (\jm,-\i) circle (0.15);\fi

    \pgfmathtruncatemacro{\col}{mod(\i-1,\n)}
    \ifnum\col<\numexpr\n-1\relax
      \pgfmathtruncatemacro{\ip}{\i+1}
      \fill[blue] (\ip,-\i) circle (0.15);
    \fi
    \ifnum\col>0
      \pgfmathtruncatemacro{\im}{\i-1}
      \fill[blue] (\im,-\i) circle (0.15);
    \fi
  }
\end{tikzpicture}
\end{center}
\vspace{-2mm}
\emph{Spy pattern for 2D Poisson, $n=4$. Bandwidth = 4, nnz/row $\approx 5$ (except at end blocks).}

\paragraph{Graph Laplacians (path vs.\ star).}
Two small graphs with $n=8$ nodes to highlight how topology changes the pattern.

\begin{center}
\begin{tikzpicture}[scale=0.45]
  \def\n{8}

  \begin{scope}
    \draw[step=1,gray!20,thin] (0,-\n-0.2) grid (\n+0.2,0);
    \foreach \i in {1,...,\n}{
      \fill[red] (\i,-\i) circle (0.16);
      \ifnum\i<\n
        \fill[red] (\i+1,-\i) circle (0.16);
      \fi
      \ifnum\i>1
        \fill[red] (\i-1,-\i) circle (0.16);
      \fi
    }
    \node at (4.5,1) {\small Path graph $L$};
  \end{scope}

  \begin{scope}[shift={(12,0)}]
    \draw[step=1,gray!20,thin] (0,-\n-0.2) grid (\n+0.2,0);
    \foreach \j in {1,...,\n}{
      \fill[red] (1,-\j) circle (0.16);
      \fill[red] (\j,-1) circle (0.16);
    }
    \foreach \i in {1,...,\n}{
      \fill[red] (\i,-\i) circle (0.16);
    }
    \node at (4.5,1) {\small Star graph $L$};
  \end{scope}
\end{tikzpicture}
\end{center}
\emph{Left: path $L$ $\Rightarrow$ tridiagonal. Right: star $L$ $\Rightarrow$ dense first row/column + diagonal.}

\paragraph{Matrix structure and storage.}  
Sparse matrices are not stored entry by entry; only the nonzero pattern is kept.  
Common formats include:
\begin{itemize}
  \item \textbf{CSC/CSR (Compressed Sparse Column/Row):} general-purpose, store indices and values compactly.  
  \item \textbf{Diagonal/banded storage:} efficient when nonzeros cluster near the diagonal. 

\end{itemize}

\begin{example}[CSR and CSC formats]
Consider the $4\times 4$ sparse matrix
\[
A=\begin{bmatrix}
1 & 0 & 0 & 2 \\
0 & 3 & 0 & 4 \\
5 & 6 & 7 & 0 \\
0 & 0 & 8 & 9
\end{bmatrix}.
\]

\paragraph{CSR (Compressed Sparse Row).}
\begin{align*}
\texttt{val} &= [1,\,2,\,3,\,4,\,5,\,6,\,7,\,8,\,9], \\
\texttt{col\_ind} &= [1,\,4,\,2,\,4,\,1,\,2,\,3,\,3,\,4], \\
\texttt{row\_ptr} &= [1,\,3,\,5,\,8,\,10].
\end{align*}

\paragraph{CSC (Compressed Sparse Column).}
\begin{align*}
\texttt{val} &= [1,\,5,\,3,\,6,\,7,\,8,\,2,\,4,\,9], \\
\texttt{row\_ind} &= [1,\,3,\,2,\,3,\,3,\,4,\,1,\,2,\,4], \\
\texttt{col\_ptr} &= [1,\,3,\,5,\,7,\,10].
\end{align*}

\medskip
\noindent
Here, indices are 1-based for readability (note that Python/NumPy uses 0-based indexing). \\
- In CSR, \texttt{row\_ptr}[i] points to the start of row $i$ in \texttt{val}.  \\
- In CSC, \texttt{col\_ptr}[j] points to the start of column $j$ in \texttt{val}.
\end{example}

\paragraph{Bandwidth vs.\ profile.}  
Two measures quantify “how far” nonzeros lie from the diagonal:
\begin{itemize}
  \item \textbf{Bandwidth:} 
  \[
  b = \max\{|i-j| : a_{ij}\neq 0\}.
  \]
  The worst-case distance from the diagonal. Narrow bands allow cheap banded LU; wide bands inflate fill and storage.
  \item \textbf{Profile (envelope):} sum of distances of each nonzero to the diagonal. This reflects actual memory usage more sensitively than bandwidth alone.
\end{itemize}

\begin{remark}
In the 2D Poisson problem with $N=n^2$, each row has only 5 nonzeros, yet the bandwidth grows like $n$.  
Naive banded LU then costs $\mathcal{O}(N\,b^2) = \mathcal{O}(N^2)$, whereas good reorderings (e.g.\ nested dissection) reduce the cost to $\mathcal{O}(N^{3/2})$.
\end{remark}

\paragraph{Fill-in.}  
During LU factorization, zeros in $A$ can become nonzeros in $L$ or $U$.  
\begin{itemize}
  \item For banded matrices, fill grows with bandwidth.  
  \item In general, the amount of fill depends strongly on the sparsity pattern.  
\end{itemize}

\begin{center}
\begin{tikzpicture}[scale=0.55]
  \draw (0,0) rectangle (2,2);
  \node at (1,-0.4) {$A$};
  \fill[blue] (0.5,1.5) circle (0.12);
  \fill[blue] (1.5,0.5) circle (0.12);
  \fill[blue] (1.5,1.5) circle (0.12);
  \fill[blue] (0.5,0.5) circle (0.12);
  \draw (3,0) rectangle (5,2);
  \node at (4,-0.4) {$LU$};
  \foreach \i/\j in {3.5/1.5,4.5/0.5,4.5/1.5,3.5/0.5,4/1}{
    \fill[red] (\i,\j) circle (0.12);
  }
\end{tikzpicture}

\vspace{-0.2cm}
\emph{Illustration: fill-in introduces new nonzeros (red).}
\end{center}

\begin{remark}
By permuting rows/columns before factorization, one can reduce bandwidth and hence fill-in. Effective reordering can save orders of magnitude in memory and runtime---critical for large-scale finite element systems.
\end{remark}

\begin{anecdote}
The invention of compressed storage and reordering heuristics in the 1970s was pivotal: without them, sparse PDE systems of size $10^5$ could not be solved on mainframes.
\end{anecdote}

The inverse of a sparse matrix is not guaranteed to be sparse, and can suffer from more fill-in than an LU or Cholesky decomposition. One example is the given in Figure \ref{fig:sparsityinverse}, where $A$ is a lower triangular matrix with ones on the diagonal and first subdiagonal, plus a single entry above the diagonal. The LU factorization of $A$ is sparse, with some fill-in seen in $U$ (lower right panel). This aligns with the element above the diagonal. On the other hand, $A^{-1}$ is a dense matrix, with almost all entries being non-zero. In general we aim to avoid using inverse matrices, but in the final section of the course we will see that inverse covariance matrices play a very important role in data assimilation and cannot be completely avoided. 

\begin{figure}
    \centering
   \includegraphics[width=0.5\linewidth]{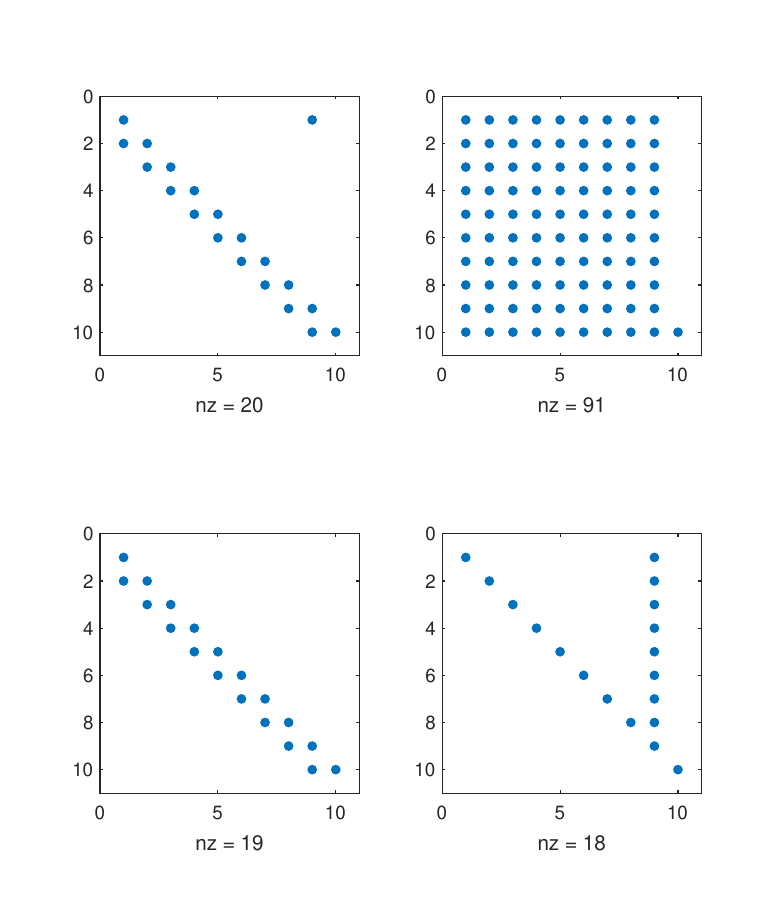}
    \caption{Visualisation of fill-in with LU vs inverse. Top left: sparsity pattern original matrix, $A$. Top right: sparsity pattern of $A^{-1}$. Lower left: sparsity pattern of $L$. Lower right: sparsity pattern of $U$.}
    \label{fig:sparsityinverse}
\end{figure}

\subsection{Spectral Properties}

For some particular sparsity patterns, we can compute the eigenvalues explicitly:
\begin{tcolorbox}[colback=yellow!5!white,colframe=yellow!50!black,title=Tridiagonal matrices]
If a tridiagonal matrix is \textbf{Toeplitz} i.e. with stencil $[c,a,b]$ then the eigenvalues can be computed explicitly as:
\[
\lambda_j =a-2\sqrt{bc}\cos(\frac{j\pi}{n+1})  \quad j=1,\dots,n.
\]
\end{tcolorbox}

\begin{tcolorbox}[colback=green!5!white,colframe=green!50!black,title=Definition]
A circulant matrix is a particular form of Toeplitz matrix which has rows composed of all the same elements, with each row being shifted one element to the right relative to the preceding row, i.e.
\[C = \begin{bmatrix} c_0& c_1 & \dots & c_{n-2}& c_{n-1} \\
c_{n-1} & c_0 &c_1 & & c_{n-2} \\
\vdots & c_{n-1} & c_0 &\ddots& \vdots \\
c_{2} && \ddots &\ddots &  c_1 \\
c_1 & c_2 &\dots &c_{n-1} & c_0\end{bmatrix}.\]
 Circulant matrices are diagonalized by a discrete Fourier transform, i.e. the eigenvalues are given by 
\[
\lambda_j =c_0 + c_{n-1} \omega^{-j}+c_{n-2}\omega^{-2j}+\dots +c_{1}\omega^{-(n-1)j}  \quad j=0,\dots,n-1,
\]
where $\omega = \exp(\frac{2\pi i}{n})$ is an $n-th$ root of unity. The eigenvectors of $C$ are given by
\[v_j = \frac{1}{\sqrt{n}}(1,\omega^j,\omega^{2j},\dots,\omega^{(n-1)j})^T, \quad j=0,1,2,\dots,n-1.\]
\end{tcolorbox}

\paragraph{Eigenvalues of model problems.}
For the 1D Poisson matrix,
\[
\lambda_j = \frac{4}{h^2}\sin^2\!\left(\frac{j\pi}{2(n+1)}\right),\quad j=1,\dots,n.
\]
Hence
\[
\kappa(A)=\frac{\lambda_{\max}}{\lambda_{\min}}=\mathcal{O}(n^2).
\]

Similarly for the 2D diffusion matrix,  $A = I - \frac{\nu}{h^2}\in\mathbb{R}^{n \times n}$, with Dirichlet boundary conditions on the unit square, the $n$ eigenvalues of $A$ are given by 
$$
\mu_{i,j} = 1 + \frac{4 \nu}{h^2} \left( \sin^2 \left ( \frac{i}{2(n+1)}\pi \right) + \sin^2 \left ( \frac{j}{2(n+1)}\pi  \right ) \right), \quad i, j = 1,2, \ldots, n.
$$

\begin{tcolorbox}[colback=yellow!5!white,colframe=yellow!50!black,title=Implication]
These condition numbers grow as $h^{-2}$: iterative solvers (CG, GMRES, etc.) slow down as the mesh refines.  
Spectral clustering of eigenvalues governs convergence rates: if most eigenvalues lie in a tight interval, Krylov methods converge quickly; wide spreads or outliers cause stagnation.  
\textbf{Preconditioners} are therefore essential to compress the spectrum.
\end{tcolorbox}

\paragraph{Spectral viewpoint on sparsity.}
Even though $A$ is sparse, its eigenvalues may cluster near zero, leading to ill-conditioning.  
Sparsity alone does not guarantee fast convergence: what matters is the \emph{distribution of eigenvalues}.  
For example, reordering does not change eigenvalues (it is a similarity transformation), but preconditioning actively reshapes the spectrum to improve solver efficiency.

\paragraph{Graph Laplacians.}
Eigenvalues encode combinatorial structure:
\begin{itemize}
  \item Smallest eigenvalue $=0$ (multiplicity = number of connected components).  
  \item The second smallest eigenvalue (Fiedler value) measures algebraic connectivity and governs diffusion/mixing rates.
\end{itemize}

\paragraph{Machine learning.}
Normal equations $A^\top A$ have eigenvalues $\sigma_i^2$, the squares of the singular values of $A$.  
Ill-conditioning corresponds to nearly collinear features, where small singular values slow down gradient descent and amplify noise.  
Regularization (ridge regression, Tikhonov) shifts the spectrum, lifting small eigenvalues away from zero.

\begin{takeaway}
\begin{itemize}
  \item Sparsity patterns (bandwidth, profile) determine storage and fill-in.  
  \item Compressed storage formats (CSR/CSC) make large problems feasible.  
  \item Fill-in can destroy sparsity; reordering strategies mitigate this.  
  \item \textbf{Spectral properties are decisive:} eigenvalues dictate conditioning, convergence, and the design of effective preconditioners.  
\end{itemize}
Efficient sparse linear algebra requires both algebraic insight (spectrum, conditioning) and practical storage/reordering techniques.
\end{takeaway}

\newpage
\section{Exercises}

\paragraph{Problem 1 (FD accuracy at a point).}
For $u(x)=\sin(\pi x)$, approximate $u''(0.5)$ with the 3‑point stencil
\[
  D_{xx}^h u(0.5)=\frac{u(0.5+h)-2u(0.5)+u(0.5-h)}{h^2}
\]
using $h=0.25$ and $h=0.125$. Compare with the exact $u''(0.5)$ and the error ratio.

\paragraph{Problem 2 (1D Poisson: assemble \& solve by hand).}
Discretize $-u''=1$ on $(0,1)$ with $u(0)=u(1)=0$ and $n=4$ interior points ($h=1/5$).
\begin{enumerate}
  \item Write the linear system $Au=g$.
  \item Solve it explicitly for $u\in\mathbb{R}^4$.
\end{enumerate}

\paragraph{Problem 3 (Discretisation of a BVP and application of a basic iterative method, exam 2024/2025).}
Consider the one-dimensional boundary value problem
\[
\begin{cases}
-\,u''(x) + c\,u(x) = 0, & x\in(0,1),\\
u(0)=1,\quad u(1)=0,
\end{cases}
\]
where $c>0$ is a constant. Use a uniform mesh with grid points $x_j=jh$, $h=\frac{1}{n+1}$, $j=0,\ldots,n+1$, and denote $u_j\approx u(x_j)$. 
\begin{enumerate}
\item[(a)] Give the finite difference scheme for the discretisation of the BVP.
\item[(b)] Let $u=(u_1,\ldots,u_n)^\top$ be the unknown vector solving $A u=b$. Determine $A$ and $b$.
\item[(c)] We solve the linear system by the Jacobi method based on the splitting $A=D-E-F$,
where $D$ is the diagonal part and $E$ (resp.\ $F$) the strict lower (resp.\ upper) triangular part.
Give the exact Jacobi iteration.
\item[(d)] If $e^k=u-u^k$ denotes the error at iteration $k$, then $e^k=G^k e^0$ with iteration matrix $G$.
Give $G$ explicitly.
\end{enumerate}

\paragraph{Problem 4 (Finite Differences for Boundary Value Problems, resit exam 2024/2025).}
Consider the boundary value problem:
\[ -u''(x) + cu(x) = f(x), \quad x \in (0,1), \quad u(0) = \alpha, \quad u(1) = \beta, \]
where $c$ is a positive constant, and $f(x)$ is a given source term.
\begin{enumerate}
    \item[(a)] Discretize the equation using the finite difference method with a uniform grid $x_j = jh$, $j = 0, 1, \dots, n+1$, where $h = 1/(n+1)$. Derive the corresponding difference scheme.
    \item[(b)] Write the scheme in matrix form. Give the expression of the matrix and right hand side.
    \item[(c)] Compute the spectral condition number of the matrix $A$. \\
    \textbf{Hint:}  For a symmetric positive definite matrix we have: $\kappa(A) = \frac{\lambda_{\max}(A)}{\lambda_{\min}(A)}$, 
    where $\lambda_{\max}(A)$ and $\lambda_{\min}(A)$ are the largest and smallest eigenvalues of $A$, respectively. For a tridiagonal matrix of the form $A = \text{tridiag}(a, b, a)$, the eigenvalues are given by:
    \[
    \lambda_k = b + 2a \cos\left(\frac{k\pi}{n+1}\right), \quad k = 1, 2, \dots, n.
    \]
    This formula can be used to compute $\lambda_{\max}$ and $\lambda_{\min}$.

    \item[(d)] Analyze how the spectral condition number $\kappa(A)$ behaves when $c$ decreases and $n$ increases and interpret the result.
\end{enumerate}

\paragraph{Problem 5 (Bandwidth and naive LU cost).}
For a 2D $n\times n$ grid with natural ordering, take $n=20$ ($N=n^2$). Estimate the semi‑bandwidth $b$ and the work of banded LU via $O(N\,b^2)$.

\paragraph{Problem 6 (Graph spectra, small and exact).}
For the path graph on $4$ vertices, compute the eigenvalues of its Laplacian $L$. Then do the same for the complete graph $K_4$. Compare the spectral gaps.

\paragraph{Problem 7 (2D Laplacian on an $n\times n$ grid: spectrum and conditioning).}
  Let $T_n=\mathrm{tridiag}(-1,2,-1)$ (Dirichlet 1D Laplacian without $1/h^2$ scaling) and
  \[
    A_{2D}=T_n\otimes I_n + I_n\otimes T_n\in\mathbb{R}^{n^2\times n^2}.
  \]
  \begin{enumerate}
    \item[(i)] Show that the eigenvalues of $A_{2D}$ are
      \[
        \lambda_{p,q}=\lambda_p(T_n)+\lambda_q(T_n)
        = \Big(2-2\cos\frac{p\pi}{n+1}\Big)+\Big(2-2\cos\frac{q\pi}{n+1}\Big),
        \quad p,q=1,\dots,n.
      \]
    \item[(ii)] With the standard spacing $h=1/(n+1)$, the stiffness matrix is
      $\widehat A_{2D}=\frac{1}{h^2}A_{2D}$. Prove that
      \[
        \lambda_{\max}(\widehat A_{2D})\sim \frac{8}{h^2},\qquad
        \lambda_{\min}(\widehat A_{2D})\to 2\pi^2\quad(n\to\infty),
      \]
      and deduce the asymptotic condition number
      \(
        \kappa_2(\widehat A_{2D})={\cal O}(n^2).
      \)
    \item[(iii)] (Optional, $d$D generalization.) For $d$ spatial dimensions with $n^d$ unknowns,
      show that $\kappa_2(\widehat A_{dD})={\cal O}(n^2)$ still holds.
  \end{enumerate}

\paragraph{Problem 8 (Graph Laplacians: spectra and expansion, general $n$).}
  Let $G=(V,E)$ be a simple undirected graph on $n$ vertices with (combinatorial) Laplacian $L=D-A$.
  \begin{enumerate}
    \item[(i)] Show that $0$ is an eigenvalue of $L$ with multiplicity equal to the number of connected components.
    \item[(ii)] For the cycle path graph $C_n$, provide the eigenvalues and eigenvectors of $C_n$ in terms of $\omega = \exp(\frac{2\pi i}{n}$. (You may use the properties of circulant matrices.) Give an interval in which these eigenvalues lie.
      \item[(iii)] For the star graph $S_n$, show that $\sigma(L)=\{0,\,1\ \text{(mult $n{-}2$)},\,n\}$.
   
  \end{enumerate}

\paragraph{Problem 9 (1D Poisson eigenvalues and $\kappa(A)$, small $n$).}
For $n=5$ interior points, use
\[
  \lambda_j=\frac{4}{h^2}\sin^2\!\Big(\frac{j\pi}{2(n+1)}\Big),\qquad h=\frac{1}{n+1},
\]
to list $\lambda_1,\dots,\lambda_5$ numerically and compute $\kappa(A)$.

Repeat this for
$$
\mu_{i,j} = 1 + \frac{4 \nu}{h^2} \left( \sin^2 \left ( \frac{i}{2(n+1)}\pi \right) + \sin^2 \left ( \frac{j}{2(n+1)}\pi  \right ) \right), \quad i, j = 1,2, \ldots, n.
$$
with $\nu =0.0025$. What differences do you notice between the two sets of eigenvalues? What will happen to the largest/smallest eigenvalues as $n$ increases?

\paragraph{Problem 10 (Data assimilation spectrum example).}
We can construct a second order autoregressive (SOAR) correlation function on the unit circle for $N$ equally distributed gridpoints, via
\[C(i,j) = \left(1+ \frac{|2a \sin(\frac{\theta_{i,j}}{2})|}{L}\right)\exp\left(\frac{-|2a \sin(\frac{\theta_{i,j}}{2})|}{L}\right),\quad 1\le i,j\le N \]
where $L > 0$ is the correlation lengthscale, $\theta_{i,j}$ denotes the angle between grid points $i$
and $j$, and $a$ is the radius of the domain \cite{habenConditioningPreconditioningMinimisation2011}. This yields a dense matrix, which can be ill-conditioned. What happens to the spectrum if we replace all values less than some threshold (e.g. $0.2$) by zero? Investigate how this changes as the lengthscale, threshold value and problem dimension increase.

\paragraph{Problem 11 (Concrete ML sparsity example).}
Four documents over a $6$-word vocabulary with bags:
\[
d_1=\{1,2\},\; d_2=\{2,3\},\; d_3=\{3,4\},\; d_4=\{1,4\}.
\]
Build the sparse document-term matrix $A\in\mathbb{R}^{4\times6}$ (entries $0/1$). Compute $A^\top A$ explicitly and report its sparsity (which off‑diagonal entries are nonzero?). Interpret the pattern.

\chapter{Applications of Basic Iterative Methods}

\section*{Overview}

Chapter 1 introduced Jacobi, Gauss--Seidel, SOR, and Richardson as instances of a single fixed-point iteration $x^{(k+1)}=Gx^{(k)}+c$, convergent whenever $\rho(G)<1$. This chapter shows how this works on three model problems where the spectrum of $G$ can be computed \emph{explicitly}, which turns the abstract convergence criterion into concrete, comparable convergence rates. This chapter covers: (1) the 1D Poisson matrix, where explicit eigenpairs give exact convergence factors for Jacobi/GS/SOR; (2) graph Laplacians, where the same iterations become diffusion/random-walk processes and connect to spectral clustering; (3) machine learning, where Richardson iteration is exactly gradient descent and the spectrum of $X^\top X$ governs both convergence speed and \emph{spectral bias}.

\begin{tcolorbox}[colback=softblue,colframe=TUblue!40!black,title=\textbf{Learning objectives},fonttitle=\bfseries]
By the end of this chapter, you should be able to:
\begin{itemize}
    \item Derive the explicit eigenvalues of the 1D Poisson matrix and use them to compute the exact spectral radius (and hence convergence rate) of Jacobi, Gauss--Seidel, and SOR.
    \item Explain why Gauss--Seidel converges roughly twice as fast as Jacobi, and why SOR with an optimal $\omega$ improves the rate further.
    \item Interpret Jacobi iteration on a graph Laplacian as a random walk / diffusion process, and relate its slow modes to the Fiedler vector and spectral clustering.
    \item Recognize Richardson iteration as gradient descent on a quadratic loss, and relate the spectrum of $A$ (or $X^\top X$) to the choice of step size and the resulting convergence rate.
    \item Explain the phenomenon of \emph{spectral bias} in machine learning as the same spectral-filtering mechanism that governs smoothing in PDE solvers, with the roles of low- and high-frequency modes swapped.
\end{itemize}
\end{tcolorbox}

\section{Basic Iterative Methods for PDE discretisations}\label{sec:BIMPDE}

We now go beyond the general fixed point framework of Chapter~1 and analyze in detail
the convergence of classical iterative methods for the model problem of the one-dimensional Poisson equation with Dirichlet boundary conditions introduced in Chapter 2.
This case is simple enough to allow explicit eigenvalue analysis, yet rich enough
to illustrate the key phenomena that will reappear in higher--dimensional PDEs,
graphs, and machine learning applications.

Recall the discretization of $-u''=f$ on $(0,1)$ with homogeneous Dirichlet boundary
conditions using $n$ interior points, mesh width $h=\tfrac{1}{n+1}$. The stiffness
matrix is
\begin{equation}\label{eq:Poisson disc matrix}
A = \frac{1}{h^2}\,\mathrm{tridiag}(-1,\,2,\,-1)
= \frac{1}{h^2}
\begin{bmatrix}
2 & -1 \\
-1 & 2 & -1 \\
& \ddots & \ddots & \ddots \\
&& -1 & 2 & -1 \\
&&& -1 & 2
\end{bmatrix}\in\mathbb{R}^{n\times n}.
\end{equation}
This matrix is symmetric, positive definite, and tridiagonal.

\begin{remark}
The eigenvalues of $A$ are explicitly known:
\begin{equation}
\label{eq:lambda}
\lambda_k(A) = \tfrac{1}{h^2}\bigl(2 - 2\cos\theta_k\bigr), 
\qquad \theta_k = \frac{k\pi}{n+1},\; k=1,\dots,n.
\end{equation}
or alternatively
\[
\lambda_k(A) = \frac{4}{h^2}\,\sin^2\!\left(\frac{k\pi}{2(n+1)}\right) = \frac{4}{h^2}\,\sin^2\!\left(\frac{k\pi h}{2}\right), 
\qquad k=1,\dots,n.
\]
Hence $\lambda_{\min}\sim \pi^2$, $\lambda_{\max}\sim 4/h^2$, and
$\kappa(A)=\mathcal{O}(n^2)$.
\end{remark}

\begin{center}
\begin{tikzpicture}[scale=0.8]
  \draw[->] (-0.2,0) -- (6.8,0) node[right]{\small $j$};
  \draw[->] (0,-1.3) -- (0,1.3);
  \draw[thick,domain=0:6,smooth,samples=200] plot (\x,{sin(\x*180/6)});
  \draw[thick,gray,domain=0:6,smooth,samples=400] plot (\x,{0.5*sin(4*\x*180/6)});
  \node[anchor=west] at (5.2,0.85) {\small low $k$};
  \node[anchor=west,gray] at (5.2,-0.85) {\small high $k$};
\end{tikzpicture}
\par\small\emph{Low-frequency ($k$ small) vs.\ high-frequency ($k$ large) eigenvectors $v^{(k)}_j=\sin(jk\pi/(n+1))$ of $A$: high-frequency modes oscillate rapidly and are damped fast by Jacobi/GS/SOR, while low-frequency modes decay slowly --- the smoothing property used later in multigrid \cite{briggsMultigridTutorial2000}.}
\end{center}

\subsection{Convergence of the Jacobi iteration}

We split $A = D - L - U$ with $D = \tfrac{2}{h^2}I$.  
The Jacobi iteration matrix is
\[
G_J \;=\; I - D^{-1}A 
= I - \tfrac{h^2}{2}A 
= I - \tfrac{1}{2}\,\operatorname{tridiag}(-1,\,2,\,-1) 
= \operatorname{tridiag}\!\Bigl(\tfrac{1}{2},\,0,\,\tfrac{1}{2}\Bigr).
\]
Since $G_J = I - \tfrac{h^2}{2}A$ and given \eqref{eq:lambda} the Jacobi eigenvalues are
\[
\mu_k = 1 - \tfrac{h^2}{2}\,\lambda_k(A)
= 1 - \tfrac{h^2}{2}\cdot\tfrac{1}{h^2}\bigl(2-2\cos\theta_k\bigr)
= \cos\theta_k
= \cos\!\Bigl(\tfrac{k\pi}{n+1}\Bigr).
\]

Hence the spectral radius is
\[
\rho(G_J) = \max_k|\mu_k| 
= \cos\!\Bigl(\tfrac{\pi}{n+1}\Bigr)
= 1 - \frac{\pi^2}{2(n+1)^2} + \mathcal{O}\!\bigl(n^{-4}\bigr),
\]
so Jacobi converges very slowly as $n$ grows.

\subsection{Convergence of the Gauss--Seidel iteration}

For Gauss--Seidel we split $A=(D-L)-U$, with
\[
D=\tfrac{2}{h^2}I,\quad L=\tfrac{1}{h^2}\,\mathrm{tridiag}(1,0,0),\quad
U=\tfrac{1}{h^2}\,\mathrm{tridiag}(0,0,1).
\]
Then the iteration matrix is (see also the first exercise on the list for the exact formula)
\[
G_{GS}=(D-L)^{-1}U.
\] 

\begin{tcolorbox}[colback=yellow!5!white,colframe=yellow!50!black]
\begin{theorem}[Relation between eigenvalues of Gauss--Seidel and Jacobi]
Let $A = D - L - U$ with $D$ diagonal, $-L$ strictly lower, $-U$ strictly upper, and all diagonal entries of $D$ nonzero. Define
\[
G_J = D^{-1}(L+U), 
\qquad 
G_{GS} = (D-L)^{-1}U.
\]
If $\mu$ is an eigenvalue of $G_J$, then $\lambda = \mu^2$ is an eigenvalue of $G_{GS}$.
\end{theorem}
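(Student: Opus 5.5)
The plan is to characterise both spectra through determinants and link them with a diagonal similarity. As stated, for \emph{arbitrary} $A$ the claim can fail. The argument needs a structural hypothesis, and the one in force in this section is that $A$ is tridiagonal. More generally the argument works for \emph{consistently ordered} matrices, which is the setting of Young's theorem quoted in Chapter~1. I would therefore prove the statement for tridiagonal $A=D-L-U$, which covers the 1D Poisson matrix \eqref{eq:Poisson disc matrix}, and remark that only the scaling identity below is used.

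\textbf{Step 1: determinant characterisations.} Since $D$ and $D-L$ are invertible, we have
\[
\mu\in\sigma(G_J)\iff \det(\mu D-L-U)=0,\qquad
\lambda\in\sigma(G_{GS})\iff \det\bigl(\lambda(D-L)-U\bigr)=\det(\lambda D-\lambda L-U)=0 .
\]
Both follow by factoring out $D^{-1}$ or $(D-L)^{-1}$, whose determinants are nonzero.

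\textbf{Step 2: the key scaling identity.} For tridiagonal $A$ and any $\alpha\neq0$, let $S_\alpha=\operatorname{diag}(\alpha,\alpha^2,\dots,\alpha^n)$. Conjugation by $S_\alpha$ leaves $D$ unchanged and multiplies the subdiagonal by $\alpha$ and the superdiagonal by $\alpha^{-1}$. Consequently
\[
S_\alpha(\beta D-\gamma L-\delta U)S_\alpha^{-1}=\beta D-\alpha\gamma L-\alpha^{-1}\delta U,
\]
so the determinant of $\beta D-\gamma L-\delta U$ depends only on $\beta$ and the product $\gamma\delta$. This is the step where the structure of $A$ is really used, and checking the index bookkeeping (that $S_\alpha L S_\alpha^{-1}=\alpha L$, etc.) is the only delicate point.

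\textbf{Step 3: conclusion.} Let $\mu\neq0$ be an eigenvalue of $G_J$ and set $\lambda=\mu^2$. Applying Step 2 with $\gamma=\lambda=\mu^2$, $\delta=1$, and comparing with $\gamma'=\delta'=\mu$ (same product $\mu^2$), we get
\[
\det(\lambda D-\lambda L-U)=\det(\mu^2D-\mu L-\mu U)=\mu^n\det(\mu D-L-U)=0 .
\]
Hence $\lambda=\mu^2\in\sigma(G_{GS})$ by Step 1.

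\textbf{The case $\mu=0$.} Here $\lambda=0$, and $\det G_{GS}=\det\bigl((D-L)^{-1}\bigr)\det U=0$ because $U$ is strictly upper triangular. So $0\in\sigma(G_{GS})$ as well.

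\textbf{Application to the model problem.} With $\mu_k=\cos\theta_k$ this gives $\rho(G_{GS})=\rho(G_J)^2=\cos^2\!\bigl(\pi/(n+1)\bigr)$. This is the promised ``twice as fast'' statement.
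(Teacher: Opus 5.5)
Your proof is correct and follows the paper's route. Both arguments characterise the two spectra by determinants and then use a diagonal similarity that scales $L$ by $\alpha$ and $U$ by $\alpha^{-1}$; this is exactly the similarity $D^{-1}\bigl(\sqrt{\lambda}L+\tfrac{1}{\sqrt{\lambda}}U\bigr)\sim D^{-1}(L+U)$ that the paper asserts under its property-A footnote. You are right that such a hypothesis is needed. For $A=2I-J\in\mathbb{R}^{3\times 3}$ with $J$ the all-ones matrix, $G_J$ has eigenvalue $2$ but $\sigma(G_{GS})=\{0,2\pm\sqrt5\}$, which does not contain $4$. Your explicit $S_\alpha$ and your separate treatment of $\mu=0$ fill in what the paper leaves implicit.

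One addition is needed in your final remark: $\rho(G_{GS})=\rho(G_J)^2$ also requires the reverse inclusion. That follows by reading Step~3 backwards for $\lambda\neq0$, which is the direction in which the paper's proof actually runs.
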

\end{tcolorbox}

\begin{proof}
By definition,
\[
G_{GS} = (I - D^{-1}L)^{-1} D^{-1}U.
\]
Hence $\lambda \in \sigma(G_{GS})$ if and only if
\[
\det\!\big((I - D^{-1}L)^{-1}D^{-1}U - \lambda I\big) = 0,
\]
which is equivalent to
\[
\det\!\big(D^{-1}U - \lambda(I - D^{-1}L)\big) = 0
\;\;\Longleftrightarrow\;\;
\det\!\left(\sqrt{\lambda} I - D^{-1}\left(\sqrt{\lambda}L+\frac{1}{\sqrt{\lambda}}U\right )\right) = 0. \tag{1}
\]
Thus $\sqrt{\lambda}$ is an eigenvalue of
\[
M(\lambda) := D^{-1}\!\Big(\tfrac{1}{\sqrt{\lambda}}U + \sqrt{\lambda}\,L\Big).
\]
Now observe that $M(\lambda)$ is similar to $D^{-1}(L+U)$. \footnote{Here we use a bit a shortcut and we skip some steps. In fact this is true for matrices which have property $A$ (i.e. can be reduced to a block structure via a permutation of rows and columns. In this proof we assume that this permutation has already been done).} 
Therefore
\[
\sigma(M(\lambda)) = \sigma(D^{-1}(L+U)) = \sigma(G_J).
\]
Hence $\sqrt{\lambda} \in \sigma(G_J)$. Same holds for $-\sqrt{\lambda}$.
\end{proof}

\noindent\textbf{Conclusion.}
The Gauss--Seidel eigenvalues are
\[
\mu_k=\cos^2\!\Bigl(\tfrac{k\pi}{n+1}\Bigr),
\]
and the spectral radius $\rho(G_{GS}) = \rho(G_J^2)$ hence Gauss-Seidel converges twice as fast as Jacobi.

\begin{remark}[Spectrum and non-diagonalisability of Gauss--Seidel]
For the one-dimensional Poisson problem with $n$ interior points and natural ordering, the Jacobi iteration matrix $G_J$ is symmetric and diagonalisable. By contrast, the Gauss--Seidel iteration matrix is non-normal and \emph{not} diagonalisable. Its spectrum is
\[
\sigma(G_{GS}) =
\Bigl\{ \cos^2\!\Bigl(\tfrac{k\pi}{n+1}\Bigr) : k=1,\dots,\lfloor n/2\rfloor \Bigr\}
\;\cup\; \{0 \text{ (with algebraic multiplicity } \lceil n/2\rceil)\}.
\]
Thus $G_{GS}$ has only $\lfloor n/2\rfloor$ distinct nonzero eigenvalues, each simple, together with the eigenvalue $0$. The eigenvalue $0$ has algebraic multiplicity $\lceil n/2\rceil$ but geometric multiplicity one, so Jordan blocks of size $>1$ occur. This shows that $G_{GS}$ is not diagonalisable.
\end{remark}

\subsection{Convergence of the SOR iteration}

For SOR with relaxation parameter $\omega\in(0,2)$ we use the splitting $
A = \tfrac{1}{\omega}(D-\omega L) - \big((1-\omega)D+\omega U\big)$, so that the iteration matrix is
\[
G_\omega = (D-\omega L)^{-1}\big((1-\omega)D+\omega U\big).
\]
Young's theorem \cite{youngConvergencePropertiesSymmetric1970,vargaMatrixIterativeAnalysis2000}\footnote{we accept this result without proof} gives for certain matrices a relation between the SOR convergence
factor and that of Jacobi and the optimal parameter $\omega$:
\[
\omega_{\mathrm{opt}} = \frac{2}{1+\sqrt{1-\rho(G_J)^2}}, 
\qquad
\rho(G_{\omega_{\mathrm{opt}}}) = 
\frac{1-\sqrt{1-\rho(G_J)^2}}{1+\sqrt{1-\rho(G_J)^2}}.
\]
For our 1D Poisson problem, $\rho(G_J)=\cos\!\big(\tfrac{\pi}{n+1}\big)$, 
which yields
\[
\omega_{\mathrm{opt}} = \frac{2}{1+\sin\!\big(\tfrac{\pi}{n+1}\big)},\, \rho(G_{\omega_{\mathrm{opt}}}) = \frac{1-\sin\!\big(\tfrac{\pi}{n+1}\big)}{1+\sin\!\big(\tfrac{\pi}{n+1}\big)}.
\]
\begin{tcolorbox}[colback=yellow!5!white,colframe=yellow!50!black]
\begin{proposition}[Asymptotic Convergence and Limitations]
Based on the previous results we can derive the asymptotic convergence behaviour of the three classical iterative methods. For large $n$, the spectral radii expand as
\begin{align*}
\rho(G_J) &= 1 - \tfrac{\pi^2}{2(n+1)^2} + \mathcal{O}(n^{-4}),\\[0.3em]
\rho(G_{GS}) &= 1 - \tfrac{\pi^2}{(n+1)^2} + \mathcal{O}(n^{-4}),\\[0.3em]
\rho(G_{\omega_{\text{opt}}}) &= 1 - \tfrac{2\pi}{n+1} + \mathcal{O}(n^{-2}),
\end{align*}
\end{proposition}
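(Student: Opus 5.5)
The plan is to take the three closed-form spectral radii established just above and Taylor-expand each one in the small parameter $\theta:=\pi/(n+1)$. We have $\rho(G_J)=\cos\theta$, and from the Jacobi--Gauss--Seidel eigenvalue theorem $\rho(G_{GS})=\rho(G_J)^2=\cos^2\theta$. Young's formula gives $\rho(G_{\omega_{\mathrm{opt}}})=(1-\sin\theta)/(1+\sin\theta)$; this uses $\sqrt{1-\cos^2\theta}=\sin\theta$, which is valid because $\theta\in(0,\pi/2]$. So no new linear algebra is needed, and everything reduces to elementary expansions as $\theta\to 0$. Since $\theta=\mathcal{O}(n^{-1})$, a remainder $\mathcal{O}(\theta^m)$ is exactly $\mathcal{O}(n^{-m})$.

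For Jacobi, Taylor's theorem with Lagrange remainder gives $\cos\theta=1-\theta^2/2+\theta^4\cos(\xi)/24$. The remainder is bounded by $\theta^4/24=\mathcal{O}(n^{-4})$, and this yields the first line directly.

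For Gauss--Seidel, I would write $\cos^2\theta=\tfrac12(1+\cos 2\theta)$ and expand $\cos 2\theta=1-2\theta^2+\mathcal{O}(\theta^4)$. This gives $1-\theta^2+\mathcal{O}(\theta^4)$, which is the second line. Equivalently, one can square the Jacobi expansion. Either way, the $n^{-4}$ remainder is clean because only even powers appear.

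For SOR, set $s=\sin\theta=\theta+\mathcal{O}(\theta^3)$ and write
\[
\frac{1-s}{1+s}=1-\frac{2s}{1+s}=1-2s+\frac{2s^2}{1+s}.
\]
The last term is bounded by $2s^2\le 2\theta^2$ since $s\ge 0$. Hence $\rho(G_{\omega_{\mathrm{opt}}})=1-2\theta+\mathcal{O}(\theta^2)$, because $2s=2\theta+\mathcal{O}(\theta^3)$. This is the third line, with remainder $\mathcal{O}(n^{-2})$.

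The only delicate point is bookkeeping of the remainder orders. Jacobi and Gauss--Seidel are even functions of $\theta$, so their next correction is genuinely fourth order. SOR contains the odd term $-2\theta$, so its remainder is only second order; indeed the next term is $+2\theta^2$. I would state explicit constants for each $\mathcal{O}$ via the bounds $|\cos\xi|\le1$ and $s\le\theta$, so the estimates hold for all $n\ge1$ and not merely in the limit.

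I would close by noting the consequence: the number of iterations needed to reduce the error by a fixed factor scales like $-1/\log\rho$. This is $\sim 2n^2/\pi^2$ for Jacobi, $\sim n^2/\pi^2$ for Gauss--Seidel, and only $\sim n/(2\pi)$ for optimal SOR.
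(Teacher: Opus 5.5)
Your proposal is correct and follows the same route as the paper, which gives no separate argument beyond substituting the closed forms $\rho(G_J)=\cos\theta$, $\rho(G_{GS})=\cos^2\theta$ and Young's $\rho(G_{\omega_{\mathrm{opt}}})=(1-\sin\theta)/(1+\sin\theta)$, with $\theta=\pi/(n+1)$, and Taylor-expanding in $\theta$. Your explicit remainder bounds make rigorous what the paper leaves implicit: the Lagrange remainder for $\cos$, and the identity $\frac{1-s}{1+s}=1-2s+\frac{2s^2}{1+s}$ with $0\le s\le\theta$.
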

\end{tcolorbox}

\begin{example}[Numerical comparison, $n=49$]
Take $n=49$ interior points (so $h=1/50$). The three spectral radii evaluate to
\[
\rho(G_J)=\cos\!\Bigl(\tfrac{\pi}{50}\Bigr)\approx 0.99803,\qquad
\rho(G_{GS})=\rho(G_J)^2\approx 0.99606,\qquad
\rho(G_{\omega_{\rm opt}})\approx 0.88180.
\]
To reduce the initial error by a factor $10^{-6}$, the number of iterations $k$ needed satisfies $\rho^k\le 10^{-6}$, i.e.\ $k\ge -6/\log_{10}\rho$. This gives
\[
k_J \approx 7000,\qquad k_{GS}\approx 3500,\qquad k_{\omega_{\rm opt}}\approx 110.
\]
So for this modest grid, optimally relaxed SOR needs roughly $64\times$ fewer iterations than Jacobi to reach the same accuracy which is a direct, numerical illustration of the asymptotic rates above. Even so, $110$ iterations of a method that costs $\mathcal{O}(n)$ per sweep is still far more expensive than the direct $\mathcal{O}(n)$ Thomas algorithm available for this tridiagonal system; the real payoff of iterative methods appears in 2D/3D, where direct factorization becomes infeasible but the same spectral machinery still applies.
\end{example}

\begin{takeaway}
 \begin{itemize}
\item Jacobi converges very slowly: $\rho(G_J)\to 1$ as $n\to\infty$, with rate $\mathcal{O}(1/n^2)$.
\item Gauss--Seidel roughly doubles the effective rate, but is still $\mathcal{O}(1/n^2)$.
\item SOR with optimal relaxation improves to $\mathcal{O}(1/n)$ per iteration, but this is still poor compared to modern solvers (e.g.\ multigrid or Krylov methods).
\end{itemize}
Thus none of these methods are suitable as standalone solvers for large PDE systems.
\end{takeaway}

\section{Basic Iterative Methods for Graph Laplacians}

Graph Laplacians defined in Chapter 2 provide another fundamental class of sparse matrices.
We recall that for any graph, the smallest eigenvalue of $L$ is given by $0$, with corresponding eigenvector $\mathbf{e} = [1,1,1,\dots,1]^T$ i.e. the vector with all unit entries. This can be seen as by definition, each row of the graphc Laplacian matrix sums to $0$. 

Below a few examples with corresponding spectral properties

\begin{tikzpicture}[scale=1,
  every node/.style={circle,draw,fill=white,inner sep=1pt,minimum size=6pt},
  edge/.style={-}]
  \def\n{6}              
  \pgfmathtruncatemacro{\m}{\n-1}

\coordinate (Pstart) at (0,0);
\foreach \i in {1,...,\n} {
  \node (p\i) at ($(Pstart)+(\i*0.9,0)$) {};
}
\foreach \i in {1,...,\m} {
  \pgfmathtruncatemacro{\next}{\i+1}
  \draw (p\i) -- (p\next);
}
  \node[draw=none,fill=none] at ({0.9*(\n+1)/2},-0.9){$P_{\n}$};

  \def\r{1.2}
  \coordinate (Ccenter) at (7,0);
  \foreach \k in {1,...,\n} {
    \node (c\k) at ({7+\r*cos(360*\k/\n)}, {\r*sin(360*\k/\n)}) {};
  }
  \foreach \k in {1,...,\n} {
    \pgfmathtruncatemacro{\next}{mod(\k,\n)+1}
    \draw (c\k) -- (c\next);
  }
  \node[draw=none,fill=none] at (7,-2) {$C_{\n}$};

  \coordinate (Scenter) at (12,0);
  \node (hub) at (Scenter) {};
  \foreach \k in {1,...,\m} {
    \node (s\k) at ({12 + \r*1.4*cos(360*\k/\m)}, {\r*1.4*sin(360*\k/\m)}) {};
    \draw (hub) -- (s\k);
  }
  \node[draw=none,fill=none] at (12,-2.2) {$S_{\n}$};
\end{tikzpicture}

\paragraph{Path graph $P_n$.} 
Here $L$ is a rank$-2$ perturbation of the 1D Poisson matrix. We can bound the eigenvalues of $L$ using the following result

\begin{tcolorbox}[colback=yellow!5!white,colframe=yellow!50!black]
\begin{theorem}[\cite{wilkinsonAlgebraicEigenvalueProblem1965}]\label{thm:Wilkinson}
Consider two symmetric matrices $S_1,S_2 \in \mathbb{R}^{n\times n}$. Let $\lambda_1\ge\lambda_2\ge \dots \ge \lambda_n$.  Then the $k$th eigenvalue of the matrix $S_1+S_2$ satisfies
\[\lambda_k(S_1) + \lambda_n(S_2) \le \lambda_k(S_1+S_2) \le \lambda_k(S_1)+\lambda_1(S_2)\]
\end{theorem}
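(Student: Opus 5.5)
We will prove the two-sided bound through the Courant--Fischer min--max characterisation of the eigenvalues of a symmetric matrix. With the ordering $\lambda_1\ge\dots\ge\lambda_n$ this reads
\[
\lambda_k(S)=\max_{\dim V=k}\ \min_{x\in V,\ x\neq 0}\frac{x^\top S x}{x^\top x}.
\]
We first justify this formula briefly from the orthonormal eigendecomposition $S=Q\Lambda Q^\top$, which exists for every symmetric matrix (see the Remark following the definition of the Schur decomposition and SVD).

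The justification goes as follows. Let $q_1,\dots,q_n$ be orthonormal eigenvectors of $S$. On $V_k=\mathrm{span}(q_1,\dots,q_k)$ the Rayleigh quotient is at least $\lambda_k$, which gives ``$\ge$''. Conversely, any $k$-dimensional $V$ meets the $(n-k+1)$-dimensional space $\mathrm{span}(q_k,\dots,q_n)$ nontrivially, by a dimension count. At a nonzero vector of that intersection the quotient is at most $\lambda_k$, which gives ``$\le$''. We also record the extreme case of the same argument: for every $x\neq 0$,
\[
\lambda_n(S)\le \frac{x^\top S x}{x^\top x}\le \lambda_1(S).
\]

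The main step is then a one-line comparison of Rayleigh quotients. For any $x\neq 0$,
\[
\frac{x^\top (S_1+S_2)x}{x^\top x}=\frac{x^\top S_1x}{x^\top x}+\frac{x^\top S_2x}{x^\top x},
\]
and the second term lies in $[\lambda_n(S_2),\lambda_1(S_2)]$. So, pointwise in $x$, the Rayleigh quotient of $S_1+S_2$ is squeezed between that of $S_1$ plus $\lambda_n(S_2)$ and that of $S_1$ plus $\lambda_1(S_2)$. Taking the minimum over $x\in V$ and then the maximum over $k$-dimensional subspaces $V$ preserves both inequalities, because the added constants do not depend on $x$ or $V$. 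By Courant--Fischer this yields exactly
\[
\lambda_k(S_1)+\lambda_n(S_2)\le\lambda_k(S_1+S_2)\le\lambda_k(S_1)+\lambda_1(S_2).
\]

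The only nontrivial ingredient is the min--max characterisation itself. Within it, the step needing care is the dimension-counting intersection argument: two subspaces whose dimensions sum to more than $n$ intersect nontrivially. Everything after that is monotonicity of $\min$ and $\max$ under adding constants. If we prefer to avoid the full min--max theorem, an alternative is the lower bound by testing on $V_k(S_1)$, with the upper bound obtained by applying the lower bound to $S_1=(S_1+S_2)+(-S_2)$ and using $\lambda_n(-S_2)=-\lambda_1(S_2)$. This route still needs the ``$\le$'' half of min--max for $\lambda_k(S_1+S_2)$, so we will present the min--max proof directly.
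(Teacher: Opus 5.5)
Your proof is correct: the Courant--Fischer characterisation, combined with the pointwise bound $\lambda_n(S_2)\le x^\top S_2x/x^\top x\le\lambda_1(S_2)$ and the monotonicity of $\min$ and $\max$ under adding a constant, gives both inequalities exactly as you describe. The paper gives no proof of its own and only cites Wilkinson, where this (Weyl's) inequality is derived in essentially the same way from the minimax characterisation, so your argument matches the intended one.
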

\end{tcolorbox}

\begin{tcolorbox}[colback=yellow!5!white,colframe=yellow!50!black]
\begin{corollary}[Bound on eigenvalues of $L$ for $P_n$]
Let $A_{Lap}$ be defined as in \eqref{eq:Poisson disc matrix}. We write $L = A_{Lap} + D$ where $D$ is a diagonal matrix with $D(1,1) = -1$, $D(n,n) = -1$ and all other entries zero.  Following Theorem \ref{thm:Wilkinson} the $k$th eigenvalue of $L$ is bounded by
\[\max(\lambda_k(A_{Lap})-1,\lambda_n(A_{Lap}+\lambda_k(D)) \le \lambda_k(L) \le \min(\lambda_k(A_{Lap}),\lambda_1(A)-\lambda_k(D)).\]

In particular the extreme eigenvalues are bounded by:
\[\lambda_{min}(A_{Lap})-1\le \lambda_{min}(L) \le \min(\lambda_{min}(A_{Lap}),\lambda_{max}(A_{Lap})-1),\]

\[ \max(\lambda_{max}(A_{Lap})-1,\lambda_{min}(A_{Lap}))\le \lambda_{max}(L)\le \lambda_{max}(A_{Lap}) \]
\end{corollary}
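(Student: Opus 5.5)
The plan is to apply Theorem~\ref{thm:Wilkinson} twice to the splitting $L=A_{Lap}+D$, once with $(S_1,S_2)=(A_{Lap},D)$ and once with the roles exchanged, $(S_1,S_2)=(D,A_{Lap})$, and then keep the sharper of the two bounds on each side. Both matrices are symmetric, so the theorem applies.

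Throughout, $A_{Lap}$ must be read as the \emph{unscaled} matrix $\mathrm{tridiag}(-1,2,-1)$, i.e.\ \eqref{eq:Poisson disc matrix} without the factor $1/h^2$. Only then does $L=A_{Lap}+D$ hold for the path-graph Laplacian computed in Chapter~2, whose corner entries are $1$ instead of $2$. I would state this explicitly at the start.

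The first step is to record the ordered spectrum of $D$. It is diagonal with entries $0$ (repeated $n-2$ times) and $-1$ (twice). In the decreasing order of Theorem~\ref{thm:Wilkinson} this gives $\lambda_1(D)=\dots=\lambda_{n-2}(D)=0$ and $\lambda_{n-1}(D)=\lambda_n(D)=-1$. In particular $\lambda_1(D)=0$ and $\lambda_n(D)=-1$.

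With $S_1=A_{Lap}$ and $S_2=D$, the theorem gives
\[
\lambda_k(A_{Lap})-1\le\lambda_k(L)\le\lambda_k(A_{Lap}).
\]
With $S_1=D$ and $S_2=A_{Lap}$ it gives
\[
\lambda_k(D)+\lambda_{\min}(A_{Lap})\le\lambda_k(L)\le\lambda_k(D)+\lambda_{\max}(A_{Lap}).
\]
Taking the maximum of the two lower bounds and the minimum of the two upper bounds yields the general statement. Here I expect a small bookkeeping point to be the only real obstacle: the second upper bound must come out as $\lambda_1(A_{Lap})+\lambda_k(D)$. Since $\lambda_k(D)\le 0$, the displayed ``$\lambda_1(A)-\lambda_k(D)$'' should be read with a plus sign (and $\lambda_1(A)$ as $\lambda_1(A_{Lap})$). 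Otherwise the bound is weaker than the first one and the extreme-eigenvalue formulas below would not follow. I would make this reading explicit in the proof.

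Finally I would specialise to the two extreme indices.

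For $k=n$ we have $\lambda_n(D)=-1$. Both lower bounds coincide at $\lambda_{\min}(A_{Lap})-1$, and the upper bounds are $\lambda_{\min}(A_{Lap})$ and $\lambda_{\max}(A_{Lap})-1$. This gives the first displayed inequality.

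For $k=1$ we have $\lambda_1(D)=0$. The lower bounds are $\lambda_{\max}(A_{Lap})-1$ and $\lambda_{\min}(A_{Lap})$, while both upper bounds equal $\lambda_{\max}(A_{Lap})$. This gives the second displayed inequality.

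As a sanity check, I would compare with the known value $\lambda_{\min}(L)=0$ (eigenvector $\mathbf e$) and with $\lambda_{\min}(A_{Lap})=2-2\cos(\pi/(n+1))$ from \eqref{eq:lambda}. This confirms $\lambda_{\min}(A_{Lap})-1\le 0\le\lambda_{\min}(A_{Lap})$, so the bounds are consistent.
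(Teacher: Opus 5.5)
Your proposal is correct and takes the same route as the paper, which gives no proof beyond invoking Theorem~\ref{thm:Wilkinson}: the max/min form of the statement is exactly your two applications of it, with $(S_1,S_2)=(A_{Lap},D)$ and $(S_1,S_2)=(D,A_{Lap})$. Your two readings are also the right corrections to the statement, namely $A_{Lap}$ as the unscaled $\mathrm{tridiag}(-1,2,-1)$ and the second upper bound as $\lambda_1(A_{Lap})+\lambda_k(D)$. One small caveat is that $\lambda_1(D)=0$ requires $n\ge 3$ (for $n=2$, $D=-I$); the displayed extreme bounds still hold there, because $\lambda_{\max}(A_{Lap})-1\ge\lambda_{\min}(A_{Lap})$.
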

\end{tcolorbox}

\paragraph{Cycle graph $C_n$.} 
Eigenvectors are Fourier modes; eigenvalues are 
$\lambda_k = 2-2\cos(2\pi k/n)$. Jacobi eigenvalues are $\cos(2\pi k/n)$. 

\paragraph{Star graph $S_n$.} The spectrum of the graph Laplacian is  $\{0,1(\times n-2),n\}$.\\

For real graph algorithms one typically turns to more advanced methods. Still, the classical iterations remain a 
useful ``microscope'' for understanding graph structure. They also have clear interpretations from the perspective of classical graph theory as we shall see later.

\begin{example}[Jacobi = random walk on a path graph]
Consider the path $P_4$ with vertices $\{1,2,3,4\}$. 
The adjacency matrix and degree matrix are
\[
A=\begin{bmatrix}
0 & 1 & 0 & 0\\
1 & 0 & 1 & 0\\
0 & 1 & 0 & 1\\
0 & 0 & 1 & 0
\end{bmatrix},\qquad 
D=\mathrm{diag}(1,2,2,1).
\]
The Jacobi iteration matrix is
\[
G_J = D^{-1}A =
\begin{bmatrix}
0 & 1 & 0 & 0\\
\tfrac{1}{2} & 0 & \tfrac{1}{2} & 0\\
0 & \tfrac{1}{2} & 0 & \tfrac{1}{2}\\
0 & 0 & 1 & 0
\end{bmatrix}.
\]

\smallskip
\noindent
\emph{Interpretation.} Each row of $G_J$ gives the transition 
probabilities of a random walk:
\begin{itemize}
\item From vertex $1$, you move to vertex $2$ with probability $1$.
\item From vertex $2$, you move to $1$ or $3$ with equal probability $1/2$.
\item From vertex $3$, you move to $2$ or $4$ with probability $1/2$.
\item From vertex $4$, you move to vertex $3$ with probability $1$.
\end{itemize}

 Starting from an initial vector 
$x^{(0)}=[1,0,0,0]^{\top}$ (all mass at vertex $1$), the Jacobi updates are
\[
x^{(1)} = G_J x^{(0)} = [0,\tfrac{1}{2},0,0]^{\top}, \quad
x^{(2)} = G_J x^{(1)} = \tfrac{1}{2}[1,0,\tfrac{1}{2},0]^{\top},
\]
and so on. The values spread along the path exactly like the 
probability distribution of a random walk or like heat diffusion 
on the network. Thus Jacobi
iteration coincides with a synchronous random walk (or diffusion) on the
graph. This interpretation is standard in spectral graph theory.
\end{example}
\begin{takeaway}
Jacobi iteration on $Lx=b$ is equivalent to
the evolution of a random walk on the underlying graph, with eigenvalues
describing diffusion rates. Thus Jacobi corresponds to a synchronous diffusion process or a ``lazy random walk'' on the graph.
\end{takeaway}

\paragraph{Relation to spectral clustering.}
The slowest-decaying Jacobi modes are precisely the eigenvectors of the Laplacian 
associated with the smallest nonzero eigenvalues. 
These eigenvectors are smooth signals on the graph: they vary little across  densely connected regions and change more abruptly across weak connections.  In spectral graph theory this property is exploited to detect communities \cite{vonluxburgTutorialSpectralClustering2007}.
For instance, the \emph{Fiedler vector} \cite{fiedlerAlgebraicConnectivity1973} (the eigenvector associated with  the second-smallest Laplacian eigenvalue) typically has nearly constant values  on different clusters and a sharp transition across the cut between them.  Thresholding this vector provides a natural bipartition of the graph.

\begin{example}[Jacobi iteration and community detection]
Consider the graph consisting of two triangles connected by a single edge:  
vertices $\{1,2,3\}$ fully connected to each other, 
vertices $\{4,5,6\}$ fully connected to each other, 
and one edge $(3,4)$ connecting the two groups.

\begin{center}
\begin{tikzpicture}[scale=1.2,
  every node/.style={circle,draw,fill=white,inner sep=1pt,minimum size=6pt,font=\small},
  edge/.style={-}]
  
  \begin{scope}[on background layer]
    \fill[blue!10,rounded corners] (-0.5,-1.5) rectangle (1.5,1.5); 
    \fill[red!10,rounded corners]  (2.5,-1.5) rectangle (4.5,1.5);  
  \end{scope}
  
  \node (1) at (0,0) {1};
  \node (2) at (1,1) {2};
  \node (3) at (1,-1) {3};
  \draw[edge] (1)--(2)--(3)--(1);
  
  \node (4) at (3,0) {4};
  \node (5) at (4,1) {5};
  \node (6) at (4,-1) {6};
  \draw[edge] (4)--(5)--(6)--(4);
  
  \draw[edge,thick] (3)--(4);
\end{tikzpicture}
\end{center}

\paragraph{Graph Laplacian construction.}
The adjacency matrix is
\[
A = \begin{bmatrix}
0 & 1 & 1 & 0 & 0 & 0 \\
1 & 0 & 1 & 0 & 0 & 0 \\
1 & 1 & 0 & 1 & 0 & 0 \\
0 & 0 & 1 & 0 & 1 & 1 \\
0 & 0 & 0 & 1 & 0 & 1 \\
0 & 0 & 0 & 1 & 1 & 0
\end{bmatrix},
\]
and the degree matrix is
\[
D = \mathrm{diag}(2,\,2,\,3,\,3,\,2,\,2).
\]
Thus the Laplacian is
\[
L = D - A = 
\begin{bmatrix}
2 & -1 & -1 & 0 & 0 & 0 \\
-1 & 2 & -1 & 0 & 0 & 0 \\
-1 & -1 & 3 & -1 & 0 & 0 \\
0 & 0 & -1 & 3 & -1 & -1 \\
0 & 0 & 0 & -1 & 2 & -1 \\
0 & 0 & 0 & -1 & -1 & 2
\end{bmatrix}.
\]
\emph{Spectrum and Fiedler vector.}
For the $L$ above, $\lambda(L)\approx \{\,0,\;0.438447,\;3,\;3,\;3,\;4.561553\,\},$
so $\lambda_2\approx 0.438447$ is smallest nonzero. An associated Fiedler vector is
\[
v_{\mathrm{Fiedler}} \approx [-0.465,\,-0.465,\,-0.261,\;0.261,\;0.465,\;0.465]^{\top}.
\]
It has a sign change change across the bridge $(3,4)$. Thresholding at $0$ (or performing a sweep cut) yields the two clusters.
\end{example}

\medskip
\noindent
\paragraph{Jacobi as both power iteration and low-pass filter.}
Let $G=(V,E)$ be an undirected graph with adjacency $A$, degree matrix $D$, and
random walk matrix $P:=D^{-1}A$. Define the symmetric normalized adjacency
$S:=D^{-1/2}AD^{-1/2}$ and the normalized Laplacians
$\mathcal L_{\mathrm{rw}}:=I-P$ and $\mathcal L_{\mathrm{sym}}:=I-S$.
We have the similarity $P=D^{-1/2} S D^{1/2}$, hence $P$ and $S$ share eigenvalues
$1=\mu_1\ge \mu_2\ge\cdots\ge \mu_n\ge -1$. Let
$\{v_i\}_{i=1}^n$ be an orthonormal basis of eigenvectors of $S$ ($Sv_i=\mu_i v_i$),
so $v_1=\frac{D^{1/2}\mathbf 1}{\|D^{1/2}\mathbf 1\|_2}$ and
$\lambda_i^{\mathrm{sym}}=1-\mu_i$ are the eigenvalues of $\mathcal L_{\mathrm{sym}}$.

\medskip
\noindent\emph{Low–pass view.}
Jacobi with $b=0$ is $x^{(k+1)}=P x^{(k)}$, so with $y^{(k)}:=D^{1/2}x^{(k)}$,
\[
y^{(k)} \;=\; S^k y^{(0)} \;=\; \sum_{i=1}^n \mu_i^{\,k}\,(y^{(0)},v_i)\,v_i
\quad\Longrightarrow\quad
x^{(k)} \;=\; D^{-1/2}\!\sum_{i=1}^n \mu_i^{\,k}\,( D^{1/2}x^{(0)},v_i)\,v_i.
\]
Thus each spectral component is multiplied by $|\mu_i|^{\,k}=|1-\lambda_i^{\mathrm{sym}}|^{\,k}$:
modes with large $\lambda_i^{\mathrm{sym}}$ (high “graph frequency”, i.e., oscillatory across edges)
are attenuated rapidly, while smooth modes (small $\lambda_i^{\mathrm{sym}}$) persist.
Jacobi is therefore a \emph{low-pass} graph filter (i.e. a filter which \textit{removes} high-frequency components and permits low-frequency components to pass through).

\medskip
\paragraph{Power method view}

The low-pass picture has a classical counterpart. Let $M\in\mathbb{R}^{n\times n}$ have a
\emph{dominant} eigenvalue, that is, let its eigenvalues be ordered as
$|\lambda_1|>|\lambda_2|\ge\cdots\ge|\lambda_n|$, so that $\rho(M)=|\lambda_1|$. Multiplying a vector
repeatedly by $M$ amplifies its component along the dominant eigenvector $v_1$ more than any other, so
that after rescaling at each step the iterate lines up with $v_1$. This is the \emph{power method}, the
oldest of all eigenvalue iterations, and the simplest predecessor of the Lanczos method of
Chapter~4.

\begin{algorithm}[h!]
\caption{Power method}\label{alg:power}
\begin{algorithmic}[1]
\REQUIRE Matrix $M\in\mathbb{R}^{n\times n}$ with a dominant eigenvalue, starting vector $y^{(0)}$ with
$\|y^{(0)}\|_2=1$ and a nonzero component along $v_1$, tolerance $\varepsilon>0$.
\FOR{$k = 0,1,2,\dots$}
  \STATE $z \gets M\,y^{(k)}$ \hfill \{one matrix--vector product per step\}
  \STATE $\mu_k \gets (y^{(k)})^\top z$ \hfill \{Rayleigh quotient, since $\|y^{(k)}\|_2=1$\}
  \IF{$\|z-\mu_k\,y^{(k)}\|_2 \le \varepsilon$}
    \STATE \textbf{stop}
  \ENDIF
  \STATE $y^{(k+1)} \gets z/\|z\|_2$
\ENDFOR
\ENSURE Approximate dominant eigenpair $(\mu_k,\,y^{(k)})$.
\end{algorithmic}
\end{algorithm}

Provided $y^{(0)}$ has a nonzero component along $v_1$, the iterates converge (up to sign) to the
dominant eigenvector, and the Rayleigh quotients $\mu_k$ converge to $\lambda_1$. The rate is governed
by the ratio of the two largest eigenvalue moduli: the eigenvector error decays like
$|\lambda_2/\lambda_1|^k$ and the eigenvalue error, for symmetric $M$, twice as fast. Slow convergence
is therefore expected when the two largest eigenvalues are not well separated. See Problem~8 for details. 

\medskip
\noindent\textbf{Deflating the trivial mode to reveal $v_2$.}
For the random walk, $M=P$ has dominant eigenvalue $1$ with right eigenvector $\mathbf 1$ (since $P\mathbf 1=\mathbf 1$).
Because of this and in order to find the first eigenpair different from $(1,\mathbf{1})$ (and identify the Fiedler vector), we \emph{project it out} after each step and normalize in the
$D$-inner product $\langle u,v\rangle_D:=u^\top D v$:
\[
\tilde x^{(k+1)}=P x^{(k)},\qquad
x^{(k+1)}=\tilde x^{(k+1)}-\frac{\langle \tilde x^{(k+1)},\mathbf 1\rangle_D}{\langle \mathbf 1,\mathbf 1\rangle_D}\,\mathbf 1,
\qquad \|x^{(k+1)}\|_D=1.
\]
Setting $y^{(k)}:=D^{1/2}x^{(k)}$ and expanding $y^{(0)}=\sum_{i\ge2}\alpha_i v_i$ gives
$y^{(k)}=\sum_{i\ge2}\alpha_i \mu_i^{\,k} v_i$, hence $x^{(k)}\to \pm D^{-1/2}v_2$ at rate $|\mu_3/\mu_2|^{\,k}$.
Thus \emph{Jacobi = power iteration on $P$} (with deflation) converging to the Fiedler vector.

\noindent\textbf{Classical point of view}
This viewpoint is standard in spectral graph theory and network science:
$P=D^{-1}A$ is the random-walk operator; $S$ diagonalizes it; Jacobi is power
iteration on $P$; and the prominence of the Fiedler vector underlies spectral
clustering.

\begin{takeaway}
\begin{itemize}  
\item Classical iterations such as Jacobi act as diffusion or averaging processes on graphs.  
\item Their eigenmode analysis shows that high--frequency oscillations on the graph are quickly damped, while smooth global modes persist.  
\item The slow modes correspond to eigenvectors of the Laplacian with small eigenvalues; these capture global connectivity patterns and reveal community structure.  
\item This perspective connects basic iterative methods to spectral clustering: Jacobi iterations naturally enhance low--frequency information that underlies graph partitioning.  
\item In practice their convergence is too slow to serve as efficient graph solvers, but they remain conceptually valuable as ``microscopes'' for understanding diffusion and spectral structure.  
\end{itemize}
\end{takeaway}

\section{Basic Iterative Methods in Machine Learning}

The linear algebra perspective on Jacobi, Gauss--Seidel, and SOR 
extends naturally to optimization and machine learning.  
At their core, many ML algorithms amount to solving large linear or 
least-squares problems via iterative schemes.  
This section shows how the classical iterations appear as special 
cases of gradient descent, and how their spectral behaviour connects 
to convergence properties of training.

\subsection{Richardson iteration and gradient descent}

Recall the Richardson scheme for $Ax=b$ with step size $\tau>0$:
\[
x^{(k+1)} = x^{(k)} + \tau(b - Ax^{(k)}).
\]
This is exactly gradient descent for the quadratic loss
\[
\min_x f(x) = \tfrac{1}{2}x^{\top}Ax - b^{\top}x,
\]
since $\nabla f(x) = Ax-b$.  
Thus gradient descent on quadratic objectives is nothing but
Richardson iteration.

\paragraph{Spectral behaviour.}
If $A$ is symmetric positive definite with eigenpairs 
$(\lambda_j,v_j)$, then the error evolves as
\[
e^{(k)} = \sum_j (1-\tau\lambda_j)^k \,\alpha_j v_j.
\]
Hence convergence requires $0<\tau<2/\lambda_{\max}$, and the rate
is controlled by the condition number $\kappa(A)=\lambda_{\max}/\lambda_{\min}$ \cite{nesterovIntroductoryLecturesConvex2004}:
the closer eigenvalues are to zero, the slower their modes decay.
This is the same ``bottleneck'' phenomenon seen for smooth modes
in PDEs and graph Laplacians.

\begin{center}
\begin{tikzpicture}[xscale=1.1,yscale=1.6]
\draw[->] (-0.1,0) -- (6.6,0) node[right]{\small iteration $k$};
\draw[->] (0,-0.02) -- (0,1.05) node[above]{\small mode amplitude};
\draw[thick,domain=0:6,samples=140] plot (\x,{exp(-0.9*\x)});
\draw[thick,gray,domain=0:6,samples=140] plot (\x,{exp(-0.18*\x)});
\node[anchor=south west] at (0.35,0.35) {\small large $\lambda_j$ (fast)};
\node[anchor=south west,gray] at (3.4,0.55) {\small small $\lambda_j$ (slow)};
\end{tikzpicture}
\end{center}

\paragraph{Choosing the step size $\tau$.} Each mode $j$ is damped at rate $g_j(\tau)=|1-\tau\lambda_j|$, so the overall (worst-case) rate is $q(\tau)=\max_j g_j(\tau)$, which for $\lambda_j\in[\lambda_{\min},\lambda_{\max}]$ reduces to $q(\tau)=\max\{|1-\tau\lambda_{\min}|,|1-\tau\lambda_{\max}|\}$. This is exactly the Richardson-for-SPD-matrices result of Chapter~1: the minimax-optimal step
\[
\tau^\star=\frac{2}{\lambda_{\min}+\lambda_{\max}}
\quad\text{equalizes the two extreme mode gains and gives}\quad
q(\tau^\star)=\frac{\kappa-1}{\kappa+1}, \kappa=\frac{\lambda_{\max}}{\lambda_{\min}}.
\]

\begin{example}[Optimal step size for a two-eigenvalue toy problem]
Suppose $A$ (or $X^\top X$) has only two distinct eigenvalues, $\lambda_{\min}=1$ and $\lambda_{\max}=9$, so $\kappa=9$. The optimal step is $\tau^\star=2/(1+9)=0.2$, giving mode gains $|1-\tau^\star\lambda_{\min}|=|1-0.2|=0.8$ and $|1-\tau^\star\lambda_{\max}|=|1-1.8|=0.8$: both extreme modes decay at exactly the same rate $q^\star=(\kappa-1)/(\kappa+1)=8/10=0.8$, as predicted. A poorly chosen step, e.g.\ $\tau=2/\lambda_{\max}=2/9\approx0.222$, leaves the $\lambda_{\max}$ mode exactly on the boundary of divergence ($g=|1-2|=1$) while the $\lambda_{\min}$ mode still decays comfortably ($g=|1-0.222|\approx0.778$) -- a reminder that the largest eigenvalue alone does not determine the best step size; both extremes of the spectrum must be balanced.
\end{example}

\paragraph{Connection to Jacobi.}
Jacobi can be viewed as Richardson with a preconditioner $D^{-1}$:
\[
x^{(k+1)} = x^{(k)} + D^{-1}(b-Ax^{(k)}).
\]
This interpretation carries over directly to ML: preconditioning
accelerates gradient descent by scaling the update direction.

\subsection{Applications in machine learning}

\paragraph{(1) Linear regression.}
Given data $X\in\mathbb{R}^{m\times n}$ and labels $y\in\mathbb{R}^m$,
the least squares problem
\[
\min_w \tfrac{1}{2}\|Xw-y\|^2
\]
leads to the normal equations $X^{\top}Xw=X^{\top}y$.  
Richardson/gradient descent updates are
\[
w^{(k+1)} = w^{(k)} - \tau\big(X^{\top}Xw^{(k)} - X^{\top}y\big).
\]
The spectrum of $X^{\top}X$ (squared singular values of $X$) dictates
the convergence speed.  
If $X$ is ill-conditioned, convergence is slow, mirroring the 
PDE case where low frequencies persist.  
This motivates preconditioning and methods like stochastic gradient descent (SGD).

\paragraph{(2) Logistic regression and nonlinear losses.}
For convex but nonlinear losses, e.g.
\[
f(w) = \sum_{i=1}^m \log\!\big(1+\exp(-y_i x_i^{\top}w)\big),
\]
gradient descent is no longer exactly Richardson,
but locally (near the solution) the Hessian $H=\nabla^2 f(w^\ast)$
plays the role of $A$.  
Convergence is again governed by the spectrum of $H$:
flat directions (small eigenvalues) cause slow learning,
steep directions (large eigenvalues) require small step sizes 
for stability.

\subsection{Smoothing and spectral bias}

Just as Jacobi smooths high-frequency error modes in PDEs,
gradient descent smooths high-frequency parameter components. In training neural networks this is visible as \emph{spectral bias} \cite{rahamanSpectralBiasNeuralNetworks2019}: early iterations capture low-frequency/global patterns in the data, while fine details emerge later.

\begin{example}[Smoothing vs.\ spectral bias] Consider two settings: \\
\noindent \emph{(a) Jacobi for the 1D Poisson problem.}  
Let $A$ be the Laplacian matrix on $n$ grid points.  
The Jacobi error iteration reads
\[
e^{(k+1)} = G_J e^{(k)}, \qquad 
\alpha_j^{(k)} = \mu_j^k \alpha_j^{(0)}, \quad 
\mu_j = \cos\!\Big(\tfrac{\pi j}{n+1}\Big).
\]
The eigenvectors are sine waves with frequency $j$.  
Large $j$ corresponds to oscillatory modes, for which $|\mu_j|$ is small, so they are quickly damped.  
Smooth low--frequency modes (small $j$) have $|\mu_j|\approx 1$ and decay slowly.  
Thus Jacobi acts as a \emph{low--pass smoother}: it removes oscillations first and leaves global structure last.

\medskip
\noindent
\emph{(b) Gradient descent for least squares regression.}  
Suppose we fit $Xw \approx y$ with normal equations $X^\top X w = X^\top y$.  
Let $X^\top X = V\Lambda V^\top$ with eigenpairs $(\lambda_j,v_j)$.  
Gradient descent with step size $\tau$ yields
\[
\alpha_j^{(k)} = (1-\tau\lambda_j)^k \alpha_j^{(0)}.
\]
For many data distributions (e.g. Fourier features, random features, kernel regression), it turns out that {\bf low-frequency patterns} correspond to larger eigenvalues of the kernel or data covariance, while high-frequency patterns correspond to smaller eigenvalues. 
This phenomenon is known as \emph{spectral bias}.  

\medskip
\noindent
\emph{Duality.}  
Both Jacobi and gradient descent share the same filtering mechanism: \(
\alpha_j^{(k)} = g(\lambda_j)^k \,\alpha_j^{(0)}\),
where $\lambda_j$ indexes spectral modes.  
In PDE solvers, large $\lambda_j$ corresponds to high--frequency error and is damped quickly (smoothing).  
In learning, large $\lambda_j$ corresponds to low--frequency patterns in the data, which are learned quickly.
Thus the \emph{roles of low vs.\ high frequency are swapped}, but the underlying principle is the same: iterative methods act as frequency filters, with the spectrum of $A$ or $X^\top X$ dictating which modes persist.
\end{example}

\begin{takeaway}
\begin{itemize}
\item Richardson iteration is gradient descent on quadratic problems; 
Jacobi is Richardson with a diagonal preconditioner.  
\item Convergence speed depends on the eigenvalue spectrum: directions 
corresponding to small eigenvalues decay slowly, leading to bottlenecks.  
\item In ML problems this explains the role of conditioning, the need 
for preconditioning/normalization, and the phenomenon of spectral bias.  
\item Classical iterative methods thus provide a unifying framework: 
from PDE solvers to optimization and machine learning, the spectral 
mechanisms are the same.
\end{itemize}
\end{takeaway}

\newpage

\section{Exercises}

\paragraph{Problem 1 (Closed form of the 1D Gauss--Seidel iteration matrix).}
Let
\[
A=\frac{2}{h^2}\,\operatorname{tridiag}(-1,\,2,\,-1)\in\mathbb{R}^{n\times n},
\qquad
A=D-L-U,
\]
where the Gauss-Seidel splitting uses nonnegative strict parts
\[
D=\frac{4}{h^2}I,\quad
L=\frac{2}{h^2}\,\operatorname{tridiag}(1,0,0),\quad
U=\frac{2}{h^2}\,\operatorname{tridiag}(0,0,1).
\]
The Gauss--Seidel iteration matrix is \(G_{GS}=(D-L)^{-1}U\).
\begin{enumerate}
\item Show that \(G_{GS}\) is independent of \(h\) and can be written as
\[
G_{GS}=B^{-1}S_{+},\qquad
B:=2I-S_{-},
\]
where \(S_{-}\) has ones on the subdiagonal and \(S_{+}\) has ones on the superdiagonal.
\item Compute \(B^{-1}\) explicitly and deduce a closed form for the entries of \(G_{GS}\).
\item Write the first few rows of \(G_{GS}\) to display its “plain matrix” pattern.
\end{enumerate}

\paragraph{Problem 2 (Error components for Jacobi).}
Let $A$ be the 1D Poisson matrix with $n$ interior points and
$G_J$ the Jacobi iteration matrix.
\begin{itemize}
\item[(a)] Show that the error $e^{(k)} = x^{(k)}-x^\ast$ can be expanded in the eigenbasis
\[
e^{(k)} = \sum_{j=1}^n \alpha_j^{(k)} v^{(j)}, \qquad
\alpha_j^{(k+1)} = \mu_j \alpha_j^{(k)},
\]
with eigenvalues $\mu_j = \cos\!\left(\tfrac{j\pi}{n+1}\right)$.
\item[(b)] Which modes (low $j$ or high $j$) have eigenvalues closest to~1?
\item[(c)] Conclude that Jacobi damps oscillatory (high--frequency) error components quickly,
but smooth (low--frequency) error components persist.
\end{itemize}

\paragraph{Preamble (for Problems 3 and 4, exam 2021/2022).}

For $a_0,a_{-1},a_1 \in \mathbb{R}$ with $a_{-1}\cdot a_1>0$ 
the Toeplitz tridiagonal matrix
\[
A =
\begin{bmatrix}
a_0 & a_1 \\
a_{-1} & a_0 & a_1 \\
& \ddots & \ddots & \ddots \\
& & a_{-1} & a_0 & a_1 \\
& & & a_{-1} & a_0
\end{bmatrix}\in\mathbb{R}^{n\times n}
\]
has eigenvalues
\[
\lambda_p = a_0 + 2 \text{sgn}(a_{-1})\sqrt{a_{-1}a_1}\cos\!\left(\frac{p\pi}{n+1}\right), \qquad p=1,\dots,n,
\]
and associated eigenvectors
\[
s_p = \left(\sqrt{\tfrac{a_{-1}}{a_1}}^{\,i}\, \sin\!\bigg(\tfrac{i p \pi}{n+1}\bigg)\right)_{i=1}^n.
\]
If $A$ is symmetric ($a_{-1}=a_1$), then the eigenvectors form an orthogonal basis.
Moreover, for any matrix $B$, 
\[
\lim_{k\to\infty} B^k = 0 \quad \Longleftrightarrow \quad \rho(B)<1,
\]
where $\rho(B)$ denotes the spectral radius.

\paragraph{Problem 3 (Tridiagonal Laplace matrix, exam 2021/2022).}
Consider the symmetric tridiagonal matrix
\[
A = 
\begin{bmatrix}
2 & -1 \\
-1 & 2 & -1 \\
   & \ddots & \ddots & \ddots \\
   &        & -1 & 2 & -1 \\
   &        &    & -1 & 2
\end{bmatrix}\in \mathbb{R}^{n\times n}.
\]
Assume $n$ is odd.
\begin{enumerate}[label=(\alph*)]
\item Show that the vector
\[
b = [1,0,-1,0,1,0,-1,\dots,0,1]^\top \in \mathbb{R}^n
\]
is an eigenvector of $A$.
\item Show that all eigenvalues of $A$ lie in the interval $(0,4)$.
\end{enumerate}

\paragraph{Problem 4 (Jacobi's method and convergence, exam 2021/2022).}
Let $A\in\mathbb{R}^{n\times n}$ be nonsingular and let $D=\mathrm{diag}(A)$.
Jacobi’s iteration for solving $Ax=b$ is
\begin{equation}\label{eq:jacobi}
x^{(k+1)}=(I-D^{-1}A)x^{(k)}+D^{-1}b,\qquad k=0,1,2,\dots
\end{equation}
Let $B=I-D^{-1}A$.

\begin{enumerate}[label=(\alph*)]
\item Show that if $\hat x=\lim_{k\to\infty}x^{(k)}$ exists, then Jacobi’s method converges if and only if $A$ has no zeros on its diagonal and $\rho(B)<1$.
\item Suppose $A$ is the Toeplitz tridiagonal matrix above. Give necessary and sufficient conditions on $(a_{-1},a_0,a_1)$ independent of $n$ for Jacobi’s method to converge.
\item Consider the special case
\[
A=
\begin{bmatrix}
a_0 & a_1 \\
a_0 & a_0 & a_1 \\
    & \ddots & \ddots & \ddots \\
    &        & a_0 & a_0 & a_1 \\
    &        &     & a_0 & a_0
\end{bmatrix}\in\mathbb{R}^{n\times n}.
\]
What are the necessary and sufficient conditions on $a_0,a_1$ (independent of $n$) for $\rho(B)<1$?
\item Let $A$ be as in (c) with $a_0=1$ and $a_1\neq 0$, $b=e_n$, and initial guess $x^{(0)}=0$. Does there exist a Jacobi iterate $x^{(k)}$ that solves $Ax=b$ exactly?
\end{enumerate}

\paragraph{Problem 5 (Numerical verification of convergence rates).}
For the 1D Poisson matrix with $n=19$ interior points:
\begin{enumerate}
\item[(a)] Compute $\rho(G_J)$, $\rho(G_{GS})$, and $\omega_{\rm opt}$, $\rho(G_{\omega_{\rm opt}})$ using the closed-form expressions derived in this chapter.
\item[(b)] Estimate the number of iterations each method needs to reduce the initial error by a factor $10^{-8}$.
\item[(c)] Implement Jacobi, Gauss--Seidel, and SOR (with $\omega=\omega_{\rm opt}$) for this system and confirm numerically that the observed asymptotic convergence factors match your predictions from (a). (See the course GitHub repository for a Python starting point.)
\end{enumerate}

\paragraph{Problem 6 (Spectrum of a small path graph).}
Let $P_5$ be the path graph on $5$ vertices with graph Laplacian $L$ (as constructed in Chapter~2).
\begin{enumerate}
\item[(a)] Write down $L$ explicitly and compute its eigenvalues and eigenvectors directly (by hand or numerically).
\item[(b)] Verify that they match the general formula $\lambda_k(L)=2-2\cos(\pi k/n)$ for the path graph.
\item[(c)] Identify the Fiedler vector (the eigenvector for the smallest nonzero eigenvalue) and describe the bipartition it induces on the $5$ vertices.
\end{enumerate}

\paragraph{Problem 7 (Gradient descent convergence rate from the data spectrum).}
Consider a least-squares problem $\min_w \tfrac12\|Xw-y\|^2$ where $X\in\mathbb{R}^{m\times n}$ has singular values $\sigma_1=4$, $\sigma_2=2$, $\sigma_3=1$ (so $X^\top X$ has eigenvalues $16,4,1$).
\begin{enumerate}
\item[(a)] Compute the condition number $\kappa(X^\top X)$ and the largest admissible step size $\tau_{\max}=2/\lambda_{\max}$ for gradient descent on $X^\top X w = X^\top y$.
\item[(b)] Compute the minimax-optimal step size $\tau^\star$ and the resulting worst-case convergence factor $q^\star=(\kappa-1)/(\kappa+1)$.
\item[(c)] Using $\tau^\star$, estimate how many iterations are needed to reduce the worst-case mode's error by a factor of $100$.
\item[(d)] Which mode (associated with $\sigma_1$, $\sigma_2$, or $\sigma_3$) is learned fastest, and which slowest? Relate your answer to the notion of \emph{spectral bias}.
\end{enumerate}

\paragraph{Problem 8 (Convergence of the power method).}
Let $M\in\mathbb{R}^{n\times n}$ be symmetric, with orthonormal eigenvectors $v_1,\dots,v_n$ and
eigenvalues ordered by modulus, $|\lambda_1|>|\lambda_2|\ge\cdots\ge|\lambda_n|$. Run
Algorithm~\ref{alg:power} from $y^{(0)}=\sum_{i=1}^n\alpha_i v_i$ with $\|y^{(0)}\|_2=1$ and
$\alpha_1\neq0$, and let $\theta_k\in[0,\pi/2]$ be the angle between $y^{(k)}$ and the line spanned by
$v_1$.
\begin{enumerate}
\item[(a)] Show that rescaling at every step is the same as rescaling once at the end, i.e.
\[
y^{(k)}=\frac{M^k y^{(0)}}{\|M^k y^{(0)}\|_2},
\qquad\text{so}\qquad
M^k y^{(0)}=\sum_{i=1}^{n}\alpha_i\lambda_i^{\,k}v_i .
\]
\item[(b)] Splitting this sum into its component along $v_1$ and the rest, deduce that
\[
\tan\theta_k \;\le\; \frac{\bigl(\sum_{i\ge2}\alpha_i^2\bigr)^{1/2}}{|\alpha_1|}\,
\left|\frac{\lambda_2}{\lambda_1}\right|^{k},
\]
so that $y^{(k)}\to\pm v_1$ geometrically with ratio $|\lambda_2/\lambda_1|$.
\item[(c)] Writing $c_i=(y^{(k)})^\top v_i$, show that $\mu_k=\sum_i\lambda_ic_i^2$ and hence that
\[
|\mu_k-\lambda_1|\;\le\;(\lambda_{\max}-\lambda_{\min})\,\sin^2\theta_k .
\]
Conclude that the eigenvalue converges at rate $|\lambda_2/\lambda_1|^{2k}$, twice as fast as the
eigenvector.
\item[(d)] Take the 1D Poisson matrix $A=\operatorname{tridiag}(-1,2,-1)$ with $n=19$, whose
eigenvalues are $\lambda_j=2-2\cos\!\left(\tfrac{j\pi}{20}\right)$. Identify the two largest in
modulus, compute the ratio $|\lambda_2/\lambda_1|$, and estimate how many power iterations are needed
to reduce the eigenvector error by a factor $10^{-2}$. Relate the answer to the discussion of
smoothing in Section~\ref{sec:BIMPDE}.
\item[(e)] The random-walk matrix $P=D^{-1}A$ of a connected graph has dominant eigenvalue $1$ with
eigenvector $\mathbf 1$. Explain what the power method converges to in that case, and why projecting
out $\mathbf 1$ at every step (as in this chapter) makes it converge to the Fiedler vector instead.
\end{enumerate}

\chapter{Krylov Methods for Symmetric Systems}

\section*{Overview}

Chapters~1 and~3 built iterative solvers from a fixed matrix splitting $A=M-N$: every step applies the \emph{same} operator $G=M^{-1}N$, so the error can be reduced by at best a fixed factor $\rho(G)$ per iteration. For the PDE model matrices of Chapter~3 that factor behaves like $1-\mathcal{O}(h^2)$, which makes stationary methods unusually slow as the mesh is refined. 
This chapter replaces that fixed recipe by an adaptive one. At step $k$, a \emph{Krylov subspace method} relies on the subspace
\(
\mathcal{K}_k(A,r^{(0)})=\operatorname{span}\{r^{(0)},Ar^{(0)},\dots,A^{k-1}r^{(0)}\}
\)
and then selects the \emph{best} approximation in the affine space $x^{(0)}+\mathcal{K}_k(A,r^{(0)})$. For symmetric positive definite (SPD) $A$, this optimal choice can be computed with short recurrences and constant storage: this is the \emph{Conjugate Gradient} (CG) method \cite{hestenesMethodsConjugateGradients1952}, and its convergence is governed by $\sqrt{\kappa(A)}$ instead of $\kappa(A)$. 

This chapter covers: (1) CG, its optimality properties and its Chebyshev convergence bound; (2) the Lanczos algorithm, including the fact that CG and Lanczos are one recurrence seen from two sides, and its use to compute the Fiedler vector of a graph; (3) machine learning, where we show that ridge regression, truncated SVD and CG are all \emph{spectral filters} of the same data, so that stopping CG early is itself a regularisation.

\begin{tcolorbox}[colback=softblue,colframe=TUblue!40!black,title=\textbf{Learning objectives},fonttitle=\bfseries]
By the end of this chapter, you should be able to:
\begin{itemize}
    \item Derive the Conjugate Gradient method from the minimisation of the quadratic functional $J(x)=\tfrac12 x^\top Ax-b^\top x$, and explain why the steepest descent is not enough.
    \item State and use the optimality property of CG: the iterate $x^{(k)}$ minimises the $A$-norm of the error over $x^{(0)}+\mathcal{K}_k(A,r^{(0)})$.
   
    \item Use the Chebyshev bound to relate the CG convergence rate to $\sqrt{\kappa(A)}$, and contrast this with the $\kappa(A)$ dependence of steepest descent and stationary methods.
    \item Describe Lanczos iteration and explain why Ritz values converge at the ends of the spectrum first, and use deflation to compute the Fiedler vector of a graph Laplacian.
    \item Explain the CG-Lanczos equivalence, and interpret ridge regression and truncated CG as spectral filters - so that early stopping is itself a regularisation.
    \item Explain what semi-convergence is, and why some stopping rule is still needed when the data are noisy.
\end{itemize}
\end{tcolorbox}

\clearpage
\section{The Conjugate Gradient method}\label{sec:cg}

We now move from stationary iterations such as Jacobi, Gauss--Seidel, and SOR to Krylov subspace methods. These methods generate successively richer approximation spaces and the most famous example is the \emph{Conjugate Gradient} (CG) method. For symmetric positive definite (SPD) matrices, such as those arising from elliptic PDEs, CG achieves convergence far superior to classical schemes. We wish
to solve $Ax=b$ with $A \in \mathbb{R}^{n\times n}$ a SPD matrix. In this case, it makes sense to consider the quadratic functional
\[
J(x) = \tfrac{1}{2} x^\top A x - b^\top x.
\]
Minimising $J(x)$ is equivalent to solving $\nabla J = Ax - b =0$ hence the linear system $Ax=b$. 
\begin{tcolorbox}[colback=green!5!white,colframe=green!50!black]
\begin{definition}[Steepest descent method]
Starting from an initial guess $x^{0}$, the \emph{steepest descent method} generates iterates
\[
x^{(k+1)} = x^{(k)} + \tau_k r^{(k)}, 
\qquad r^{(k)} = b - Ax^{(k)},
\]
where the step size $\tau_k$ is chosen to minimize $J$ along the search direction opposite to the gradient of $J$ i.e. $r^{(k)}$.
\end{definition}
\end{tcolorbox}
Let's compute the formula of the step length in the steepest descent method.
With the notations above, let $\phi(\tau)=J\!\left(x^{(k)}+\tau r^{(k)}\right)$.
Then
\[
\phi'(\tau)=\left(r^{(k)}\right)^\top\!\big(Ax^{(k)}-b\big)+\tau\left(r^{(k)}\right)^\top A r^{(k)}
= -\,\|r^{(k)}\|^2+\tau\left(r^{(k)}\right)^\top A r^{(k)}.
\]
Setting $\phi'(\tau)=0$ gives
\[
{\;\tau_k=\dfrac{r^{(k)\top}r^{(k)}}{r^{(k)\top} A r^{(k)}} = \dfrac{\|r^{(k)}\|^2}{\|r^{(k)}\|^2_A}\;},
\]
which is equivalent to minimising $J$ along the direction $r^{(k)}$.

\begin{tcolorbox}[colback=yellow!5!white,colframe=yellow!50!black]
\begin{proposition}[Convergence of steepest descent]
Let $x^\ast$ be the exact solution of $Ax=b$ and let $\kappa(A) =
\lambda_{\max}(A)/\lambda_{\min}(A)$ denote the condition number of $A$. Then the steepest descent iterates satisfy
\[
\|x^{(k)} - x^\ast\|_A \le
\left(\frac{\kappa(A)-1}{\kappa(A)+1}\right)^k \,
\|x^{(0)}-x^\ast\|_A,
\]
where $\|v\|_A^2 = v^\top A v$ (norm induced by $A$). 
\end{proposition}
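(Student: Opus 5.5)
The plan is to compare one step of steepest descent with one step of Richardson iteration at the optimal fixed step, and then use the Richardson result for SPD matrices from Chapter~1.

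Write $e^{(k)}=x^{(k)}-x^\ast$ and use the identity
\[
J(x)-J(x^\ast)=\tfrac12\|x-x^\ast\|_A^2 ,
\]
which follows by expanding $J$ and using $Ax^\ast=b$. Because $\tau_k$ minimises $\phi(\tau)=J(x^{(k)}+\tau r^{(k)})$, the steepest descent iterate minimises the $A$-norm of the error over the line $x^{(k)}+\tau r^{(k)}$, $\tau\in\mathbb{R}$. In particular
\[
\|e^{(k+1)}\|_A\le \|x^{(k)}+\tau r^{(k)}-x^\ast\|_A \quad\text{for every } \tau .
\]

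Next, since $r^{(k)}=-Ae^{(k)}$, the competitor satisfies $x^{(k)}+\tau r^{(k)}-x^\ast=(I-\tau A)e^{(k)}$. This is exactly one Richardson step, and we take $\tau=\tau^\star=2/(\lambda_{\min}+\lambda_{\max})$. It remains to bound $\|(I-\tau^\star A)e\|_A$. Expand $e=\sum_j\alpha_jv_j$ in an orthonormal eigenbasis of $A$. The $A$-norm diagonalises:
\[
\|(I-\tau A)e\|_A^2=\sum_j\lambda_j(1-\tau\lambda_j)^2\alpha_j^2\le \max_j(1-\tau\lambda_j)^2\,\|e\|_A^2 .
\]
By the Richardson theorem, $\max_j|1-\tau^\star\lambda_j|=(\kappa-1)/(\kappa+1)$. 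Equivalently, $I-\tau A$ commutes with $A^{1/2}$, so its $A$-norm equals its spectral radius.

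Combining these steps gives
\[
\|e^{(k+1)}\|_A\le\frac{\kappa-1}{\kappa+1}\,\|e^{(k)}\|_A ,
\]
and induction on $k$ yields the claim.

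The main point requiring care is the first step: the exact line search is optimal for the $A$-norm of the error, not merely for $J$. The identity $J(x)-J(x^\ast)=\tfrac12\|x-x^\ast\|_A^2$ settles this.

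The remaining steps are routine spectral bookkeeping. The resulting bound is not the sharp Kantorovich rate $\big((\kappa-1)/(\kappa+1)\big)^{2}$ for $J$. It is exactly the rate stated for the $A$-norm of the error, so no Kantorovich inequality is needed.
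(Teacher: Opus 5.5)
Your proof is correct, but it takes a different route from the paper.

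The paper computes the one-step decrease exactly,
$\|e^{(k+1)}\|_A^2/\|e^{(k)}\|_A^2=1-\|r^{(k)}\|_2^4/\bigl((r^{(k)\top}Ar^{(k)})(r^{(k)\top}A^{-1}r^{(k)})\bigr)$.
It then bounds the denominator with the Kantorovich inequality, whose proof is left as an exercise.

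You argue variationally instead. Exact line search minimises the $A$-norm of the error along $x^{(k)}+\tau r^{(k)}$, so it does at least as well as the optimally tuned Richardson step $(I-\tau^\star A)e^{(k)}$. The $A$-norm contraction of that step can be read directly off the spectrum.

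What each route buys:
\begin{itemize}
\item Your argument needs only the eigen-expansion and the elementary fact $\max_j|1-\tau^\star\lambda_j|=(\kappa-1)/(\kappa+1)$. It ties steepest descent to the Richardson/gradient-descent picture of Chapters~1 and~3. It is also the greedy degree-one version of the polynomial comparison behind the Chebyshev bound for CG.
\item The paper's identity is exact, so it also shows which quantity governs each step and when the factor is attained.
\end{itemize}

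One correction to your closing remark: you lose nothing relative to Kantorovich. Since $J(x)-J(x^\ast)=\tfrac12\|x-x^\ast\|_A^2$, a factor $(\kappa-1)/(\kappa+1)$ in the $A$-norm is exactly a factor $\bigl((\kappa-1)/(\kappa+1)\bigr)^2$ in $J-J(x^\ast)$. Your bound therefore is the Kantorovich rate. In fact, combine your inequality with the paper's identity, and note that $r=-Ae$ runs over all nonzero vectors. This gives $(v^\top Av)(v^\top A^{-1}v)\le\frac{(\kappa+1)^2}{4\kappa}\|v\|_2^4$, which is exactly the Kantorovich inequality. Your comparison argument is in effect a proof of it.
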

\end{tcolorbox}

\begin{proof}[Sketch of proof]
Let $e^{(k)} = x^{(k)} - x^\ast$ and $r^{(k)} = b - A x^{(k)} = -A e^{(k)}$.  
A steepest descent step gives
\[
e^{(k+1)} = e^{(k)} + \tau_k r^{(k)}.
\]
Expanding the $A$-norm of the error yields
\[
\|e^{(k+1)}\|_A^2 
= \|e^{(k)}\|_A^2 - \tau_k \|r^{(k)}\|^2,
\]
where we used the optimal choice of $\tau_k$. Dividing by $\|e^{(k)}\|_A^2$ and
substituting the formula of gives
\[
\frac{\|e^{(k+1)}\|_A^2}{\|e^{(k)}\|_A^2}
= 1 - \frac{\|r^{(k)}\|^4}{(r^{(k)\top} A r^{(k)})(r^{(k)\top} A^{-1} r^{(k)})}.
\]
The denominator is bounded below via the \emph{Kantorovich inequality}\footnote{See the exercise list: $1\le \frac{(v^\top A v)(v^\top A^{-1} v)}{\|v\|^4} \le \frac{1}{4}(\sqrt{\kappa(A)}+\frac{1}{\sqrt{\kappa(A)}})^2$}, which
relates $(v^\top A v)(v^\top A^{-1} v)$ to the condition number $\kappa(A)$. 
Inserting this bound yields
\[
\frac{\|e^{(k+1)}\|_A}{\|e^{(k)}\|_A}
\;\leq\; \frac{\kappa(A)-1}{\kappa(A)+1}.
\]
By induction over $k$, the stated result follows.
\end{proof}

\begin{remark}
In particular, for PDE discretizations of the Poisson equation introduced in Chapter~2
\[
-u'' = f, \qquad u(0)=u(1)=0,
\]
which yields the SPD stiffness matrix $A \in \mathbb{R}^{n\times n}$. Here,  the
condition number scales as $\kappa(A) = \mathcal{O}(n^2)$. Hence steepest descent requires $\mathcal{O}(n^2)$ iterations, which becomes prohibitive for large systems.
\end{remark}

\begin{figure}[htbp]
\centering
\begin{tikzpicture}[>=stealth,scale=1.28]
\begin{scope}
  \node[font=\small] at (0,1.6) {Steepest descent};
  \foreach \a/\b in {2.579/0.860, 1.730/0.577, 1.031/0.344}{\draw[gray!55] (0,0) ellipse ({\a} and {\b});}
  \draw[->,thick,TUblue] (1.950,0.563) -- (1.706,-0.073);
  \draw[->,thick,TUblue] (1.706,-0.073) -- (0.867,0.250);
  \draw[->,thick,TUblue] (0.867,0.250) -- (0.758,-0.032);
  \draw[->,thick,TUblue] (0.758,-0.032) -- (0.386,0.111);
  \draw[->,thick,TUblue] (0.386,0.111) -- (0.337,-0.014);
  \draw[->,thick,TUblue] (0.337,-0.014) -- (0.171,0.050);
  \fill (0,0) circle (1.3pt);
  \node[font=\scriptsize,below left=-1pt] at (0,0) {$x^\ast$};
  \fill[TUblue] (1.950,0.563) circle (1.3pt);
  \node[font=\scriptsize,above right=-2pt] at (1.950,0.563) {$x^{(0)}$};
  \node[font=\scriptsize,align=center] at (0,-1.5) {successive residuals are orthogonal:\\the iterates zig-zag down the valley};
\end{scope}
\begin{scope}[xshift=6.6cm]
  \node[font=\small] at (0,1.6) {Conjugate Gradient};
  \foreach \a/\b in {2.579/0.860, 1.730/0.577, 1.031/0.344}{\draw[gray!55] (0,0) ellipse ({\a} and {\b});}
  \draw[->,thick,TUblue] (1.950,0.563) -- (1.706,-0.073);
  \draw[->,thick,TUblue] (1.706,-0.073) -- (0,0);
  \node[font=\scriptsize,right=1pt] at (1.75,-0.28) {$p^{(0)}$};
  \node[font=\scriptsize,below] at (0.85,-0.10) {$p^{(1)}$};
  \fill (0,0) circle (1.3pt);
  \node[font=\scriptsize,above left=-2pt] at (0,0) {$x^\ast$};
  \fill[TUblue] (1.950,0.563) circle (1.3pt);
  \node[font=\scriptsize,above right=-2pt] at (1.950,0.563) {$x^{(0)}$};
  \node[font=\scriptsize,align=center] at (0,-1.5) {two $A$-conjugate directions:\\exact solution after $n=2$ steps};
\end{scope}
\end{tikzpicture}
\caption{Steepest descent versus Conjugate Gradient on the level sets of $J(x)=\tfrac12x^\top Ax-b^\top x$
with $A=\mathrm{diag}(1,9)$ and the same starting point (iterates computed exactly). Because consecutive
steepest-descent residuals are orthogonal, the iterates zig-zag across the valley and creep towards
$x^\ast$; the number of steps grows like $\kappa(A)$, so the picture gets dramatically worse as the
contours become more elongated. CG replaces orthogonality of the \emph{residuals} by $A$-conjugacy of the
\emph{search directions}: no direction is ever revisited, and in exact arithmetic $n$ steps suffice.}
\label{fig:sd-vs-cg}
\end{figure}

\begin{anecdote}
    When Hestenes and Stiefel introduced Conjugate Gradient in 1952, computing was still in its infancy: machines like the SWAC in Los Angeles (then fastest computer in the world) had memory measured in kilobytes and could barely store a few thousand numbers.  Steepest descent was well known, but its zig-zagging inefficiency was frustrating. The breakthrough of CG was to design successive directions that “remember” past progress and never waste effort repeating the same zig-zag. This made it one of the first practical Krylov methods and a cornerstone of numerical linear algebra.
\end{anecdote}

The idea of Conjugate Gradient (CG), developed by Hestenes and Stiefel \cite{hestenesMethodsConjugateGradients1952},
is to construct \emph{search directions} $p^{(k)}$ that are not only descent
directions but are $A$-orthogonal. Using such conjugate directions ensures that each step eliminates error
components independently, guaranteeing convergence in at most $n$ steps in exact arithmetic.

\begin{tcolorbox}[colback=green!5!white,colframe=green!50!black]
\begin{definition}[Conjugate directions]
Let $A \in \mathbb{R}^{n\times n}$ be SPD.  
Two vectors $p,q \in \mathbb{R}^n$ are said to be \emph{$A$-conjugate} if
\[
p^\top A q = 0.
\]
A family of vectors $\{p^{(0)},\ldots,p^{(k)}\}$ is called \emph{mutually $A$-conjugate} if
every pair is $A$-conjugate. Such directions play the role of “orthogonal axes”
with respect to the $A$-inner product $\langle u,v\rangle_A = u^\top A v$.
\end{definition}
\end{tcolorbox}

\begin{tcolorbox}[colback=green!5!white,colframe=green!50!black]
\begin{definition}[Krylov subspaces]
Given an initial residual $r^{(0)} = b - A x^{(0)}$, the \emph{$k$-th Krylov subspace} is
\[
\mathcal{K}_k(A,r^{(0)}) = \mathrm{span}\{r^{(0)}, Ar^{(0)}, \ldots, A^{k-1}r^{(0)}\}.
\]
Equivalently, 
\[
\mathcal{K}_k(A,r^{(0)}) = \{\,q(A)r^{(0)}:\ q \in \Pi_{k-1}\,\},
\]
where $\Pi_{k-1}$ denotes the set of polynomials of degree at most $k-1$.
\end{definition}
\end{tcolorbox}

\begin{tcolorbox}[colback=green!5!white,colframe=green!50!black]
\begin{definition}[Conjugate Gradient method]\label{def:cg}
Let $Ax=b$ with $A \in \mathbb{R}^{n\times n}$ be SPD and the right-hand side $b \in \mathbb{R}^n$.  
Starting from an initial guess $x^{(0)}$ and the initial descent direction $p^{(0)}= r^{(0)}$, the Conjugate Gradient (CG) generates a sequence of approximations $\{x^{(k)}\}$ by choosing search directions
$p^{(k)} \in \mathcal{K}_{k+1}(A,r^{(0)})$ iteratively as follows:
\[
x^{(k+1)} = x^{(k)} + \alpha_k p^{(k)}, 
\qquad \alpha_k = \frac{r^{(k)\top}r^{(k)}}{p^{(k)\top} A p^{(k)}},
\]
\[
r^{(k+1)} = r^{(k)} - \alpha_k A p^{(k)}, \qquad \beta_k = \frac{r^{(k+1)\top}r^{(k+1)}}{r^{(k)\top}r^{(k)}},
\]
\[ 
p^{(k+1)} = r^{(k+1)} + \beta_k p^{(k)}.
\]
\end{definition}
\end{tcolorbox}

\begin{tcolorbox}[colback=yellow!5!white,colframe=yellow!50!black]
\begin{proposition}[Properties of CG]\label{prop:cg_properties}
Let $A$ be SPD. Then the iterates of CG satisfy:
\begin{itemize}
  \item Residuals are mutually orthogonal: $r^{(i)\top} r^{(j)} = 0$ for $i\neq j$.
  \item Search directions are $A$-orthogonal: $p^{(i)\top} A p^{(j)}= 0$ for $i\neq j$.
  \item $x^{(k)}$ minimizes $\|x-x^\ast\|_A$ over the affine space $x^{(0)}+\mathcal{K}_k(A,r^{(0)})$.
\end{itemize}
\end{proposition}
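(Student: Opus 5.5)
The plan is a joint induction on $k$ for the first two properties, followed by a short optimality argument for the third. Throughout I assume the algorithm has not terminated, i.e.\ $r^{(j)}\neq 0$ for $j\le k$, so that every $\alpha_j,\beta_j$ is well defined. This holds because $A$ is SPD, so $p^\top Ap>0$ whenever $p\neq0$.

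\textbf{Inductive hypothesis.} The hypothesis $H_k$ will state that for all $i<j\le k$:
\begin{itemize}
\item $r^{(i)\top}r^{(j)}=0$ and $p^{(i)\top}Ap^{(j)}=0$;
\item $\mathrm{span}\{r^{(0)},\dots,r^{(k)}\}=\mathrm{span}\{p^{(0)},\dots,p^{(k)}\}=\mathcal{K}_{k+1}(A,r^{(0)})$.
\end{itemize}
The span identity follows directly from the recurrences. We have $r^{(k+1)}=r^{(k)}-\alpha_kAp^{(k)}$ with $Ap^{(k)}\in A\mathcal{K}_{k+1}\subset\mathcal{K}_{k+2}$, and $p^{(k+1)}=r^{(k+1)}+\beta_kp^{(k)}$.

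\textbf{Step 1: residual orthogonality.} For the inductive step, first compute
\[
r^{(i)\top}r^{(k+1)}=r^{(i)\top}r^{(k)}-\alpha_k\,r^{(i)\top}Ap^{(k)}.
\]
For $i<k$ both terms vanish. The first is zero by $H_k$. For the second, $r^{(i)}\in\mathrm{span}\{p^{(0)},\dots,p^{(i)}\}$, and these directions are $A$-conjugate to $p^{(k)}$. For $i=k$ the expression vanishes precisely because of the choice of $\alpha_k$. Here I use $r^{(k)\top}Ap^{(k)}=p^{(k)\top}Ap^{(k)}$, which follows from $r^{(k)}=p^{(k)}-\beta_{k-1}p^{(k-1)}$ and conjugacy.

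\textbf{Step 2: conjugacy of directions.} Next compute
\[
p^{(i)\top}Ap^{(k+1)}=p^{(i)\top}Ar^{(k+1)}+\beta_k\,p^{(i)\top}Ap^{(k)}.
\]
For $i<k$, write $Ap^{(i)}=(r^{(i)}-r^{(i+1)})/\alpha_i$. Both of these residuals are orthogonal to $r^{(k+1)}$ by Step 1, and the second term vanishes by $H_k$. For $i=k$, the same substitution gives $p^{(k)\top}Ar^{(k+1)}=-\|r^{(k+1)}\|^2/\alpha_k$. Using $\alpha_k=\|r^{(k)}\|^2/p^{(k)\top}Ap^{(k)}$, the total equals
\[
-\|r^{(k+1)}\|^2\,p^{(k)\top}Ap^{(k)}/\|r^{(k)}\|^2+\beta_k\,p^{(k)\top}Ap^{(k)},
\]
which is zero exactly by the definition of $\beta_k$.

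I expect this bookkeeping, in particular matching the specific formulas for $\alpha_k$ and $\beta_k$ to the two "diagonal" cases, to be the main obstacle. The off-diagonal cases follow mechanically from the span identity.

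\textbf{Step 3: optimality.} The iterate $x^{(k)}-x^{(0)}=\sum_{j<k}\alpha_jp^{(j)}$ lies in $\mathcal{K}_k$. For any $y=x^{(k)}+z$ with $z\in\mathcal{K}_k$,
\[
\|y-x^\ast\|_A^2=\|x^{(k)}-x^\ast\|_A^2+2z^\top A(x^{(k)}-x^\ast)+\|z\|_A^2 .
\]
Since $A(x^{(k)}-x^\ast)=-r^{(k)}$, the cross term is $-2z^\top r^{(k)}$. This is zero because $r^{(k)}\perp\mathrm{span}\{r^{(0)},\dots,r^{(k-1)}\}=\mathcal{K}_k$ by Step 1. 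Hence $\|y-x^\ast\|_A^2\ge\|x^{(k)}-x^\ast\|_A^2$, which is the Galerkin orthogonality characterisation of the $A$-norm minimiser.
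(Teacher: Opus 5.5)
The paper gives no proof of this proposition in the text. It is stated, left as Problem~4 of this chapter's exercises, and later invoked in the proofs of Propositions~\ref{prop:cg_poly} and~\ref{prop:cglanczos}, so there is no argument to compare against. Your proof is the standard Hestenes--Stiefel induction, and it is correct. A few steps should be made explicit.

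\begin{itemize}
\item The recurrences only give the inclusion $\mathrm{span}\{r^{(0)},\dots,r^{(k+1)}\}\subseteq\mathcal{K}_{k+2}(A,r^{(0)})$. Step~3 uses the reverse inclusion when you identify $\mathcal{K}_k$ with $\mathrm{span}\{r^{(0)},\dots,r^{(k-1)}\}$. That inclusion follows from $A^{k+1}r^{(0)}\in A\,\mathrm{span}\{p^{(0)},\dots,p^{(k)}\}$, together with $Ap^{(j)}=(r^{(j)}-r^{(j+1)})/\alpha_j$ and $\alpha_j>0$. Alternatively, count dimensions: nonzero, mutually orthogonal residuals are linearly independent.
\item For $\alpha_k$ to be well defined you need $p^{(k)}\neq0$. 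Positive definiteness of $A$ alone does not give this. It follows from $r^{(k)\top}p^{(k)}=\|r^{(k)}\|_2^2$, which in turn comes from Step~1 and $p^{(k-1)}\in\mathrm{span}\{r^{(0)},\dots,r^{(k-1)}\}$.
\item Step~3 uses that the recursively updated $r^{(k)}$ equals $b-Ax^{(k)}$. This is a one-line induction, but it should be stated.
\end{itemize}
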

\end{tcolorbox}

\begin{figure}[htbp]
\centering
\begin{tikzpicture}[>=stealth,scale=1.1]
  \foreach \f in {1,0.72,0.44}{\draw[rotate around={20:(0,0)},gray!55] (0,0) ellipse ({2.5*\f} and {1.15*\f});}
  \draw[dashed,thick] (-1.55,1.75) -- (2.95,-0.15);
  \node[font=\scriptsize,anchor=east] at (-1.62,1.80) {$x^{(0)}+\mathcal{K}_k(A,r^{(0)})$};
  \coordinate (x0) at (2.60,0.0);
  \coordinate (xk) at (0.42,0.92);
  \fill (0,0) circle (1.4pt);
  \node[font=\scriptsize,below=1pt] at (0,0) {$x^\ast$};
  \fill (x0) circle (1.4pt);
  \node[font=\scriptsize,below right=-2pt] at (x0) {$x^{(0)}$};
  \fill[TUblue] (xk) circle (1.4pt);
  \node[font=\scriptsize,above=1pt] at (xk) {$x^{(k)}$};
  \draw[->,thick,TUblue] (x0) -- (xk);
  \draw[densely dotted] (0,0) -- (xk);
  \node[font=\scriptsize,anchor=north] at (0.6,-1.55)
    {$x^{(k)}=\arg\min\bigl\{\|x-x^\ast\|_A\ :\ x\in x^{(0)}+\mathcal{K}_k(A,r^{(0)})\bigr\}$};
\end{tikzpicture}
\caption{The optimality property of CG. The ellipses are level sets of the energy norm
$\|x-x^\ast\|_A$. At step $k$ the iterate is the point of the affine space
$x^{(0)}+\mathcal{K}_k(A,r^{(0)})$ (dashed line) closest to $x^\ast$ in the
$A$-norm, i.e.\ the $A$-orthogonal projection of $x^\ast$ onto it. }
\label{fig:cg-optimality}
\end{figure}

\begin{tcolorbox}[colback=yellow!5!white,colframe=yellow!50!black]
\begin{proposition}[Polynomial interpretation and finite termination of CG]\label{prop:cg_poly}
Let $A\in\mathbb{R}^{n\times n}$ be SPD, $x^\ast$ denote the exact solution of $Ax=b$, and $e^{(k)}=x^{(k)}-x^\ast$ the CG error at step $k$ (with $x^{(0)}$ arbitrary).

\begin{enumerate}
\item (\emph{Polynomial representation}) For each $k\ge 0$ there exists a polynomial $q_k$ with $\deg q_k\le k$ and $q_k(0)=1$ such that
\[
e^{(k)} = q_k(A)\,e^{(0)}.
\]
The polynomial $q_k$ minimizes the $A$ norm of the error at step $k$ over the set
\[
\bigl\{\,q(A)e^{(0)}:\ q\in\Pi_k,\ q(0)=1\,\bigr\}.
\]

\item (\emph{Finite termination}) Let $m$ be the minimal polynomial of $A$ and let $d=\deg m$ (for symmetric $A$, $d$ equals the number of distinct eigenvalues). Then CG attains the exact solution in at most $d\le n$ steps \(e^{(d)}=0.\)
\end{enumerate}
\end{proposition}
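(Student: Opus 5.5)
The plan is to derive both parts from the optimality property in Proposition~\ref{prop:cg_properties}: $x^{(k)}$ minimises $\|x-x^\ast\|_A$ over $x^{(0)}+\mathcal{K}_k(A,r^{(0)})$. That proposition is taken as given.

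\emph{Polynomial representation.} I would first describe the affine space in polynomial terms. By the definition of Krylov subspaces, every $x\in x^{(0)}+\mathcal{K}_k(A,r^{(0)})$ can be written as $x=x^{(0)}+s(A)r^{(0)}$ with $s\in\Pi_{k-1}$. Since $r^{(0)}=b-Ax^{(0)}=-Ae^{(0)}$, the error of such an $x$ is
\[
x-x^\ast=e^{(0)}-s(A)Ae^{(0)}=q(A)e^{(0)},\qquad q(t):=1-t\,s(t).
\]
The map $s\mapsto q$ is a bijection from $\Pi_{k-1}$ onto $\{q\in\Pi_k:\ q(0)=1\}$. Indeed, any $q\in\Pi_k$ with $q(0)=1$ has $1-q(t)$ divisible by $t$, so $s(t)=(1-q(t))/t$ recovers $s$. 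Hence the set of errors attainable from the affine space is exactly $\{q(A)e^{(0)}: q\in\Pi_k,\ q(0)=1\}$. The CG iterate lies in this space, so $e^{(k)}=q_k(A)e^{(0)}$ for some admissible $q_k$. Its $A$-norm optimality over the space then transfers verbatim into optimality of $q_k$ over the polynomial set.

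\emph{Finite termination.} Let $m$ be the minimal polynomial of $A$, with degree $d$. Since $A$ is SPD it is nonsingular, so $0$ is not an eigenvalue and $m(0)\neq 0$. Then $\tilde q:=m/m(0)$ belongs to $\Pi_d$, satisfies $\tilde q(0)=1$, and gives $\tilde q(A)=0$. By the optimality in part~1 at step $k=d$,
\[
\|e^{(d)}\|_A\le\|\tilde q(A)e^{(0)}\|_A=0,
\]
so $e^{(d)}=0$ because $\|\cdot\|_A$ is a norm. For the remark that $d$ is the number of distinct eigenvalues: a symmetric $A$ is diagonalizable, so $m(t)=\prod_{j}(t-\lambda_j)$ over its distinct eigenvalues. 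Also $d\le n$ by Cayley--Hamilton.

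\emph{Main obstacle.} The only delicate point is the well-definedness of the iteration. If the residual vanishes at some step $k<d$, the CG recurrence would divide by zero in the next $\beta_k$ or $\alpha_k$. In that case the solution has already been reached early, and I would state explicitly that the iteration stops there, which is consistent with the claim $e^{(d)}=0$. Beyond this, the bijection $s\leftrightarrow q$ is the key step and is elementary.
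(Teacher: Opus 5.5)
Your proposal is correct and follows the paper's proof essentially step for step. In part~1 you rewrite $x^{(0)}+\mathcal{K}_k(A,r^{(0)})$ using $r^{(0)}=-Ae^{(0)}$ as the set of errors $q(A)e^{(0)}$ with $q(0)=1$, and the optimality is inherited from the CG optimality property. In part~2 you test the normalised minimal polynomial $m/m(0)$. Your extra checks fill in points the paper leaves implicit: the surjectivity of $s\mapsto 1-ts(t)$, the fact that $m(0)\neq 0$ because $A$ is nonsingular, and the convention of stopping at an early zero residual.
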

\end{tcolorbox}

\begin{proof}
(1) CG produces $x^{(k)}\in x^{(0)}+\mathcal{K}_k(A,r^{(0)})$, hence
\[
e^{(k)}=x^{(k)}-x^\ast = e^{(0)} - v_k,\quad v_k\in \mathcal{K}_k(A,r^{(0)}).
\]
Write $v_k=A\,s_{k-1}(A)e^{(0)}$ with $\deg s_{k-1}\le k-1$ (since $\mathcal{K}_k(A,r^{(0)})=A\,\mathcal{K}_k(A,e^{(0)})$). Then
\[
e^{(k)} = \bigl(I - A\,s_{k-1}(A)\bigr)e^{(0)} = q_k(A)e^{(0)},
\]
where $q_k(\lambda)=1-\lambda s_{k-1}(\lambda)$ has degree $\le k$ and satisfies $q_k(0)=1$. This also shows $e^{(k)}\in\mathcal{K}_{k+1}(A,e^{(0)})=\mathrm{span}\{e^{(0)},Ae^{(0)},\ldots,A^k e^{(0)}\}$. {The minimization follows from Prop. \ref{prop:cg_properties}.}

(2) Let $m(\lambda)$ be the minimal polynomial of $A$, so $m(A)=0$ and $\deg m=d$. Define
\[
q_d(\lambda)=\frac{m(\lambda)}{m(0)}.
\]
Then $q_d(0)=1$, $q_d(A)=0$, hence $q_d(A)e^{(0)}=0$. By part (1), the set of attainable errors after $k\ge d$ steps includes $0$, so the CG minimizer in the $A$-norm satisfies $\|e^{(d)}\|_A=0$, i.e., $e^{(d)}=0$.

For symmetric $A$ with eigenvalues $\{\lambda_1,\dots,\lambda_n\}$ and $d$ distinct values, $m(\lambda)=\prod_{j=1}^{d}(\lambda-\lambda_j)$, so $d\le n$, with equality when all eigenvalues are distinct. \\
\emph{Remark:} All the above is true in exact arithmetic. In floating-point arithmetic, loss of orthogonality may delay (or prevent) exact termination; the classical analysis of this effect is due to Paige \cite{paigeAccuracyEffectivenessLanczos1980}.
\end{proof}

\begin{tcolorbox}[colback=yellow!5!white,colframe=yellow!50!black]
\begin{proposition}[Convergence of Conjugate Gradient]
Let $x^\ast = A^{-1}b$ denote the exact solution and $\kappa(A) =
\lambda_{\max}(A)/\lambda_{\min}(A)$ the condition number of $A$.  
Then the CG iterates satisfy the error bound
\[
\frac{\|x^{(k)}-x^\ast\|_A}{\|x^{(0)}-x^\ast\|_A}
\le 2\left(\frac{\sqrt{\kappa(A)}-1}{\sqrt{\kappa(A)}+1}\right)^k,
\]
where $\|v\|_A^2 = v^\top A v$. 
In exact arithmetic the method terminates in at most $n$ steps.
\end{proposition}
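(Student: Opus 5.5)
The plan is to combine the polynomial optimality of Proposition~\ref{prop:cg_poly} with a spectral bound and then a Chebyshev construction.

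\emph{Reduction to a polynomial problem.} By Proposition~\ref{prop:cg_poly}, $e^{(k)}=q_k(A)e^{(0)}$, where $q_k$ minimises $\|q(A)e^{(0)}\|_A$ over all $q\in\Pi_k$ with $q(0)=1$. Since $A$ is SPD, it has orthonormal eigenpairs $(\lambda_j,v_j)$. Expanding $e^{(0)}=\sum_j \xi_j v_j$ gives
\[
\|q(A)e^{(0)}\|_A^2=\sum_j \lambda_j q(\lambda_j)^2\xi_j^2\le \max_j q(\lambda_j)^2\,\|e^{(0)}\|_A^2 .
\]
Hence, for \emph{every} admissible $q$,
\[
\frac{\|e^{(k)}\|_A}{\|e^{(0)}\|_A}\le \max_{\lambda\in[\lambda_{\min},\lambda_{\max}]}|q(\lambda)| .
\]
It therefore suffices to exhibit one polynomial $q$ of degree $k$ with $q(0)=1$ that is uniformly small on $[a,b]:=[\lambda_{\min},\lambda_{\max}]$.

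\emph{Chebyshev construction.} Let $T_k$ be the Chebyshev polynomial, with $T_k(\cos\theta)=\cos k\theta$, so that $|T_k|\le1$ on $[-1,1]$. Take
\[
q(\lambda)=\frac{T_k\!\left(\frac{b+a-2\lambda}{b-a}\right)}{T_k\!\left(\frac{b+a}{b-a}\right)} .
\]
The affine map sends $[a,b]$ onto $[-1,1]$ and $0$ to $\sigma:=\frac{\kappa+1}{\kappa-1}>1$. Therefore $q(0)=1$ and $\max_{[a,b]}|q|=1/T_k(\sigma)$.

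\emph{Main estimate.} The remaining step is a lower bound for $T_k(\sigma)$ when $\sigma>1$. I will use the representation
\[
T_k(\sigma)=\tfrac12\big(z^k+z^{-k}\big),\qquad z=\sigma+\sqrt{\sigma^2-1},
\]
which follows from $T_k(\cosh t)=\cosh(kt)$ or from verifying the three-term recurrence. From $\sigma=\frac{\kappa+1}{\kappa-1}$ we get $\sigma^2-1=\frac{4\kappa}{(\kappa-1)^2}$. Then
\[
z=\frac{\kappa+1+2\sqrt\kappa}{\kappa-1}=\frac{(\sqrt\kappa+1)^2}{(\sqrt\kappa+1)(\sqrt\kappa-1)}=\frac{\sqrt\kappa+1}{\sqrt\kappa-1}.
\]
Dropping the positive term $z^{-k}$ gives $T_k(\sigma)\ge \tfrac12 z^k$, and therefore
\[
\frac{1}{T_k(\sigma)}\le 2\left(\frac{\sqrt\kappa-1}{\sqrt\kappa+1}\right)^k ,
\]
which is the claimed bound. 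This algebra, namely identifying $z$ and justifying the closed form of $T_k$ outside $[-1,1]$, is the only delicate point.

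\emph{Degenerate case and finite termination.} If $\kappa=1$, then $A=\lambda I$, CG converges in one step, and the bound holds trivially. The finite-termination statement (at most $n$ steps in exact arithmetic) is already part~(2) of Proposition~\ref{prop:cg_poly}.
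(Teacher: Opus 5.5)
Your proof is correct and follows the same route as the paper, which leaves the argument to Problem~5 of the Chapter~4 exercises. There, too, $\|e^{(k)}\|_A/\|e^{(0)}\|_A$ is bounded by $\min_{q\in\Pi_k,\,q(0)=1}\max_{\lambda\in[\lambda_{\min},\lambda_{\max}]}|q(\lambda)|$ via the polynomial optimality of Proposition~\ref{prop:cg_poly}, the shifted Chebyshev polynomial is evaluated through $T_k(\sigma)=\tfrac12(z^k+z^{-k})$ with $z=\frac{\sqrt{\kappa}+1}{\sqrt{\kappa}-1}$, and finite termination comes from part~(2) of that proposition; the only nuance is that you use the Chebyshev polynomial merely as a test candidate, which is all the upper bound needs, whereas the exercise takes its min--max optimality as known.
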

\end{tcolorbox}

\begin{figure}[htbp]
\centering
\begin{tikzpicture}[>=stealth,scale=0.95]
  \draw[->] (0,0) -- (7.0,0) node[right,font=\small]{$k$};
  \draw[->] (0,0) -- (0,3.3) node[above,font=\small]{$\|e^{(k)}\|_A/\|e^{(0)}\|_A$};
  \draw[gray,smooth,domain=0:6.6,samples=80,thick] plot(\x,{3*0.90^(\x)});
  \draw[TUblue,smooth,domain=0:6.6,samples=80,thick] plot(\x,{3*0.62^(\x)});
  \node[gray,font=\scriptsize,anchor=west] at (5.0,1.9)
     {SD: $\bigl(\tfrac{\kappa-1}{\kappa+1}\bigr)^k$};
  \node[TUblue,font=\scriptsize,anchor=west] at (3.1,0.75)
     {CG: $2\bigl(\tfrac{\sqrt{\kappa}-1}{\sqrt{\kappa}+1}\bigr)^k$};
\end{tikzpicture}
\caption{Error decay of steepest descent and CG for the same SPD matrix. Both are linear (geometric)
rates, but CG's contraction factor is governed by $\sqrt{\kappa(A)}$ rather than $\kappa(A)$. For the 1D
Poisson matrix, where $\kappa(A)=\mathcal{O}(n^2)$, this is the difference between $\mathcal{O}(n^2)$ and
$\mathcal{O}(n)$ iterations. The bound is only an upper estimate: when the eigenvalues of $A$ are
clustered, CG typically converges considerably faster than the curve shown.}
\label{fig:sd-cg-rates}
\end{figure}

\paragraph{Example.} For the 1D Poisson matrix of size $n$, the Steepest descent needs on the order of $n^2$
iterations, while CG requires only $n$ iterations. This dramatic acceleration is what makes CG the
standard choice for large SPD PDE systems. A full treatment of CG and its convergence theory can be found in \cite{greenbaumIterativeMethods1997,saadIterativeMethodsSparse2003}.

\begin{takeaway}

\begin{itemize}
  \item \textbf{Comparison with classical iterations.}
    \begin{itemize}
      \item Jacobi, Gauss--Seidel, and SOR are simple stationary schemes with iteration counts scaling like $\mathcal{O}(n^2)$ for PDEs.
      \item Steepest descent improves the picture but still requires
            $\mathcal{O}(\kappa(A))$ iterations, which is prohibitive when
            $\kappa(A)$ scales like $n^2$.
      \item Conjugate Gradient reduces the required iterations to
            $\mathcal{O}(\sqrt{\kappa(A)})$, leading to dramatic savings in
            practice.
    \end{itemize}

  \item \textbf{Finite termination property.}  
  In exact arithmetic CG produces the exact solution in at most $n$ steps,
  unlike stationary methods which only converge asymptotically.

  \item \textbf{Energy minimization viewpoint.}  
  Each CG iterate minimizes the quadratic energy functional over a growing
  Krylov subspace, making the method ``optimal'' among all polynomial
  approximations of degree $k$.

\end{itemize}
    
\end{takeaway}

\begin{table}[h]
\centering
\renewcommand{\arraystretch}{1.3}
\begin{tabular}{|l|c|c|l|}
\hline
\textbf{Method} & \textbf{Convergence} & \textbf{Iterations (PDE)} & \textbf{Remarks} \\
\hline
Jacobi & $\big(1 - c n^{-2}\big)^k$ & $\mathcal{O}(n^2)$ & Very slow; mainly used as smoother \\
Gauss--Seidel & $\big(1 - c n^{-2}\big)^k$  & $\mathcal{O}(n^2)$ & Roughly twice as fast as Jacobi \\
SOR (optimal $\omega$) & $\big(1 - c n^{-1}\big)^k$ & $\mathcal{O}(n)$ & Improved, but still asymptotically slow \\
\hline
Steepest descent & $\big(\frac{\kappa-1}{\kappa+1}\big)^k$ & $\mathcal{O}(\kappa)$ & Gradient method; zig-zag convergence \\
Conjugate Gradient & $2\Big(\tfrac{\sqrt{\kappa}-1}{\sqrt{\kappa}+1}\Big)^k$ & $\mathcal{O}(\sqrt{\kappa})$ & Optimal Krylov solver \\
\hline
\end{tabular}
\caption{Comparison of classical stationary methods, steepest descent, and Conjugate Gradient for SPD PDE systems.}
\end{table}

\section{The Lanczos Method and the Symmetric Eigenproblem}\label{sec:lanczos}

So far, the Krylov subspace $\mathcal{K}_m(A,r^{(0)})$ has been a useful tool for \emph{solving} $Ax=b$. The
same subspace answers a different question as well: what does the \emph{spectrum} of $A$ look like? There are two different aspects to this question:

\begin{itemize}
\item \textbf{We need $\kappa(A)$.} Every bound in Section~\ref{sec:cg} is stated in terms of
$\lambda_{\min}(A)$ and $\lambda_{\max}(A)$, yet for a large sparse $A$ these are exactly the quantities we
cannot compute directly. Estimating them is not a side task: it is what tells us whether CG will take ten
iterations or ten thousand, and, from Chapter~6, whether a preconditioner is doing its job.
\item \textbf{The spectrum \emph{is} needed \emph{but also} the eigenvectors.} In Chapter~3 we saw that for a graph Laplacian $L=D-A$ the
eigenvector belonging to the smallest nonzero eigenvalue, i.e. the \emph{Fiedler vector} \cite{fiedlerAlgebraicConnectivity1973}, encodes how the
graph splits into communities. 
\end{itemize}

In both cases $n$ is far too large for a full eigendecomposition, which costs $\mathcal{O}(n^3)$ and
destroys sparsity. What we can afford, as always, is the product $Av$.
Instead, we rely on iterative Krylov subspace methods such as the \emph{Lanczos algorithm} \cite{lanczosIterationMethod1950}. The Lanczos algorithm is an efficient iterative method for approximating eigenvalues and eigenvectors of a large symmetric matrix $A$. The key idea is:
\begin{itemize}
  \item Start with a normalized vector $q_1$.
  \item Build the Krylov subspace
  \[
  \mathcal{K}_m(A,q_1) = \text{span}\{q_1, Aq_1, A^2 q_1, \dots, A^{m-1}q_1\}.
  \]
  \item Iteratively construct an orthonormal basis $Q_m = [q_1, \dots, q_m]$ of $\mathcal{K}_m(A,q_1)$ and project:
  \[ 
  T_m = Q_m^\top A Q_m.
  \]
  \item For a general matrix $A$, the projected matrix $T_m$ is \emph{upper Hessenberg}, i.e.\ upper
  triangular with one extra subdiagonal. This is the \emph{Arnoldi} case and is the subject of Chapter~5.
  \item When $A$ is symmetric, however,
  \[
  T_m^{\top} = (Q_m^\top A Q_m)^\top = Q_m^\top A^{\top} Q_m = Q_m^\top A Q_m = T_m,
  \]
  so $T_m$ is symmetric as well. Being simultaneously Hessenberg and symmetric, it must be
  \emph{tridiagonal}:
\[  T_m =
\begin{bmatrix}
\alpha_1 & \beta_1 &        &   \\
\beta_1  & \alpha_2 & \ddots &   \\
         & \ddots  & \ddots & \beta_{m-1} \\
         &         & \beta_{m-1} & \alpha_m
\end{bmatrix}.
\]
  \item Only the scalars $\alpha_j$ and $\beta_j$ therefore need to be computed, and each new basis vector
  is orthogonal to all but two of its predecessors automatically. This is the \emph{Lanczos} algorithm,
  displayed in Algorithm~\ref{alg:lanczos}.
  \item Eigenvalues of $T_m$ (Ritz values) approximate eigenvalues of $A$, and 
  $Q_m y$ approximates an eigenvector of $A$ if $y$ is an eigenvector of $T_m$.
\end{itemize}

\begin{algorithm}[h!] 
\caption{Lanczos Algorithm} \label{alg:lanczos}
\begin{algorithmic}[1]
\REQUIRE Symmetric matrix $A \in \mathbb{R}^{n \times n}$, starting vector $q_1$ with $\|q_1\|_2 = 1$, number of steps $m$.
\STATE Set $\beta_0 = 0$, $q_0 = 0$.
\FOR{$j = 1, \dots, m$}
  \STATE $w \gets A q_j - \beta_{j-1} q_{j-1}$
  \STATE $\alpha_j \gets q_j^\top w$
  \STATE $w \gets w - \alpha_j q_j$
  \STATE $\beta_j \gets \|w\|_2$
  \IF{$\beta_j = 0$}
    \STATE \textbf{stop}
  \ELSE
    \STATE $q_{j+1} \gets w / \beta_j$
  \ENDIF
\ENDFOR
\STATE Construct the tridiagonal matrix $T_m$.
\end{algorithmic}
\end{algorithm}

\begin{tcolorbox}[colback=blue!3!white,colframe=blue!40!black]
\textbf{Notation: two different $\alpha$'s and $\beta$'s.}
The symbols $\alpha$ and $\beta$ are unfortunately standard in \emph{both} algorithms, with unrelated
meanings. In CG (Definition~\ref{def:cg}) $\alpha_k$ is a \emph{step length} and $\beta_k$ a
\emph{direction-update} coefficient; in Lanczos (Algorithm~\ref{alg:lanczos}) $\alpha_j$ and $\beta_j$
are the diagonal and off-diagonal \emph{entries of $T_m$}. Both conventions are too well established to
be worth changing, so we keep them. In Section~\ref{sec:cglanczos}, where the two algorithms are compared
side by side, we write $\alpha_k^{\mathrm{cg}},\beta_k^{\mathrm{cg}}$ for the CG scalars and reserve the $\alpha_j,\beta_j$ for the entries of $T_m$.
\end{tcolorbox}

\begin{figure}[htbp]
\centering
\begin{tikzpicture}[>=stealth,node distance=2.4cm]
\tikzstyle{bx}=[draw,TUblue!60!black,rounded corners,inner sep=6pt,fill=softblue,font=\small]
\node[bx] (A) {$A\in\mathbb{R}^{n\times n}$};
\node[bx,right=2.6cm of A] (Q) {$Q_m\in\mathbb{R}^{n\times m}$};
\node[bx,right=2.6cm of Q] (T) {$T_m\in\mathbb{R}^{m\times m}$};
\draw[->,thick] (A) -- node[above,font=\scriptsize]{Krylov basis}
                        node[below,font=\scriptsize]{$m$ matvecs} (Q);
\draw[->,thick] (Q) -- node[above,font=\scriptsize]{$Q_m^\top A Q_m$}
                        node[below,font=\scriptsize]{tridiagonal} (T);
\node[font=\scriptsize,anchor=north] at ($(A)!0.5!(T)+(0,-1.15)$)
  {$n$ huge and sparse \quad$\longrightarrow$\quad $m\ll n$, dense but tiny: eigenvalues by any direct method};
\end{tikzpicture}
\caption{The Lanczos idea in one line. The large sparse symmetric matrix $A$ is never factorised; it is
only applied to vectors. After $m$ steps the problem has been compressed into an $m\times m$ symmetric
tridiagonal matrix $T_m$ whose eigenvalues (the \emph{Ritz values}) approximate the extreme eigenvalues of
$A$, and whose eigenvectors lift back to approximate eigenvectors of $A$ through $Q_m$.}
\label{fig:lanczos-pipeline}
\end{figure}

\paragraph{Interpretation.}
The matrix $T_m \in \mathbb{R}^{m \times m}$ represents the orthogonal projection of $A$ onto the Krylov
subspace $\mathcal{K}_m(A,q_1)$, expressed in the orthonormal basis $q_1,\dots,q_m$. Its eigenvalues,
called \emph{Ritz values}, provide approximations to the eigenvalues of $A$ and, as the next
proposition makes precise, to the \emph{extremal} ones first.
Since $T_m$ is symmetric and tridiagonal, its eigenvalues can be computed efficiently and stably using standard algorithms \cite{GolubvanLoan}.
Eigenvectors of $T_m$, lifted back via $Q_m = [q_1,\dots,q_m]$, yield approximations of the eigenvectors of $A$.

\begin{tcolorbox}[colback=yellow!5!white,colframe=yellow!50!black]
\begin{proposition}[Ritz values converge at the ends of the spectrum]
Let $A=A^\top$ have eigenvalues $\lambda_1\le\cdots\le\lambda_n$ and let
$\theta_1\le\cdots\le\theta_m$ be the eigenvalues of $T_m=Q_m^\top AQ_m$. Then
\[
\lambda_1\;\le\;\theta_1,\qquad \theta_m\;\le\;\lambda_n,
\]
and more generally the $\theta_i$ interlace the $\lambda_i$ (Problem~7). Moreover $\theta_1$ and
$\theta_m$ are the smallest and largest values of the Rayleigh quotient $R_A(x)=x^\top Ax/x^\top x$ over
$\mathcal{K}_m(A,q_1)$, so they improve monotonically with $m$.
\end{proposition}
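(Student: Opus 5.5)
The plan is to work entirely with the Rayleigh quotient and the Courant--Fischer min--max characterisation, using the fact that $Q_m$ has orthonormal columns spanning $\mathcal{K}_m:=\mathcal{K}_m(A,q_1)$. For any $y\in\mathbb{R}^m\setminus\{0\}$, set $x=Q_my$. Then $x^\top x=y^\top Q_m^\top Q_m y=y^\top y$ and $x^\top Ax=y^\top T_m y$, so
\[
R_{T_m}(y)=R_A(Q_my).
\]
Since $y\mapsto Q_my$ is a bijection from $\mathbb{R}^m$ onto $\mathcal{K}_m$, the range of $R_{T_m}$ on $\mathbb{R}^m\setminus\{0\}$ equals the range of $R_A$ on $\mathcal{K}_m\setminus\{0\}$. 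This identity is the single structural step; everything else follows from it.

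Next I apply the Rayleigh characterisation of extreme eigenvalues of a symmetric matrix. Diagonalise $T_m$ orthogonally (it is symmetric) and write $y$ in its eigenbasis. This gives $\theta_1=\min_{y\neq0}R_{T_m}(y)$ and $\theta_m=\max_{y\neq0}R_{T_m}(y)$, with the extrema attained at eigenvectors. By the identity above,
\[
\theta_1=\min_{x\in\mathcal{K}_m\setminus\{0\}}R_A(x),\qquad \theta_m=\max_{x\in\mathcal{K}_m\setminus\{0\}}R_A(x).
\]
The same characterisation for $A$ over all of $\mathbb{R}^n$ gives $\lambda_1=\min R_A$ and $\lambda_n=\max R_A$. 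Restricting a minimum to a subset can only increase it, and restricting a maximum can only decrease it, so $\lambda_1\le\theta_1$ and $\theta_m\le\lambda_n$.

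Monotonicity in $m$ follows from the nesting $\mathcal{K}_m\subseteq\mathcal{K}_{m+1}$, which holds because the spanning sets are nested. Minimising over a larger set gives $\theta_1^{(m+1)}\le\theta_1^{(m)}$, and maximising over a larger set gives $\theta_{m+1}^{(m+1)}\ge\theta_m^{(m)}$. Both extreme Ritz values therefore move monotonically toward $\lambda_1$ and $\lambda_n$ respectively.

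For the general interlacing claim, which the statement defers to Problem~7, I would sketch Courant--Fischer,
\[
\theta_i=\min_{\dim S=i,\;S\subseteq\mathbb{R}^m}\ \max_{y\in S\setminus\{0\}} R_{T_m}(y).
\]
Mapping each $S$ to $Q_mS$ gives an $i$-dimensional subspace of $\mathbb{R}^n$, so $\theta_i$ is a min--max over a restricted family of subspaces, whence $\theta_i\ge\lambda_i$. The dual max--min form gives $\theta_i\le\lambda_{n-m+i}$.

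I expect the main obstacle to be bookkeeping rather than ideas: orthonormality of $Q_m$ must be justified. This holds in exact arithmetic by the Lanczos construction, and only before breakdown ($\beta_j\neq0$). Without it, $x^\top x\neq y^\top y$ and the identity fails.
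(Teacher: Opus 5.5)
Your proposal is correct and follows the paper's reasoning. The paper gives no separate proof; its justification sits in the statement itself: $\theta_1$ and $\theta_m$ are the extreme Rayleigh quotients of $A$ over $\mathcal{K}_m(A,q_1)$, the nesting of the Krylov spaces gives monotonicity, and the interlacing deferred to Problem~7 is exactly the Courant--Fischer argument you sketch, transported through the isometry $Q_m$.
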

\end{tcolorbox}

Two consequences matter in practice. First, the Ritz values approach $\lambda_1$ and $\lambda_n$ from the
inside, so a handful of Lanczos steps already brackets the spectrum which is precisely the estimate of
$\kappa(A)$ that the CG bound of Section~\ref{sec:cg} needs. Second, \emph{interior} eigenvalues are
reached much more slowly. If the eigenvalue of interest is not extremal, one must deflate the
eigenvectors that are in the way. The next example uses the first of these. Sharp convergence
estimates for the Ritz values, which show that the rate again improves by a square root over the power
method, are given in \cite{saadRatesConvergenceLanczos1980}.

\subsection{Lanczos and CG are the same recurrence}\label{sec:cglanczos}

It is no accident that Lanczos and CG appear in the same chapter. Both build an orthonormal basis of the
same Krylov subspace by a three-term recurrence, and in exact arithmetic they build the \emph{same} one.
The following proposition makes this precise and, more usefully, shows that the entries of $T_m$ can be
read off from a CG run without any extra work.

\begin{tcolorbox}[colback=yellow!5!white,colframe=yellow!50!black]
\begin{proposition}[CG and Lanczos generate the same basis,
{\cite[Section~3.1, Theorem~3]{bouyouliNewResultsConvergence2009}}]\label{prop:cglanczos}
Let $A$ be SPD and $b\neq0$. Run CG (Definition~\ref{def:cg}) from $x^{(0)}=0$, so that $r^{(0)}=b$, and
assume $r^{(j)}\neq0$ for $j=0,\dots,m-1$; denote its scalars by
$\alpha^{\mathrm{cg}}_k,\beta^{\mathrm{cg}}_k$. Run Lanczos (Algorithm~\ref{alg:lanczos}) on $A$ with
$q_1=b/\|b\|_2$, producing $q_1,\dots,q_m$ and the entries $\alpha_j,\beta_j$ of $T_m$. Then
\begin{enumerate}
\item[(i)] the two methods produce the same orthonormal basis of $\mathcal{K}_m(A,b)$, namely
\[
q_{j+1}=(-1)^{j}\,\frac{r^{(j)}}{\|r^{(j)}\|_2},\qquad j=0,\dots,m-1;
\]
\item[(ii)] the entries of $T_m$ are determined by the CG scalars,
\[
\alpha_1=\frac{1}{\alpha^{\mathrm{cg}}_{0}},\qquad
\alpha_j=\frac{1}{\alpha^{\mathrm{cg}}_{j-1}}+\frac{\beta^{\mathrm{cg}}_{j-2}}{\alpha^{\mathrm{cg}}_{j-2}}
\ \ (j\ge2),\qquad
\beta_j=\frac{\sqrt{\beta^{\mathrm{cg}}_{j-1}}}{\alpha^{\mathrm{cg}}_{j-1}}\ \ (j\ge1).
\]
\end{enumerate}
\end{proposition}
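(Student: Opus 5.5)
The plan is to eliminate the search directions from the CG recurrences. This produces a three-term recurrence for the residuals alone, which can then be matched term by term with the Lanczos recurrence. For part (i) I will use Proposition~\ref{prop:cg_properties}: the residuals $r^{(0)},\dots,r^{(m-1)}$ are mutually orthogonal and nonzero. Each $r^{(j)}$ lies in $\mathcal{K}_{j+1}(A,b)$, by induction from $r^{(j+1)}=r^{(j)}-\alpha^{\mathrm{cg}}_j Ap^{(j)}$ and $p^{(j)}\in\mathcal{K}_{j+1}$. Hence the normalised residuals form an orthonormal basis of $\mathcal{K}_m(A,b)$ that is nested in the same way as the Lanczos basis. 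Lanczos is Gram--Schmidt applied to the Krylov sequence, so $q_{j+1}$ is determined up to sign as the unit vector in $\mathcal{K}_{j+1}\cap\mathcal{K}_j^\perp$; this subspace is one-dimensional and contains $r^{(j)}$. So $q_{j+1}=\pm r^{(j)}/\|r^{(j)}\|_2$. The sign will be fixed once the recurrence of the next step is available.

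The key step is to derive that recurrence. From $p^{(j)}=r^{(j)}+\beta^{\mathrm{cg}}_{j-1}p^{(j-1)}$ and $Ap^{(j-1)}=(r^{(j-1)}-r^{(j)})/\alpha^{\mathrm{cg}}_{j-1}$, I will substitute into $r^{(j+1)}=r^{(j)}-\alpha^{\mathrm{cg}}_jAp^{(j)}$ and obtain
\[
Ar^{(j)}=-\frac{1}{\alpha^{\mathrm{cg}}_j}r^{(j+1)}+\Bigl(\frac{1}{\alpha^{\mathrm{cg}}_j}+\frac{\beta^{\mathrm{cg}}_{j-1}}{\alpha^{\mathrm{cg}}_{j-1}}\Bigr)r^{(j)}-\frac{\beta^{\mathrm{cg}}_{j-1}}{\alpha^{\mathrm{cg}}_{j-1}}r^{(j-1)},
\]
with the convention that the $\beta^{\mathrm{cg}}_{-1}$ terms are absent for $j=0$. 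Next I set $\tilde q_{j+1}=(-1)^j r^{(j)}/\|r^{(j)}\|_2$ and divide by $\|r^{(j)}\|_2$. Using $\|r^{(j+1)}\|_2/\|r^{(j)}\|_2=\sqrt{\beta^{\mathrm{cg}}_j}$ and $\|r^{(j-1)}\|_2/\|r^{(j)}\|_2=1/\sqrt{\beta^{\mathrm{cg}}_{j-1}}$, the recurrence becomes
\[
A\tilde q_{j+1}=\frac{\sqrt{\beta^{\mathrm{cg}}_j}}{\alpha^{\mathrm{cg}}_j}\tilde q_{j+2}+\Bigl(\frac{1}{\alpha^{\mathrm{cg}}_j}+\frac{\beta^{\mathrm{cg}}_{j-1}}{\alpha^{\mathrm{cg}}_{j-1}}\Bigr)\tilde q_{j+1}+\frac{\sqrt{\beta^{\mathrm{cg}}_{j-1}}}{\alpha^{\mathrm{cg}}_{j-1}}\tilde q_j .
\]
The alternating sign $(-1)^j$ is chosen precisely so that the minus signs on the outer terms become plus signs. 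As a result both off-diagonal coefficients are positive, matching $\beta_j=\|w\|_2>0$ in Algorithm~\ref{alg:lanczos}.

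Finally I will argue by induction on $j$ that $q_{j+1}=\tilde q_{j+1}$ and that the Lanczos scalars are as claimed. The base case is $q_1=b/\|b\|_2=\tilde q_1$. Assume $q_i=\tilde q_i$ for $i\le j+1$ and that $\beta_{j}$ is already identified. The Lanczos step computes $w=Aq_{j+1}-\beta_j q_j$, then $\alpha_{j+1}=q_{j+1}^\top w$, and then normalises. Since the $\tilde q_i$ are orthonormal, comparing with the displayed recurrence gives $\alpha_{j+1}=1/\alpha^{\mathrm{cg}}_j+\beta^{\mathrm{cg}}_{j-1}/\alpha^{\mathrm{cg}}_{j-1}$ (or $1/\alpha^{\mathrm{cg}}_0$ when $j=0$). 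It also gives $w-\alpha_{j+1}q_{j+1}=(\sqrt{\beta^{\mathrm{cg}}_j}/\alpha^{\mathrm{cg}}_j)\tilde q_{j+2}$, whose norm is the claimed positive $\beta_{j+1}$, and normalising gives $q_{j+2}=\tilde q_{j+2}$. Positivity of $\alpha^{\mathrm{cg}}_j$ follows from $A$ being SPD and the residuals being nonzero, and it ensures the sign is correct. I expect the main obstacle to be index bookkeeping: matching the Lanczos offset ($\beta_j$ couples $q_j$ and $q_{j+1}$) with the CG offset, handling the $j=0$ boundary case, and checking that the coefficient of $\tilde q_j$ equals the previously identified $\beta_j$, as consistency with the symmetric recurrence requires.
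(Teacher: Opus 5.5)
Your proposal is correct and takes essentially the same route as the paper. Part (i) rests on the orthogonality and nesting of the residuals, and the rest eliminates the search directions to get the three-term residual recurrence, normalises it with the alternating sign $(-1)^j$, and matches it against the Lanczos recurrence. Your explicit induction through the Lanczos steps pins down the signs more carefully than the paper's appeal to the convention $\beta_j=\|w\|_2>0$, and it makes the Gram--Schmidt uniqueness argument in (i) redundant, but the approach is the same.
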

\end{tcolorbox}

\begin{proof}
(i) By Proposition~\ref{prop:cg_properties} the residuals $r^{(0)},\dots,r^{(m-1)}$ are mutually
orthogonal, and $r^{(j)}\in\mathcal{K}_{j+1}(A,b)\setminus\mathcal{K}_{j}(A,b)$. Once normalised they are
therefore an orthonormal basis of $\mathcal{K}_m(A,b)$ adapted to the nested sequence
$\mathcal{K}_1\subset\mathcal{K}_2\subset\cdots$. The Lanczos vectors are, by construction, the
Gram--Schmidt basis of that same nested sequence, and such a basis is unique up to the sign of each
vector. The signs are pinned down by the convention $\beta_j=\|w\|_2>0$ in
Algorithm~\ref{alg:lanczos}, which produces the alternating factor $(-1)^j$.

(ii) Eliminate the search directions. From $r^{(k+1)}=r^{(k)}-\alpha^{\mathrm{cg}}_kAp^{(k)}$ we get
$Ap^{(k)}=\bigl(r^{(k)}-r^{(k+1)}\bigr)/\alpha^{\mathrm{cg}}_k$, and substituting this into
$Ar^{(k)}=Ap^{(k)}-\beta^{\mathrm{cg}}_{k-1}Ap^{(k-1)}$, which follows from
$p^{(k)}=r^{(k)}+\beta^{\mathrm{cg}}_{k-1}p^{(k-1)}$, gives the three-term recurrence
\[
Ar^{(k)}=-\frac{r^{(k+1)}}{\alpha^{\mathrm{cg}}_k}
+\Bigl(\frac{1}{\alpha^{\mathrm{cg}}_k}+\frac{\beta^{\mathrm{cg}}_{k-1}}{\alpha^{\mathrm{cg}}_{k-1}}\Bigr)r^{(k)}
-\frac{\beta^{\mathrm{cg}}_{k-1}}{\alpha^{\mathrm{cg}}_{k-1}}\,r^{(k-1)} .
\]
Now write $\rho_k=\|r^{(k)}\|_2$, so that $r^{(k)}=(-1)^k\rho_k\,q_{k+1}$ by (i). The definition of
$\beta^{\mathrm{cg}}_k$ gives $\rho_{k+1}/\rho_k=\sqrt{\beta^{\mathrm{cg}}_k}$. Dividing the recurrence
by $(-1)^k\rho_k$ turns every sign positive and yields
\[
Aq_{k+1}=\frac{\sqrt{\beta^{\mathrm{cg}}_{k}}}{\alpha^{\mathrm{cg}}_{k}}\,q_{k+2}
+\Bigl(\frac{1}{\alpha^{\mathrm{cg}}_k}+\frac{\beta^{\mathrm{cg}}_{k-1}}{\alpha^{\mathrm{cg}}_{k-1}}\Bigr)q_{k+1}
+\frac{\sqrt{\beta^{\mathrm{cg}}_{k-1}}}{\alpha^{\mathrm{cg}}_{k-1}}\,q_{k} .
\]
Comparing with the Lanczos recurrence $Aq_j=\beta_jq_{j+1}+\alpha_jq_j+\beta_{j-1}q_{j-1}$ and setting
$j=k+1$ gives the stated formulas.
\end{proof}

\begin{remark}
The alternating sign in (i) is a normalisation convention: replacing
$q_{j+1}$ by $-q_{j+1}$ conjugates $T_m$ by a diagonal matrix of signs, which changes the signs of the
off-diagonal entries but leaves the Ritz values, and hence everything we use them for, unchanged.
\end{remark}

The practical payoff is that spectral information comes \emph{for free} from a run one was performing
anyway: a CG solve can report an estimate of $\lambda_{\min}$, $\lambda_{\max}$ and hence $\kappa(A)$ at
the cost of a few scalar operations and no extra matrix--vector products. Proposition~\ref{prop:cglanczos}
also explains the finite-termination property of Section~\ref{sec:cg} from the other side: CG stops after
$d$ steps because Lanczos exhausts the Krylov space after $d$ steps, $d$ being the number of distinct
eigenvalues. A systematic account of this correspondence, together with the consequences it has for
error estimation inside a running CG solve, is given in
\cite{bouyouliNewResultsConvergence2009}.

\subsection{Application: computing the Fiedler vector}\label{sec:fiedlerlanczos}

We return to the graph of Chapter~3: two triangles joined by a single edge,
\[
A=\begin{bmatrix}
0&1&1&0&0&0\\ 1&0&1&0&0&0\\ 1&1&0&1&0&0\\
0&0&1&0&1&1\\ 0&0&0&1&0&1\\ 0&0&0&1&1&0
\end{bmatrix},
\qquad L=D-A,\quad D=\mathrm{diag}(2,2,3,3,2,2).
\]
There we ran Jacobi on this Laplacian and observed that the slowest-decaying error mode was the Fiedler
vector. We can perform a similar computation with Lanczos.

\paragraph{Deflating the known null vector.} $L$ is symmetric positive \emph{semi}definite and always has
the eigenpair $(0,\mathbf{1})$, since every row sums to zero. The Fiedler value $\lambda_2$ is therefore
not extremal, but it sits just above a known eigenvalue. By the proposition above, Lanczos would
converge to $0$ and to $\lambda_{\max}$ long before it resolved $\lambda_2$. The remedy is immediate:
start from a vector orthogonal to $\mathbf{1}$ and keep the iteration in that subspace,
\[
q_1\perp\mathbf{1},\qquad w\;\leftarrow\;w-(\mathbf{1}^\top w)\,\mathbf{1}/n \ \text{ at every step}.
\]
On the deflated subspace $\lambda_2$ \emph{is} the smallest eigenvalue, and the proposition applies again.

\begin{example}[Lanczos on the two-triangle Laplacian]
The exact spectrum is 
$$\{0,\;0.4384,\;3,\;3,\;3,\;4.5616\}.$$ 
Starting from a normalised
$q_1\perp\mathbf{1}$ and deflating as above, the Ritz values are
\begin{center}
\begin{tabular}{c|l}
\hline
$m$ & Ritz values $\theta_i$ \\
\hline
$1$ & $3.1323$ \\
$2$ & $2.4887,\;3.7217$ \\
$3$ & $\mathbf{0.4384},\;3.0000,\;4.5616$ \\
\hline
\end{tabular}
\end{center}
After three steps the Ritz values are the exact nonzero eigenvalues, and the Ritz vector belonging to
$\theta_1=0.4384$ is
\[
v \;=\; Q_3y_1 \;=\; (0.4647,\;0.4647,\;0.2610,\;-0.2610,\;-0.4647,\;-0.4647)^\top,
\]
the Fiedler vector to machine precision, and thresholding it is the simplest form of spectral clustering. Its sign pattern splits the vertices exactly as the two
triangles, $\{1,2,3\}$ against $\{4,5,6\}$, and the small entries $\pm0.2610$ identify nodes $3$ and $4$
as the bottleneck across the cut.
\end{example}

\begin{takeaway}
\begin{itemize}
  \item Lanczos is a Krylov projection: $m$ matrix-vector products compress $A$ into
  an $m\times m$ tridiagonal $T_m$, whose eigenvalues (Ritz values) approximate those of $A$.
  \item Convergence happens \emph{at the ends of the spectrum} first. Extremal eigenvalues come cheaply; a few Lanczos steps therefore bracket $\lambda_{\min},\lambda_{\max}$ hence $\kappa(A)$, which the CG bound of Section~\ref{sec:cg} needs.   
  \item On graph Laplacians, deflating the constant vector makes the Fiedler value extremal, and
  Lanczos delivers the community structure in an optimal number of steps.
\end{itemize}
\end{takeaway}

\section{Krylov Methods and Machine Learning (optional)}
\label{sec:krylovml}

Fitting a linear model to data by least squares is the simplest learning task there is, and one of the
most widely used. The same problem appears well beyond machine learning: reconstructing a sharp image
from a blurred one, recovering a physical parameter from indirect measurements, and combining a forecast
with observations in the data assimilation systems of Chapter~2 all reduce to it. In each case the matrix
is ill-conditioned and the data are noisy, so the point of view differs from that of the earlier
sections: there the system had a unique solution and we wanted it accurately, whereas here solving to
machine precision reproduces the noise rather than the underlying model.  \footnote{This section is not examinable but
the ideas in it are likely to be useful for the project.}

\subsection{Least squares and the SVD}\label{sec:lsq}

We are given $N$ observations, stored as the rows of a \emph{design matrix} $X\in\mathbb{R}^{N\times d}$,
together with the corresponding targets $y\in\mathbb{R}^N$, and we look for coefficients
$x\in\mathbb{R}^d$ for which $Xx$ is as close to $y$ as possible. This is the linear least squares
problem
\begin{equation}\label{eq:lsq}
\min_{x\in\mathbb{R}^d}\ \|Xx-y\|_2^2
\qquad\Longleftrightarrow\qquad
X^\top X\,x=X^\top y ,
\end{equation}
the equivalence being obtained by setting the gradient of the objective to zero. The matrix $X^\top X$ is
symmetric positive semidefinite, and positive definite whenever $X$ has full column rank, so we are back
in the setting of this chapter.

\begin{tcolorbox}[colback=yellow!5!white,colframe=yellow!50!black]
\begin{proposition}[Least-squares solution via the SVD]\label{prop:lsq}
Let $X\in\mathbb{R}^{N\times d}$ have full column rank, with thin SVD $X=U\Sigma V^\top$ and
$\Sigma=\mathrm{diag}(\sigma_1,\dots,\sigma_d)$, $\sigma_1\ge\cdots\ge\sigma_d>0$. Then the
solution of~\eqref{eq:lsq} is unique and given by
\begin{equation}\label{eq:lssol}
x^{\mathrm{ls}}=\sum_{i=1}^{d}\frac{u_i^\top y}{\sigma_i}\,v_i .
\end{equation}
\end{proposition}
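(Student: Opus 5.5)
The plan is to work directly with the normal equations in~\eqref{eq:lsq} and substitute the thin SVD. This gives uniqueness and the explicit formula in the same stroke.

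We first establish uniqueness. Full column rank means $Xv=0$ implies $v=0$. Hence $v^\top X^\top X v=\|Xv\|_2^2>0$ for every $v\neq0$, so $X^\top X$ is SPD and in particular invertible. The objective $\|Xx-y\|_2^2$ is a convex quadratic whose Hessian is $2X^\top X$. Its minimisers are therefore exactly the solutions of the normal equations, and invertibility gives exactly one solution. This is precisely the equivalence already stated in~\eqref{eq:lsq}.

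Next we substitute $X=U\Sigma V^\top$, where $U\in\mathbb{R}^{N\times d}$ has orthonormal columns ($U^\top U=I_d$) and $V\in\mathbb{R}^{d\times d}$ is orthogonal. Then $X^\top X=V\Sigma U^\top U\Sigma V^\top=V\Sigma^2V^\top$ and $X^\top y=V\Sigma U^\top y$. The normal equations become $V\Sigma^2V^\top x=V\Sigma U^\top y$. Multiplying on the left by $V^\top$ and then by $\Sigma^{-2}$ (legitimate because $\sigma_d>0$, which is again the full-rank hypothesis) gives $V^\top x=\Sigma^{-1}U^\top y$. Hence $x=V\Sigma^{-1}U^\top y$. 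Expanding this column by column, $x=\sum_{i=1}^d v_i\,\sigma_i^{-1}(u_i^\top y)$, which is exactly~\eqref{eq:lssol}.

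There is no genuine obstacle here. The one point needing care is the bookkeeping of the \emph{thin} SVD: $U$ is not square, so $UU^\top\neq I$ in general, and we use only $U^\top U=I_d$. As a cross-check we could instead argue orthogonally in $\mathbb{R}^N$. Complete $U$ to an orthogonal $[U,U_\perp]$; then $\|Xx-y\|_2^2=\|\Sigma V^\top x-U^\top y\|_2^2+\|U_\perp^\top y\|_2^2$. The first term vanishes exactly when $V^\top x=\Sigma^{-1}U^\top y$, which gives both the minimiser and its uniqueness without invoking the normal equations at all.
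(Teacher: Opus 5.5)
Your proof is correct and follows the paper's route. You substitute the thin SVD into the normal equations, use only $U^\top U=I_d$ to get $X^\top X=V\Sigma^2V^\top$, invert using $\sigma_d>0$, and read off $x^{\mathrm{ls}}=V\Sigma^{-1}U^\top y$ columnwise. Your separate SPD argument for uniqueness and the orthogonal-completion cross-check (which bypasses the normal equations entirely) are valid extras that the paper does not need.
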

\end{tcolorbox}

\begin{proof}
Since $U$ has orthonormal columns, $X^\top X=V\Sigma U^\top U\Sigma V^\top=V\Sigma^2V^\top$, which is
invertible because every $\sigma_i>0$; the normal equations in~\eqref{eq:lsq} therefore have the unique
solution $x^{\mathrm{ls}}=(X^\top X)^{-1}X^\top y=V\Sigma^{-2}V^\top V\Sigma U^\top y=V\Sigma^{-1}U^\top y$,
which written columnwise is~\eqref{eq:lssol}. (If $\operatorname{rank}X=r<d$ the solution is no longer
unique, but replacing $d$ by $r$ in~\eqref{eq:lssol} gives the one of smallest norm.)
\end{proof}

Formula~\eqref{eq:lssol} also shows where the difficulty lies. The size of $\sigma_i$ measures how much the data say about the direction $v_i$. Suppose now the targets carry noise, $y$ being replaced by $y+e$. By~\eqref{eq:lssol} the coefficient $i$ -th
 changes by $u_i^\top e/\sigma_i$: the same noise is \emph{divided} by $\sigma_i$. If
$\sigma_i=10^{-3}$, a noise component of size $10^{-3}$ shifts that coefficient by $1$. The coefficients
the data determine the least well are thus the ones the noise affects the most, which is why $x^{\mathrm{ls}}$
is of little use when $X$ is ill-conditioned.

\subsection{Spectral filters and the truncated SVD}\label{sec:filters}

The remedy is modify the form of~\eqref{eq:lssol} while damping the terms responsible for 
amplification of errors. We therefore replace $x^{\mathrm{ls}}$ by
\begin{equation}\label{eq:filter}
\widehat{x}=\sum_{i=1}^{d}f_i\,\frac{u_i^\top y}{\sigma_i}\,v_i ,\qquad f_i\in[0,1],
\end{equation}
where the coefficients $f_i$ are called \emph{filter factors}: $f_i=1$ retains the $i$-th direction
exactly as least squares had it, $f_i=0$ discards it, and intermediate values damp it. Choosing the
$f_i$, keeping the directions the data determine well and suppressing the others, is what
\emph{regularisation} means \cite{hansenRankDeficientDiscrete1998}.

The simplest choice is a sharp cutoff: for a fixed $k\le d$, set $f_i=1$ for $i\le k$ and $f_i=0$
otherwise, keeping the $k$ best-determined directions. This is the \emph{truncated SVD}, the most
transparent of the filters but also the most expensive: it requires the singular value decomposition of
$X$, out of reach for large $d$, together with a cutoff $k$ chosen by hand. The next two sections present
filters that require neither.

\subsection{Ridge regression: a smooth filter}\label{sec:ridge}

Rather than truncating, one may penalise large coefficients. \emph{Ridge regression} solves
\[
\min_x\ \tfrac{1}{2}\|Xx-y\|_2^2+\tfrac{\lambda}{2}\|x\|_2^2
\qquad\Longleftrightarrow\qquad
(X^\top X+\lambda I)\,x = X^\top y ,
\]
where $\lambda>0$ is a regularisation parameter. The matrix $A_\lambda=X^\top X+\lambda I$ is SPD, so CG
applies directly and never requires $A_\lambda$ itself: $A_\lambda v=X^\top(Xv)+\lambda v$ costs two
passes over the data, and no $d\times d$ matrix is ever formed. This is what makes Krylov methods natural
here. It remains to determine the effect of $\lambda$.

\begin{tcolorbox}[colback=yellow!5!white,colframe=yellow!50!black]
\begin{proposition}[Ridge regression is a spectral filter]\label{prop:ridgefilter}
Under the assumptions of Proposition~\ref{prop:lsq} and for $\lambda>0$, the ridge solution
$x_\lambda=(X^\top X+\lambda I)^{-1}X^\top y$ has the form~\eqref{eq:filter} with
$f_i=\sigma_i^2/(\sigma_i^2+\lambda)\in(0,1)$.
\end{proposition}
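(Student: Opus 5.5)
The plan is to mirror the proof of Proposition~\ref{prop:lsq}, diagonalising the ridge system with the thin SVD $X=U\Sigma V^\top$. Since $V\in\mathbb{R}^{d\times d}$ is orthogonal (full column rank forces $V$ square), we have $VV^\top=I$. Hence
\[
X^\top X+\lambda I=V\Sigma^2V^\top+\lambda VV^\top=V(\Sigma^2+\lambda I)V^\top .
\]
The diagonal matrix $\Sigma^2+\lambda I$ has entries $\sigma_i^2+\lambda>0$, so $A_\lambda$ is invertible and its inverse is $V(\Sigma^2+\lambda I)^{-1}V^\top$. This also re-confirms that $A_\lambda$ is SPD, as claimed in the text.

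Next, I substitute $X^\top y=V\Sigma U^\top y$ and use $V^\top V=I$ to get
\[
x_\lambda=V(\Sigma^2+\lambda I)^{-1}\Sigma U^\top y=\sum_{i=1}^d\frac{\sigma_i}{\sigma_i^2+\lambda}\,(u_i^\top y)\,v_i .
\]
Writing $\sigma_i/(\sigma_i^2+\lambda)=\bigl(\sigma_i^2/(\sigma_i^2+\lambda)\bigr)\cdot(1/\sigma_i)$ puts this in the form~\eqref{eq:filter} with $f_i=\sigma_i^2/(\sigma_i^2+\lambda)$. Dividing by $\sigma_i$ is legitimate because $\sigma_i>0$ under the assumptions of Proposition~\ref{prop:lsq}.

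It remains to check that $f_i\in(0,1)$. Because $\sigma_i>0$ and $\lambda>0$, we have $0<\sigma_i^2<\sigma_i^2+\lambda$, which gives both strict inequalities.

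The only step needing care is the first: the identity $VV^\top=I$ requires $V$ to be square. That holds here because the thin SVD of a full-column-rank $X\in\mathbb{R}^{N\times d}$ has $V\in\mathbb{R}^{d\times d}$. Without full column rank, one would instead handle the null directions separately, where $\sigma_i=0$ and those directions contribute nothing to $x_\lambda$.
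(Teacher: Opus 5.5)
Your proposal is correct and follows essentially the same route as the paper: diagonalise $X^\top X+\lambda I=V(\Sigma^2+\lambda I)V^\top$ via the thin SVD, substitute $X^\top y=V\Sigma U^\top y$, and factor $\sigma_i/(\sigma_i^2+\lambda)$ as $\bigl(\sigma_i^2/(\sigma_i^2+\lambda)\bigr)\cdot(1/\sigma_i)$. Your justification that $V$ is square and your check that $f_i\in(0,1)$ are details the paper's proof leaves implicit.
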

\end{tcolorbox}

\begin{proof}
As above, $X^\top X+\lambda I=V(\Sigma^2+\lambda I)V^\top$ and $X^\top y=V\Sigma U^\top y$, so
$$x_\lambda=V(\Sigma^2+\lambda I)^{-1}\Sigma U^\top y=\sum_{i}\frac{\sigma_i}{\sigma_i^2+\lambda}(u_i^\top y)v_i = \frac{\sigma_i^2}{\sigma_i^2+\lambda}\cdot\frac{1}{\sigma_i} (u_i^\top y)v_i$$
which represents the solution in the form~\eqref{eq:filter}.
\end{proof}

Hence $f_i\approx1$ when $\sigma_i^2\gg\lambda$ and $f_i\approx0$ when $\sigma_i^2\ll\lambda$: ridge
regression is a smoothed truncated SVD, in which $\lambda$ rather than $k$ determines where the
transition occurs (Figure~\ref{fig:filters}, left). No singular value decomposition is needed, but a
value of $\lambda$ still has to be supplied.

\begin{figure}[htbp]
\centering
\begin{tikzpicture}[>=stealth,scale=0.5]
  \draw[->,gray!70] (0,0) -- (7.4,0) node[right,font=\scriptsize]{$\sigma_i^2$};
  \draw[->,gray!70] (0,0) -- (0,3.0) node[above,font=\scriptsize]{$f_i$};
  \draw[gray!50,dashed] (0,2.4) -- (7.2,2.4) node[right,font=\scriptsize]{$1$};
  \draw[TUblue,thick,smooth,domain=0:7,samples=100] plot(\x,{2.4*\x/(\x+2)});
  \draw[gray!55!black,thick,densely dotted] (0,0) -- (2.6,0) -- (2.6,2.4) -- (7,2.4);
  \node[TUblue,font=\scriptsize,anchor=west] at (4.2,1.05) {ridge};
  \node[gray!55!black,font=\scriptsize,anchor=west] at (0.15,2.8) {truncated SVD};
\end{tikzpicture}
\hfill
\begin{tikzpicture}[>=stealth,scale=0.5]
  \draw[->,gray!70] (0,0) -- (7.4,0) node[right,font=\scriptsize]{$k$};
  \draw[->,gray!70] (0,0) -- (0,3.0) node[above,font=\scriptsize]{error};
  \draw[gray!55!black,thick,densely dotted,domain=0:8,samples=90,variable=\x]
    plot ({0.8*\x},{1.05*2.2*exp(-0.55*\x)});
  \draw[red!60!black,thick,dashed,domain=0:8,samples=90,variable=\x]
    plot ({0.8*\x},{1.05*0.06*exp(0.42*\x)});
  \draw[TUblue!85!black,very thick,domain=0:8,samples=140,variable=\x]
    plot ({0.8*\x},{1.05*(2.2*exp(-0.55*\x)+0.06*exp(0.42*\x))});
  \draw[gray!75!black,dashed] (3.19,0) -- (3.19,0.594);
  \fill[TUblue!85!black] (3.19,0.594) circle (2.2pt);
  \node[font=\scriptsize,anchor=north] at (3.19,-0.08) {$k^\star$};
\end{tikzpicture}
\caption{\emph{Left:} the truncated SVD cuts small-$\sigma_i$ directions off abruptly, ridge damps them
smoothly, and the CG filter lies between the two. \emph{Right:} semi-convergence --- the unresolved part
(dotted) falls while the admitted noise (dashed) grows, so the total (solid) is smallest at $k^\star$.}
\label{fig:filters}
\end{figure}

\subsection{Early stopping: the filter generated by CG}\label{sec:earlystop}

We now consider a third possibility, in which no filter is designed and no parameter selected: solve the
\emph{unregularised} normal equations in~\eqref{eq:lsq} by CG from $x^{(0)}=0$, and stop after $k$ steps.
The iteration then produces a filter of its own.

\begin{tcolorbox}[colback=yellow!5!white,colframe=yellow!50!black]
\begin{proposition}[CG is a spectral filter]\label{prop:cgfilter}
Apply CG to $X^\top X\,x=X^\top y$ from $x^{(0)}=0$, and let $q_k$ be the CG residual polynomial of
Proposition~\ref{prop:cg_poly}, so $\deg q_k\le k$ and $q_k(0)=1$. Then $x^{(k)}$ has the
form~\eqref{eq:filter} with $f_i=1-q_k(\sigma_i^2)$, and among all filters built this way from a
polynomial, that of CG minimises the $A$-norm of the error at every step.
\end{proposition}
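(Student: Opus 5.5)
We will work with $A:=X^\top X=V\Sigma^2V^\top$ and right-hand side $b:=X^\top y=V\Sigma U^\top y$, assuming as in Proposition~\ref{prop:lsq} that $X$ has full column rank, so $A$ is SPD and CG is well defined. The exact solution is $x^\ast=x^{\mathrm{ls}}=V\Sigma^{-1}U^\top y$. Since $x^{(0)}=0$, we have $e^{(0)}=-x^{\mathrm{ls}}$. Proposition~\ref{prop:cg_poly} then gives $e^{(k)}=q_k(A)e^{(0)}$, that is,
\[
x^{(k)}=x^{\mathrm{ls}}-q_k(A)\,x^{\mathrm{ls}}=\bigl(I-q_k(A)\bigr)x^{\mathrm{ls}} .
\]
The plan is to diagonalise this identity in the basis $\{v_i\}$.

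First step: because $A=V\Sigma^2V^\top$ with $V$ orthogonal, any polynomial satisfies $p(A)=V\,p(\Sigma^2)\,V^\top$, so $p(A)v_i=p(\sigma_i^2)v_i$. Expanding $x^{\mathrm{ls}}=\sum_i \frac{u_i^\top y}{\sigma_i}v_i$ by \eqref{eq:lssol} gives
\[
x^{(k)}=\sum_{i=1}^d \bigl(1-q_k(\sigma_i^2)\bigr)\frac{u_i^\top y}{\sigma_i}v_i ,
\]
which is exactly the form \eqref{eq:filter} with $f_i=1-q_k(\sigma_i^2)$. We should note in passing that the statement's $f_i\in[0,1]$ is an additional fact. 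It holds because the CG residual polynomial has its $k$ roots real and in $(\lambda_{\min},\lambda_{\max})$, since they are the Ritz values by Proposition~\ref{prop:cglanczos}, and it satisfies $q_k(0)=1$. We can only claim $0\le q_k(\sigma_i^2)$ on the interval below the smallest Ritz value, so the honest statement is the representation itself. We will present $f_i\in[0,1]$ as holding for the directions with $\sigma_i^2$ below the Ritz values, and not prove more.

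Second step (optimality): any "polynomial filter" of this type is $f_i=1-q(\sigma_i^2)$ with $\deg q\le k$ and $q(0)=1$. The corresponding iterate is $\widehat x=(I-q(A))x^{\mathrm{ls}}$, and this is precisely a generic element of $x^{(0)}+\mathcal{K}_k(A,r^{(0)})$, because $I-q(A)=A\,s(A)$ with $\deg s\le k-1$ and $A\,x^{\mathrm{ls}}=b=r^{(0)}$. Its error is $-q(A)x^{\mathrm{ls}}=q(A)e^{(0)}$. The minimisation part of Proposition~\ref{prop:cg_poly}, equivalently the third item of Proposition~\ref{prop:cg_properties}, then states that $q_k$ minimises $\|q(A)e^{(0)}\|_A$ over this class, which is the claim. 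Written spectrally, the quantity minimised is $\sum_i \sigma_i^2 q(\sigma_i^2)^2 (u_i^\top y)^2/\sigma_i^2=\sum_i q(\sigma_i^2)^2(u_i^\top y)^2$, which we may display for interpretation.

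The main obstacle is the bijection in the second step between admissible polynomials and points of the affine Krylov space, in particular the identity $\mathcal{K}_k(A,b)=\{s(A)A x^{\mathrm{ls}}\}$. This is routine once $b=Ax^{\mathrm{ls}}$ is observed. The only genuinely delicate point is the range claim $f_i\in[0,1]$, which we will handle via the Ritz-value location of the roots of $q_k$ as described above.
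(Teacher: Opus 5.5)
Your argument is correct and follows the paper's proof: from $e^{(0)}=-x^{\mathrm{ls}}$ you write $x^{(k)}=(I-q_k(A))x^{\mathrm{ls}}$, diagonalise in $\{v_i\}$, and inherit optimality from Proposition~\ref{prop:cg_poly} through the correspondence $q\leftrightarrow x^{(0)}+\mathcal{K}_k(A,X^\top y)$, which you spell out more explicitly than the paper does. Your caveat about the range condition $f_i\in[0,1]$ built into \eqref{eq:filter}, which the paper's proof passes over, is justified and should be sharpened rather than hedged. The condition genuinely fails in general: already for $k=1$ one has $q_1(\lambda)=1-\lambda/\theta_1$, where $\theta_1$ is the Rayleigh quotient of $X^\top y$, so $f_i=\sigma_i^2/\theta_1>1$ whenever $\sigma_i^2>\theta_1$. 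You should therefore delete the sentence claiming it ``holds'' and keep only the statement that $f_i\in[0,1)$ for $\sigma_i^2$ below the smallest Ritz value.
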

\end{tcolorbox}

\begin{proof}
The matrix $A=X^\top X$ is SPD with eigenpairs $(\sigma_i^2,v_i)$ and $Ax=X^\top y$ has solution
$x^{\mathrm{ls}}$. By Proposition~\ref{prop:cg_poly}, $e^{(k)}=q_k(A)e^{(0)}$; since $x^{(0)}=0$ we have
$e^{(0)}=-x^{\mathrm{ls}}$, so $x^{(k)}=x^{\mathrm{ls}}+e^{(k)}=(I-q_k(A))x^{\mathrm{ls}}$. Expanding
$x^{\mathrm{ls}}$ in the basis $\{v_i\}$ and using $q_k(A)v_i=q_k(\sigma_i^2)v_i$ gives the filter; the
optimality is that of Proposition~\ref{prop:cg_poly}, read through this correspondence.
\end{proof}

Since $q_k(0)=1$ and $q_k$ must be small where the eigenvalues carry weight, the CG filter is close to
$1$ on the large $\sigma_i^2$ while remaining close to $0$ on the small ones after a few iterations. In
other words, CG resolves the well-determined directions first.

Stopping early is therefore a form of regularisation. The iteration count $k$ plays the role that $k$
plays for the truncated SVD and $\lambda$ for ridge regression, but at no additional cost, since the
iteration has to be carried out in any case.

To be noted that the first two filters are fixed before the data are seen, whereas
$q_k$ is built from $\mathcal{K}_k(A,X^\top y)$ and hence depends on $y$. CG selects a filter adapted to
the problem at hand, which is why it outperforms a fixed filter at the same $k$.

\begin{remark}[When to stop]
Filtering explains why early stopping helps, but not when to stop. Two effects compete, as shown on the
right of Figure~\ref{fig:filters}:
\begin{itemize}[topsep=2pt,itemsep=0pt,parsep=0pt,partopsep=0pt]
  \item the part of the solution not yet resolved \emph{decreases} with $k$;
  \item the noise admitted through the filter \emph{grows} with $k$, since the filter factors approach
  $1$ on the small $\sigma_i$ for which $1/\sigma_i$ is large.
\end{itemize}
Their sum therefore attains a minimum at some $k^\star$ and increases again beyond it, a phenomenon
known as \emph{semi-convergence}. A stopping rule is thus indispensable.

If the noise level $\delta$ is known, one stops once $\|y-Xx^{(k)}\|_2\le\tau\delta$ with $\tau>1$, since
fitting the data more closely than the noise amounts to fitting the noise; this makes CG a genuine
regularisation method \cite{hankeConjugateGradientTypeMethods1995}. In machine learning $\delta$ is
seldom known, and one stops when the error on a held-out validation set begins to rise.
\end{remark}

\begin{takeaway}
\begin{itemize}
  \item The least-squares solution divides by $\sigma_i$: the directions the data determine worst are
  exactly those noise corrupts most, so solving~\eqref{eq:lsq} exactly is the wrong thing to do.
  \item Every remedy has the filtered form~\eqref{eq:filter} and differs only in the shape of $f_i$, each
  replacing the divergent $1/\sigma_i$ by something bounded.
  \item \emph{Truncated SVD:} $f_i=1$ for $i\le k$, else $0$ is transparent, but it needs a full SVD and
  a cutoff $k$ chosen by hand.
  \item \emph{Ridge regression:} $f_i=\sigma_i^2/(\sigma_i^2+\lambda)$, only matrix--vector products,
  one pass over the data each, so CG applies directly; but $\lambda$ must be supplied.
  \item \emph{CG, stopped early:} $f_i=1-q_k(\sigma_i^2)$,  no SVD and no parameter, optimal in the
  $A$-norm at every step, and adapted to the data through the Krylov space.
  \item \emph{Early stopping is regularisation}, with $k$ in the role of $\lambda$. Semi-convergence
  means a stopping rule is still required.
\end{itemize}
\end{takeaway}

\newpage
\section{Exercises}
\paragraph{Problem 1 (A link between fixed point and Krylov iterative methods, exam 2023/2024).}
Let $A,M \in \mathbb{R}^{n\times n}$ be non-singular, $I:=I_n$, and $b \in \mathbb{R}^n$.  
Let $x^0 \in \mathbb{R}^n$ be arbitrary and define $r^k = b - A x^k$ for $k=0,1,2,\dots$.  
Abbreviate $r:=b-Ax^0$. Assume one splits \(A = M - (M-A).\)
\begin{enumerate}
\item[(a)] Show that $Ax=b$ if and only if
\begin{equation}\label{eq:fixedpoint}
x = x + M^{-1}(b - A x).
\end{equation}

\item[(b)] Relation \eqref{eq:fixedpoint} leads to the fixed-point method
\[
x^{k+1} = x^k + M^{-1} r^k.\]
Show that
\[
r^{k+1} = (I - A M^{-1}) r^k.
\]

\item[(c)] Show that
\[
M^{-1} r^{k+1} = (I - M^{-1} A)\, M^{-1} r^k.
\]

\item[(d)] Define the difference $u^k = x^{k+1} - x^k$. Show that
\[
u^k = M^{-1} r^k.
\]

\item[(e)] Define \(U^k = [\,u^0, u^1, \ldots, u^{k-1}\,] \in \mathbb{R}^{n\times k}.\)
Show that
\[
U^k = \big[M^{-1} r,\; (I - M^{-1}A) M^{-1} r,\; \ldots,\; (I - M^{-1}A)^{k-1} M^{-1} r\big].
\]
\end{enumerate}

\paragraph{Problem 2 (Krylov spaces, homework 2023/2024).}
Let $n=5$ and
\[
A =
\begin{bmatrix}
2 & -1 & 0 & 0 & 0 \\
-1 & 2 & -1 & 0 & 0 \\
0 & -1 & 2 & -1 & 0 \\
0 & 0 & -1 & 2 & -1 \\
0 & 0 & 0 & -1 & 2
\end{bmatrix}.
\]
One finds that its Schur decomposition is $A = S D S^{-1}$ with
\[
D = \mathrm{diag}\!\bigl(2-\sqrt{3},\,1,\,2,\,3,\,2+\sqrt{3}\bigr),\qquad
S =
\begin{bmatrix}
\frac{1}{\sqrt{12}} & -\frac{1}{2} & \frac{1}{\sqrt{3}} & \frac{1}{2} & -\frac{1}{\sqrt{12}}\\
\frac{1}{2} & -\frac{1}{2} & 0 & -\frac{1}{2} & \frac{1}{2}\\
\frac{1}{\sqrt{3}} & 0 & -\frac{1}{\sqrt{3}} & 0 & \frac{1}{\sqrt{3}}\\
\frac{1}{2} & \frac{1}{2} & 0 & \frac{1}{2} & \frac{1}{2}\\
\frac{1}{\sqrt{12}} & \frac{1}{2} & \frac{1}{\sqrt{3}} & -\frac{1}{2} & -\frac{1}{\sqrt{12}}
\end{bmatrix}.
\]
Write $S=[s_1,\dots,s_5]$ as a matrix of five columns $s_1,\dots,s_5$.

\begin{enumerate}

\item[(a)] {Determine the minimal polynomial of $A$ and the polynomial $q$ for which $A^{-1} = q(A)$.}

\item[(b)] {Determine $\dim \mathcal{K}_k(A, s_2)$ and $\dim \mathcal{K}_k(A,e_1)$ for $k=1,2,3,\dots$.}

\item[(c)] Let $b = s_2 + s_4$ and $x_0=0$. Does there exist (in exact arithmetic) a $k\in\mathbb{N}$ for which the CG iterate $x_k$ equals the solution $\hat x$ of $A\hat x=b$? If yes, what is that value of $k$?

\end{enumerate}

\paragraph{Problem 3 (Kantorovich inequality).}
Let $A$ be SPD with condition number $\kappa(A)$. 
Show that for any $v \neq 0$,
\[
1 \le \frac{(v^\top A v)(v^\top A^{-1} v)}{(v^\top v)^2}
\;\leq\; \frac{1}{4}\left(\sqrt{\kappa(A)}+\frac{1}{\sqrt{\kappa(A)}}\right)^2.
\]

\paragraph{Problem 4 (Orthogonality, $A$-conjugacy, best approximation).}
Let $A$ be an SPD matrix. Prove the following fundamental CG properties:
\begin{enumerate}
\item[(a)] (\emph{Residual orthogonality}) $r^{(i)\top}r^{(j)}=0$ for $i\neq j$.
\item[(b)] (\emph{$A$-orthogonality of search directions}) $p^{(i)\top}A p^{(j)}=0$ for $i\neq j$.
\item[(c)] (\emph{Best $A$-norm approximation}) $x^{(k)}$ minimizes $\|x-x^\ast\|_A$ over $x^{(0)}+K_k(A,r^{(0)})$.
\end{enumerate}

\paragraph{Problem 5 (Chebyshev bound and CG convergence rate).}
Let $0<m=\lambda_{\min}(A)\le \lambda\le M=\lambda_{\max}(A)$ and $\kappa(A)=M/m$.
\begin{enumerate}
\item[(a)] Show that
\[
\frac{\|e^{(k)}\|_A}{\|e^{(0)}\|_A}\;\le\;
\min_{\substack{p\in\Pi_k\\ p(0)=1}}\;\max_{\lambda\in[m,M]} |p(\lambda)|,
\]
where $\Pi_k$ denotes polynomials of degree $\le k$.

\item[(b)] We will assume as known the Chebyshev optimality property
$$
\max_{\lambda\in[m,M]}|p_k(\lambda)|= \min_{\substack{p\in\Pi_k\\ p(0)=1}}\;\max_{\lambda\in[m,M]} |p(\lambda)| = \frac{1}{T_k\left(\frac{M+m}{M-m}\right)}
$$
where $T_k$ is the Chebyshev polynomial having the complex representation
$$
T_k(z) = \frac{(z+\sqrt{z^2-1})^k + (z-\sqrt{z^2-1})^k}{2}.
$$
Prove the bound
\[
\max_{\lambda\in[m,M]}|p_k(\lambda)|
\;\le\;2\left(\frac{\sqrt{\kappa(A)}-1}{\sqrt{\kappa(A)}+1}\right)^k,
\]
and conclude the classical CG estimate
\[
\|e^{(k)}\|_A \;\le\; 2\left(\frac{\sqrt{\kappa(A)}-1}{\sqrt{\kappa(A)}+1}\right)^k \|e^{(0)}\|_A.
\]
\end{enumerate}

\paragraph{Problem 6 (Orthogonality of Lanczos vectors).}
Let $A=A^\top$ and generate $\{q_j\}$ by the Lanczos steps
\[
\begin{aligned}
 &w \leftarrow A q_j - \beta_{j-1} q_{j-1},\qquad
 \alpha_j \leftarrow q_j^\top w,\\
 &w \leftarrow w - \alpha_j q_j,\qquad
 \beta_j \leftarrow \|w\|_2,\qquad
 q_{j+1} \leftarrow w/\beta_j\quad(\beta_j\neq0),
\end{aligned}
\]
with $\|q_1\|_2=1$, $q_0=0$, $\beta_0=0$. Show that, in exact arithmetic, the
vectors $q_1,\dots,q_m$ are orthonormal.

\paragraph{Problem 7 (Networks: Lanczos Ritz values and interlacing).}
Let $A \in \mathbb{R}^{n\times n}$ be symmetric, and let $T_m = Q_m^\top A Q_m$ 
be the Lanczos tridiagonal matrix. Show that the eigenvalues of $T_m$ interlace those of $A$, i.e.
\[
\lambda_i(A) \leq \theta_i \leq \lambda_{i+n-m}(A),
\]
where $\theta_i$ are the eigenvalues of $T_m$.

\chapter{Krylov Methods for Non-Symmetric Systems}


\section*{Overview}

In Chapter~4 we exploited symmetry to the full but unfortunately none of these results will hold when $A$ is non-symmetric. Convection--diffusion discretisations with upwinding, linearised Navier--Stokes Jacobians, transport and reaction networks, and the Google matrix are all non-symmetric and often \emph{non-normal}, meaning they have no orthogonal eigenbasis at all. This chapter develops the tools for that general setting. Replacing Lanczos by the \emph{Arnoldi} process yields an upper Hessenberg projection $\underline{H}_k$ and a full (long) recurrence; minimising $\|b-Ax\|_2$ over $x^{(0)}+\mathcal{K}_k(A,r^{(0)})$ then gives \emph{GMRES} \cite{saadGMRESGeneralizedMinimal1986}, at a storage and orthogonalisation cost that grows with $k$ which is why restarting is needed in practice. The convergence theory changes character too: for non-normal $A$ the eigenvalues alone no longer predict behaviour, and one turns to the field of values instead. This chapter applies the methodology to a wider class of problem: (1) Arnoldi and GMRES for $Ax=b$; (2) directed networks, where the same Arnoldi decomposition is used for eigenvalue computations; (3) rectangular systems from machine learning, where least squares poses the \emph{same} minimisation but where the Krylov space is built from $A$ and $A^\top$.

\begin{tcolorbox}[colback=softblue,colframe=TUblue!40!black,title=\textbf{Learning objectives},fonttitle=\bfseries]
By the end of this chapter, you should be able to:
\begin{itemize}
    \item Derive the Arnoldi decomposition $AV_k=V_{k+1}\underline{H}_k$, explain why the projection is upper Hessenberg, and recover Lanczos as the symmetric special case.
    \item Formulate GMRES as the least-squares problem $\min_y\|\beta e_1-\underline{H}_ky\|_2$, and state its residual-minimisation, monotonicity, and finite-termination properties.
    \item Express the GMRES residual as $q_k(A)r^{(0)}$ with $q_k(0)=1$, and use eigenvalue and field-of-values bounds to explain why eigenvalues alone mispredict convergence when $A$ is non-normal.
    \item Formulate PageRank as an eigenproblem for the Google matrix, apply $G$ without forming it, explain why $|\lambda_2(G)|\le\alpha$ bounds the power-iteration rate.
    \item Explain why rectangular $A$ forces a Krylov space built from $A$ and $A^\top$, and reduce LSQR to $\min_y\|\gamma e_1-B_ky\|_2$, mirroring the GMRES methodology.
\end{itemize}
\end{tcolorbox}

\clearpage
\section{GMRES and Arnoldi for non-normal matrices}\label{sec:gmres}

We now wish to solve $Ax=b$ when $A$ is no longer symmetric. Two routes are possible in this case. CG requires $A$ to be SPD, so it does not apply. Forming the normal
equations $A^\top Ax=A^\top b$ restores symmetry, but squares the condition number and destroys the
sparsity of $A$; Section~\ref{sec:ml} will explain why this is a bad idea. What remains is the natural
alternative: work directly with $A$, build a Krylov basis of $\mathcal{K}_k(A,r^{(0)})$, and choose from
that subspace the iterate whose residual is smallest in the $2$-norm. 


\begin{tcolorbox}[colback=green!5!white,colframe=green!50!black]
\begin{definition}[Krylov subspace and Arnoldi decomposition]
Given $A\in\mathbb{R}^{n\times n}$ (not necessarily symmetric) and $r^{(0)}=b-Ax^{(0)}$,
\[
\mathcal{K}_k(A,r^{(0)})=\mathrm{span}\{r^{(0)},Ar^{(0)},\ldots,A^{k-1}r^{(0)}\}.
\]
The Arnoldi process \cite{arnoldiPrincipleMinimizedIterations1951} (with $v_1=r^{(0)}/\|r^{(0)}\|_2$) builds an orthonormal basis
$V_{k+1}=[v_1,\ldots,v_{k+1}]$ and an \emph{upper Hessenberg} matrix
$\underline{H}_k\in\mathbb{R}^{(k+1)\times k}$ such that
\[
A V_k = V_{k+1}\,\underline{H}_k,\qquad
\underline{H}_k=\begin{bmatrix} H_k \\ h_{k+1,k}e_k^\top\end{bmatrix},
\]
where $H_k\in\mathbb{R}^{k\times k}$ is upper Hessenberg with entries $h_{ij}=(Av_j,v_i)$ for $1\le i\le j+1\le k$.
\end{definition}
\end{tcolorbox}
\vspace{-0.3cm}

\begin{figure}[htbp]
\centering
\begin{tikzpicture}[>=stealth,scale=1.4]
\node[font=\small] at (0.6,2.55) {$A\,V_k$};
\draw[fill=gray!8] (0,0.45) rectangle (1.2,2.2);
\node[font=\scriptsize] at (0.6,1.325) {$k$ cols};
\node[font=\Large] at (1.75,1.325) {$=$};
\node[font=\small] at (3.0,2.55) {$V_{k+1}$};
\draw[fill=softblue] (2.3,0.45) rectangle (3.8,2.2);
\node[font=\scriptsize] at (3.05,1.325) {$k+1$ cols};
\node[font=\small] at (5.5,2.55) {$\underline{H}_k$};
\foreach \j in {1,...,4}{
  \foreach \i in {1,...,5}{
    \pgfmathtruncatemacro{\nz}{ifthenelse(\i<\j+2,1,0)}
    \ifnum\nz=1
      \fill[TUblue!18] ({4.7+0.4*(\j-1)},{2.2-0.35*\i}) rectangle ({4.7+0.4*\j},{2.2-0.35*(\i-1)});
    \fi
  }
}
\draw[gray!50,very thin,step=0pt] (4.7,0.45) rectangle (6.3,2.2);
\foreach \j in {1,2,3}{\draw[gray!45,very thin] ({4.7+0.4*\j},0.45) -- ({4.7+0.4*\j},2.2);}
\foreach \i in {1,...,4}{\draw[gray!45,very thin] (4.7,{2.2-0.35*\i}) -- (6.3,{2.2-0.35*\i});}
\draw[black,thick] (4.7,0.45) rectangle (6.3,2.2);
\draw[TUblue!70!black,thick]
  (4.7,1.50) -- (5.10,1.50) -- (5.10,1.15) -- (5.50,1.15) -- (5.50,0.80)
  -- (5.90,0.80) -- (5.90,0.45);
\node[font=\scriptsize] at (5.55,1.85) {nonzero};
\node[font=\scriptsize] at (4.95,0.80) {$0$};
\draw[->,gray!60!black] (6.9,0.28) -- (6.15,0.55);
\node[font=\scriptsize,anchor=west] at (6.92,0.24) {$h_{k+1,k}$: the extra row};
\node[font=\scriptsize,anchor=west] at (6.55,1.55) {one subdiagonal:};
\node[font=\scriptsize,anchor=west] at (6.55,1.25) {upper Hessenberg};
\end{tikzpicture}
\caption{The Arnoldi decomposition $AV_k=V_{k+1}\underline{H}_k$, drawn for $k=4$. Applying $A$ to the
orthonormal basis $V_k$ of $\mathcal{K}_k$ never leaves $\mathcal{K}_{k+1}$, so the result factors
through exactly one extra basis vector. Column $j$ of $\underline{H}_k$ is nonzero only in its first
$j+1$ entries, which is what \emph{upper Hessenberg} means; in particular the first column has just two
nonzeros. The matrix is $(k+1)\times k$, the last row containing the single entry $h_{k+1,k}$ that
carries $v_{k+1}$. When $A$ is symmetric, $\underline{H}_k$ is additionally symmetric and therefore
tridiagonal, and Arnoldi reduces to the Lanczos algorithm of Chapter~4.}
\label{fig:arnoldi}
\end{figure}

\begin{tcolorbox}[colback=blue!3!white,colframe=blue!40!black,title=What is an upper Hessenberg matrix?]
A matrix $H=[h_{ij}]$ is \emph{upper Hessenberg} if $h_{ij}=0$ whenever $i>j+1$, i.e.\ all entries strictly below the first subdiagonal are zero. Example ($k=6$):
\[
\footnotesize
H=\begin{bmatrix}
\star & \star & \star & \star & \star \\
\star & \star & \star & \star & \star \\
0      & \star & \star & \star & \star \\
0      & 0      & \star & \star & \star \\
0      & 0      & 0      & \star & \star \\
0      & 0      & 0      & 0      & \star 
\end{bmatrix}.
\]
Arnoldi produces such an $H_k$ because $Av_j$ only has components along $v_1,\ldots,v_{j+1}$.
\end{tcolorbox}

\begin{anecdote}
Large non-symmetric systems were becoming routine in CFD and transport. Memory budgets were counted in \emph{megabytes}, so one could not afford dense factorizations.
Saad and Schultz's 1986 idea \cite{saadGMRESGeneralizedMinimal1986} was elegantly pragmatic: let \emph{Arnoldi} produce an orthonormal snapshot of the action of $A$, and within that small window solve the \emph{best possible} least-squares. 
\end{anecdote}

\subsection{GMRES: minimising the residual over the Krylov space}\label{sec:gmresdef}

Arnoldi supplies a basis; it does not yet say which vector in the subspace to pick. GMRES makes the
choice that CG made in Chapter~4, but in the $2$-norm of the residual rather than the $A$-norm of the
error, the latter no longer being available when $A$ is not SPD.

\begin{tcolorbox}[colback=green!5!white,colframe=green!50!black]
\begin{definition}[GMRES — the principle (method definition)]
Given $A\in\mathbb{R}^{n\times n}$, $b$, and an initial guess $x^{(0)}$ with residual $r^{(0)}=b-Ax^{(0)}$, the \emph{Generalized Minimal Residual method (GMRES)} defines the $k$-th iterate as
\[
x^{(k)} \;=\; \arg\min_{x\in x^{(0)}+\mathcal{K}_k(A,r^{(0)})}\;\|b-Ax\|_2,
\quad
\mathcal{K}_k(A,r^{(0)})=\mathrm{span}\{r^{(0)},Ar^{(0)},\ldots,A^{k-1}r^{(0)}\}.
\]
\end{definition}
\end{tcolorbox}

\begin{tcolorbox}[colback=yellow!5!white,colframe=yellow!50!black]
\begin{proposition}[Equivalence of the GMRES definition and the Arnoldi least-squares]\label{prop:gmres-ls}
Let $r^{(0)}=b-Ax^{(0)}$ and $V_{k+1}=[v_1,\dots,v_{k+1}]$ be an \emph{orthonormal} Arnoldi basis of $\mathcal{K}_k(A,r^{(0)})$ with
\(
v_1=\frac{r^{(0)}}{\|r^{(0)}\|_2}
\)
and set $\beta=\|r^{(0)}\|_2$. Then the GMRES iterate \(x^{(k)}\)
is obtained by choosing $x^{(k)}=x^{(0)}+V_k y^{(k)}$, where $y^{(k)}$ solves the small least-squares
\[
y^{(k)}=\arg\min_{y\in\mathbb{R}^k}\;\Big\|\beta e_1-\underline H_k y\Big\|_2,
\]
and the residual norm satisfies $\|r^{(k)}\|_2=\|\beta e_1-\underline H_k y^{(k)}\|_2$.
\end{proposition}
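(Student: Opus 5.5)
The plan is to parametrise the affine search space through the Arnoldi basis and then use the Arnoldi relation $AV_k=V_{k+1}\underline H_k$ together with orthonormality of $V_{k+1}$ to collapse the $n$-dimensional residual norm into a $(k+1)$-dimensional one. Throughout, I will assume that Arnoldi has not broken down before step $k$, so that $V_k$ is an orthonormal basis of $\mathcal{K}_k(A,r^{(0)})$. The case $h_{k+1,k}=0$ (lucky breakdown) can be handled by the same argument with $\underline H_k$ replaced by $H_k$, since then $AV_k=V_kH_k$.

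\textbf{Parametrisation.} Since the columns of $V_k$ form a basis of $\mathcal{K}_k(A,r^{(0)})$, every $x\in x^{(0)}+\mathcal{K}_k(A,r^{(0)})$ can be written uniquely as $x=x^{(0)}+V_ky$ with $y\in\mathbb{R}^k$. Hence minimising over $x$ is the same as minimising over $y$.

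\textbf{Rewriting the residual.} For such an $x$ we have
\[
b-Ax=r^{(0)}-AV_ky=\beta v_1-V_{k+1}\underline H_ky=V_{k+1}\bigl(\beta e_1-\underline H_ky\bigr).
\]
Here I use $r^{(0)}=\beta v_1=\beta V_{k+1}e_1$, where $e_1\in\mathbb{R}^{k+1}$, together with the Arnoldi decomposition.

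\textbf{Isometry.} Because $V_{k+1}$ has orthonormal columns, $V_{k+1}^\top V_{k+1}=I_{k+1}$, and therefore $\|V_{k+1}z\|_2^2=z^\top V_{k+1}^\top V_{k+1}z=\|z\|_2^2$ for every $z\in\mathbb{R}^{k+1}$. It follows that
\[
\|b-Ax\|_2=\|\beta e_1-\underline H_ky\|_2
\]
for every $y$. The two minimisation problems therefore have the same objective under the bijection $y\mapsto x^{(0)}+V_ky$. Consequently the minimisers correspond, $x^{(k)}=x^{(0)}+V_ky^{(k)}$, and the minimal values coincide, which gives the stated formula for $\|r^{(k)}\|_2$.

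\textbf{Main obstacle.} The substantive point is the identity $r^{(0)}-AV_ky=V_{k+1}(\beta e_1-\underline H_ky)$. It rests on two facts: the first Arnoldi vector is aligned with $r^{(0)}$, and $AV_k$ stays inside the span of $V_{k+1}$. Both come directly from the definition of the Arnoldi decomposition.

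A secondary point is uniqueness of $y^{(k)}$. In the no-breakdown case every $h_{j+1,j}\neq0$, so $\underline H_k$ has full column rank $k$ and the small least-squares problem has a unique solution. This is consistent with the correspondence between $x$ and $y$ being a bijection.
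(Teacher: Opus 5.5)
Your proof is correct and follows the paper's argument step for step: you parametrise $x=x^{(0)}+V_ky$, use $r^{(0)}=\beta V_{k+1}e_1$ and $AV_k=V_{k+1}\underline H_k$ to write the residual as $V_{k+1}(\beta e_1-\underline H_ky)$, and conclude from the fact that $V_{k+1}$ is an isometry. Your additional remarks are sound. Uniqueness of $y^{(k)}$ follows from the nonzero subdiagonal of $\underline H_k$, and at breakdown you replace $\underline H_k$ by $H_k$, whereas the paper instead observes that the residual can then be driven to zero.
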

\end{tcolorbox}
\begin{proof}
Every $x\in x^{(0)}+\mathcal{K}_k(A,r^{(0)})$ can be written \emph{uniquely} as
$x=x^{(0)}+V_k y$ for some $y\in\mathbb{R}^k$, because $V_k$ has orthonormal columns spanning $\mathcal{K}_k$.
For such $x$,
\[
r(y):=b-Ax=b-A(x^{(0)}+V_k y)=r^{(0)}-AV_k y.
\]
By Arnoldi, $AV_k=V_{k+1}\underline H_k$; also $r^{(0)}=\beta v_1=V_{k+1}(\beta e_1)$.
Hence
\[
r(y)=V_{k+1}\bigl(\beta e_1-\underline H_k y\bigr).
\]
Because $V_{k+1}$ has orthonormal columns, it is an isometry on $\mathbb{R}^{k+1}$:
$\|V_{k+1} z\|_2=\|z\|_2$ for all $z\in\mathbb{R}^{k+1}$. Therefore
\[
\|r(y)\|_2=\big\|\beta e_1-\underline H_k y\big\|_2.
\]
Minimizing $\|r(y)\|_2$ over $x\in x^{(0)}+\mathcal{K}_k$ is thus \emph{exactly}
the $(k\!+\!1)\times k$ least-squares
\[
y^{(k)}=\arg\min_{y}\big\|\beta e_1-\underline H_k y\big\|_2,
\]
and the GMRES iterate is $x^{(k)}=x^{(0)}+V_k y^{(k)}$ with residual norm
$\|r^{(k)}\|_2=\|\beta e_1-\underline H_k y^{(k)}\|_2$.
If $h_{k+1,k}=0$, then $\underline H_k$ has a zero last row, so the system is square (and upper
Hessenberg) and the least-squares residual can be made zero; consequently $r^{(k)}=0$ and GMRES terminates in $k$ steps.
\end{proof}

\begin{remark}[Why orthonormality is the key feature]
The proof used $V_{k+1}$ only once, but decisively: because its columns are orthonormal it preserves
norms, so the $n$-dimensional residual and the $(k+1)$-dimensional vector $\beta e_1-\underline H_ky$
have exactly the same length. Had the basis merely been linearly independent, the reduced problem would
have acquired a weighting by $V_{k+1}^\top V_{k+1}$, and minimising it would no longer minimise the true
residual. This is the same reason Lanczos orthonormalises in Chapter~4.
\end{remark}


Proposition~\ref{prop:gmres-ls} turns each GMRES step into a small least-squares problem, but solving it
from scratch at every step would be wasteful: $\underline H_k$ differs from $\underline H_{k-1}$ by a
single new column. Maintaining a QR factorisation of $\underline H_k$ incrementally costs one rotation
per step, and returns the residual norm as a by-product.

\begin{tcolorbox}[colback=green!5!white,colframe=green!50!black]
\begin{definition}[Givens rotations (notation used by GMRES)]
For scalars $a,b$, a \emph{Givens rotation} $G(c,s)$ is the $2\times2$ orthogonal matrix
\[
G(c,s)=\begin{bmatrix} c & s \\ -s & c \end{bmatrix},\qquad
c=\frac{a}{\sqrt{a^2+b^2}},\ \ s=\frac{b}{\sqrt{a^2+b^2}},
\]
which maps $\begin{bmatrix}a\\ b\end{bmatrix}\mapsto\begin{bmatrix}\sqrt{a^2+b^2}\\ 0\end{bmatrix}$.
In GMRES we apply $G_k$ to \emph{rows} $k$ and $k\!+\!1$ of the tall matrix $\underline H_k$ to zero the subdiagonal entry $h_{k+1,k}$, and we apply the same rotation to the right-hand side $g=\beta e_1$ to keep $g=Q^\top(\beta e_1)$ up to step $k$.
\end{definition}
\end{tcolorbox}

\begin{tcolorbox}[colback=green!5!white,colframe=green!50!black]
\begin{definition}[Full GMRES (Givens form) — construction]
Given $x^{(0)}$, set $r^{(0)}=b-Ax^{(0)}$, $\beta=\|r^{(0)}\|_2$, $v_1=r^{(0)}/\beta$.
For $k=1,2,\dots$ until convergence:

\begin{enumerate}
\item \textbf{Arnoldi expansion.} Compute $w=Av_k$. For $i=1,\dots,k$:
      \[
      h_{i,k} = v_i^\top w,\quad w \gets w - h_{i,k} v_i.
      \]
      Set $h_{k+1,k}=\|w\|_2$. If $h_{k+1,k}=0$ (lucky breakdown), stop; else $v_{k+1}=w/h_{k+1,k}$.
\item \textbf{Update small LS (Givens).} Apply the stored rotations $G_1,\dots,G_{k-1}$ to the new column of $\underline H_k$, then form $G_k$ to annihilate $h_{k+1,k}$ and apply $G_k$ to $g=\beta e_1$; this maintains the QR of $\underline H_k$ and yields $\|r^{(k)}\|_2=|g_{k+1}|$.

\item \textbf{Residual and test.} The current residual norm is
      \[
      \|r^{(k)}\|_2 = |g_{k+1}|,
      \]
      available \emph{without} forming $x^{(k)}$. If $\|r^{(k)}\|_2/\|b\|_2 \le \text{tol}$, recover $y^{(k)}$ by back–substitution in the $k\times k$ upper triangular $R$ and set $x^{(k)}=x^{(0)}+V_k y^{(k)}$.
\end{enumerate}
\end{definition}
\end{tcolorbox}

\begin{tcolorbox}[colback=yellow!5!white,colframe=yellow!50!black]
\begin{proposition}[GMRES basics in one shot]
Let $A\in\mathbb{R}^{n\times n}$, $x^{(0)}$ be an initial guess, $r^{(0)}=b-Ax^{(0)}$, and let
$V_{k+1}$ and $\underline H_k$ come from $k$ steps of Arnoldi with
$v_1=r^{(0)}/\|r^{(0)}\|_2$ and $AV_k=V_{k+1}\underline H_k$; set $\beta=\|r^{(0)}\|_2$.
The GMRES iterate $x^{(k)}=x^{(0)}+V_k y^{(k)}$, where
$y^{(k)}=\arg\min_{y}\|\beta e_1-\underline H_k y\|_2$. This satisfies:

\begin{enumerate}
\item[\emph{(i)}] \textbf{Orthogonality.} $r^{(k)}\perp A\mathcal{K}_k(A,r^{(0)})$.
\item[\emph{(ii)}] \textbf{Monotonicity (full GMRES).} $\|r^{(k+1)}\|_2\le\|r^{(k)}\|_2$.
\item[\emph{(iii)}] \textbf{Residual polynomial and finite termination.}
There exists $q_k\in\Pi_k$ with $q_k(0)=1$ such that
\(
r^{(k)}=q_k(A)\,r^{(0)},
\)
and $q_k$ minimizes $\|q(A)r^{(0)}\|_2$ over all such polynomials.
If $m$ is the minimal polynomial of $A$ relative to $r^{(0)}$ with $\deg m=d$, then GMRES
terminates in at most $d$ steps (i.e., $r^{(d)}=0$).
\end{enumerate}
\end{proposition}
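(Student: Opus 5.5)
My plan is to derive everything from the characterisation already established in Proposition~\ref{prop:gmres-ls}. That result identifies $x^{(k)}$ as the minimiser of $\|b-Ax\|_2$ over the affine space $x^{(0)}+\mathcal{K}_k(A,r^{(0)})$. Writing $x=x^{(0)}+z$ with $z\in\mathcal{K}_k$, the residual is $r^{(0)}-Az$. So GMRES is exactly the problem of finding the best $2$-norm approximation of $r^{(0)}$ from the linear subspace $A\mathcal{K}_k(A,r^{(0)})$. All three items are then consequences of this one reformulation.

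For (i), I will use the standard characterisation of least-squares minimisers by orthogonal projection. Take $\phi(t)=\|r^{(k)}-tAw\|_2^2$ for an arbitrary $w\in\mathcal{K}_k$. Since $x^{(k)}$ is optimal, $t=0$ minimises $\phi$, so $\phi'(0)=-2\,(r^{(k)})^\top Aw=0$. This is precisely $r^{(k)}\perp A\mathcal{K}_k$. Equivalently, in Arnoldi terms, $\underline H_k^\top(\beta e_1-\underline H_k y^{(k)})=0$, which with $AV_k=V_{k+1}\underline H_k$ gives $V_k^\top A^\top r^{(k)}=0$.

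For (ii), I will use that the Krylov spaces are nested, $\mathcal{K}_k\subseteq\mathcal{K}_{k+1}$. This gives $x^{(0)}+\mathcal{K}_k\subseteq x^{(0)}+\mathcal{K}_{k+1}$, so $x^{(k)}$ is an admissible candidate at step $k+1$. Minimising over a larger set cannot increase the minimum, which yields monotonicity.

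For (iii), I will repeat the argument of Proposition~\ref{prop:cg_poly}(1). Any $z\in\mathcal{K}_k$ has the form $s_{k-1}(A)r^{(0)}$ with $\deg s_{k-1}\le k-1$. Hence $r^{(0)}-Az=q(A)r^{(0)}$ with $q(\lambda)=1-\lambda s_{k-1}(\lambda)$. Conversely, every $q\in\Pi_k$ with $q(0)=1$ arises this way, because $(1-q(\lambda))/\lambda$ is a polynomial of degree at most $k-1$. The map $z\leftrightarrow q$ is therefore a bijection onto the admissible polynomials. The GMRES minimisation is then exactly $\min\|q(A)r^{(0)}\|_2$, and $q_k$ is the corresponding minimiser.

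For finite termination, let $m$ be the minimal polynomial of $A$ relative to $r^{(0)}$, so that $m(A)r^{(0)}=0$ with $\deg m=d$. The first point to establish is $m(0)\ne0$, which is needed to normalise $m$. This is the step I expect to be the main obstacle, because it requires $A$ to be nonsingular. The statement says nothing about invertibility, so I will state it as an assumption: the context is $Ax=b$ solvable with $A$ nonsingular. The argument is as follows. If $m(0)=0$, write $m(\lambda)=\lambda\tilde m(\lambda)$. Then $A\tilde m(A)r^{(0)}=0$, and invertibility forces $\tilde m(A)r^{(0)}=0$. Since $\deg\tilde m<d$, this contradicts minimality.

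With $m(0)\ne0$, the polynomial $q=m/m(0)$ is admissible at step $d$ and satisfies $q(A)r^{(0)}=0$. By the optimality just shown, $\|r^{(d)}\|_2\le\|q(A)r^{(0)}\|_2=0$, so $r^{(d)}=0$. This is consistent with the lucky breakdown $h_{d+1,d}=0$ treated at the end of the proof of Proposition~\ref{prop:gmres-ls}.
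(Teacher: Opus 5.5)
Your proof is correct and follows the route the paper sets up; the paper's own proof is deferred to Problem~3, but it supplies the ingredients you use: the residual-minimisation characterisation of Proposition~\ref{prop:gmres-ls} gives (i) and (ii), and the polynomial argument of Proposition~\ref{prop:cg_poly} gives (iii). Your added assumption that $A$ is nonsingular (more precisely, that $m(0)\neq0$) is genuinely needed for finite termination and is missing from the statement: for $A=\begin{bmatrix}0&1\\0&0\end{bmatrix}$, $b=e_1$, $x^{(0)}=0$ one has $Ae_1=0$, so $d=1$, yet $r^{(k)}=e_1$ for every $k$ even though $Ax=b$ is solvable.
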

\end{tcolorbox}
\begin{proof}
    See Problem 3.
\end{proof}
\subsection{Convergence: why eigenvalues are not enough}\label{sec:fov}

\begin{figure}[htbp]
\centering
\begin{tikzpicture}[>=stealth,scale=1.0]
\begin{scope}
  \node[font=\small] at (0,2.15) {normal $A$};
  \draw[->,gray!70] (-1.9,0)--(1.9,0) node[right,font=\scriptsize]{$\Re$};
  \draw[->,gray!70] (0,-1.6)--(0,1.7) node[above,font=\scriptsize]{$\Im$};
  \foreach \x/\y in {0.9/0.35, 1.1/-0.25, 1.25/0.6, 0.75/-0.55, 1.4/0.1, 1.0/0.0}{
    \fill[TUblue] (\x,\y) circle (1.3pt);}
  \draw[TUblue,thick] (1.07,0.04) circle (0.62);
  \node[font=\scriptsize,align=center] at (0,-2.05) {eigenvalues clustered away from $0$\\$\Rightarrow$ GMRES converges fast};
\end{scope}
\begin{scope}[xshift=6.4cm]
  \node[font=\small] at (0,2.15) {non-normal $A$};
  \draw[->,gray!70] (-1.9,0)--(1.9,0) node[right,font=\scriptsize]{$\Re$};
  \draw[->,gray!70] (0,-1.6)--(0,1.7) node[above,font=\scriptsize]{$\Im$};
  \fill[TUblue!12,draw=TUblue,thick] (0.35,0) circle (1.25);
  \foreach \x/\y in {0.9/0.35, 1.1/-0.25, 1.25/0.6, 0.75/-0.55, 1.4/0.1, 1.0/0.0}{
    \fill[TUblue] (\x,\y) circle (1.3pt);}
  \fill (0,0) circle (1.4pt) node[below left=-2pt,font=\scriptsize]{$0$};
  \node[font=\scriptsize,anchor=west] at (0.55,1.2) {$F(A)$};
  \node[font=\scriptsize,align=center] at (0,-2.05) {same eigenvalues, but $0\in F(A)$\\$\Rightarrow$ GMRES may stagnate};
\end{scope}
\end{tikzpicture}
\caption{Why eigenvalues are not enough. For a normal matrix the eigenvalues alone control
$\|q(A)\|_2=\max_j|q(\lambda_j)|$, so a tight cluster away from the origin guarantees rapid GMRES
convergence. For a non-normal matrix the eigenvector basis is ill-conditioned and the relevant object is
the field of values $F(A)=\{x^\ast Ax:\|x\|_2=1\}$, a convex set containing $\sigma(A)$. If $F(A)$ stays
in a disk that excludes the origin, the Crouzeix--Palencia inequality \cite{crouzeixNumericalRangeSpectral2017} gives a genuine convergence rate;
if the origin lies inside $F(A)$, no such bound is available, and GMRES can stagnate for $n-1$ steps even
with all eigenvalues equal to $1$ (see the example at the end of this section).}
\label{fig:fov}
\end{figure}

For CG, Chapter~4 gave a single convergence bound in terms of $\kappa(A)$, and that was the end of the
story. For GMRES there is no such clean result. The natural first attempt is to bound the residual using
the eigenvalues of $A$, and it works whenever $A$ is diagonalisable with a well-conditioned eigenvector
matrix, but it becomes vacuous exactly when $A$ is strongly non-normal, which is the case we introduced
the method for.

\begin{tcolorbox}[colback=yellow!5!white,colframe=yellow!50!black]
\begin{proposition}[Diagonalizable-$A$ bound]
If $A=V\Lambda V^{-1}$, then
\[
\frac{\|r^{(k)}\|_2}{\|r^{(0)}\|_2}\ \le\
\kappa_2(V)\,\min_{\substack{q\in\Pi_k\\ q(0)=1}}\max_i |q(\lambda_i)|.
\]
\end{proposition}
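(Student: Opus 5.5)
The plan is to start from the polynomial characterisation of GMRES in part (iii) of the preceding proposition: $r^{(k)}=q_k(A)\,r^{(0)}$, where $q_k$ minimises $\|q(A)r^{(0)}\|_2$ over all $q\in\Pi_k$ with $q(0)=1$. Because $q_k$ is the minimiser, its residual is no larger than the one produced by any other admissible polynomial. So for every $q\in\Pi_k$ with $q(0)=1$ we have
\[
\|r^{(k)}\|_2\le \|q(A)r^{(0)}\|_2\le \|q(A)\|_2\,\|r^{(0)}\|_2,
\]
where the second inequality is the defining property of the induced norm.

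The next step is to bound $\|q(A)\|_2$ using the eigendecomposition. From $A=V\Lambda V^{-1}$ we get $A^j=V\Lambda^jV^{-1}$ for every $j$, and therefore $q(A)=V\,q(\Lambda)\,V^{-1}$ for any polynomial $q$. Submultiplicativity of the induced norm then gives
\[
\|q(A)\|_2\le\|V\|_2\,\|q(\Lambda)\|_2\,\|V^{-1}\|_2=\kappa_2(V)\,\|q(\Lambda)\|_2 .
\]
The matrix $q(\Lambda)=\operatorname{diag}(q(\lambda_1),\dots,q(\lambda_n))$ is diagonal. Its $2$-norm is therefore the largest modulus of its entries, by Problem~4(b) of Chapter~1, which extends directly to complex diagonal entries. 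Hence $\|q(\Lambda)\|_2=\max_i|q(\lambda_i)|$.

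Combining the two displays and dividing by $\|r^{(0)}\|_2$ gives, for every admissible $q$,
\[
\frac{\|r^{(k)}\|_2}{\|r^{(0)}\|_2}\le\kappa_2(V)\max_i|q(\lambda_i)|.
\]
The left-hand side does not depend on $q$, so we may take the infimum over all admissible $q$ on the right. This infimum is a minimum: the admissible polynomials form a finite-dimensional affine space, and we are minimising a continuous, coercive function over it. That yields the stated bound.

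The only step needing care is that $V$ and $\Lambda$ may be complex even when $A$ is real. The polynomial identity $q(A)=Vq(\Lambda)V^{-1}$ and the norm estimates work verbatim in $\mathbb{C}^{n}$. Moreover, the $2$-norm of a real vector is the same whether we regard it in $\mathbb{R}^n$ or $\mathbb{C}^n$. So the complex factorisation bounds the real residual without modification, and there is no real obstacle.
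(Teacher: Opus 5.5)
Your proof is correct and follows the same route as the paper: the polynomial representation $r^{(k)}=q_k(A)r^{(0)}$, then $q(A)=Vq(\Lambda)V^{-1}$ and submultiplicativity. It is in fact more careful than the paper's one-line argument, which only bounds $\kappa_2(V)\max_i|q_k(\lambda_i)|$ for the GMRES polynomial itself and leaves implicit the optimality step $\|r^{(k)}\|_2\le\|q(A)r^{(0)}\|_2$ for arbitrary admissible $q$, which you use to reach the minimum.

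One minor slip: $q\mapsto\max_i|q(\lambda_i)|$ is not coercive once $k$ exceeds the number $d$ of distinct eigenvalues $\mu_1,\dots,\mu_d$, since adding $t\,z\prod_{j=1}^{d}(z-\mu_j)$ leaves it unchanged. The minimum is nevertheless attained, because the objective depends only on $(q(\mu_1),\dots,q(\mu_d))$, which ranges over a closed affine subset of $\mathbb{C}^d$ on which the max-norm is coercive.
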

\end{tcolorbox}

\begin{proof}
$r^{(k)}=q_k(A)r^{(0)}=V q_k(\Lambda)V^{-1}r^{(0)}$, so
$\|r^{(k)}\|\le\|V\|\,\|V^{-1}\|\,\max_i|q_k(\lambda_i)|\,\|r^{(0)}\|$.
\end{proof}

\begin{tcolorbox}[colback=green!5!white,colframe=green!50!black]
\begin{definition}[Field of values (numerical range)]
\[
F(A)=\left\{\frac{x^\ast A x}{x^\ast x}\ :\ x\in\mathbb{C}^n\setminus\{0\}\right\}.
\]
\end{definition}
\end{tcolorbox}
In words, $F(A)$ is the set of all Rayleigh quotients of $A$. It is compact and convex, and it contains
every eigenvalue of $A$, as one sees by taking $x$ to be an eigenvector. The point of replacing the
spectrum by $F(A)$ is that the latter is insensitive to the conditioning of the eigenvectors: its shape
and position control $\|q(A)\|_2$ directly, and hence the GMRES residual through
$r^{(k)}=q_k(A)r^{(0)}$.
\begin{tcolorbox}[colback=yellow!5!white,colframe=yellow!50!black]
\begin{proposition}[Disk-in-$F(A)$ bound]
If $F(A)\subset\{\,|z-c|\le s\,\}$ and $0\notin\{\,|z-c|\le s\,\}$, then
\[
\|r^{(k)}\|_2 \ \le\ (1+\sqrt{2})\Big(\frac{s}{|c|}\Big)^{\!k}\,\|r^{(0)}\|_2.
\]
\end{proposition}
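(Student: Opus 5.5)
The plan is to combine the polynomial characterisation of GMRES with the Crouzeix--Palencia inequality cited in Figure~\ref{fig:fov}. By part (iii) of the ``GMRES basics'' proposition, $r^{(k)}=q_k(A)r^{(0)}$, where $q_k$ minimises $\|q(A)r^{(0)}\|_2$ over all $q\in\Pi_k$ with $q(0)=1$. Hence for \emph{any} admissible $q$,
\[
\|r^{(k)}\|_2\le \|q(A)\|_2\,\|r^{(0)}\|_2 .
\]
It therefore suffices to choose one explicit polynomial with $q(0)=1$ and bound $\|q(A)\|_2$ by $(1+\sqrt2)(s/|c|)^k$.

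The natural choice is dictated by the disk: take $q(z)=(1-z/c)^k$. Since $c\neq0$ (because $0$ lies outside the disk, so $|c|>s\ge0$), $q$ has degree $k$ and $q(0)=1$. On the disk $\{|z-c|\le s\}$ we have $|q(z)|=|c-z|^k/|c|^k\le (s/|c|)^k$. So $\max_{z\in D}|q(z)|\le (s/|c|)^k$, where $D$ denotes the disk.

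The key step, and the only nontrivial one, is to pass from the scalar bound on $F(A)$ to the matrix norm. Here I would invoke the Crouzeix--Palencia theorem \cite{crouzeixNumericalRangeSpectral2017}: for any polynomial $p$,
\[
\|p(A)\|_2\le(1+\sqrt2)\max_{z\in F(A)}|p(z)|.
\]
Since $F(A)\subset D$, the maximum over $F(A)$ is at most the maximum over $D$, which gives $\|q(A)\|_2\le(1+\sqrt2)(s/|c|)^k$. Combining this with the minimality inequality above proves the claim.

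I expect the Crouzeix--Palencia step to be the main obstacle if one insists on a self-contained argument, since its proof is genuinely deep. In these notes I would simply cite it, as the figure caption already does. If a more elementary route were wanted, I would instead write $(I-A/c)^k=c^{-k}(cI-A)^k$ and try to bound $\|(A-cI)^k\|_2\le s^k$ directly. This works with constant $1$ when $A$ is normal, and for general $A$ it follows from Berger's power inequality for the numerical radius, $w\bigl((A-cI)^k\bigr)\le w(A-cI)^k\le s^k$, combined with $\|B\|_2\le 2w(B)$. That route yields the weaker constant $2$ rather than $1+\sqrt2$, so it is worth mentioning as a remark but not as the main proof.
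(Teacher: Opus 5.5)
Your proof is correct and is the paper's argument: GMRES minimality, the test polynomial $(1-z/c)^k$, and Crouzeix--Palencia combined with $F(A)\subset\{|z-c|\le s\}$. One correction to your closing remark: since $2<1+\sqrt{2}\approx 2.414$, the Berger power-inequality route (via $w(A-cI)\le s$ and $\|B\|_2\le 2w(B)$) gives a \emph{sharper} constant, not a weaker one, so it is a complete and more elementary proof of a slightly stronger statement rather than a side remark.
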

\end{tcolorbox}

\begin{proof}
Crouzeix-Palencia: $\|q(A)\|\le (1+\sqrt2)\max_{z\in F(A)}|q(z)|$.
Test the degree-$k$ polynomial $q_k(z)=(1-z/c)^k$; then
$\max_{F(A)}|q_k|\le(s/|c|)^k$.
\end{proof}
\paragraph{When eigenvalues mislead: a companion matrix.}
The following example shows how far the spectrum alone can be from the truth. For a monic polynomial
$p(z)=z^n+c_{n-1}z^{n-1}+\cdots+c_0$,
the companion matrix
\[
C(p)=\begin{bmatrix}
0 & 1 & 0 & \cdots & 0\\
0 & 0 & 1 & \cdots & 0\\
\vdots & \vdots & \ddots & \ddots & \vdots\\
0 & 0 & \cdots & 0 & 1\\
- c_0 & - c_1 & \cdots & - c_{n-2} & - c_{n-1}
\end{bmatrix}
\]
has eigenvalues equal to the roots of $p$. These matrices are typically \emph{highly nonnormal}.
Take for example $p(z)=(z-1)^3=z^3-3z^2+3z-1$. Then
\[
A=C(p)=\begin{bmatrix} 0&1&0\\[2pt] 0&0&1\\[2pt] 1&-3&3\end{bmatrix},
\quad \Lambda(A)=\{1,1,1\}.
\]
Run GMRES on $Ax=f$ with $x^{(0)}=0$ and $f=e_1$.
One observes
\[
\|r^{(0)}\|_2=\|r^{(1)}\|_2=\|r^{(2)}\|_2=1,\qquad \|r^{(3)}\|_2=0,
\]
i.e., \emph{no decrease} for two steps, then exact termination at $k=3$. Moreover,
$e_1^\top A e_1=0$, so $0\in F(A)$ and the disk bound above \emph{does not apply} here. This illustrates that tightly clustered eigenvalues (even all at $1$) do not guarantee fast GMRES if the matrix is nonnormal; the field of values and nonnormality matter. In fact any nonincreasing residual curve whatsoever can be realised by some matrix with any prescribed spectrum \cite{Greenbaum1996}. A systematic account of non-normal behaviour is given in \cite{trefethenSpectraPseudospectra2005}.

\section{Networks: PageRank as an Eigenproblem}\label{sec:pagerank}

\begin{anecdote}
The original PageRank algorithm, developed by Larry Page and Sergey Brin in the late 1990s,  was essentially an eigenvector computation on the web graph. At the time, the web already had tens of millions of pages,  making full eigen-decompositions infeasible.  Instead, iterative methods such as power iteration were the only scalable option. This numerical perspective i.e. treating web search as a giant linear algebra problem, was one of the key insights that allowed Google to leap ahead of earlier search engines. It remains a striking example of how classical numerical methods became  transformative in an entirely new application domain.
\end{anecdote}

Chapter~4 used the Krylov subspace twice: to solve $Ax=b$ and, through Lanczos, to compute eigenvalues of a
\emph{symmetric} matrix. Section~\ref{sec:gmres} has just repeated the first of these for non-symmetric
$A$; this section repeats the second. The Arnoldi decomposition $AV_m=V_{m+1}\underline H_m$ is again all
we need for the eigenvalues of the small matrix $H_m$ rather than for a
linear solve. The application we envision here is ranking the nodes of a directed graph, where the matrix is
non-symmetric by construction and far too large to factorise.

\begin{tcolorbox}[colback=green!5!white,colframe=green!50!black]
\begin{definition}[Eigenvector centrality (directed graphs)]
Let $A\in\mathbb{R}^{n\times n}$ be the (possibly non-symmetric) adjacency matrix.
\emph{Right eigenvector centrality} assigns scores $x\ge0$ to nodes by solving
\[
A x=\lambda_1 x, \qquad \|x\|_1=1,
\]
while \emph{left eigenvector centrality} assigns scores $y\ge0$ from \(A^\top y=\lambda_1 y, \qquad \|y\|_1=1.\)
\end{definition}
\end{tcolorbox}

\paragraph{The Google matrix}\label{sec:googlematrix}

Let $P \in \mathbb{R}^{n \times n}$ be the column-stochastic matrix where
\[
P_{ij} =
\begin{cases}
\dfrac{1}{d_j}, & \text{if page $j$ links to page $i$}, \\
0, & \text{otherwise},
\end{cases}
\]
with $d_j$ the out-degree of page $j$.
The Google matrix is defined as
\[
G = \alpha P + (1-\alpha)\frac{1}{n}\mathbf{1}\mathbf{1}^\top, \qquad 0 < \alpha < 1.
\]
By construction, $G$ is stochastic\footnote{Every entry is a value from 0 to 1, and the numbers in each column (or row) add up to exactly 1} and irreducible\footnote{An irreducible matrix is a square matrix that cannot be rearranged into a block upper-triangular form using row and column permutations. It means you cannot split the matrix into smaller independent parts.}; the Perron-Frobenius theorem guarantees a unique dominant eigenvector $x$ with positive entries such that $\|x\|_1 = 1$, called the \emph{PageRank vector} \cite{langvilleGooglePageRankBeyond2006,pagePageRankCitationRanking1999}.

\paragraph{Efficient application of the Google matrix} We should notice that $G$ is \emph{dense}: every entry of $\mathbf{1}\mathbf{1}^\top$ is nonzero, so storing $G$
for a web-scale graph is out of the question, but it doesn't have to be stored. For any $x$ with
$\mathbf{1}^\top x=1$ the rank-one perturbation term acts as a constant vector,
\[
Gx=\alpha Px+\frac{1-\alpha}{n}\mathbf{1}\,(\mathbf{1}^\top x)=\alpha Px+\frac{1-\alpha}{n}\mathbf{1},
\]
so one application of $G$ costs one sparse product with $P$ plus a vector addition. The power iteration
of Chapter~3 (Algorithm~\ref{alg:power}) applied to the dominant eigenvector is therefore simply
\begin{equation}\label{eq:pagerank-power}
x^{(k+1)}=\alpha P x^{(k)}+\frac{1-\alpha}{n}\mathbf{1},
\end{equation}
one sparse matrix--vector product per step, two vectors of storage, and no factorisation anywhere. This
is the entire algorithm that ranked the early web.

\begin{example}[Google Page Rank]
Consider a graph of three webpages with links and find which is the most central node
\begin{center}
\begin{tikzpicture}[->, >=stealth, node distance=2.5cm, thick]
  \tikzstyle{node}=[circle, draw, minimum size=1cm, font=\bfseries]

  \node[node] (1) {1};
  \node[node, right of=1] (2) {2};
  \node[node, below of=2] (3) {3};

  \draw (1) -- (2);
  \draw (2) -- (3);
  \draw (3) -- (1);
  \draw (3) -- (2);
\end{tikzpicture}
\end{center}
The adjacency matrix is
\[
A = \begin{bmatrix}
0 & 0 & 1 \\
1 & 0 & 1 \\
0 & 1 & 0
\end{bmatrix}, \quad
P = \begin{bmatrix}
0 & 0 & 1/2 \\
1 & 0 & 1/2 \\
0 & 1 & 0
\end{bmatrix}.
\]
With damping factor $\alpha = 0.85$, 
\[
G \;=\; \alpha P + \frac{1-\alpha}{n}\mathbf{1}\mathbf{1}^\top
      \;=\; 0.85\,P + 0.05\,\mathbf{1}\mathbf{1}^\top
      \;=\;
\begin{bmatrix}
0.05 & 0.05 & 0.475\\
0.90 & 0.05 & 0.475\\
0.05 & 0.90 & 0.05
\end{bmatrix},
\quad\text{(column-stochastic)}.
\]
Start from the uniform vector $x^{(0)}=\tfrac{1}{3}(1,1,1)^\top$. Iterating with the power method gives
\[
x^\star \approx \begin{bmatrix} 0.215\\ 0.397\\ 0.388 \end{bmatrix}.
\]
hence node $2$ is the most central.
\end{example}

\subsection{The role of the damping factor}\label{sec:damping}

First, what $\alpha$ \emph{means}. The Google matrix models a random surfer who, at each step, follows
one of the links on the current page with probability $\alpha$, and with probability $1-\alpha$ abandons
the page and jumps to a page chosen randomly. So $\alpha$ is a modelling choice.

It is also the only thing that determines the cost of computing the ranking.

\begin{tcolorbox}[colback=yellow!5!white,colframe=yellow!50!black]
\begin{proposition}[Role of the damping factor \cite{haveliwalaSecondEigenvalueGoogle2003}]\label{prop:damping}
Let $P$ be column-stochastic, $0<\alpha<1$, and $G=\alpha P+\frac{1-\alpha}{n}\mathbf{1}\mathbf{1}^\top$.
Then every eigenvalue of $G$ other than $\lambda_1=1$ satisfies $|\lambda|\le\alpha$. Consequently the
power iteration \eqref{eq:pagerank-power} converges linearly with asymptotic factor at most $\alpha$,
independently of the size of the graph.
\end{proposition}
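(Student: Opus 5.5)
The plan is to exploit that $G^\top$ has the known left eigenvector structure. Since $P$ is column-stochastic, $\mathbf{1}^\top P=\mathbf{1}^\top$, and hence $\mathbf{1}^\top G=\alpha\mathbf{1}^\top+(1-\alpha)\mathbf{1}^\top=\mathbf{1}^\top$. So $\mathbf{1}$ is a left eigenvector of $G$ for $\lambda_1=1$, and the hyperplane $W=\{x:\mathbf{1}^\top x=0\}$ is invariant under $G$. The rank-one term annihilates $W$: for $x\in W$ we have $\frac{1-\alpha}{n}\mathbf{1}\mathbf{1}^\top x=0$. Therefore $G|_W=\alpha P|_W$.

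The eigenvalues of $G$ consist of $1$ (on the quotient $\mathbb{R}^n/W$) together with the eigenvalues of $G|_W=\alpha P|_W$. To make this concrete, I would choose a basis $\{u,w_2,\dots,w_n\}$ with $w_i\in W$ and $u\notin W$. In this basis $G$ is block upper triangular, with diagonal blocks $1$ and $\alpha P|_W$. Its characteristic polynomial is then $(\lambda-1)\det(\lambda I-\alpha P|_W)$.

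Every eigenvalue $\mu$ of $P|_W$ is also an eigenvalue of $P$, since an eigenvector in $W$ is an eigenvector of $P$. Next I need $\rho(P)\le1$. This follows from $\|P\|_1=1$, because column sums of a nonnegative column-stochastic matrix equal $1$, combined with $\rho(P)\le\|P\|_1$ (induced norms bound the spectral radius, as used in Theorem~\ref{th:fp}). Hence every eigenvalue of $G$ other than the $\lambda_1=1$ coming from the quotient block has the form $\alpha\mu$ with $|\mu|\le1$, so $|\lambda|\le\alpha$. A subtle point is that $P|_W$ may itself have eigenvalue $1$, for example if the graph is reducible. This only produces $\lambda=\alpha<1$ for $G$, so it does not contradict the claim: $1$ is a simple eigenvalue of $G$ and all others lie in the disk of radius $\alpha$.

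For the convergence statement, I would write the PageRank vector as $x^\star$ with $\mathbf{1}^\top x^\star=1$. If $\mathbf{1}^\top x^{(0)}=1$, then \eqref{eq:pagerank-power} equals $Gx^{(k)}$ and preserves $\mathbf{1}^\top x^{(k)}=1$. The error $e^{(k)}=x^{(k)}-x^\star$ therefore lies in $W$, and $e^{(k+1)}=Ge^{(k)}=\alpha Pe^{(k)}$. This gives the explicit bound $\|e^{(k)}\|_1\le\alpha^k\|P\|_1^k\|e^{(0)}\|_1=\alpha^k\|e^{(0)}\|_1$, which is a linear rate with factor $\alpha$ independent of $n$. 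It is even stronger than the asymptotic $\rho(G|_W)\le\alpha$ statement obtained from Theorem~\ref{th:fp} and Problem~10. The only delicate step is the block-triangular argument isolating the eigenvalue $1$; the invariance of $W$ makes it routine. I would also note that the argument avoids Perron--Frobenius except for the existence and uniqueness of $x^\star$, which the paper already assumes.
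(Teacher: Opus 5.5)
Your proof is correct, and its engine is the same as the paper's: $\mathbf{1}^\top G=\mathbf{1}^\top$, and the rank-one term vanishes on $\mathbf{1}^\perp$, so $G=\alpha P$ there. The difference is in how that identity is used.

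The paper applies it one eigenvector at a time. If $Gx=\lambda x$ with $\lambda\neq1$, then $(\lambda-1)\mathbf{1}^\top x=0$ forces $\mathbf{1}^\top x=0$, hence $Px=(\lambda/\alpha)x$ and $|\lambda|\le\alpha$. This is shorter, but it has three limitations. It says nothing about the multiplicity of the eigenvalue $1$. It asserts $\rho(P)\le1$ without justification. And it stops at the eigenvalue bound, leaving the ``consequently'' about the iteration implicit.

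You instead work with the whole invariant hyperplane $W$, which buys two things. The factorisation $\det(\lambda I-G)=(\lambda-1)\det(\lambda I-\alpha P|_W)$ also shows that $1$ is algebraically simple. The recursion $e^{(k+1)}=\alpha Pe^{(k)}$, together with $\|P\|_1=1$, gives the explicit non-asymptotic bound $\|e^{(k)}\|_1\le\alpha^k\|e^{(0)}\|_1$, which is exactly what the second sentence of the proposition needs.

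Two cosmetic points. With the basis ordered $\{u,w_2,\dots,w_n\}$ the matrix is block \emph{lower} triangular, which is irrelevant for the determinant. The hypothesis $\mathbf{1}^\top x^{(0)}=1$ is unnecessary: subtracting $x^\star=\alpha Px^\star+\frac{1-\alpha}{n}\mathbf{1}$ from \eqref{eq:pagerank-power} gives $e^{(k+1)}=\alpha Pe^{(k)}$ for any starting vector.

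The same observation $\|\alpha P\|_1=\alpha<1$ also gives $x^\star=\frac{1-\alpha}{n}\sum_{j\ge0}\alpha^jP^j\mathbf{1}$, which is entrywise positive with $\mathbf{1}^\top x^\star=1$. So you can drop Perron--Frobenius entirely, even for existence and positivity of the PageRank vector.
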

\end{tcolorbox}

\begin{proof}
Because $G$ is column-stochastic, $\mathbf{1}^\top G=\mathbf{1}^\top$: the vector $\mathbf{1}$ is a
\emph{left} eigenvector for the eigenvalue $1$. Let $Gx=\lambda x$ with $\lambda\neq1$. Left and right
eigenvectors belonging to different eigenvalues are orthogonal, so $\mathbf{1}^\top x=0$. Then we have,
\[
Gx=\alpha Px+\frac{1-\alpha}{n}\mathbf{1}\,(\mathbf{1}^\top x)=\alpha Px ,
\]
so $x$ is also an eigenvector of $P$, with eigenvalue $\lambda/\alpha$. Since $P$ is stochastic all its
eigenvalues satisfy $|\mu|\le1$, hence $|\lambda|\le\alpha$.
\end{proof}

Two consequences are resulting from this but they pull in opposite directions.

\begin{itemize}[topsep=2pt,itemsep=1pt,parsep=0pt,partopsep=0pt]
  \item \textbf{The method is usable at scale.} The bound involves $\alpha$ and nothing else, in
  particular not $n$. Doubling the size of the web does not cost a single extra iteration, which is why
  \eqref{eq:pagerank-power} scaled from millions of pages to billions unchanged.
  \item \textbf{The model and the cost are linked.}  Taking $\alpha$ close to $1$ makes the ranking
  faithful to the link structure and simultaneously makes the iteration slow: to reach a tolerance
  $\varepsilon$ needs about $\log\varepsilon/\log\alpha$ steps, which behaves like
  $\log(1/\varepsilon)/(1-\alpha)$ as $\alpha\to1$ and therefore blows up.
\end{itemize}

The three-node example makes this concrete. Its link matrix $P$ has characteristic polynomial
$\lambda^3-\tfrac12\lambda-\tfrac12=(\lambda-1)(\lambda^2+\lambda+\tfrac12)$, so besides $\lambda=1$ its
eigenvalues are $(-1\pm i)/2$, of modulus $1/\sqrt2$. By the proof above the corresponding eigenvalues of
$G$ are exactly $\alpha$ times these, giving $|\lambda_2(G)|=\alpha/\sqrt2\approx0.707\,\alpha$, which is
what Table~\ref{tab:damping} reports. The bound $|\lambda_2|\le\alpha$ is thus attained here up to the
factor $|\lambda_2(P)|$; on a real web graph $|\lambda_2(P)|$ is very close to $1$ and the bound is
essentially sharp.

\begin{table}[htbp]
\centering
\renewcommand{\arraystretch}{1.2}
\begin{tabular}{|c|c|c|}
\hline
$\alpha$ & $|\lambda_2(G)|$ & power iterations to $10^{-8}$\\
\hline
$0.50$ & $0.354$ & $17$\\
$0.85$ & $0.601$ & $36$\\
$0.95$ & $0.672$ & $45$\\
$0.99$ & $0.700$ & $52$\\
\hline
\end{tabular}
\caption{The damping factor on the three-node example of this section, where
$|\lambda_2(G)|=\alpha/\sqrt2$ exactly. The last column is the number of iterations
observed to gain eight digits, in agreement with the asymptotic estimate
$8\log 10/\log(1/|\lambda_2(G)|)$. The value $\alpha=0.85$ used by Google is a compromise
between fidelity to the link structure and cost, not a mathematical constant.}
\label{tab:damping}
\end{table}

\subsection{Beyond the power iteration: the Arnoldi process}\label{sec:notenough}

Power iteration is attractive because \eqref{eq:pagerank-power} is one line of code, but it delivers a
single eigenvector and its rate is fixed at $\alpha$: it cannot be accelerated, and it has no way of
certifying its answer.

\begin{tcolorbox}[colback=blue!3!white,colframe=blue!40!black]
\begin{remark}[When is power iteration \emph{not} enough?]
Four situations call for Arnoldi instead.
\begin{itemize}
\item \textbf{Small spectral gap:} if $|\lambda_1|\approx|\lambda_2|$, power converges very slowly.
\item \textbf{Multiple/clustered targets or several eigenpairs:} power returns at most one direction; Arnoldi delivers many Ritz pairs at once.
\item \textbf{Nonnormal matrices:} eigenvectors may be poorly conditioned and power can behave erratically; Arnoldi tracks subspaces and provides residuals $\|Au-\theta u\|$.
\item \textbf{Stopping guarantees:} Arnoldi gives rigorous a posteriori residuals $|h_{m+1,m}||e_m^\top y|$.
\end{itemize}
\end{remark}
\end{tcolorbox}

The remedy is the Arnoldi process, which we run on $A$ (here $A=G$, or a shifted
problem), and from the decomposition $AV_m=V_{m+1}\underline H_m$ take an eigenpair $(\theta,y)$ of the
square part $H_m$. The Ritz pair is then $(\theta,u)$ with $u=V_my$, and the associated residual is
available for free,
\begin{equation}\label{eq:ritz-residual}
\|Au-\theta u\|_2=|h_{m+1,m}|\,|e_m^\top y| ,
\end{equation}
because 
$$
Au-\theta u=(AV_m-V_mH_m)y=h_{m+1,m}v_{m+1}(e_m^\top y)$$ 
and $\|v_{m+1}\|_2=1$. 

\begin{takeaway}
\begin{itemize}
\item PageRank is the dominant eigenvector of the Google matrix $G$; Perron--Frobenius guarantees it is
unique and positive.
\item $G$ is dense but never formed: one application costs one sparse product with $P$, so the power
iteration \eqref{eq:pagerank-power} is one line of code.
\item The damping factor is the single difficult parameter; $|\lambda_2(G)|\le\alpha$ bounds the
convergence rate independently of the size of the graph.
\item Power iteration gives one eigenpair at a fixed rate and no error guarantees; in turn, Arnoldi i.e. the same
decomposition built in Section~\ref{sec:gmres} gives several Ritz pairs at once, plus the residual
$|h_{m+1,m}||e_m^\top y|$ for free.
\end{itemize}
\end{takeaway}

\section{Krylov Methods for Large Least Squares (optional)}\label{sec:ml}

Section~\ref{sec:gmres} solved $Ax=b$ for square non-symmetric $A$ by minimising $\|b-Ax\|_2$ over a
Krylov subspace. Least squares asks for the minimiser of the \emph{same} quantity when $A$ is
rectangular, and this section is about what changes. The answer is a single structural point: with $A$
rectangular the sequence $r^{(0)},Ar^{(0)},A^2r^{(0)},\dots$ does not even make sense, because $Ar^{(0)}$
lives in the wrong space. The Krylov space must be built by alternating $A$ and $A^\top$, and everything
else follows from this single change. \footnote{This section is not examinable but the ideas in it are likely to be useful for the project.}

\paragraph{A motivating example: sparse regression on text}\label{sec:textreg}

Consider a linear model fitted to a corpus of documents: row $i$ of the design matrix $X$ holds the word
counts of document $i$ over a vocabulary of $n$ terms, and $b_i$ is a response, as for example a rating in recommender systems, a
click-through rate, or a price. Fitting by least squares means solving
\[
\min_x\ \|Xx-b\|_2,\qquad X\in\mathbb{R}^{m\times n},
\]
with $m$ in the millions and $n$ in the tens of thousands. Three features of this problem decide the
choice of algorithm, and all of them are frequently encountered (recommender systems, genomics, and inverse problems).

\begin{itemize}
\item \textbf{$X$ is rectangular}, so ``solve the linear system'' is not even the right statement; there
is no square matrix in sight and generally no exact solution.
\item \textbf{$X$ is extremely sparse.} A document contains a few hundred of the $n$ vocabulary terms, so
$X$ has a fraction of a percent of its entries nonzero.
\item \textbf{$X$ is ill-conditioned.} Word frequencies are heavy-tailed and features are correlated
(``machine'' and ``learning'' co-occur), so the singular values of $X$ decay over many orders of
magnitude.
\end{itemize}

In what follows will briefly explain why the obvious route fails. 
\begin{figure}[htbp]
\centering
\begin{tikzpicture}[>=stealth,scale=1.15]
  \fill[gray!10] (-1.9,-0.95) -- (1.9,0.62) -- (2.6,-0.12) -- (-1.2,-1.7) -- cycle;
  \draw[TUblue!70!black,thick] (-1.9,-0.95) -- (1.9,0.62);
  \node[font=\scriptsize,anchor=west] at (1.95,0.66) {$\mathcal{R}(A)$};
  \coordinate (b) at (0.55,1.65);
  \coordinate (Ax) at (0.95,0.23);
  \fill (b) circle (1.5pt) node[above right=-2pt,font=\scriptsize]{$b$};
  \fill[TUblue] (Ax) circle (1.5pt) node[below right=-1pt,font=\scriptsize]{$Ax^\ast$};
  \draw[dashed,thick] (b) -- (Ax);
  \node[font=\scriptsize,anchor=east] at (0.68,0.95) {$r$};
  \draw (0.99,0.37) -- (1.13,0.42) -- (1.09,0.28);
  \node[font=\scriptsize,align=center,anchor=north] at (0.3,-1.45)
     {$r\perp\mathcal{R}(A)\iff A^\top(Ax^\ast-b)=0$};
\end{tikzpicture}
\caption{Least squares as orthogonal projection. The minimiser of $\|Ax-b\|_2$ is the point of the column
space $\mathcal{R}(A)$ closest to $b$, so the residual is orthogonal to $\mathcal{R}(A)$, which leads
exactly to the normal equations $A^\top Ax=A^\top b$. LSQR realises the same projection without forming $A^\top A$.}
\label{fig:lsq-projection}
\end{figure}

As explained in Chapter 4, the first ideas would be to solve $\min_x\|Xx-b\|_2$ by differentiating: the minimiser satisfies the \emph{normal equations}
$X^\top Xx=X^\top b$, a square SPD system of size $n$, to which the CG of Chapter~4 applies immediately.
Indeed Section~\ref{sec:krylovml} did precisely this: it read least squares as an SPD problem and ran CG
on $X^\top X$ in order to exhibit the filter factors $1-q_k(\sigma_i^2)$. That was the right way to
\emph{understand} regularisation, because the filtering is visible only in the eigenbasis of
$X^\top X$. It is not the right way to \emph{compute}. At any serious scale one never forms
$X^\top X$, and never runs CG on it, for two independent reasons.

\paragraph{Sparsity is destroyed.} Two terms contribute a nonzero to $(X^\top X)_{jk}$ as soon as they
co-occur in a \emph{single} document, so the Gram matrix is far denser than $X$. On a representative
sparse design matrix, for example $20\,000\times5000$ at $0.2\%$ density,  forming $X^\top X$ produces a matrix
that is $7.7\%$ dense, nearly ten times as many nonzeros as $X$ itself, and this ratio grows with the
corpus.

\paragraph{Conditioning is squared.} Since the singular values of $X^\top X$ are $\sigma_i^2$,
\[
\kappa_2(X^\top X)=\kappa_2(X)^2 .
\]
Running CG on $X^\top X$ (the method known as CGNR) therefore converges at a rate governed by
$\kappa_2(X)$ rather than $\sqrt{\kappa_2(X)}$, and in floating point the damage can be worse than slow:
information present in $X$ can be destroyed by the time $X^\top X$ has been formed.

\begin{tcolorbox}[colback=softred,colframe=red!50!black]
\textbf{A classical warning.} Let $\varepsilon>0$ and
\[
X=\begin{bmatrix}1&1\\ \varepsilon&0\\ 0&\varepsilon\end{bmatrix},
\qquad
X^\top X=\begin{bmatrix}1+\varepsilon^2&1\\ 1&1+\varepsilon^2\end{bmatrix}.
\]
$X$ has rank $2$ for every $\varepsilon\neq0$. But if $\varepsilon<\sqrt{u}$, where $u\approx2.2\cdot10^{-16}$
is the unit roundoff, then $1+\varepsilon^2$ rounds to $1$ and the Gram matrix is
$\left[\begin{smallmatrix}1&1\\1&1\end{smallmatrix}\right]$, so exactly singular. In double precision
this already happens at $\varepsilon=10^{-9}$: the rank information is annihilated by
forming $X^\top X$.
\end{tcolorbox}

\noindent What we want, then, is a method with the \emph{convergence} of CG on $X^\top X$ but the
\emph{conditioning} of $X$, that means one that uses $X$ and $X^\top$ only as operators and never multiplies them
together. That is exactly what Golub--Kahan bidiagonalisation provides
\cite{golubCalculatingSingularValues1965}. From here on we revert to the neutral notation
$A\in\mathbb{R}^{m\times n}$ for the rectangular matrix, to keep the parallel with
Section~\ref{sec:gmres} visible.

\subsection{Golub--Kahan bidiagonalisation and LSQR}\label{sec:gk}

\begin{tcolorbox}[colback=green!5!white,colframe=green!50!black]
\begin{definition}[Golub--Kahan bidiagonalization (core recurrence)]
Choose $u_1=\frac{b}{\|b\|_2}$ and set $\beta_0=0$, $v_0=0$. For $k=1,2,\dots$:
\[
\begin{aligned}
&\text{(i) } r = A^\top u_k - \beta_{k-1} v_{k-1},\quad \alpha_k=\|r\|_2,\quad v_k = r/\alpha_k,\\
&\text{(ii) } p = A v_k - \alpha_k u_k,\quad \beta_k=\|p\|_2,\quad u_{k+1} = p/\beta_k.
\end{aligned}
\]
Since $Av_k=\alpha_ku_k+\beta_ku_{k+1}$, we obtain $A V_k = U_{k+1} B_k$ with
$B_k\in\mathbb{R}^{(k+1)\times k}$ \emph{lower} bidiagonal, carrying $\alpha_1,\dots,\alpha_k$ on the
diagonal and $\beta_1,\dots,\beta_k$ on the subdiagonal.
\end{definition}
\end{tcolorbox}

\begin{figure}[htbp]
\centering
\begin{tikzpicture}[>=stealth,scale=1.4]
  \tikzstyle{vv}=[circle,draw=TUblue!70!black,fill=softblue,minimum size=8.5mm,font=\small]
  \tikzstyle{uu}=[circle,draw=gray!70!black,fill=gray!12,minimum size=8.5mm,font=\small]
  \node[vv] (v1) at (0,0) {$v_1$};
  \node[uu] (u1) at (1.7,-1.0) {$u_1$};
  \node[vv] (v2) at (3.4,0) {$v_2$};
  \node[uu] (u2) at (5.1,-1.0) {$u_2$};
  \node[vv] (v3) at (6.8,0) {$v_3$};
  \node[font=\small] at (8.2,-0.5) {$\cdots$};
  \draw[->,thick] (v1) -- node[above right=-3pt,font=\scriptsize]{$A$} (u1);
  \draw[->,thick] (u1) -- node[above left=-3pt,font=\scriptsize]{$A^\top$} (v2);
  \draw[->,thick] (v2) -- node[above right=-3pt,font=\scriptsize]{$A$} (u2);
  \draw[->,thick] (u2) -- node[above left=-3pt,font=\scriptsize]{$A^\top$} (v3);
  \node[font=\scriptsize,anchor=west] at (-1.0,1.05) {$\{v_j\}$ orthonormal in $\mathbb{R}^n$ (row space)};
  \node[font=\scriptsize,anchor=west] at (-1.0,-2.0) {$\{u_j\}$ orthonormal in $\mathbb{R}^m$ (column space)};
\end{tikzpicture}
\caption{Golub--Kahan bidiagonalisation. Two orthonormal sequences are grown in alternation, one in the
row space and one in the column space, each step costing one product with $A$ and one with $A^\top$. The
coefficients assemble into a lower bidiagonal $B_k$ with $AV_k=U_{k+1}B_k$.
This is Lanczos applied implicitly to $A^\top A$ but the squaring never happens numerically, which is
precisely why LSQR is stable where CG on the normal equations is not.}
\label{fig:golub-kahan}
\end{figure}

The reason this construction is worth the trouble is that it reduces the least-squares problem to a tiny
one of exactly the shape met in Section~\ref{sec:gmres}.

\begin{tcolorbox}[colback=yellow!5!white,colframe=yellow!50!black]
\begin{proposition}[LSQR as a small least-squares problem]\label{prop:lsqr}
Write $\gamma=\|b\|_2$, so that $u_1=b/\gamma$, and let $k$ steps of Golub--Kahan produce $V_k$,
$U_{k+1}$ and the bidiagonal $B_k\in\mathbb{R}^{(k+1)\times k}$ with $AV_k=U_{k+1}B_k$. Then for any
$x=V_ky$,
\[
\|b-Ax\|_2=\bigl\|\gamma e_1-B_ky\bigr\|_2 ,
\]
and consequently the minimiser of $\|b-Ax\|_2$ over $x\in\mathcal{R}(V_k)$ is $x^{(k)}=V_ky^{(k)}$ with
\[
y^{(k)}=\arg\min_{y\in\mathbb{R}^k}\bigl\|\gamma e_1-B_ky\bigr\|_2 .
\]
\end{proposition}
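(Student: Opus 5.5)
The plan mirrors the proof of Proposition~\ref{prop:gmres-ls} step by step, with the Arnoldi relation replaced by the Golub--Kahan relation $AV_k=U_{k+1}B_k$. Three ingredients are needed: an expression for $b$ in the basis $U_{k+1}$, an expression for $Ax$, and orthonormality of the columns of $U_{k+1}$.

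The first step is to express the residual in the basis $U_{k+1}$. By construction $u_1=b/\gamma$, so $b=\gamma u_1=U_{k+1}(\gamma e_1)$. For any $x=V_ky$ the relation $AV_k=U_{k+1}B_k$ gives $Ax=U_{k+1}B_ky$. Together these yield
\[
b-Ax=U_{k+1}\bigl(\gamma e_1-B_ky\bigr).
\]
The relation $AV_k=U_{k+1}B_k$ itself is read column by column from step (ii) of the recurrence, $Av_j=\alpha_ju_j+\beta_ju_{j+1}$. This is exactly the lower bidiagonal structure stated in the definition, so nothing new needs proving there.

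The second step is to use that $U_{k+1}$ has orthonormal columns, so that $\|U_{k+1}z\|_2=\|z\|_2$ for all $z\in\mathbb{R}^{k+1}$. This gives the norm identity, and minimising over $x\in\mathcal{R}(V_k)$ becomes minimising over $y\in\mathbb{R}^k$. Because the columns of $V_k$ are orthonormal, the map $y\mapsto V_ky$ is a bijection onto $\mathcal{R}(V_k)$, which yields $x^{(k)}=V_ky^{(k)}$.

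The main obstacle is justifying the orthonormality of the $u_j$ and of the $v_j$, since the recurrence subtracts only one previous vector at each step. I would prove both jointly by induction on $k$, in the same way as the Lanczos orthogonality argument of Problem~6. The inductive step rests on two identities:
\[
u_i^\top A v_k = v_k^\top A^\top u_i = v_k^\top(\alpha_i v_i+\beta_{i-1}v_{i-1}),
\qquad
v_i^\top A^\top u_k = (Av_i)^\top u_k = (\alpha_i u_i+\beta_i u_{i+1})^\top u_k.
\]
By the induction hypothesis these vanish for all indices other than the neighbouring ones, and those neighbouring terms are removed explicitly by the recurrence. The cleanest alternative is to note that the $v_j$ are the Lanczos vectors of $A^\top A$ started from $A^\top b$ (up to scaling) and the $u_j$ those of $AA^\top$, and then invoke Problem~6. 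I would present the direct induction, with breakdowns $\alpha_j=0$ or $\beta_j=0$ assumed not to occur before step $k$.
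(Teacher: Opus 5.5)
Your argument is correct and is the paper's proof: write $b-AV_ky=U_{k+1}(\gamma e_1-B_ky)$, then use that $U_{k+1}$ has orthonormal columns and is therefore an isometry. The only difference is that you also prove the orthonormality of the $u_j$ and $v_j$ by joint induction, which the paper takes for granted as part of the Golub--Kahan construction. Your direct induction is the right way to do this; the Lanczos shortcut via Problem~6 is not independent, because identifying the $v_j$ with the Lanczos vectors of $A^\top A$ already requires knowing they are orthogonal.
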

\end{tcolorbox}

\begin{proof}
Since $b=\gamma u_1=U_{k+1}(\gamma e_1)$ and $AV_ky=U_{k+1}B_ky$,
\[
b-AV_ky=U_{k+1}\bigl(\gamma e_1-B_ky\bigr).
\]
The columns of $U_{k+1}$ are orthonormal, so $U_{k+1}$ is an isometry on $\mathbb{R}^{k+1}$ and the norm
is unchanged. Minimising over $y$ gives the second claim.
\end{proof}

Compare this with Proposition~\ref{prop:gmres-ls}: GMRES minimises $\|\beta e_1-\underline H_ky\|_2$ against an upper
Hessenberg $\underline H_k$, LSQR minimises $\|\gamma e_1-B_ky\|_2$ against a bidiagonal $B_k$. The
algorithms differ only in how the small matrix is generated and, consequently, in how cheaply it is
factorised: a bidiagonal $B_k$ is reduced to triangular form by a \emph{single} Givens rotation per step.
This is what allows the LSQR iterate to be updated by short recurrences in $\mathcal{O}(m+n)$ work
and storage, with no need to keep $V_k$. Mathematically it produces the same iterates as CG
on $A^\top A$, but the squaring never happens numerically, which is why it is stable where CGNR is not.

\subsection{Early stopping and the truncated SVD}\label{sec:lsqrfilter}

\begin{figure}[htbp]
\centering
\begin{tikzpicture}[>=stealth,scale=1.2]
  \draw[->,gray!70] (0,0)--(6.4,0) node[right,font=\scriptsize]{singular value index $i$};
  \draw[->,gray!70] (0,0)--(0,2.9) node[above,font=\scriptsize]{$\sigma_i$};
  \foreach \i/\h in {0.5/2.55, 1.1/1.95, 1.7/1.42, 2.3/1.0, 2.9/0.68, 3.5/0.44, 4.1/0.27, 4.7/0.15, 5.3/0.08, 5.9/0.04}{
    \draw[TUblue,line width=3pt] (\i,0) -- (\i,\h);}
  \draw[gray!80,thick,rounded corners,dashed] (0.2,-0.12) rectangle (2.6,2.75);
  \node[font=\scriptsize,align=center,anchor=north] at (1.4,-0.25) {captured in the\\first LSQR steps};
  \node[font=\scriptsize,align=center,anchor=north] at (4.6,-0.25) {reached late; these are the\\directions that amplify noise};
\end{tikzpicture}
\caption{LSQR as an iterative truncated SVD. Because the Krylov space is built from $A^\top b$ and grows
by powers of $A^\top A$, components associated with large singular values are resolved first. Stopping
early therefore acts as a spectral filter, suppressing the small-$\sigma_i$ directions that magnify noise
in an ill-posed problem. The iteration count plays the role of a regularisation parameter, the same
mechanism as truncating the SVD, but obtained without ever computing one.}
\label{fig:lsqr-filter}
\end{figure}

Running $\ell=k+p$ steps of Golub--Kahan on a sparse term--document matrix $M$ and computing the (tiny)
SVD $B_\ell=\hat U\Sigma\hat V^\top$ gives approximate singular triplets of $M$ through
$U\approx U_{\ell+1}\hat U_{:,1:k}$ and $V\approx V_\ell\hat V_{:,1:k}$, with a small oversampling $p$ to
improve accuracy of the last few. This is \emph{latent semantic analysis}: the leading singular directions of $M$ define a
low-dimensional semantic space in which related documents cluster, and it is obtained from a few dozen
matrix--vector products rather than a dense $\mathcal{O}(mn\min(m,n))$ factorisation.

\begin{figure}[h]
\centering
\begin{tikzpicture}[scale=1.2,>=stealth,thick]
\draw[->] (-0.2,0)--(4.2,0); \draw[->] (0,-0.2)--(0,3.2);
\fill[blue] (0.8,2.4) circle (1.5pt) node[above] {\scriptsize sports};
\fill[blue] (1.0,2.1) circle (1.5pt);
\fill[blue] (0.9,1.9) circle (1.5pt);
\fill[red] (3.2,0.7) circle (1.5pt) node[right] {\scriptsize finance};
\fill[red] (2.9,0.9) circle (1.5pt);
\fill[red] (3.4,0.6) circle (1.5pt);
\node at (3.5,3.0) {\footnotesize $V_k$ doc-embedding (2D sketch)};
\end{tikzpicture}
\caption{After truncated SVD, documents cluster in a low-dimensional semantic space.}
\end{figure}

\begin{takeaway}
\begin{itemize}
\item Rectangular $A$ forces the Krylov space to be built from $A$ \emph{and} $A^\top$; Golub--Kahan
bidiagonalisation is the resulting recurrence.
\item LSQR reduces the least-squares problem to $\min_y\|\gamma e_1-B_ky\|_2$, structurally identical
to the GMRES problem of Section~\ref{sec:gmres}, but with a bidiagonal in place of a Hessenberg matrix.
\item Never form $A^\top A$: the geometry of the normal equations is right, the arithmetic is not.
\item The same bidiagonalisation, stopped early, acts as a spectral filter (regularisation); kept and
factorised, it yields a truncated SVD.
\end{itemize}
\end{takeaway}

\newpage
\section{Exercises}

\paragraph{Problem 1 (Krylov spaces, resit exam 2024/2025).}
Consider the matrix
\[ A = \begin{bmatrix} 2 & -1 & 0 \\ -1 & 2 & -1 \\ 0 & -1 & 2 \end{bmatrix}, \quad b = \begin{bmatrix} 1 \\ 0 \\ 1 \end{bmatrix}. \]
\begin{enumerate}
    \item[(a)] Define the Krylov space $K_k(A, b)$ for $k = 1, 2, \dots$.

    \item[(b)] Compute the Krylov spaces $K_1(A, b)$, $K_2(A, b)$, and $K_3(A, b)$. Determine their dimensions.

    \item[(c)] Explain the general principle of the GMRES method for solving $Ax = b$. How many iterations are needed for convergence?
\end{enumerate}

\paragraph{Problem 2 (Companion matrices and GMRES convergence, exam October 2024).}
Let
\[
A=\begin{bmatrix}
0 & 1 & & & 0\\
 & 0 & 1 & & \\
 & & \ddots & \ddots & \\
 & & & 0 & 1\\
 c_0 & c_1 & \cdots & c_{n-2} & c_{n-1}
\end{bmatrix}.
\]
and the polynomial $p(z) = z^n -\sum_{j=0}^{n-1} c_j z^j$.
Show that:
\begin{itemize}
\item[(a)] $p(z)$ is its characteristic and minimal polynomial.
\item[(b)] Let
\[
 A=\begin{bmatrix}0&1&0\\[2pt]0&0&1\\[2pt]1&-3&3\end{bmatrix},
 \qquad p(z)=(z-1)^3=z^3-3z^2+3z-1,\qquad b=e_1=\begin{bmatrix}1\\0\\0\end{bmatrix}.
\]
Compute the Krylov spaces $K_k(A,b), k=1,2,3$.
\item[(c)] Compute the GMRES iterates $u_k$ for $Au=b$ with $u_0=0$ and conclude on how many iterations are needed for convergence.
\item[(d)] Can this result be generalised to an $n\times n$ companion matrix? Explain your answer.
\end{itemize}

\paragraph{Problem 3 (GMRES basics in one shot).}
Let $A\in\mathbb{R}^{n\times n}$, $x^{(0)}$ be an initial guess, $r^{(0)}=b-Ax^{(0)}$, and let
$V_{k+1}$ and $\underline H_k$ come from $k$ steps of Arnoldi with
$v_1=r^{(0)}/\|r^{(0)}\|_2$ and $AV_k=V_{k+1}\underline H_k$; set $\beta=\|r^{(0)}\|_2$.
The GMRES iterate $x^{(k)}=x^{(0)}+V_k y^{(k)}$, where
$y^{(k)}=\arg\min_{y}\|\beta e_1-\underline H_k y\|_2$. Prove that it satisfies:

\begin{enumerate}
\item[\emph{(i)}] \textbf{Orthogonality.} $r^{(k)}\perp A\mathcal{K}_k(A,r^{(0)})$.
\item[\emph{(ii)}] \textbf{Monotonicity (full GMRES).} $\|r^{(k+1)}\|_2\le\|r^{(k)}\|_2$.
\item[\emph{(iii)}] \textbf{Residual polynomial and finite termination.}
There exists $q_k\in\Pi_k$ with $q_k(0)=1$ such that
\(
r^{(k)}=q_k(A)\,r^{(0)},
\)
and $q_k$ minimizes $\|q(A)r^{(0)}\|_2$ over all such polynomials.
If $m$ is the minimal polynomial of $A$ relative to $r^{(0)}$ with $\deg m=d$, then GMRES
terminates in at most $d$ steps (i.e., $r^{(d)}=0$).
\end{enumerate}

\paragraph{Problem 4 (Field of values and GMRES convergence).}
Let $A\in\mathbb{C}^{n\times n}$ and suppose its field of values (numerical range) \(F(A) \) is contained in a disk $\{z:\,|z-c|\le s\}$ with $0\notin \{z:\,|z-c|\le s\}$.
Let $r^{(k)}$ denote the $k$-th GMRES residual for $Ax=b$ with some initial guess $x^{(0)}$ and $r^{(0)}=b-Ax^{(0)}$.

\begin{enumerate}[label=(\alph*)]
\item Show that GMRES residuals admit a polynomial representation $r^{(k)}=q_k(A)r^{(0)}$ for some $q_k\in\Pi_k$ with $q_k(0)=1$, and that GMRES chooses $q_k$ to minimize $\|q(A)r^{(0)}\|_2$ over all such polynomials.
\item Prove the ``disk-in-$F(A)$'' bound
\[
\|r^{(k)}\|_2 \;\le\; C \left(\frac{s}{|c|}\right)^k \|r^{(0)}\|_2,
\]
for a universal constant $C$. (\emph{Hint:} Use Crouzeix--Palencia: $\|p(A)\|\le (1+\sqrt{2})\max_{z\in F(A)}|p(z)|$ and test $p(z)=(1-z/c)^k$.)
\item Explain briefly why this bound can be more descriptive than eigenvalue-only bounds for highly non-normal matrices. Give a concrete example of a matrix with tightly clustered eigenvalues but possibly slow GMRES and relate this to the geometry of $F(A)$.
\end{enumerate}

\paragraph{Problem 5 (Arnoldi, Ritz values, and a posteriori eigen-residuals).}
Let $A\in\mathbb{C}^{n\times n}$ and suppose $k$ steps of Arnoldi with $v_1=\frac{r^{(0)}}{\|r^{(0)}\|_2}$ produce
\[
AV_k = V_{k+1} H_k,\qquad
V_{k+1}=\begin{bmatrix}v_1&\cdots&v_{k+1}\end{bmatrix},\quad
H_k=\begin{bmatrix} \widehat H_k \\ h_{k+1,k}\, e_k^\top \end{bmatrix},
\]
where $V_{k+1}$ has orthonormal columns, $H_k\in\mathbb{C}^{(k+1)\times k}$ is upper Hessenberg, and $\widehat H_k\in\mathbb{C}^{k\times k}$ is its leading square part.
\begin{enumerate}[label=(\alph*)]
\item Show that if $(\theta,y)$ is an eigenpair of $\widehat H_k$ with $\|y\|_2=1$, then $(\theta, v)$ with $v:=V_k y$ is a Ritz pair for $A$ (i.e.\ $\theta$ is a Ritz value and $v$ a Ritz vector).
\item Prove the a posteriori residual formula
\[
\|A v - \theta v\|_2 \;=\; |h_{k+1,k}|\,|e_k^\top y|.
\]
\item Interpret this bound: under what condition is $\theta$ a good eigenvalue approximation to $A$? How does this relate to (i) small subdiagonal element $|h_{k+1,k}|$ (a ``lucky'' or near-breakdown), and (ii) the last component $|e_k^\top y|$ of the eigenvector of $\widehat H_k$?
\end{enumerate}

\paragraph{Problem 6 (PageRank on a four-page web).}
Four pages link as follows: page $1\to2,3$; page $2\to3$; page $3\to1$; page $4\to1,3$.
\begin{enumerate}
\item[(a)] Write down the link matrix $P$ (with $P_{ij}=1/d_j$ when page $j$ links to page $i$) and check
that it is column-stochastic. Page $4$ receives no links: what does that say about row $4$ of $P$, and
what will make its PageRank nonzero nevertheless?
\item[(b)] Take $\alpha=0.85$. Without writing out the dense matrix $G$, give the formula that computes
$Gx$ from $Px$, and say how many nonzeros are touched per application.
\item[(c)] Starting from $x^{(0)}=\tfrac14(1,1,1,1)^\top$, perform two steps of the iteration
\eqref{eq:pagerank-power} and rank the four pages.
\item[(d)] Using Proposition~\ref{prop:damping}, bound the number of iterations needed to reduce the
error by $10^{-6}$, first for $\alpha=0.85$ and then for $\alpha=0.99$. Comment on what the ranking gains
and what the computation loses when $\alpha$ is increased.
\item[(e)] Suppose a fifth page is added that links to nothing. Show that $P$ is then no longer
column-stochastic, and describe a way of repairing it that keeps the cost of applying $G$ unchanged.
\end{enumerate}

\paragraph{Problem 7 (Golub--Kahan bidiagonalisation and LSQR).}
Let $A\in\mathbb{R}^{m\times n}$ with $m\ge n$, let $b\neq0$, and run the recurrence of
Section~\ref{sec:gk} with $\gamma=\|b\|_2$ and $u_1=b/\gamma$.
\begin{enumerate}
\item[(a)] Show that $A v_k=\alpha_k u_k+\beta_k u_{k+1}$ and $A^\top u_{k+1}=\beta_k v_k+\alpha_{k+1}v_{k+1}$.
Deduce that $AV_k=U_{k+1}B_k$ with $B_k$ lower bidiagonal, and write $B_3$ out explicitly.
\item[(b)] Show by induction that
$\mathrm{span}\{v_1,\dots,v_k\}=\mathcal{K}_k(A^\top A,\,A^\top b)$ and
$\mathrm{span}\{u_1,\dots,u_k\}=\mathcal{K}_k(AA^\top,\,b)$.
This is the precise sense in which LSQR ``is'' CG on the normal equations.
\item[(c)] Proposition~\ref{prop:lsqr} reduces step $k$ to $\min_y\|\gamma e_1-B_ky\|_2$. Show that a
\emph{single} Givens rotation brings $B_k$ to upper triangular form given the factorisation of
$B_{k-1}$, and contrast the work and the storage per step with GMRES, where step $k$ costs $k$
orthogonalisations against $v_1,\dots,v_k$.

\end{enumerate}

\chapter{Preconditioning and Accelerating Solvers}

\section*{Overview}

Chapters~4 and~5 showed that Krylov methods are optimal over the subspace they build, but that their speed is dictated by the spectrum of $A$: CG needs $\mathcal{O}(\sqrt{\kappa(A)})$ iterations, and for the PDE matrices of Chapter~2 this means $\mathcal{O}(h^{-1})$ iterations, growing without bound as the mesh is refined. Optimality over a Krylov subspace cannot fix a badly conditioned problem. This is what preconditioning does. Instead of solving $Ax=b$ we solve an equivalent system involving $M^{-1}A$, where $M$ approximates $A$ but is cheap to invert, and we choose $M$ so that the transformed spectrum is clustered and the condition number is bounded independently of the mesh size; broad surveys of the subject are given in \cite{benziPreconditioningTechniques2002,wathenPreconditioning2015}. This chapter covers: (1) the algebraic framework of left, right, and symmetric preconditioning, preconditioned CG, and simple choices such as Jacobi and incomplete factorisations; (2) Schwarz domain decomposition, where the preconditioner is built from independent subdomain solves and the overlap width controls the convergence rate; (3) two-level and multigrid ideas, where a coarse-grid correction removes the smooth error components that relaxation cannot touch, yielding preconditioners whose quality is \emph{independent} of $h$.

\begin{tcolorbox}[colback=softblue,colframe=TUblue!40!black,title=\textbf{Learning objectives},fonttitle=\bfseries]
By the end of this chapter, you should be able to:
\begin{itemize}
    \item Explain why Krylov convergence rates make preconditioning necessary for mesh-refined PDE problems, and state the left, right, and symmetric preconditioning formulations.
    \item Derive the preconditioned conjugate gradient algorithm and explain why it requires only solves with $M$, never $M^{-1/2}$, and why $M$ must be SPD.
    \item Construct and assess simple preconditioners (Jacobi, SSOR, incomplete Cholesky/LU) and estimate their effect on $\kappa(M^{-1}A)$ for the discrete Laplacian.
 
    \item Formulate the alternating and additive Schwarz methods, analyse the 1D convergence factor in terms of the overlap $\delta$, and explain the role of a coarse space in restoring scalability.
    \item Compute the smoothing factor of weighted Jacobi, explain why relaxation damps oscillatory but not smooth error, and describe how coarse-grid correction and the resulting two-grid/multigrid cycle give an $h$-independent preconditioner.
\end{itemize}
\end{tcolorbox}

\clearpage
\section{Motivation and Basic Preconditioners}\label{sec:basic}

Preconditioning accelerates Krylov iterations by modifying the spectrum of the coefficient matrix. We aim to accelerate the solution of
\(
A x = b, \, A \in \mathbb{R}^{n\times n},
\)
using a matrix \(M\) that approximates \(A\) but is cheaper to invert.

\begin{tcolorbox}[colback=green!5!white,colframe=green!50!black]
\begin{definition}[Left, right, and symmetric preconditioning]
We have three different options:
\begin{itemize}[leftmargin=*]
  \item \textbf{Left preconditioning:}
  \[
  M_L^{-1}Ax=M_L^{-1}b.
  \]
  This changes the residual norm seen by the Krylov method.

  \item \textbf{Right preconditioning:}
  \[
  A M_R^{-1} y=b,\qquad x=M_R^{-1}y.
  \]
  This preserves the original residual $b-Ax$ but modifies the search space.

  \item \textbf{Symmetric preconditioning:}
  for SPD problems, one uses an SPD matrix $M$ and rewrites the system as
  \[
  M^{-1/2}AM^{-1/2}\,\tilde x=M^{-1/2}b,
  \qquad \tilde x=M^{1/2}x.
  \]
  This preserves symmetry and positive definiteness.
\end{itemize}
\end{definition}
\end{tcolorbox}

\begin{keyidea}
For SPD problems, the convergence of CG depends on the condition number of the operator.
A good preconditioner replaces $A$ by an operator whose eigenvalues are more tightly clustered,
ideally near $1$:
\[
M^{-1}A \approx I,
\qquad
\kappa(M^{-1}A)\ll \kappa(A).
\]
The gain can already be quantified exactly on simple model matrices.
\end{keyidea}

\begin{figure}[h!]
\centering
\begin{tikzpicture}
\begin{axis}[
name=left,
width=0.5\textwidth,
height=5cm,
axis lines=middle,
xlabel={$\Re$}, ylabel={$\Im$},
xmin=-0.5, xmax=4.5, ymin=-1.8, ymax=1.8,
title={Unpreconditioned $A$},
ticklabel style={font=\small},
label style={font=\small},
title style={font=\small}
]
\addplot+[only marks, mark=*, mark size=1pt]
coordinates{(0.2,1.2) (0.3,-1.1) (0.5,0.6) (0.6,-0.7) (0.8,1.5)
(1.0,-1.5) (1.2,0.2) (1.4,-0.3) (1.6,1.0) (1.9,-1.0)
(2.1,0.9) (2.4,-0.8) (2.7,0.4) (3.0,-0.2) (3.3,0.0)};
\addplot+[domain=0:360,samples=200, no marks]
({1.6+2.0*cos(x)},{0+1.8*sin(x)});
\end{axis}

\begin{axis}[
at={(left.east)}, anchor=west, xshift=1.0cm,
width=0.45\textwidth,
height=5cm,
axis lines=middle,
xlabel={$\Re$}, ylabel={$\Im$},
xmin=0.2, xmax=1.8, ymin=-0.6, ymax=0.6,
title={$M^{-1}A$ (clustered)},
ticklabel style={font=\small},
label style={font=\small},
title style={font=\small}
]
\addplot+[only marks, mark=*, mark size=1.2pt]
coordinates{(1.00,0.00) (1.05,0.08) (0.95,-0.05) (1.10,-0.02) (0.92,0.03)
(1.03,-0.10) (0.98,0.06) (1.08,0.02) (0.90,-0.08) (1.12,0.04)
(0.97,-0.02) (1.06,-0.05) (1.02,0.03)};
\addplot+[domain=0:360,samples=200, no marks]
({1+0.22*cos(x)},{0+0.22*sin(x)});
\addplot+[only marks, mark=*, mark size=1.5pt]
coordinates{(1,0)};
\end{axis}
\end{tikzpicture}
\caption{Effect of preconditioning on the spectrum: eigenvalues of $A$ (left) are spread, whereas those of $M^{-1}A$ (right) are tightly clustered around 1.}
\end{figure}

\subsection{Preconditioning SPD problems}
For SPD problems, preconditioning is most naturally understood through the symmetrically
preconditioned operator
\[
\widetilde A = M^{-1/2} A M^{-1/2}.
\]
CG is then applied to
\[
\widetilde A \widetilde x = \widetilde b,
\qquad
\widetilde x = M^{1/2}x,
\qquad
\widetilde b = M^{-1/2}b.
\]
Equivalently, in the original variables, the search space is the preconditioned Krylov space
\[
x^{(0)}+\mathcal K_k(M^{-1}A,M^{-1}r^{(0)}).
\]
\begin{tcolorbox}[colback=yellow!5!white,colframe=yellow!50!black]
\begin{proposition}[Preconditioned CG]
\label{prop:pcg}
Let $A$ and $M$ be SPD. Applying CG to
\[
M^{-1/2}AM^{-1/2}\,\widetilde x = M^{-1/2}b
\]
is equivalent to the following iteration in the original variables:
\[
\begin{aligned}
&r^{(0)}=b-Ax^{(0)},\qquad z^{(0)}=M^{-1}r^{(0)},\qquad p^{(0)}=z^{(0)},\\
&q^{(k)}=Ap^{(k)},\qquad
\alpha_k=\frac{\langle r^{(k)},z^{(k)}\rangle}{\langle p^{(k)},q^{(k)}\rangle},\\
&x^{(k+1)}=x^{(k)}+\alpha_k p^{(k)},\qquad
r^{(k+1)}=r^{(k)}-\alpha_k q^{(k)},\\
&z^{(k+1)}=M^{-1}r^{(k+1)},\qquad
\beta_{k+1}=\frac{\langle r^{(k+1)},z^{(k+1)}\rangle}{\langle r^{(k)},z^{(k)}\rangle},\\
&p^{(k+1)}=z^{(k+1)}+\beta_{k+1}p^{(k)}.
\end{aligned}
\]
Moreover,
\[
\operatorname{spec}(M^{-1/2}AM^{-1/2})=\operatorname{spec}(M^{-1}A),
\]
so the usual CG convergence estimate holds with $\kappa(M^{-1}A)$ replacing $\kappa(A)$.
\end{proposition}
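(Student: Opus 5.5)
I will write out plain CG (Definition~\ref{def:cg}) for the transformed system and introduce tilde quantities $\widetilde x^{(k)},\widetilde r^{(k)},\widetilde p^{(k)}$ for its iterates, residuals and search directions. Then I will set up a dictionary between the tilde variables and the original ones and check that each CG line maps to the corresponding line of the proposed iteration. The dictionary I commit to is
\[
x^{(k)}=M^{-1/2}\widetilde x^{(k)},\qquad r^{(k)}=M^{1/2}\widetilde r^{(k)},\qquad p^{(k)}=M^{-1/2}\widetilde p^{(k)},\qquad z^{(k)}=M^{-1}r^{(k)}=M^{-1/2}\widetilde r^{(k)}.
\]
$M^{1/2}$ exists and is SPD because $M$ is SPD, via the eigendecomposition recalled in Chapter~1. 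First I will check that $\widetilde r^{(k)}=\widetilde b-\widetilde A\widetilde x^{(k)}=M^{-1/2}(b-Ax^{(k)})$. So $r^{(k)}=b-Ax^{(k)}$ is indeed the true residual, and the initialisation $\widetilde p^{(0)}=\widetilde r^{(0)}$ becomes $p^{(0)}=M^{-1/2}\widetilde r^{(0)}=z^{(0)}$.

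Next come the scalars. I will use the identities
\[
\widetilde r^{(k)\top}\widetilde r^{(k)}=r^{(k)\top}M^{-1}r^{(k)}=\langle r^{(k)},z^{(k)}\rangle,\qquad
\widetilde p^{(k)\top}\widetilde A\widetilde p^{(k)}=p^{(k)\top}Ap^{(k)}=\langle p^{(k)},q^{(k)}\rangle.
\]
These give exactly the stated $\alpha_k$ and $\beta_{k+1}$. For the updates, I multiply $\widetilde x^{(k+1)}=\widetilde x^{(k)}+\alpha_k\widetilde p^{(k)}$ on the left by $M^{-1/2}$, and $\widetilde r^{(k+1)}=\widetilde r^{(k)}-\alpha_k\widetilde A\widetilde p^{(k)}$ by $M^{1/2}$. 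The second uses $M^{1/2}\widetilde A\widetilde p^{(k)}=AM^{-1/2}\widetilde p^{(k)}=Ap^{(k)}=q^{(k)}$. I multiply $\widetilde p^{(k+1)}=\widetilde r^{(k+1)}+\beta\widetilde p^{(k)}$ by $M^{-1/2}$, which turns $M^{-1/2}\widetilde r^{(k+1)}$ into $z^{(k+1)}$. An induction on $k$ then closes the equivalence. I will also remark that $M^{1/2}$ never appears in the final iteration, only solves with $M$.

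For the spectral claim, I will note the similarity $M^{-1}A=M^{-1/2}\bigl(M^{-1/2}AM^{-1/2}\bigr)M^{1/2}$, so the two matrices have the same spectrum. $\widetilde A$ is SPD as a congruence of $A$, so its eigenvalues are real and positive and $\kappa_2(\widetilde A)=\lambda_{\max}/\lambda_{\min}$. This ratio is what I mean by $\kappa(M^{-1}A)$. The CG convergence proposition applied to $\widetilde A$ bounds $\|\widetilde x^{(k)}-\widetilde x^\ast\|_{\widetilde A}$. Since $\|\widetilde e\|_{\widetilde A}=\|M^{1/2}e\|_{\widetilde A}=\|e\|_A$, the bound transfers verbatim to the original error in the $A$-norm.

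The main obstacle is mostly bookkeeping: choosing the right scaling for each variable. Residuals scale with $M^{1/2}$ while iterates and directions scale with $M^{-1/2}$, and the dictionary has to respect that. There is also an interpretive point to make explicit: $M^{-1}A$ is not symmetric, so "$\kappa(M^{-1}A)$" must be read as the eigenvalue ratio of the similar SPD matrix $\widetilde A$, not an operator-norm condition number.
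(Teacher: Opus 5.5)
Your proof is correct and follows the same route as the paper. Both use the change of variables $\widetilde x=M^{1/2}x$, $\widetilde r=M^{-1/2}r$, $\widetilde p=M^{1/2}p$, the identities $\widetilde r^{(k)\top}\widetilde r^{(k)}=\langle r^{(k)},z^{(k)}\rangle$ and $\widetilde p^{(k)\top}\widetilde A\widetilde p^{(k)}=\langle p^{(k)},Ap^{(k)}\rangle$ for the scalars, and a similarity argument for the spectrum. Your version is more complete than the paper's sketch: you also verify the vector updates, the initialisation $p^{(0)}=z^{(0)}$, and the norm transfer $\|\widetilde e\|_{\widetilde A}=\|e\|_A$. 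Your similarity $M^{-1}A=M^{-1/2}\widetilde A\,M^{1/2}$ is the correct form; the paper's displayed identity has the conjugating factors the wrong way round and is not a similarity as written.
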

\end{tcolorbox}

\begin{proof}[Proof sketch]
Set
\[
\widetilde x=M^{1/2}x,\qquad
\widetilde r=M^{-1/2}r,\qquad
\widetilde p=M^{1/2}p.
\]
Then standard CG on $\widetilde A=M^{-1/2}AM^{-1/2}$ gives
\[
\alpha_k
=
\frac{\langle \widetilde r^{(k)},\widetilde r^{(k)}\rangle}
{\langle \widetilde p^{(k)},\widetilde A\widetilde p^{(k)}\rangle}
=
\frac{\langle r^{(k)},M^{-1}r^{(k)}\rangle}
{\langle p^{(k)},Ap^{(k)}\rangle},
\]
which yields the stated recursion with $z^{(k)}=M^{-1}r^{(k)}$.
Similarly,
\[
\beta_{k+1}
=
\frac{\langle \widetilde r^{(k+1)},\widetilde r^{(k+1)}\rangle}
{\langle \widetilde r^{(k)},\widetilde r^{(k)}\rangle}
=
\frac{\langle r^{(k+1)},z^{(k+1)}\rangle}
{\langle r^{(k)},z^{(k)}\rangle}.
\]
Finally,
\[
M^{-1}A = M^{-1/2}(M^{-1/2}AM^{1/2})M^{-1/2}
\]
is similar to $M^{-1/2}AM^{-1/2}$, so they have the same eigenvalues.
\end{proof}

\begin{remark}
Preconditioning does not alter the exact solution $x^*$ but changes the trajectory of residual minimization. The effectiveness depends on how well $M$ approximates $A$ in the subspace explored by the Krylov method.
\end{remark}

\subsection{A general framework for preconditioners}

\paragraph{Jacobi preconditioning.}
Let
\(
A=D+L+U,
\)
where $D$ is the diagonal of $A$.
The \emph{Jacobi preconditioner} is
\[
M=D.
\]
It is extremely cheap to apply, fully parallel, and often removes poor scaling between coordinates.
Its main limitation is that it only uses diagonal information. 
\paragraph{Example: Jacobi for the 1D discrete Laplacian.}
Consider now the tridiagonal matrix arising from the standard finite-difference
discretisation of
\[
-u''(x)=f(x),\qquad x\in(0,1),\qquad u(0)=u(1)=0.
\]
With $m$ interior grid points and mesh size $h=1/(m+1)$, the linear system is
\[
A_h u = f,
\qquad
A_h=\frac1{h^2}\operatorname{tridiag}(-1,2,-1)\in\mathbb{R}^{m\times m}.
\]
The diagonal of $A_h$ is constant:
\(
D=\operatorname{diag}(A_h)=\frac{2}{h^2}I.
\)
Therefore the Jacobi-preconditioned operator is simply
\[
D^{-1}A_h=\frac{h^2}{2}A_h
=\frac12\,\operatorname{tridiag}(-1,2,-1).
\]
Jacobi therefore rescales every eigenvalue by the same constant factor, and cannot improve the
condition number $\kappa(A_h)\sim h^{-2}$, which grows quadratically as the mesh is refined.

\begin{takeaway}
Jacobi preconditioning is effective when the main difficulty is simple coordinate scaling,
but it cannot improve problems such as the 1D discrete Laplacian where the diagonal is already uniform.
In PDEs, the hard part is often not local scaling but global error propagation, which motivates Schwarz methods, coarse spaces, and multigrid. A more powerful algebraic alternative is the incomplete factorisation $A\approx\tilde L\tilde U$, in which fill-in arising outside a prescribed sparsity pattern is simply discarded \cite{meijerinkIterativeSolutionMethod1977}.
\end{takeaway}

\subsection{One framework for the whole chapter}\label{sec:subspacecorrection}

Before turning to better preconditioners it is worth noticing that many of them (and all those discussed in this lecture) are instances of a single construction. Let
$R_i\in\mathbb{R}^{n_i\times n}$ be a \emph{restriction} onto a subspace of the unknowns, let
$A_i=R_iAR_i^\top$ be the corresponding local matrix, and set
\begin{equation}\label{eq:subspace-correction}
M^{-1}=\sum_{i} R_i^\top A_i^{-1}R_i .
\end{equation}
Each term solves the problem \emph{exactly} on one subspace and ignores the rest; the preconditioner is
their sum. Applying $M^{-1}$ costs one small solve per subspace, and the solves are independent of one
another which is what makes the construction parallel.

Everything in this chapter is a choice of the subspaces in \eqref{eq:subspace-correction}.

\begin{center}
\renewcommand{\arraystretch}{1.25}
\begin{tabular}{|l|l|l|}
\hline
\textbf{Subspaces $R_i$} & \textbf{Preconditioner} & \textbf{What it cannot reach}\\
\hline
single unknowns & Jacobi (\S\ref{sec:basic}) & anything non-local\\
overlapping subdomains & additive Schwarz (\S\ref{sec:schwarz}) & global, slowly varying error\\
subdomains $+$ a coarse space & two-level, multigrid (\S\ref{sec:multigrid}) & ---\\
\hline
\end{tabular}
\end{center}

\noindent The table above is the plan of the chapter, and each row fixes the disadvantages of
the one above it. For example, a pure \emph{local} preconditioner handles local error well and global error not at
all (for example Jacobi in its simple and block variants or Schwarz preconditioners). To fix this, one should add a \emph{global} subspace to catch what the local solves miss.

\section{Schwarz Domain Decomposition}\label{sec:schwarz}

\noindent The algorithms of this section, their convergence theory, and their parallel implementation are treated at length in \cite{doleanIntroductionDomainDecomposition2015,toselliDomainDecompositionMethods2005}.

In the language of \eqref{eq:subspace-correction}, this section makes the first non-trivial choice of
subspaces: instead of correcting each unknown independently as Jacobi does, we take the $R_i$ to restrict
to \emph{overlapping subdomains} of the mesh, so that each local solve of $A_i^{-1}$ inverts the operator
exactly on a whole piece of the domain. The resulting preconditioner is \emph{additive Schwarz}, and the
question we must answer about it is the one posed in Section~\ref{sec:basic}: how does its convergence behave?

It is easiest to answer that question by first studying the corresponding \emph{stationary} iteration,
where the local solves are applied one after another. That iteration is of independent historical
interest and its convergence factor exposes the two
parameters that will govern the preconditioner as well: the overlap width, and the number of subdomains.

\paragraph{Model problem: 1D Laplace equation.}
We consider
\[
-u''(x)=f(x),\qquad x\in(0,1),\qquad u(0)=u(1)=0.
\]
We split the interval into two overlapping subdomains
\(\Omega_1=(0,\beta),\, \Omega_2=(\alpha,1),
\, 0<\alpha<\beta<1.
\)
The overlap is the interval $(\alpha,\beta)$.
\begin{figure}[h!]
\centering
\begin{tikzpicture}[scale=1.1]
  \draw[thick] (0,0)--(6,0);
  \node[below] at (0,0){$0$};
  \node[below] at (6,0){$1$};
  \draw[dashed, gray] (2.3,0.1)--(2.3,-0.1) node[below]{$\alpha$};
  \draw[dashed, gray] (3.8,0.1)--(3.8,-0.1) node[below]{$\beta$};
  \filldraw[fill=blue!10,draw=blue!70!black] (0,-0.15) rectangle (3.8,0.15);
  \filldraw[fill=green!10,draw=green!60!black] (2.3,-0.15) rectangle (6,0.15);
  \fill[orange!20,opacity=0.5] (2.3,-0.15) rectangle (3.8,0.15);
  \node[above,blue!70!black] at (1.5,0.2){$\Omega_1$};
  \node[above,green!60!black] at (4.8,0.2){$\Omega_2$};
  \node[above,orange!80!black] at (3.1,0.35){overlap};
\end{tikzpicture}
\caption{1D domain split into two overlapping subdomains.}
\end{figure}
The alternating Schwarz iteration is:
\[
\begin{array}{cc}
\left\{\begin{array}{rcl}
-(u_1^{(n)})''&=&f \text{ in }(0,\beta),\\
u_1^{(n)}(0)&=&0,\\
u_1^{(n)}(\beta)&=&u_2^{(n-1)}(\beta),
\end{array}\right.
&
\left\{\begin{array}{rcl}
-(u_2^{(n)})''&=&f \text{ in }(\alpha,1),\\
u_2^{(n)}(1)&=&0,\\
u_2^{(n)}(\alpha)&=&u_1^{(n)}(\alpha).
\end{array}\right.
\end{array}
\]
Each subproblem is solved exactly, using boundary data from the latest iterate on
the neighbouring subdomain.

\begin{tcolorbox}[colback=yellow!5!white,colframe=yellow!50!black]
\begin{proposition}[Contraction of alternating Schwarz in 1D]
\label{prop:schwarz-1d}
Let $u$ be the exact solution of the Poisson BVP and let $u_1^{(n)},u_2^{(n)}$ be the alternating Schwarz iterates on
\(
\Omega_1=(0,\beta)\) and \(\Omega_2=(\alpha,1)\). Define the errors
\(
e_1^{(n)}=u|_{\Omega_1}-u_1^{(n)},
\,
e_2^{(n)}=u|_{\Omega_2}-u_2^{(n)}.
\)
Then
\[
-(e_1^{(n)})''=0 \quad \text{in }(0,\beta),
\qquad
-(e_2^{(n)})''=0 \quad \text{in }(\alpha,1),
\]
and the interface errors satisfy
\[
|e_2^{(n)}(\beta)|
\le
\rho \, |e_2^{(n-1)}(\beta)|,
\qquad
\rho=\frac{\alpha(1-\beta)}{\beta(1-\alpha)}<1.
\]
Thus the alternating Schwarz method converges geometrically, and the convergence
improves as the overlap $\beta-\alpha$ increases.
\end{proposition}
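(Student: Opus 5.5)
Subtracting the subdomain problems from the exact equation $-u''=f$ restricted to $\Omega_1$ and $\Omega_2$ shows that the errors are harmonic: $-(e_1^{(n)})''=0$ on $(0,\beta)$ and $-(e_2^{(n)})''=0$ on $(\alpha,1)$. The boundary data of the errors are inherited from the iteration. Since $u(0)=u_1^{(n)}(0)=0$, we get $e_1^{(n)}(0)=0$, and the transmission condition gives $e_1^{(n)}(\beta)=e_2^{(n-1)}(\beta)$. Likewise $e_2^{(n)}(1)=0$ and $e_2^{(n)}(\alpha)=e_1^{(n)}(\alpha)$. This first step is immediate from linearity.

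In one dimension, harmonic functions are affine, so each error is determined completely by its two boundary values. From $e_1^{(n)}(0)=0$ we obtain $e_1^{(n)}(x)=e_2^{(n-1)}(\beta)\,x/\beta$, and evaluating at $x=\alpha$ gives
\[
e_1^{(n)}(\alpha)=\frac{\alpha}{\beta}\,e_2^{(n-1)}(\beta).
\]
Similarly, $e_2^{(n)}(1)=0$ forces $e_2^{(n)}(x)=e_1^{(n)}(\alpha)\,(1-x)/(1-\alpha)$, so
\[
e_2^{(n)}(\beta)=\frac{1-\beta}{1-\alpha}\,e_1^{(n)}(\alpha).
\]
Chaining the two relations yields the exact identity
\[
e_2^{(n)}(\beta)=\rho\,e_2^{(n-1)}(\beta),\qquad \rho=\frac{\alpha(1-\beta)}{\beta(1-\alpha)},
\]
which in particular gives the claimed inequality after taking absolute values.

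It remains to check that $\rho<1$ and that $\rho$ improves with overlap. Because $0<\alpha<\beta<1$, we have $\alpha/\beta<1$ and $(1-\beta)/(1-\alpha)<1$, so the product of these two factors is less than $1$. Each factor decreases when $\beta$ grows or $\alpha$ shrinks, so $\rho$ decreases as the overlap $\beta-\alpha$ increases. Iterating the identity gives $|e_2^{(n)}(\beta)|=\rho^n|e_2^{(0)}(\beta)|$. The errors on the whole subdomains are affine with endpoint values controlled by this interface value, so they converge geometrically in the maximum norm as well.

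No step poses a real obstacle. The only point requiring care is bookkeeping: the error of the second subproblem must be matched to the previous iterate $n-1$, and the sign convention $e=u-u_i$ must be kept consistent throughout. The main thing to emphasise is that the contraction is in fact an equality, owing to the affine structure of harmonic functions in one dimension.
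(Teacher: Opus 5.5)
Your proof is correct and follows the paper's argument essentially line for line: the errors are harmonic with the inherited boundary data, they have the explicit affine forms $e_1^{(n)}(x)=\frac{x}{\beta}\,e_2^{(n-1)}(\beta)$ and $e_2^{(n)}(x)=\frac{1-x}{1-\alpha}\,e_1^{(n)}(\alpha)$, and chaining them at $x=\alpha$ and $x=\beta$ gives the exact identity $e_2^{(n)}(\beta)=\rho\,e_2^{(n-1)}(\beta)$ with $\rho<1$. Your extra checks fill in points the paper only asserts: the monotonicity of $\rho$ as $\beta$ grows or $\alpha$ shrinks (the right reading of ``improves with overlap'', since $\rho$ is not a function of $\beta-\alpha$ alone), and the max-norm convergence of the full subdomain errors.
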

\end{tcolorbox}

\begin{proof}
The iterates and the exact solution satisfy the same equation with different boundary data, so each error
is harmonic on its subdomain: $-(e_i^{(n)})''=0$, with $e_1^{(n)}(0)=0$, $e_1^{(n)}(\beta)=e_2^{(n-1)}(\beta)$
and $e_2^{(n)}(1)=0$, $e_2^{(n)}(\alpha)=e_1^{(n)}(\alpha)$. In one dimension, a harmonic function is
affine, so each error is determined by its two boundary values:
\[
e_1^{(n)}(x)=\frac{x}{\beta}\,e_2^{(n-1)}(\beta),
\qquad
e_2^{(n)}(x)=\frac{1-x}{1-\alpha}\,e_1^{(n)}(\alpha).
\]
Evaluating the first at $x=\alpha$ and substituting into the second at $x=\beta$ gives
$e_2^{(n)}(\beta)=\rho\,e_2^{(n-1)}(\beta)$ with $\rho=\frac{\alpha(1-\beta)}{\beta(1-\alpha)}$, and
$0<\alpha<\beta<1$ forces $\rho<1$.
\end{proof}

\begin{remark}
The formula makes the role of the overlap explicit: as $\beta-\alpha\to0$ we get $\rho\to1$ and
convergence stalls, while a wider overlap decreases $\rho$. Overlap is the
mechanism by which information is shared by the neighbouring subdomains.
\end{remark}

\begin{figure}[h!]
\centering
\begin{tikzpicture}
\begin{semilogyaxis}[
width=0.75\textwidth,
height=6cm,
xlabel={Schwarz iteration $n$},
ylabel={interface error magnitude},
xmin=0, xmax=10,
ymin=1e-4, ymax=1,
grid=both,
legend style={at={(0.5,1.02)},anchor=south,draw=none,legend columns=3,font=\small},
ticklabel style={font=\small},
label style={font=\small},
]
\addplot+[mark=o] coordinates {
(0,1) (1,0.70) (2,0.49) (3,0.343) (4,0.2401) (5,0.1681) (6,0.1176)
};
\addlegendentry{small overlap ($\rho\approx 0.7$)}

\addplot+[mark=square*] coordinates {
(0,1) (1,0.40) (2,0.16) (3,0.064) (4,0.0256) (5,0.01024) (6,0.0041)
};
\addlegendentry{medium overlap ($\rho\approx 0.4$)}

\addplot+[mark=triangle*] coordinates {
(0,1) (1,0.20) (2,0.04) (3,0.008) (4,0.0016) (5,0.00032)
};
\addlegendentry{large overlap ($\rho\approx 0.2$)}
\end{semilogyaxis}
\end{tikzpicture}
\caption{Alternating Schwarz converges geometrically in 1D. The contraction factor improves when the overlap increases.}
\end{figure}

\paragraph{Discrete formulation.}
Discretising with $m=9$ interior points and taking $\beta=7h$, $\alpha=4h$ gives local matrices
$A_1\in\mathbb{R}^{6\times6}$ and $A_2\in\mathbb{R}^{5\times5}$, each the 1D Laplacian on its own
subdomain. The iteration becomes
\[
A_1u_1^{(n)}=f_1+\tilde B_{12}u_2^{(n-1)},\qquad
A_2u_2^{(n)}=f_2+\tilde B_{21}u_1^{(n)},
\]
where $\tilde B_{12},\tilde B_{21}$ inject the interface values from one subdomain into the other
(Dirichlet transmission). Note that only $A_1$ and $A_2$ are factorised: the global matrix is never
formed. This makes the method efficient at a larger scale.

\noindent The discrete contraction factor inherits the continuous one: the iteration converges
geometrically at a rate set by the overlap and, crucially, \emph{independently of $h$} such that refining the
mesh does not slow it down, because the overlap is a fixed fraction of the domain.

\begin{takeaway}
The alternating Schwarz iteration quickly damps high-frequency errors and converges independently of the mesh size.  
It provides a prototype for overlapping domain decomposition preconditioners used in large-scale PDE solvers.
\end{takeaway}

\paragraph{From subdomain solves to a preconditioner.}
The discrete Schwarz viewpoint becomes clearer if we start from the global residual
\(
r^{(n)} = f - A u^{(n)}.
\)  
Instead of trying to correct all unknowns at once, we restrict this residual to each subdomain and solve a \emph{local correction problem} there. Let $R_i$ be the restriction from the global vector to the unknowns belonging to
subdomain $\Omega_i$, and let
\[
A_i = R_i A R_i^T
\]
be the local stiffness matrix on that subdomain.
The local correction $\delta u_i$ is defined by
\[
A_i \,\delta u_i = R_i r^{(n)},
\qquad\text{so that}\qquad
\delta u_i = A_i^{-1} R_i r^{(n)}.
\]
This means that on each subdomain we solve exactly the local error equation driven by the
restricted residual. The correction is then prolongated back to the global space by $R_i^T$.
If the subdomains overlap, several local corrections contribute to the same global node,
so we blend them with diagonal weights $D_i$ satisfying the partition-of-unity condition
\[
\sum_i R_i^T D_i R_i = I.
\]
This leads to the global correction
\[
u^{(n+1)}
=
u^{(n)} + \sum_i R_i^T D_i \delta u_i
=
u^{(n)} + \sum_i R_i^T D_i A_i^{-1} R_i\, r^{(n)}.
\]

\paragraph{Additive Schwarz preconditioner.}
The operator
\[
M_{\mathrm{AS}}^{-1}
=
\sum_i R_i^T D_i A_i^{-1} R_i
\]
is called the \emph{additive Schwarz preconditioner}.
With this notation, one step of preconditioned Richardson iteration reads
\[
u^{(n+1)} = u^{(n)} + M_{\mathrm{AS}}^{-1}(f-Au^{(n)}).
\]
So the preconditioner acts as an approximate inverse of $A$ obtained by:
\begin{enumerate}[leftmargin=*]
  \item restricting the residual to each subdomain,
  \item solving local problems there,
  \item prolongating the local corrections,
  \item blending the overlap contributions.
\end{enumerate}

\begin{remark}
This additive preconditioner is closely related to the alternating Schwarz iteration,
but it is not exactly the same algorithm.
The alternating method is \emph{multiplicative}: the second subdomain uses the updated
information coming from the first.
By contrast, additive Schwarz computes all local corrections from the same global residual
and sums them together.
The additive form is especially convenient as a preconditioner inside Krylov methods .
\end{remark}

\begin{keyidea}
Schwarz preconditioning improves on Jacobi by replacing \emph{pointwise} updates with
\emph{subdomain solves}.
Instead of using only diagonal information, it inverts the operator locally on each
subdomain.
This captures nearest-neighbour couplings much better and yields a significantly stronger
preconditioner for discretised PDEs.
\end{keyidea}

\paragraph{Example of a decomposition.}
To make the link with the preconditioned formulation explicit, consider again $m=9$ interior nodes
and the decomposition
\[
\Omega_1 = (0,7h), \qquad \Omega_2 = (4h,1),
\]
which gives an overlap of three grid points ($x_5,x_6,x_7$).  
The global unknown vector is
\(
u = (u_1,u_2,\ldots,u_9)^\top,
\)
and the local vectors on each subdomain are
\[
u_1 = (u_1,\ldots,u_6)^\top, \qquad
u_2 = (u_5,\ldots,u_9)^\top.
\]

The restriction matrices $R_1\in\mathbb{R}^{6\times9}$ and $R_2\in\mathbb{R}^{5\times9}$ are the
corresponding rows of the identity: $R_1=[\,I_6\ \ 0\,]$ picks out $u_1,\dots,u_6$, and
$R_2=[\,0\ \ I_5\,]$ picks out $u_5,\dots,u_9$. They are never stored as matrices since applying $R_i$ is
an indexing operation.

Each restriction operator extracts local unknowns,
and its transpose \(R_i^T\) prolongates the local corrections into the global vector.
In the overlap region (nodes $x_5,x_6,x_7$), contributions from both subdomains are combined
using a discrete \emph{partition of unity}:
\[
D_1 = \mathrm{diag}(1,1,1,1,\tfrac{1}{2},\tfrac{1}{2},\tfrac{1}{2},0,0), \qquad
D_2 = \mathrm{diag}(0,0,0,0,\tfrac{1}{2},\tfrac{1}{2},\tfrac{1}{2},1,1).
\]
These satisfy
\[
R_1^T D_1 R_1 + R_2^T D_2 R_2 = I,
\]
ensuring that the overlapping corrections are weighted consistently and the global operator
remains a proper approximation of $A^{-1}$. The additive Schwarz preconditioner then reads
\(
M^{-1} = R_1^T D_1 A_1^{-1} R_1 + R_2^T D_2 A_2^{-1} R_2,
\)
which applies local solves and blends them smoothly through the partition weights.

\begin{figure}[t]
\centering
\begin{tikzpicture}[x=0.9cm,y=0.9cm]

  \draw[->] (0,0) -- (9.7,0) node[right] {$x$};
  \foreach \k in {1,...,9}{
    \draw (\k,0.08) -- (\k,-0.08);
    \node[below] at (\k,-0.1) {\scriptsize $x_{\k}$};
  }

  \draw[dashed,gray] (4,-1.0) -- (4,-3) node[below,gray] {$\alpha=4h$};
  \draw[dashed,gray] (7,-1.0) -- (7,-3) node[below,gray] {$\beta=7h$};

  \node[above] at (2.2,0.6) {\small $\Omega_1$};
  \node[above] at (8.0,0.6) {\small $\Omega_2$};

  \node[left] at (-0.2,  0.35) {\small $D_1$};
  \node[left] at (-0.2, -1.15) {\small $D_2$};

  \fill[blue!20] (0,0.1) rectangle (4,0.6);
  \draw[blue!60!black] (0,0.1) rectangle (4,0.6);
  \node at (2,0.35) {\scriptsize 1};
  \fill[blue!10] (4,0.1) rectangle (7,0.6);
  \draw[blue!60!black] (4,0.1) rectangle (7,0.6);
  \node at (5.5,0.35) {\scriptsize $\tfrac{1}{2}$};
  \draw[gray!50] (7,0.1) rectangle (9,0.6);

  \draw[gray!50] (0,-1.4) rectangle (4,-0.9);
  \fill[green!10] (4,-1.4) rectangle (7,-0.9);
  \draw[green!60!black] (4,-1.4) rectangle (7,-0.9);
  \node at (5.5,-1.15) {\scriptsize $\tfrac{1}{2}$};
  \fill[green!20] (7,-1.4) rectangle (9,-0.9);
  \draw[green!60!black] (7,-1.4) rectangle (9,-0.9);
  \node at (8,-1.15) {\scriptsize 1};

  \begin{scope}[shift={(6.2,-1.8)}]
    \draw (0,0) rectangle +(0.35,0.25); \fill[blue!20] (0,0) rectangle +(0.35,0.25);
    \node[right] at (0.45,0.12) {\scriptsize $D_1=1$};
    \draw (0,-0.4) rectangle +(0.35,0.25); \fill[blue!10] (0,-0.4) rectangle +(0.35,0.25);
    \node[right] at (0.45,-0.28) {\scriptsize $D_1=\tfrac{1}{2}$};
    \draw (2.2,0) rectangle +(0.35,0.25); \fill[green!20] (2.2,0) rectangle +(0.35,0.25);
    \node[right] at (2.6,0.12) {\scriptsize $D_2=1$};
    \draw (2.2,-0.4) rectangle +(0.35,0.25); \fill[green!10] (2.2,-0.4) rectangle +(0.35,0.25);
    \node[right] at (2.6,-0.28) {\scriptsize $D_2=\tfrac{1}{2}$};
  \end{scope}

\end{tikzpicture}
\caption{Partition of unity in the overlap: $D_1$ (top) and $D_2$ (bottom). On the overlap, $D_1=D_2=\tfrac{1}{2}$ and outside the overlap they sum to one: $R_1^T D_1 R_1 + R_2^T D_2 R_2 = I$.}
\end{figure}

\section{Two-Level Preconditioning and Multigrid Ideas}\label{sec:multigrid}

In order to understand the mechanism of error reduction or smoothing and then the need of a coarse space correction let us consider the discretisation of the 1D Poisson problem. The eigenvectors of \(A_h\) are the discrete sine modes
\[
v_j(\ell)=\sin\!\Bigl(\frac{j\pi \ell}{m+1}\Bigr),
\qquad \ell=1,\dots,m,\quad j=1,\dots,m,
\]
with eigenvalues
\[
\lambda_j(A_h)=\frac{4}{h^2}\sin^2\!\Bigl(\frac{j\pi}{2(m+1)}\Bigr).
\]
Small values of \(j\) correspond to smooth, slowly varying modes, whereas large
values of \(j\) correspond to oscillatory modes.

\begin{figure}[h!]
\centering
\begin{tikzpicture}[scale=1.1]
\begin{scope}
\draw[gray!50] (0,0)--(5.2,0);
\foreach \i in {0,...,10}{\fill (0.5*\i,0) circle (1.1pt);}
\draw[very thick,TUblue]
plot[smooth] coordinates
{(0,0.05) (0.5,0.25) (1,0.45) (1.5,0.62) (2,0.74) (2.5,0.78)
 (3,0.74) (3.5,0.62) (4,0.45) (4.5,0.25) (5,0.05)};
\node at (2.5,-0.65) {\small smooth mode};
\node at (2.5,-1.05) {\small $\sin\!\bigl(\frac{\pi j}{m+1}\bigr)$, small \(j\)};
\end{scope}

\begin{scope}[xshift=7cm]
\draw[gray!50] (0,0)--(5.2,0);
\foreach \i in {0,...,10}{\fill (0.5*\i,0) circle (1.1pt);}
\draw[very thick,red!70!black]
plot[smooth] coordinates
{(0,0.05) (0.5,0.75) (1,-0.72) (1.5,0.78) (2,-0.76) (2.5,0.75)
 (3,-0.74) (3.5,0.78) (4,-0.72) (4.5,0.75) (5,0.05)};
\node at (2.5,-0.9) {\small oscillatory mode};
\node at (2.5,-1.3) {\small $\sin\!\bigl(\frac{\pi j}{m+1}\bigr)$, large \(j\)};
\end{scope}
\end{tikzpicture}
\caption{Low-frequency and high-frequency error modes on a 1D grid.}
\end{figure}

Consider the Jacobi iteration
\[
u^{(k+1)} = u^{(k)} + D^{-1}(f-A_hu^{(k)}),
\qquad D=\frac{2}{h^2}I.
\]
If \(e^{(k)}=u-u^{(k)}\) denotes the error, then
\[
e^{(k+1)} = T_J e^{(k)},
\qquad
T_J = I-D^{-1}A_h.
\]

\begin{tcolorbox}[colback=yellow!5!white,colframe=yellow!50!black]
\begin{proposition}[Mode-by-mode damping of Jacobi]
\label{prop:jacobi-mode-damping}
For the 1D Laplacian, each sine mode \(v_j\) is an eigenvector of the Jacobi
error propagator \(T_J\), with eigenvalue
\[
\mu_j
=
1-\frac{\lambda_j(A_h)}{2/h^2}
=
1-2\sin^2\!\Bigl(\frac{j\pi}{2(m+1)}\Bigr)
=
\cos\!\Bigl(\frac{j\pi}{m+1}\Bigr).
\]
Hence, if
\[
e^{(0)}=\sum_{j=1}^m c_j v_j,
\]
then after \(k\) Jacobi steps
\[
e^{(k)}=\sum_{j=1}^m c_j\,\mu_j^k\,v_j.
\]
\end{proposition}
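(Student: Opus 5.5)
The plan is to reduce everything to the eigenpairs of $A_h$ and then use linearity of the error propagator. First I verify directly that the sine vector $v_j$ is an eigenvector of $\operatorname{tridiag}(-1,2,-1)$. For an interior index $\ell$ I apply the identity $\sin((\ell-1)\theta)+\sin((\ell+1)\theta)=2\cos\theta\,\sin(\ell\theta)$ with $\theta=\theta_j=j\pi/(m+1)$. This gives
\[
-v_j(\ell-1)+2v_j(\ell)-v_j(\ell+1)=(2-2\cos\theta_j)\,v_j(\ell).
\]
The boundary rows $\ell=1$ and $\ell=m$ are covered by the same identity once I set $v_j(0)=\sin 0=0$ and $v_j(m+1)=\sin(j\pi)=0$. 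This is the main point to get right, since the Dirichlet conditions are exactly what make the truncated stencil consistent. Hence $A_hv_j=\lambda_j(A_h)v_j$ with
\[
\lambda_j(A_h)=\frac{2-2\cos\theta_j}{h^2}=\frac{4}{h^2}\sin^2\!\Bigl(\frac{\theta_j}{2}\Bigr),
\]
which agrees with the eigenvalue formula recalled earlier in the paper (and in Chapter~3, \eqref{eq:lambda}).

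Next I exploit $D=\frac{2}{h^2}I$. Because $D$ is a multiple of the identity, $T_J=I-\frac{h^2}{2}A_h$ is a polynomial in $A_h$ and shares its eigenvectors. Therefore
\[
T_Jv_j=\Bigl(1-\frac{h^2}{2}\lambda_j(A_h)\Bigr)v_j=\Bigl(1-2\sin^2\!\frac{\theta_j}{2}\Bigr)v_j=\cos(\theta_j)\,v_j,
\]
using the double-angle formula. This yields all three stated expressions for $\mu_j$. The same computation already appeared for $G_J$ in Section~\ref{sec:BIMPDE}, since $T_J=G_J$ for this matrix.

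Finally, the vectors $v_j$ for $j=1,\dots,m$ correspond to distinct eigenvalues of the symmetric matrix $A_h$, because $\cos\theta_j$ is strictly decreasing on $(0,\pi)$. They are therefore orthogonal, hence linearly independent, and form a basis. So every $e^{(0)}$ admits an expansion $\sum_j c_jv_j$. From $e^{(k+1)}=T_Je^{(k)}$, induction on $k$ together with linearity of $T_J$ gives
\[
e^{(k)}=T_J^k e^{(0)}=\sum_{j=1}^m c_j\,T_J^k v_j=\sum_{j=1}^m c_j\,\mu_j^k\,v_j .
\]
None of the steps is deep. The only care needed is the boundary-row check in the first step and the basis claim, which I justify through symmetry and distinctness of the eigenvalues.
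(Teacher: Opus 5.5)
Your proof is correct and follows the same route as the paper: the eigenpairs of $A_h$, the observation that $D=\frac{2}{h^2}I$ gives $T_J=I-\frac{h^2}{2}A_h$, the identity $1-2\sin^2(\theta/2)=\cos\theta$, and diagonalisation in the sine basis. The only difference is that you verify the sine eigenpairs (including the boundary rows) and the basis property explicitly, where the paper takes them as known; for completeness, also note that $v_j\neq 0$, since $v_j(1)=\sin\theta_j>0$.
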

\end{tcolorbox}

\begin{proof}
Since the \(v_j\) are eigenvectors of \(A_h\),
\[
A_h v_j=\lambda_j(A_h)v_j.
\]
Therefore
\[
T_J v_j
=
\Bigl(I-D^{-1}A_h\Bigr)v_j
=
\Bigl(1-\frac{\lambda_j(A_h)}{2/h^2}\Bigr)v_j.
\]
Using
\[
\lambda_j(A_h)=\frac{4}{h^2}\sin^2\!\Bigl(\frac{j\pi}{2(m+1)}\Bigr),
\]
we obtain
\[
\mu_j
=
1-2\sin^2\!\Bigl(\frac{j\pi}{2(m+1)}\Bigr).
\]
The trigonometric identity \(1-2\sin^2(\theta)=\cos(2\theta)\) gives
\[
\mu_j=\cos\!\Bigl(\frac{j\pi}{m+1}\Bigr).
\]
The expansion of \(e^{(k)}\) follows by diagonalising the iteration on the basis
\(\{v_j\}_{j=1}^m\).
\end{proof}

\begin{remark}
This formula shows explicitly why Jacobi behaves differently on different scales:
\[
j\ \text{small}
\quad\Longrightarrow\quad
|\mu_j|\approx 1
\qquad\text{(slow damping)},
\]
whereas
\[
j\ \text{large}
\quad\Longrightarrow\quad
|\mu_j|\ \text{is much smaller}
\qquad\text{(fast damping)}.
\]
So Jacobi is not a good solver by itself, but it is a good smoother.
\end{remark}

\subsection{Weighted Jacobi and the smoothing factor}\label{sec:smoothing}

A common variant is \emph{weighted Jacobi}:
\[
u^{(k+1)} = u^{(k)} + \omega D^{-1}(f-A_hu^{(k)}).
\]
Its error propagator is
\[
T_{J,\omega}=I-\omega D^{-1}A_h.
\]
For the sine mode \(v_j\),
\[
T_{J,\omega}v_j=\mu_j^{(\omega)}v_j,
\qquad
\mu_j^{(\omega)}
=
1-2\omega\sin^2\!\Bigl(\frac{j\pi}{2(m+1)}\Bigr).
\]
To study smoothing, we focus on the upper half of the spectrum,
\(
\frac{m+1}{2}\le j\le m,
\)
which corresponds to oscillatory modes. For these indices,
\[
\sin^2\!\Bigl(\frac{j\pi}{2(m+1)}\Bigr)\in\Bigl[\frac12,1\Bigr],
\]
hence
\[
\mu_j^{(\omega)}\in[\,1-2\omega,\;1-\omega\,].
\]
So the worst-case damping factor on the high-frequency range is
\[
\rho_{\mathrm{high}}(\omega)
=
\max\bigl(|1-2\omega|,\ |1-\omega|\bigr).
\]

\begin{tcolorbox}[colback=yellow!5!white,colframe=yellow!50!black]
\begin{proposition}[Optimal weighted Jacobi parameter for smoothing]
\label{prop:weighted-jacobi-smoothing}
For the 1D Laplacian, the value of \(\omega\) that minimizes the worst-case high-frequency damping factor is
\(
\omega=\frac23.
\)
For this choice,
\(
\rho_{\mathrm{high}}=\frac13.
\)
Thus every high-frequency mode is reduced by at least a factor \(1/3\) per step.
\end{proposition}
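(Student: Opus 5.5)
The plan is to reduce the statement to a one-variable minimax problem, using the computation made just before the proposition. For the sine modes, the weighted Jacobi propagator has eigenvalues $\mu_j^{(\omega)}=1-2\omega s_j$ with $s_j=\sin^2\!\bigl(\tfrac{j\pi}{2(m+1)}\bigr)$. On the high-frequency range $\tfrac{m+1}{2}\le j\le m$ we have $s_j\in[\tfrac12,1]$. Because $\mu^{(\omega)}$ is affine in $s$, its absolute value over $s\in[\tfrac12,1]$ is maximised at an endpoint. This gives the quantity already written down, $\rho_{\mathrm{high}}(\omega)=\max\bigl(|1-2\omega|,|1-\omega|\bigr)$, which I will treat as a function of $\omega>0$ to be minimised. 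In keeping with the smoothing heuristic of the text, I will work with the continuous range $s\in[\tfrac12,1]$ rather than the finite set of $s_j$, so the bound is uniform in $m$.

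First I minimise $\rho_{\mathrm{high}}$. Let $g_1(\omega)=|1-\omega|$ and $g_2(\omega)=|1-2\omega|$; both are convex and piecewise linear. For $\omega\le\tfrac12$ both are decreasing, so the maximum is decreasing and the minimiser satisfies $\omega\ge\tfrac12$. For $\omega\ge 1$ both are increasing, so we may restrict to $\omega\in[\tfrac12,1]$. On that interval $g_1=1-\omega$ is decreasing and $g_2=2\omega-1$ is increasing. The maximum of a decreasing and an increasing function is minimised where they cross, namely where $1-\omega=2\omega-1$, that is $\omega=\tfrac23$. The common value there is $\tfrac13$. As a check, for $\omega<\tfrac23$ the value $1-\omega$ exceeds $\tfrac13$, and for $\omega>\tfrac23$ the value $2\omega-1$ exceeds $\tfrac13$. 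So $\omega=\tfrac23$ is the unique minimiser and $\rho_{\mathrm{high}}=\tfrac13$.

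Finally I translate the bound back to modes. With $\omega=\tfrac23$, every $\mu_j^{(2/3)}$ with $j$ in the high range lies in $[-\tfrac13,\tfrac13]$. Since the $v_j$ are eigenvectors of $T_{J,\omega}$, the expansion used in Proposition~\ref{prop:jacobi-mode-damping} shows that each high-frequency coefficient is multiplied by a factor of modulus at most $\tfrac13$ per step.

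The only point needing care is the endpoint reduction, i.e.\ justifying that the worst case over the high range is attained at $s=\tfrac12$ or $s=1$. This follows from the affinity of $\mu$ in $s$. I would also remark that the minimisation is taken over the continuous interval, so for finite $m$ the actual discrete worst case is at most $\tfrac13$.
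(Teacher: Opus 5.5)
Your proposal is correct and follows the paper's route: reduce to $\rho_{\mathrm{high}}(\omega)=\max(|1-2\omega|,|1-\omega|)$, then balance the two endpoint magnitudes to get $\omega=\tfrac23$ and $\rho_{\mathrm{high}}=\tfrac13$. You also justify two points the paper leaves implicit, namely why the minimiser must lie in $[\tfrac12,1]$ and why it is unique, which makes your argument slightly more complete than the paper's.
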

\end{tcolorbox}

\begin{proof}
We minimize
\[
\rho_{\mathrm{high}}(\omega)=\max(|1-2\omega|,|1-\omega|).
\]
The optimum is obtained by balancing the two endpoint magnitudes:
\[
|1-2\omega|=|1-\omega|.
\]
For \(\omega\in(1/2,1)\), this becomes
\(
2\omega-1=1-\omega,
\)
hence
\(
\omega=\frac23.
\)
Then
\(
\rho_{\mathrm{high}}=\frac13.
\)
\end{proof}

\begin{takeaway}
Weighted Jacobi with \(\omega=2/3\) is not a scalable solver, but it is an effective
smoother: it damps oscillatory components uniformly while leaving smooth components
for the coarse grid to remove.
\end{takeaway}
After several relaxation steps, the remaining error typically has the form
\[
e = e_{\mathrm{smooth}} + e_{\mathrm{osc}},
\qquad
\|e_{\mathrm{osc}}\|\ll \|e_{\mathrm{smooth}}\|.
\]
That is, the oscillatory part has already been reduced, while the smooth part remains.




The key multigrid observation is that a smooth error on the fine grid may look much
more oscillatory on a coarser grid.
This makes it easier to represent and eliminate there.

\begin{keyidea}
A good solver for an elliptic problem must act on two scales at once:
\[
\text{local smoothing}\qquad+\qquad\text{global coarse correction}.
\]
\end{keyidea}

\subsection{Coarse-grid correction}\label{sec:coarse}

Section~\ref{sec:smoothing} left us with an iterate whose error is \emph{smooth}: the smoother has
removed the oscillatory components and stalls on the rest. What remains satisfies the error equation
\[
Ae=r,\qquad r=f-Au,\qquad e=u^\ast-u ,
\]
and solving that on the fine grid is of course just the original problem again. The way out rests on a
single observation: \emph{a smooth vector is cheap to represent}. It is determined, to good accuracy, by
its values at half of the grid points, the rest being recovered by interpolation so the error
equation may be solved in a space of half the dimension.

\begin{figure}[htbp]
\centering
\begin{tikzpicture}[>=stealth,scale=0.98]
  \draw[gray!45] (0,-1.75) -- (9,-1.75);
  \draw[gray!35] (0,0) -- (9,0);
  \foreach \i in {0,...,32}{\fill[gray!55] (0.28125*\i,-1.75) circle (0.9pt);}
  \foreach \i in {0,...,16}{\fill[TUblue!70!black] (0.5625*\i,-1.75) circle (1.9pt);}
  \node[font=\scriptsize,anchor=north] at (1.3,-1.95) {fine nodes};
  \node[font=\scriptsize,anchor=north,TUblue!70!black] at (4.6,-1.95) {coarse nodes ($H=2h$)};
  \draw[gray!70,thin] plot coordinates {(0.000,0.000)(0.281,0.124)(0.562,-0.343)(0.844,-0.271)(1.125,-1.600)(1.406,1.179)(1.688,0.750)(1.969,-0.213)(2.250,0.507)(2.531,0.184)(2.812,-0.363)(3.094,0.641)(3.375,-0.204)(3.656,-0.215)(3.938,-0.519)(4.219,0.298)(4.500,-0.065)(4.781,0.357)(5.062,-0.398)(5.344,0.083)(5.625,-0.585)(5.906,0.551)(6.188,0.123)(6.469,0.217)(6.750,0.269)(7.031,-0.662)(7.312,0.513)(7.594,1.348)(7.875,-1.074)(8.156,-1.133)(8.438,-0.986)(8.719,0.551)(9.000,0.000)};
  \draw[TUblue,very thick] plot coordinates {(0.000,0.000)(0.281,-0.533)(0.562,-0.895)(0.844,-0.913)(1.125,-0.481)(1.406,0.195)(1.688,0.792)(1.969,1.032)(2.250,0.953)(2.531,0.699)(2.812,0.422)(3.094,0.127)(3.375,-0.143)(3.656,-0.334)(3.938,-0.342)(4.219,-0.233)(4.500,-0.107)(4.781,-0.108)(5.062,-0.162)(5.344,-0.168)(5.625,-0.004)(5.906,0.235)(6.188,0.441)(6.469,0.529)(6.750,0.563)(7.031,0.523)(7.312,0.281)(7.594,-0.303)(7.875,-1.058)(8.156,-1.600)(8.438,-1.567)(8.719,-0.955)(9.000,0.000)};
  \draw[orange!85!black,thick,densely dashed] plot coordinates {(0.000,0.000)(0.562,-0.895)(1.125,-0.481)(1.688,0.792)(2.250,0.953)(2.812,0.422)(3.375,-0.143)(3.938,-0.342)(4.500,-0.107)(5.062,-0.162)(5.625,-0.004)(6.188,0.441)(6.750,0.563)(7.312,0.281)(7.875,-1.058)(8.438,-1.567)(9.000,0.000)};
  \foreach \xx/\yy in {0.000/0.000,0.562/-0.895,1.125/-0.481,1.688/0.792,2.250/0.953,2.812/0.422,3.375/-0.143,3.938/-0.342,4.500/-0.107,5.062/-0.162,5.625/-0.004,6.188/0.441,6.750/0.563,7.312/0.281,7.875/-1.058,8.438/-1.567,9.000/0.000}{\fill[orange!85!black] (\xx,\yy) circle (1.6pt);}
  \draw[gray!70,thin] (0.15,2.35) -- (0.75,2.35);
  \node[gray!80,font=\scriptsize,anchor=west] at (0.85,2.35) {initial error};
  \draw[TUblue,very thick] (3.05,2.35) -- (3.65,2.35);
  \node[TUblue,font=\scriptsize,anchor=west] at (3.75,2.35) {after 5 smoothing sweeps};
  \draw[orange!85!black,thick,densely dashed] (0.15,1.95) -- (0.75,1.95);
  \node[orange!85!black,font=\scriptsize,anchor=west] at (0.85,1.95) {piecewise-linear interpolant of its coarse-node values};
\end{tikzpicture}
\caption{Why a coarse space can carry the correction. The thin grey curve is a random initial error on a
grid with $m=31$; the thick blue curve is the same error after five weighted-Jacobi sweeps
($\omega=\tfrac23$); the dashed orange curve is the piecewise-linear interpolant through its values at
the $15$ coarse nodes only.}
\label{fig:coarse-grid}
\end{figure}

Formally, let $P:\mathbb{R}^{n_c}\to\mathbb{R}^{n}$ be the \emph{prolongation} (linear interpolation from
the coarse grid), let $R:\mathbb{R}^{n}\to\mathbb{R}^{n_c}$ be the \emph{restriction}, and define the
coarse operator by
\[
A_H=RAP .
\]
Taking $R\propto P^\top$ makes $A_H$ the Galerkin projection of $A$ onto the coarse space; for the 1D
Laplacian with full weighting $R=\tfrac12P^\top$ it coincides exactly with the direct discretisation on
the coarse grid (Problem~4). The correction then reads: restrict the residual, solve $A_He_H=Rr$, and
prolongate, $u\leftarrow u+Pe_H$.

\begin{tcolorbox}[colback=softred,colframe=red!50!black]
\textbf{Coarse-grid correction cannot be an iterative method on its own.} Its error propagation
operator is
\[
T=I-PA_H^{-1}RA ,
\]
and a short computation using $A_H=RAP$ shows that $T^2=T$ (Problem~5). A projection has eigenvalues $0$
and $1$ only, so $\rho(T)=1$: whatever lies in $\ker T$ is annihilated in one step, and whatever does not
is left \emph{completely untouched}, forever. Coarse-grid correction removes the part of the error that
the coarse space can see, and by construction can do nothing about the rest.
\end{tcolorbox}

\noindent This is the precise sense in which the two ingredients are complementary, and it is why neither
is optional. The smoother cannot touch smooth error; the coarse correction cannot touch anything outside
its range. Each ingredient is useless on its own; the next subsection shows that together they are optimal.

\subsection{From coarse correction to the two-grid method}

\begin{figure}[htbp]
\centering
\begin{tikzpicture}[>=stealth,scale=1.2]
\foreach \y/\lab in {2.4/{$\Omega_h$}, 1.2/{$\Omega_{2h}$}, 0/{$\Omega_{4h}$}}{
  \draw[gray!35,dashed] (-0.3,\y) -- (8.6,\y);
  \node[font=\scriptsize,anchor=east] at (-0.4,\y) {\lab};}
\begin{scope}
  \node[font=\small,anchor=south] at (1.55,3.05) {two-grid};
  \fill[TUblue] (0.5,2.4) circle (2pt); \fill[TUblue] (2.6,2.4) circle (2pt);
  \fill[TUblue] (1.55,1.2) circle (2pt);
  \draw[->,thick,TUblue] (0.5,2.4) -- (1.55,1.2);
  \draw[->,thick,TUblue] (1.55,1.2) -- (2.6,2.4);
  \node[font=\scriptsize,anchor=east] at (0.95,1.85) {$R$};
  \node[font=\scriptsize,anchor=west] at (2.15,1.85) {$P$};
  \node[font=\scriptsize,anchor=south] at (0.5,2.5) {$S^{\nu_1}$};
  \node[font=\scriptsize,anchor=south] at (2.6,2.5) {$S^{\nu_2}$};
  \node[font=\scriptsize,anchor=north] at (1.55,1.1) {exact solve};
\end{scope}
\begin{scope}[xshift=4.6cm]
  \node[font=\small,anchor=south] at (2.0,3.05) {V-cycle};
  \fill[TUblue] (0,2.4) circle (2pt); \fill[TUblue] (1,1.2) circle (2pt);
  \fill[TUblue] (2,0) circle (2pt);
  \fill[TUblue] (3,1.2) circle (2pt); \fill[TUblue] (4,2.4) circle (2pt);
  \draw[->,thick,TUblue] (0,2.4) -- (1,1.2);
  \draw[->,thick,TUblue] (1,1.2) -- (2,0);
  \draw[->,thick,TUblue] (2,0) -- (3,1.2);
  \draw[->,thick,TUblue] (3,1.2) -- (4,2.4);
  \node[font=\scriptsize,anchor=north] at (2,-0.1) {coarsest solve};
  \node[font=\scriptsize,anchor=south] at (0,2.5) {smooth};
  \node[font=\scriptsize,anchor=south] at (4,2.5) {smooth};
\end{scope}
\end{tikzpicture}
\caption{The two-grid method (left) and the V-cycle obtained by applying it recursively (right). Each
descending edge is a restriction of the residual, each ascending edge a prolongation of the correction,
and relaxation is applied before and after every transfer. Recursion is what makes the method optimal: the coarse solve is
itself replaced by a two-grid step until the problem is small enough to solve directly, so one cycle
costs $\mathcal{O}(n)$ work and reduces the error by a factor independent of $h$. Used as a
preconditioner for CG, this turns the $\mathcal{O}(h^{-1})$ iteration count of Chapter~4 into a constant.}
\label{fig:v-cycle}
\end{figure}

The two ingredients are now in place, and Section~\ref{sec:coarse} showed that each fails precisely where
the other succeeds. Combining them in the obvious order gives the \emph{two-grid method}: smooth to
remove what the coarse space cannot see, correct on the coarse grid to remove what the smoother cannot
touch, then smooth again to clean up the interpolation error introduced by $P$. Starting from an
approximation $u$:

\begin{enumerate}[leftmargin=*,itemsep=1pt]
\item \textbf{Pre-smoothing:} $u\leftarrow S^{\nu_1}(u)$, with $S$ weighted Jacobi or Gauss--Seidel.
\item \textbf{Residual restriction:} $r_H=R(f-Au)$.
\item \textbf{Coarse solve:} $A_He_H=r_H$.
\item \textbf{Coarse correction:} $u\leftarrow u+Pe_H$.
\item \textbf{Post-smoothing:} $u\leftarrow S^{\nu_2}(u)$.
\end{enumerate}

\paragraph{From two grids to multigrid.} Step~3 still requires an \emph{exact} solve with $n/2$
unknowns ($n/4$ in 2D), which may itself be far too large to solve directly. But the observation that
motivated
the coarse grid applies again one level down: once the coarse problem has itself been smoothed, its error
is smooth \emph{relative to $H$} and is representable on a grid of spacing $2H$. So replace the exact
coarse solve by one two-grid step on $\Omega_{2h}$, its coarse solve by a two-grid step on
$\Omega_{4h}$, and so on until the grid is small enough to solve directly.

That recursion is \emph{multigrid}, and the grids visited on the way down and back up form the V-cycle of
Figure~\ref{fig:v-cycle}. Two properties follow. The work is $\mathcal{O}(n)$, since the grids shrink
geometrically; and the error reduction per cycle is independent of $h$, being governed by the smoothing
factor of Section~\ref{sec:smoothing} e.g. $\tfrac13$ per step for weighted Jacobi in 1D rather than
by $\kappa(A)$.

\begin{takeaway}
Smoother and coarse correction are exactly complementary
\cite{briggsMultigridTutorial2000,hackbuschMultiGridMethods1985}: the first removes oscillatory error and
cannot touch smooth error, the second removes what lies in the coarse space and cannot touch anything
else. Recursion turns the pair into an $\mathcal{O}(n)$ solver whose contraction factor does not depend
on $h$.
\end{takeaway}

\paragraph{Multigrid as a preconditioner}

In practice, multigrid is often used not as a standalone solver, but as a preconditioner
inside a Krylov method. One multigrid cycle is then viewed as an approximate inverse:
\[
M_{\mathrm{MG}}^{-1}\approx A^{-1}.
\]
The linear system is solved in preconditioned form, for example by CG or GMRES:
\[
M_{\mathrm{MG}}^{-1}Ax = M_{\mathrm{MG}}^{-1}b.
\]

This fits naturally into the general framework of preconditioning:
multigrid constructs a strong approximate inverse by combining local relaxation
with a global coarse-space correction.

\begin{remark}
For elliptic problems, multigrid-preconditioned Krylov methods often have iteration
counts that remain essentially bounded under mesh refinement.
This is the sense in which multigrid is called \emph{mesh-independent} or even
\emph{optimal}: the number of iterations no longer grows significantly as the grid is refined.
\end{remark}

\newpage
\section{Exercises}
\paragraph{Problem 1 (Convergence of a parallel Schwarz method in 1D, past exam).}
Consider the one-dimensional Laplace problem
\[
-\,u''(x)=f(x),\qquad x\in(0,1),\qquad u(0)=u(1)=0,
\]
and the overlapping decomposition $\Omega=\Omega_1\cup\Omega_2$ with
\[
\Omega_1=(0,\alpha),\qquad \Omega_2=(\beta,1),\qquad 0<\beta<\alpha<1,\quad \delta=\alpha-\beta>0.
\]
The \emph{parallel} (additive) Schwarz iteration is
\[
\begin{cases}
-\,\big(u_1^{(n+1)}\big)''=f & \text{in }(0,\alpha),\\
u_1^{(n+1)}(0)=0,\quad u_1^{(n+1)}(\alpha)=u_2^{(n)}(\alpha),
\end{cases}
\qquad
\begin{cases}
-\,\big(u_2^{(n+1)}\big)''=f & \text{in }(\beta,1),\\
u_2^{(n+1)}(\beta)=u_1^{(n)}(\beta),\quad u_2^{(n+1)}(1)=0,
\end{cases}
\]
where $u_i^{(n)}$ is the $n$th iterate restricted to $\Omega_i$.

\begin{enumerate}[label=(\alph*)]
\item Let $e_i^{(n)}:=u_i-u_i^{(n)}$ denote the errors on each subdomain (where $u$ is the exact solution).
Write the boundary value problems (BVPs) satisfied by $e_1^{(n+1)}$ and $e_2^{(n+1)}$.

\item Solve these BVPs explicitly (they are linear functions on each subdomain).

\item Let
\[
\eta_n:=e_2^{(n)}(\alpha),\qquad \zeta_n:=e_1^{(n)}(\beta).
\]
Show that the interface errors satisfy the two-step recursions
\[
\eta_{n+1}=\frac{1-\alpha}{\,1-\beta\,}\,\zeta_n,\qquad
\zeta_{n+1}=\frac{\beta}{\alpha}\,\eta_n,
\]
and hence
\[
\frac{e_1^{(n)}(\alpha)}{e_1^{(n-2)}(\alpha)}
=\frac{e_2^{(n)}(\beta)}{e_2^{(n-2)}(\beta)}
=\rho,\qquad
\rho:=\frac{\beta}{\alpha}\cdot\frac{1-\alpha}{\,1-\beta\,}.
\]

\item Prove that $0<\rho<1$ whenever $\beta<\alpha$ (i.e., when there is positive overlap), and conclude that the parallel Schwarz method is convergent.
\end{enumerate}

\paragraph{Problem 2 (Spectral equivalence $\Rightarrow$ PCG convergence).}
Let $A,B$ be SPD and suppose there exist $c_1,c_2>0$ such that
\[
c_1\,x^\top Bx \;\le\; x^\top A x \;\le\; c_2\,x^\top Bx \qquad \forall x\in\mathbb{R}^n.
\]
(a) Show that $\operatorname{spec}(B^{-1/2}AB^{-1/2})\subset[c_1,c_2]$, hence $\kappa(B^{-1}A)\le c_2/c_1$. \\
(b) Conclude that PCG applied to $Au=b$ with left preconditioner $B$ satisfies
\[
\frac{\|e_k\|_{H}}{\|e_0\|_{H}}
\;\le\;
2\!\left(\frac{\sqrt{\kappa(B^{-1}A)}-1}{\sqrt{\kappa(B^{-1}A)}+1}\right)^{\!k},
\qquad e_k=u_k-u_\ast .
\]
with $H=B^{-1/2}AB^{-1/2}$.

\paragraph{Problem 3 (Preconditioned conjugate gradient).}
Let $A,M$ be SPD. Apply CG to $\tilde A\tilde x=\tilde b$ with $\tilde A=M^{-1/2}AM^{-1/2}$ and $\tilde b=M^{-1/2}b$. 
Show that the mapped iterates $x^{(k)}=M^{-1/2}\tilde x^{(k)}$ satisfy the iteration in the proposition; in particular
$p^{(0)}=M^{-1}r^{(0)}$, and
\[
\alpha_k=\frac{\langle r^{(k)},z^{(k)}\rangle}{\langle p^{(k)},Ap^{(k)}\rangle},
\qquad
\beta_{k+1}=\frac{\langle r^{(k+1)},z^{(k+1)}\rangle}{\langle r^{(k)},z^{(k)}\rangle},
\quad z^{(k)}=M^{-1}r^{(k)}.
\]

\paragraph{Problem 4 (Transfer operators and the Galerkin coarse matrix).}
Let 
$$A_h=\frac{1}{h^2}\operatorname{tridiag}(-1,2,-1)\in\mathbb{R}^{m\times m}$$ 
with $m=2m_H+1$,
$h=1/(m+1)$, and let $P\in\mathbb{R}^{m\times m_H}$ be linear interpolation from the coarse grid
$H=2h$: $(Pe_H)_{2j}=\tfrac12(e_H)_j$, $(Pe_H)_{2j+1}=(e_H)_j$, $(Pe_H)_{2j+2}=\tfrac12(e_H)_j$.
\begin{enumerate}
\item[(a)] Write $P$ explicitly for $m=7$, $m_H=3$, and show that the \emph{full weighting} restriction
$R=\tfrac12P^\top$ applies the stencil $\tfrac14(1,2,1)$.
\item[(c)] Compute $A_H=RA_hP$ for $m=7$ and verify that it coincides \emph{exactly} with the direct
discretisation $\frac{1}{H^2}\operatorname{tridiag}(-1,2,-1)$ on the coarse grid.
\item[(d)] What do you get from $P^\top A_hP$ instead, and why must the scalings of $R$ and $P$ be
consistent?
\end{enumerate}

\paragraph{Problem 5 (Coarse-grid correction alone cannot converge).}
With $A_h,P,R$ and $A_H=RA_hP$ as in Problem~5, the coarse-grid correction
$u\leftarrow u+PA_H^{-1}R(f-A_hu)$ has error propagation operator $T=I-PA_H^{-1}RA_h$.
\begin{enumerate}
\item[(a)] Show that $T^2=T$ (use $A_H=RA_hP$ to simplify $PA_H^{-1}RA_hP$), and deduce that every
eigenvalue of $T$ is $0$ or $1$, that $\rho(T)=1$ when $m_H<m$, and that $\operatorname{rank}T=m-m_H$.
\item[(c)] Conclude that coarse-grid correction alone is not convergent, identify which errors lie in
$\ker T$ (compare Figure~\ref{fig:coarse-grid}), and explain in one sentence why a smoother is therefore
mandatory rather than merely helpful.
\end{enumerate}

\paragraph{Problem 6 (The two-grid contraction factor).}
Let $S=I-\omega D^{-1}A_h$ be the weighted Jacobi smoother, $D=\operatorname{diag}(A_h)$, and $T$ as in
Problem~6; the two-grid operator with $\nu$ pre-smoothing steps is $M_{\rm TG}=TS^{\nu}$.
\begin{enumerate}
\item[(a)] From the eigenvalues $\mu_j(\omega)=1-2\omega\sin^2\!\bigl(\tfrac{j\pi}{2(m+1)}\bigr)$ of $S$,
explain why $\omega=1$ is a poor \emph{smoother} even though it minimises $\rho(S)$.
\item[(b)] Take $\omega=\tfrac23$ and $m=7$. Compute $\rho(M_{\rm TG})$ numerically for $\nu=1$ and
$\nu=2$, and compare with the smoothing factor $\tfrac13$ of
Proposition~\ref{prop:weighted-jacobi-smoothing}. State the relationship you observe between
$\rho(M_{\rm TG})$ and $\nu$.
\item[(c)] Repeat for $m=15$ and $m=31$: is the contraction factor independent of $h$? Contrast with the
$\mathcal{O}(h^{-2})$ iteration count of unpreconditioned Jacobi in Chapter~3.
\item[(d)] A V-cycle visits grids of size $m,m/2,m/4,\dots$; show its total cost is $\mathcal{O}(m)$, and
explain why this together with (c) makes multigrid an \emph{optimal} solver.
\end{enumerate}

\backmatter

\cleardoublepage
\addcontentsline{toc}{chapter}{\bibname}
\bibliographystyle{plain}
\bibliography{scicomp}

\end{document}